\documentclass[11pt,twoside]{article}
\usepackage{amssymb,amsmath,amsthm,amsfonts,mathrsfs,hyperref}
\usepackage{times}
\usepackage{enumerate}
\usepackage{cite,titletoc}
\usepackage{color}
\usepackage[toc,page,title,titletoc,header]{appendix}
\usepackage{multirow}

\allowdisplaybreaks

\def\cA{\mathcal A}
\def\cB{\mathcal B}

\def\cK{\mathcal K}

\def\cO{\mathcal O}

\def\cS{\mathcal S}

\def\cW{\mathcal W}

\def\na{\nabla}

\def\N{\mathop{\mathbb N\kern 0pt}\nolimits}
\def\Z{\mathop{\mathbb Z\kern 0pt}\nolimits}
\def\Q{\mathop{\mathbb Q\kern 0pt}\nolimits}
\def\R{\mathop{\mathbb R\kern 0pt}\nolimits}
\def\T{\mathop{\mathbb T\kern 0pt}\nolimits}
\def\SS{\mathop{\mathbb S\kern 0pt}\nolimits}

\def\ds{\displaystyle}
\def\f{\frac}
\def\al{\alpha}
\def\supp{\mathop{\rm supp}\nolimits}

\def\p{\partial}

\def\ve{\varepsilon}

\def\dive{\operatorname{div}}
\def\curl{\operatorname{curl}}
\def\supp{\operatorname{supp}}
\def\ls{\lesssim}
\def\gt{\gtrsim}
\def\bn{\mathbf{n}}

\newcommand{\w}[1]{\langle {#1} \rangle}

\hypersetup{colorlinks=true,linkcolor=blue,citecolor=red,urlcolor=cyan}

\theoremstyle{plain}
\newtheorem{theorem}{Theorem}[section]

\newtheorem{lemma}[theorem]{Lemma}

\newtheorem{remark}{Remark}[section]

\theoremstyle{definition}

\numberwithin{equation}{section}

\title{Long time smooth solutions of 2-D quadratic quasilinear wave equations in exterior domains with Neumann boundary conditions}

\author{Fei Hou$^{1,*}$ \quad Huicheng Yin$^{2,*}$ \quad Meng Yuan$^{3,}$
    \footnote{Fei Hou (\texttt{fhou$@$nju.edu.cn}) and Huicheng Yin (\texttt{huicheng$@$nju.edu.cn}, \texttt{05407$@$njnu.edu.cn})
    are supported by the National key research and development program of China (No.2024YFA1013301).
    In addition, Fei Hou, Huicheng Yin and Meng Yuan (\texttt{ym$@$cczu.edu.cn}) are supported by the NSFC (No.~12571237, No.~12331007).}\\
    [12pt]{\small 1. School of Mathematics, Nanjing University, Nanjing, 210093, China}\\
    {\small 2. School of Mathematical Sciences, Nanjing Normal University, Nanjing, 210023, China}\\
    {\small 3. School of Computer Science and Artificial Intelligence, Aliyun School of Big Data,}\\
    {\small School of Software, Changzhou University, Changzhou, 213164, China}}

\begin{document}

\date{}
\maketitle
\thispagestyle{empty}

\begin{abstract}
For the 3-D quadratic quasilinear wave equations in exterior domains with Dirichlet or Neumann boundary conditions,
the global existence or the maximal existence time of small data smooth solutions have been established in the past.
However, so far it is still open for the corresponding 2-D Neumann boundary value problem.
In this paper, we investigate the long time existence of small data solutions to 2-D quadratic
quasilinear wave equations with homogeneous Neumann boundary values.
Our main ingredients include: establishing some new pointwise spacetime decay estimates for the 2-D initial boundary value
problem of the divergence form wave equations, and introducing a series of good unknowns to derive the required energy estimates.
The obtained results can be directly applied to the initial boundary value problem of 2-D isentropic and
irrotational compressible Euler equations
for both the polytropic gases and the Chaplygin gases in exterior domains with impermeable conditions,
the 2-D relativistic membrane equations and  2-D membrane equations with  homogeneous Neumann boundary values.

\vskip 0.2 true cm

\noindent
\textbf{Keywords.} 2-D quadratic quasilinear wave equation, Neumann boundary condition,  good unknown,

\qquad\quad   weighted energy estimate, compressible Euler equations, relativistic membrane equation

\vskip 0.2 true cm
\noindent
\textbf{2020 Mathematical Subject Classification.}  35L05, 35L20, 35L70
\end{abstract}

\vskip 0.2 true cm

\addtocontents{toc}{\protect\thispagestyle{empty}}
\tableofcontents

\section{Introduction}

\subsection{Main results and remarks}

In this paper, we are concerned with the following IBVP (initial boundary value problem) of 2-D quadratic
quasilinear wave equations in exterior domains with homogeneous Neumann boundary conditions
\begin{equation}\label{QWE}
\left\{
\begin{aligned}
&\Box u=Q(\p u,\p^2u),\hspace{2.6cm} (t,x)\in[0,+\infty)\times\cK,\\
&\frac{\p}{\p\bn}u(t,x)=0,\hspace{3.2cm} (t,x)\in[0,+\infty)\times\p\cK,\\
&(u,\p_tu)(0,x)=(\ve u_0,\ve u_1)(x),\qquad x\in\cK,
\end{aligned}
\right.
\end{equation}
where $\p=(\p_0,\p_1,\p_2)=(\p_t,\p_{x_1},\p_{x_2})$, $\ds\Box=\p_t^2-\Delta$ with $\Delta=\p_1^2+\p_2^2$, $\cK=\R^2\setminus\cO$,
the obstacle $\cO\subset\R^2$ is compact and contains the origin, the boundary $\p\cK=\p\cO$ is smooth, $\bn=(0, n_1,n_2)=(0, n_1(x),n_2(x))$ represents the unit outer normal direction of the boundary $[0,+\infty)\times\p\cK$,
$(u_0,u_1)\in C^\infty(\cK)$ and $\supp(u_0,u_1)\subset\{x\in\cK:|x|\le M_0\}$ with some fixed constant $M_0>1$.
In addition,
\begin{equation}\label{YHCC-1}
\begin{split}
Q(\p u,\p^2u)&=\sum_{\alpha,\beta,\mu=0}^2Q^{\alpha\beta\mu}\p^2_{\alpha\beta}u\p_{\mu}u
+\sum_{\alpha,\beta,\mu,\nu=0}^2Q^{\alpha\beta\mu\nu}\p^2_{\alpha\beta}u\p_{\mu}u\p_{\nu}u,
\end{split}
\end{equation}
where the constants $Q^{\alpha\beta\mu}=Q^{\beta\alpha\mu}$ and $Q^{\alpha\beta\mu\nu}=Q^{\beta\alpha\mu\nu}=Q^{\alpha\beta\nu\mu}$.
On the other hand, as in condition (1.8) of \cite{MSS}, the following admissible condition on $Q(\p u,\p^2u)$ for Neumann boundary
value problem \eqref{QWE} is naturally imposed:
\begin{equation}\label{YHCC-2}
\begin{split}
&\text{For any $C^1(\Bbb R^2)$ smooth functions $w$ and $\psi$ verifying $\frac{\p}{\p\bn}w
=\frac{\p}{\p\bn}\psi=0$}\\
&\text{on $[0,+\infty)\times\p\cK$, it holds that on $[0,+\infty)\times\p\cK$},\\
&{\sum_{\alpha,\beta,\mu=0}^2Q^{\alpha\beta\mu}n_{\alpha}\p_{\beta}w\p_{\mu}\psi=\sum_{\alpha,\beta,\mu,\nu=0}^2
Q^{\alpha\beta\mu\nu}n_{\alpha}\p_{\beta}w\p_{\mu}\psi\p_{\nu}\psi\equiv 0.}
\end{split}
\end{equation}

As usual, in order to find the smooth solutions of problem \eqref{QWE}, the compatibility conditions of $(u_0,u_1)$
with $\frac{\p}{\p\bn}u(t,x)=0$ for $(t,x)\in[0,+\infty)\times\p\cK$ are necessary. That is,
let $J_ku=\{\p_x^{\alpha}u:0\le|\alpha|\le k\}$
and $\p_t^ku(0,x)=G_k(J_ku_0,J_{k-1}u_1)$ ($0\le k\le 2N$), where $G_k$ depends on $Q(\p u, \p^2u)$, $J_ku_0$ and $J_{k-1}u_1$
in terms of the equation and the initial data in problem \eqref{QWE},
then the compatibility conditions up to $(2N+1)-$order mean that
\begin{equation}\label{YHCC-3}
\begin{split}
&\text{$\frac{\p}{\p\bn}G_k$ vanish on $\p\cK$ for all $0\le k\le 2N$.}
\end{split}
\end{equation}

Our main results can be stated as
\begin{theorem}\label{thm1}
Suppose that the obstacle $\cO$ is convex and \eqref{YHCC-1}-\eqref{YHCC-3} hold.
Then for any fixed constant $\delta\in(0,1)$, there is a constant $\ve_0>0$ such that when $\ve\le\ve_0$ and
\begin{equation}\label{initial:data-thm1}
\|u_0\|_{H^{2N+1}(\cK)}+\|u_1\|_{H^{2N}(\cK)}\le1 \quad\text{with $N\ge19$},
\end{equation}
problem \eqref{QWE} has a smooth solution $u\in\bigcap\limits_{j=0}^{2N+1}C^{j}([0,\ve^{\delta-2}], H^{2N+1-j}(\cK))$.
\end{theorem}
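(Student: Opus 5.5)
The proof will be a bootstrap (continuity) argument on $[0,T_\ve]$ with $T_\ve=\ve^{\delta-2}$, combining weighted energy estimates for vector fields with pointwise spacetime decay estimates for the linear Neumann problem. The vector fields are $Z=\{\p_t,\p_1,\p_2,\Omega=x_1\p_2-x_2\p_1\}$; the scaling field $S$ and the Lorentz boosts will be avoided because they do not preserve the Neumann condition. Cutoffs localize $\Omega$ away from $\p\cK$.

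\textbf{Bootstrap hypotheses.} For a small $\eta>0$ tied to $\delta$ I will assume, on $[0,T]\subset[0,T_\ve]$,
\[
\sum_{|a|\le 2N}\|\p Z^a u(t)\|_{L^2(\cK)}\le C_1\ve\w{t}^{\eta},\qquad
\sum_{|a|\le N}|\p Z^a u(t,x)|\le C_1\ve\w{t}^{-1/2}\w{t-|x|}^{-1/2}.
\]
The goal is to recover both bounds with $C_1/2$ as long as $T\le \ve^{\delta-2}$. Local existence in $\bigcap_j C^j H^{2N+1-j}$ follows from the compatibility conditions \eqref{YHCC-3} and standard theory. Closing the bootstrap therefore gives Theorem \ref{thm1}.

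\textbf{Energy estimates.} I apply $\p_t^j$ first, since these fields preserve the Neumann condition, and recover spatial derivatives near $\p\cK$ by elliptic regularity for the Neumann Laplacian. Away from the boundary I use $\Omega$. The equation is put in divergence form $\p_\alpha(g^{\alpha\beta}(\p u)\p_\beta u)=\text{cubic}$, and the admissibility condition \eqref{YHCC-2} makes the boundary flux $n_\alpha g^{\alpha\beta}\p_\beta\p_t^j u$ vanish up to lower-order terms. The time derivatives of the boundary condition, however, produce commutator terms containing $\p\p_t^j u$ against $\p^2 u$ on $\p\cK$, which lose a derivative. To handle these I introduce good unknowns $V_j=\p_t^j u+\tfrac12 \sum Q^{\cdot\cdot\mu}\p_\mu u\,\p\p_t^{j-1}u+\cdots$, chosen so that $V_j$ satisfies an exact homogeneous Neumann condition for the quasilinear operator. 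The energy identity then yields
\[
\frac{d}{dt}E_{2N}(t)\ls \big(\|\p u\|_{W^{N,\infty}}+\|\p u\|_{W^{N,\infty}(|x|\le 2)}\big)E_{2N}(t)+\text{(cubic terms)}.
\]
The pointwise bound gives $\|\p u\|_{L^\infty}\ls\ve\w{t}^{-1/2}$ in general. Near the obstacle I expect faster local decay, $\ve\w{t}^{-1}$ up to logarithms, because the obstacle is convex and hence nontrapping. Gronwall then gives $E\ls\ve^2\exp(C\ve\sqrt t)$, which stays $\ls\ve^2$ only up to $t\sim\ve^{-2}$. This is exactly why the lifespan is $\ve^{\delta-2}$: at $t\le\ve^{\delta-2}$ we have $\ve\sqrt t\le\ve^{\delta/2}$, which is small.

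\textbf{Pointwise decay (main obstacle).} I will derive a 2-D decay estimate for the divergence-form linear Neumann problem $\Box w=F$, $\p_{\bn}w=G$, of the form
\[
\w{t}^{1/2}\w{t-|x|}^{1/2}|\p w|\ls \sum_{|a|\le k}\sup_s\|\w{s}^{\cdots}Z^aF(s)\|_{L^2\cap L^1}+\text{(boundary data)}+\text{(data)}.
\]
The approach is a cutoff decomposition: $w=\chi w+(1-\chi)w$. The exterior part solves a free-space problem handled by Klainerman–Sobolev-type estimates with $\Omega$ and $L^\infty$–$L^\infty$ estimates for the 2-D wave equation. The part near the obstacle is controlled by local energy decay for the convex-obstacle Neumann problem. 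This step is the hardest. In 2-D the Neumann resolvent has a logarithmic singularity at zero frequency, so local energy decays only like $\w{t}^{-1}(\log)^{-2}$. Without $S$, $\w{t-|x|}$ decay must come from $\p_t$ and the equation. I would first try to prove local decay $\|\p w\|_{L^2(|x|<R)}\ls\w{t}^{-1}$ modulo logs by Fourier–Laplace transform in $t$, using the known low-frequency expansion of the Neumann resolvent. I would then feed this into Duhamel's formula through the 2-D fundamental solution to obtain the weighted estimate. With these estimates, the nonlinearity $F=Q(\p u,\p^2 u)$ is bounded by the bootstrap assumptions at a loss of $\w{t}^{\eta}$. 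This loss is absorbed since $\ve\w{t}^{1/2+\eta}\le 1$ on $[0,\ve^{\delta-2}]$ for $\eta$ small relative to $\delta$. This recovers the pointwise bound, and the choice $N\ge19$ supplies the derivative counts needed for Sobolev embedding and the decay estimate.
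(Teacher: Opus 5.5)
Your energy mechanism is essentially the paper's: with $|\p u|\ls\ve\w{t}^{-1/2}$ the Gronwall factor is $e^{C\ve\w{t}^{1/2+}}$, and this is what fixes $T_\ve=\ve^{\delta-2}$ (cf.\ Lemma \ref{lem-Gronwall-1} and \eqref{thm1-time:BA}). The gap is in the pointwise step. There you only assert that the Duhamel contribution of $F=Q(\p u,\p^2u)$ costs a factor $\ve\w{t}^{1/2+\eta}$, and for the cutoff decomposition you describe this is false. To pass from free space to the Neumann problem you solve $\Box w^c=\chi F$ in $\R^2$ and correct near $\cO$ by a Neumann solution with source $[\Delta,\chi_{[1,2]}]w^c$. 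That source contains $(\Delta\chi_{[1,2]})w^c$, so via \eqref{loc:decay} you need the \emph{undifferentiated} free solution $w^c$ to decay like $\w{t}^{-1}$ on $|x|\le2$. The pieces $\chi_{[1,2]}w^c$ and the correction need not decay separately. With only $Z=\{\p,\Omega\}$, $\p^2u$ gains nothing in $\w{t-|x|}$, so under your hypothesis (or \eqref{thm1-BA}) $|F|$ is of size $\ve^2\w{s}^{-1}$, up to $\w{s}^{0+}$, on a unit neighbourhood of the light cone. The 2-D Poisson formula then gives, for $|x|\le2$, only $|w^c(t,x)|\ls\ve^2\int_0^{t/2}\frac{ds}{\sqrt{t(t-2s)}}\ls\ve^2$, with no decay. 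This is the weight $\w{y}^{1/2}\cW_{3/2+\eta,1}$ attached to $G^r$ in \eqref{loc:ibvp} and \eqref{dpw:ibvp}, inherited from \eqref{pw:ivp2}. The resulting loss is $\ve\w{t}^{1+}$, not $\ve\w{t}^{1/2+}$, so your scheme closes only for $t\lesssim\ve^{-1}$. The same objection applies to your expected local decay $\ve\w{t}^{-1}$ near $\cO$: for the nonlinear problem this does not follow from nontrapping alone.

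The missing idea is the one the paper's proof rests on. Since $Q^{\alpha\beta\mu}=Q^{\beta\alpha\mu}$, the quadratic part equals $\sum_\alpha\p_\alpha G^\alpha$ with $G^\alpha$ quadratic in $\p u$ (see \eqref{eqn:div}). In the IBVP estimates \eqref{loc:ibvp} and \eqref{dpw:ibvp} these divergence sources are handled through \eqref{pw:ivp:div}, i.e.\ with the weaker weight $\w{y}^{1/2}\cW_{1,1}$. This gains exactly $\w{t+|x|}^{1/2}$ and leaves the loss $\ve_1\w{t}^{1/2+\delta_1}$ of \eqref{BA:impr5}, which \eqref{thm1-time:BA} absorbs. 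It is used both for the local energy decay (Lemma \ref{Son-4.2}, \eqref{thm1-loc:energyB}) and for the improved pointwise bound \eqref{BA:impr}.

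Two further points also need repair. First, your pointwise hypothesis has no $\w{t}^{\eta}$ slack, but the available IBVP estimates already lose $\ln(2+t)$ on terms linear in $u$ (the cutoff commutators $\Box(\tilde Z^a-Z^a)u$); this is why \eqref{thm1-BA} carries $\w{t}^{\delta_1}$. Second, an $\Omega$-energy at the top order $2N$ with only $\w{t}^{\eta}$ growth would need integrable local energy decay at top order. The local decay estimates lose derivatives through the nonlinearity and do not provide this. The paper instead closes the $\p_t^j$/elliptic energies at order $2N$, and no good unknowns are needed there because \eqref{YHCC-2} kills the boundary flux exactly. It then lets the $Z$-energy grow like $\w{t}^{1/2}$ at order $2N-1$ (Lemma \ref{Son-4.4}), and obtains a uniform $Z$-bound only at order $2N-6$ (\eqref{thm1-energyB}), after the local decay \eqref{thm1-loc:energyB}.
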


\begin{theorem}\label{thm2}
Suppose that the obstacle $\cO$ is convex and  \eqref{YHCC-1}-\eqref{YHCC-3} hold. Especially,
$Q(\p u,\p^2u)$ admits the following form
\begin{equation}\label{nonlinear-Chaplygin}
\begin{split}
Q(\p u,\p^2u)&=2\sum_{\alpha=0}^2C^{\alpha}Q_0(\p_{\alpha}u,u)+\sum_{\alpha,\beta,\mu,\nu=0}^2
Q^{\alpha\beta\mu\nu}\p^2_{\alpha\beta}u\p_{\mu}u\p_{\nu}u,\\
\end{split}
\end{equation}
where $C^{\alpha}$, $Q^{\alpha\beta\mu\nu}$ are constants and $\ds Q_0(f,h)=\p_tf\p_th-\sum_{i=1}^2\p_if\p_ih$.
Meanwhile, the second null condition holds:
\begin{equation}\label{null:condition}
\sum_{\alpha,\beta,\mu,\nu=0}^2Q^{\alpha\beta\mu\nu}\xi_{\alpha}\xi_{\beta}\xi_{\mu}\xi_{\nu}\equiv0\quad\text{for any  $(\xi_0,\xi_1,\xi_2)\in\{\pm1\}\times\SS$}.
\end{equation}
Then there is a constant $\ve_0>0$ such that when $\ve\le\ve_0$ and
\begin{equation}\label{initial:data}
\|u_0\|_{H^{2N+1}(\cK)}+\|u_1\|_{H^{2N}(\cK)}\le1 \quad\text{with $N\ge40$},
\end{equation}
problem \eqref{QWE} has a global solution $u\in\bigcap\limits_{j=0}^{2N+1}C^{j}([0,\infty), H^{2N+1-j}(\cK))$.
Moreover,
\addtocounter{equation}{1}
\begin{align}
\sum_{|a|\le N+1}|\p Z^au|&\le C\ve\w{x}^{-1/2}\w{t-|x|}^{-1}\ln^2(2+t),\tag{\theequation a}\label{thm2:decay:a}\\
\sum_{|a|\le N+1}\sum_{i=1}^2|\bar\p_iZ^au|&\le C\ve\w{x}^{-1/2}\w{t+|x|}^{0.001-1},\tag{\theequation b}\label{thm2:decay:b}\\
\sum_{|a|\le N+1}|Z^au|&\le C\ve\w{t+|x|}^{-1/2}\w{t-|x|}^{0.001-1/2},\tag{\theequation c}\label{thm2:decay:c}\\
|\p\p_tu|&\le C\ve\w{x}^{-1/2}\w{t-|x|}^{-1},\tag{\theequation d}\label{thm2:decay:d}
\end{align}
where $Z=\{\p,\Omega\}$ with $\Omega=x_1\p_2-x_2\p_1$, $\w{x}=\sqrt{1+|x|^2}$,  $\bar\p_i=\frac{x_i}{|x|}\p_t+\p_i$ ($i=1,2$)
are the good derivatives (tangent to the outgoing light cone $|x|=t$).
Furthermore, the following time decay estimates of local energy hold
\addtocounter{equation}{1}
\begin{align}
\sum_{|a|\le2N-29}\|\p^au\|_{L^2(\cK_R)}\le C_R\ve(1+t)^{-1}\ln(2+t),\tag{\theequation a}\label{thm2:decay:LE}\\
\sum_{|a|\le2N-30}\|\p^a\p_tu\|_{L^2(\cK_R)}\le C_R\ve(1+t)^{-1},\tag{\theequation b}\label{thm2:decay:LEdt}
\end{align}
where $R>1$ is a fixed constant, $\cK_R=\cK\cap\{x: |x|\le R\}$ and  $C_R>0$ is a constant
depending on $R$.
\end{theorem}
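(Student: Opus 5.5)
We prove Theorem \ref{thm2} by a bootstrap (continuity) argument on the local solution, whose existence follows from the compatibility conditions \eqref{YHCC-3}. Suppose that on $[0,T]$ the solution satisfies the decay bounds \eqref{thm2:decay:a}--\eqref{thm2:decay:d} and \eqref{thm2:decay:LE}--\eqref{thm2:decay:LEdt} with $C$ replaced by $2C_1$, together with a slowly growing energy bound
\[
\sum_{|a|\le 2N}\|\p Z^a u\|_{L^2(\cK)}\le C_1\ve(1+t)^{\ve_1}
\]
for a small $\ve_1>0$. We then show that all of these hold with constant $C_1$ once $\ve\le\ve_0$, which closes the argument.

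\textbf{Step 1: pointwise decay.} Rewrite the equation in divergence form. The structure \eqref{nonlinear-Chaplygin} helps here, since $2Q_0(\p_\alpha u,u)=\Box(\p_\alpha u\,u)-u\Box\p_\alpha u-\p_\alpha u\Box u$ up to lower order, and the cubic term can also be put in divergence form. The admissibility condition \eqref{YHCC-2} makes the Neumann condition compatible with the flux. We then apply the exterior-domain pointwise estimates for the divergence-form linear IBVP. To do this we split $u=\chi u+(1-\chi)u$ with a cutoff $\chi$ near the obstacle. The far part satisfies a whole-space problem with a commutator source supported in $\cK_R$, and we handle it with the standard 2-D weighted $L^\infty$ estimates (Kubo--Yokoyama type). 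The near part is controlled by the local energy decay of the Neumann problem for convex obstacles, which gives the rate $t^{-1}$ in 2-D; this is the origin of the $\ln(2+t)$ in \eqref{thm2:decay:LE}. Feeding the bootstrap bounds into the sources yields \eqref{thm2:decay:a}--\eqref{thm2:decay:c}. The improved estimate \eqref{thm2:decay:d} without logarithm is obtained by applying the argument to $\p_t u$, which again satisfies a Neumann problem, using that $\p_t$ commutes with the boundary condition.

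\textbf{Step 2: energy estimates.} This step requires good unknowns because $\Omega$ does not preserve the Neumann condition. For $\p_t^jZ^a u$ we derive energy estimates for the quasilinear equation. The boundary terms are removed by \eqref{YHCC-2} for pure $\p_t$ derivatives. For vector fields involving $\Omega$ or $\p_x$, we localize them away from $\p\cK$ and recover the near-boundary part by elliptic regularity from $\p_t$-energies. We also introduce good unknowns: $Q_0(\p_\alpha u,u)$ is replaced by $\Box$ of a quadratic correction, that is, we set $V=u-\sum_\alpha C^\alpha u\,\p_\alpha u$, whose equation has only cubic nonlinearities. Analogous corrections are made for the derivatives $Z^aV$, with the boundary conditions of $V$ corrected by quadratic terms that are controlled through trace estimates. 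For the cubic part, the null condition \eqref{null:condition} writes $Q^{\alpha\beta\mu\nu}\p^2_{\alpha\beta}u\p_\mu u\p_\nu u$ as a sum of terms containing at least one good derivative $\bar\p$. Using \eqref{thm2:decay:b} together with \eqref{thm2:decay:a}, the resulting integrand is bounded by $\ve^2\w{t}^{-1-\kappa}$ away from the cone, and by a ghost-weight (Alinhac) energy near the cone. Near the obstacle, \eqref{thm2:decay:LEdt} provides the integrable factor $t^{-1}\cdot t^{-1/2}$. Grönwall's inequality then gives the energy bound with growth $(1+t)^{C\ve}$, which improves the bootstrap assumption.

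\textbf{Main obstacle.} The hardest point is the 2-D quadratic interaction. The quadratic term only gives decay $\ve\,t^{-1/2}$ times a derivative, which is not integrable, so the normal-form/good-unknown transformation must be compatible with the Neumann boundary. The quadratic corrections generate boundary terms involving $\p_\bn(u\p_\alpha u)$. These vanish only up to the tangential parts allowed by \eqref{YHCC-2}, and the remainders must be absorbed using the local energy decay \eqref{thm2:decay:LE} at the rate $t^{-1}\ln t$. If the logarithmic losses fail to be integrable, I would first try to iterate the good-unknown construction once more so that every quadratic boundary remainder carries an additional $\p_t$ factor, which is then controlled through \eqref{thm2:decay:d} and \eqref{thm2:decay:LEdt}. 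The large regularity threshold $N\ge40$ absorbs the derivative losses in this scheme.
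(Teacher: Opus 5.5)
There is a genuine gap in Step 2, and it sits at the main difficulty of the theorem. The quadratic term $2C^\alpha Q_0(\p_\alpha u,u)$ is \emph{quasilinear}, so the normal form $V=u-\sum_\alpha C^\alpha u\,\p_\alpha u$ cannot simply be differentiated. Indeed $\Box Z^aV$ contains $C^\alpha u\,\Box\p_\alpha Z^au=C^\alpha u\,\p_\alpha Z^aQ(\p u,\p^2u)$, which includes $u\,Q_0(\p_\alpha\p_\beta Z^au,u)$, i.e.\ $|a|+3$ derivatives of $u$. The energy of $Z^aV$ controls only $|a|+1$ derivatives, so the top-order estimate cannot close.

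If instead you run the ghost-weight energy estimate on $Z^au$ for the quasilinear equation directly, the error terms contain the full Hessian $\p^2u$. For $\p^2u$ you only have \eqref{thm2:decay:a}, with its $\ln^2(2+t)$ loss. This gives $\frac{d}{dt}E\ls\ve(1+t)^{-1}\ln^2(2+t)E$, and worse near the cone, i.e.\ the non-closing growth \eqref{intro-energy} rather than $(1+t)^{C\ve}$.

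The paper resolves both problems together, in three steps:
\begin{enumerate}
\item It works with the perturbed operator $\Box_g$ of \eqref{energy:t1}, which absorbs the quasilinear parts, together with the product rule \eqref{energy:t2}.
\item It corrects only the low--high interactions, via $V_i=\p_t^iu-\frac12\sum_\alpha C^\alpha\p_\alpha u\,\p_t^iu-\sum C^\alpha C^i_{jk}\chi_{[1,2]}(x)\p_\alpha\p_t^ju\,\p_t^ku$. The derivative always falls on the lower-order factor, so there is no loss.
\item The factor $\frac12$ is essential. The leftover $C^\alpha Q_0(\p_\alpha u,V_i)$ exactly cancels the non-null Hessian term $C^\alpha\eta^{\mu\nu}\p^2_{\alpha\nu}u\,\p_tV_i\,\p_\mu V_i$ produced when the quasilinear part is integrated by parts (see \eqref{energy:t6}).
\end{enumerate}
After this cancellation only $\Box u$, the ghost-weight term, and $\p\p_tu$ (with null structure) remain. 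The last is controlled without a logarithm by \eqref{BA5}, i.e.\ \eqref{thm2:decay:d}, and near $\p\cK$ by \eqref{BA4}. This is why the sharp estimate is done only for $\p_t^iu$, with $\p_x$ recovered elliptically.

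The boundary terms have a similar problem. You propose to absorb them via the trace theorem and \eqref{thm2:decay:LE}. But the trace costs a derivative, and \eqref{thm2:decay:LE} holds only up to order $2N-29$, so this fails at top order; your bootstrap bound $\sum_{|a|\le2N}\|\p Z^au\|\le C_1\ve(1+t)^{\ve_1}$ cannot be recovered. Also, \eqref{YHCC-2} says nothing about $\p_\bn(u\p_\alpha u)=u\,\p_\bn\p_\alpha u$; it only kills the normal component of the quasilinear flux.

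In the paper the cutoff gives $V_i=\p_t^iu\,(1-\frac12\sum_\alpha C^\alpha\p_\alpha u)$ on $\p\cK$. Hence $\p_\bn V_i=-\frac12\p_t^iu\sum_\alpha C^\alpha\p_\bn\p_\alpha u$, and the boundary integrand contains $\p_t^iu\,\p_t^{i+1}u=\frac12\p_t|\p_t^iu|^2$. It is therefore a total time derivative plus lossless terms, as in \eqref{energy:t13}. For $\Omega$-energies the paper accepts the one-derivative loss: it proves only $(1+t)^{1/2+\ve_2}$ at order $2N-1$ and $(1+t)^{4\ve_2}$ at order $2N-7$ (\eqref{energyA}, \eqref{energyB}), which suffices for the Sobolev input to the pointwise estimates.

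Two points in Step 1 also need correcting. First, the good unknowns used in the pointwise step must satisfy the \emph{homogeneous} Neumann condition for the linear estimates to apply; the paper achieves this by cutting the corrections off near the obstacle ($\tilde V_i$, $\tilde V^a$). Second, \eqref{thm2:decay:d} does not follow merely from $\p_t$ commuting with the boundary condition. It comes from the structure $\Box\p_tu=\p_tQ$, which removes the logarithm in the local energy decay; see \eqref{dtpw:ibvp}.
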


\begin{remark}\label{YinA-1}
For the Cauchy problem of the 2-D quasilinear wave equation in \eqref{QWE} without the first null condition
(i.e., $\ds\sum_{\alpha,\beta,\mu=0}^2Q^{\alpha\beta\mu}\xi_{\alpha}\xi_{\beta}\xi_{\mu}\not\equiv0$ for
$(\xi_0,\xi_1,\xi_2)\in\{\pm1\}\times\SS$), the lifespan $T_\ve$ (maximal existence time) of the smooth solution fulfills $T_\ve\ge C/\ve^2$ for
some suitable constant $C>0$ (see Chapter 6 of \cite{Hormander97book}).
Note that the lifespan $T_\ve$ of the 2-D Cauchy-Neumann problem in Theorem \ref{thm1} satisfies $T_\ve\ge\frac{1}{\ve^{2-}}$,
which is only arbitrarily close to that of the Cauchy problem.
In our forthcoming paper, we intend to show that the lifespan $T_\ve$ of the 2-D quadratic quasilinear wave equation in exterior domain
with both the Dirichlet and Neumann boundary conditions is at least $C/\ve^2$ by constructing delicate approximate solutions and establishing
some more precise energy estimates.
\end{remark}

\begin{remark}\label{YinA-2}
In \cite{HouYinYuan24}, for the following Cauchy-Dirichlet problem of 2-D
quadratic quasilinear wave equation in exterior domain with the homogeneous Dirichlet boundary condition
\begin{equation}\label{QWE-D}
\left\{
\begin{aligned}
&\Box u=\sum_{\alpha,\beta,\mu=0}^2Q^{\alpha\beta\mu}\p^2_{\alpha\beta}u\p_{\mu}u,\hspace{1cm} (t,x)\in[0,+\infty)\times\cK,\\
&u(t,x)=0,\hspace{3.8cm} (t,x)\in[0,+\infty)\times\p\cK,\\
&(u,\p_tu)(0,x)=(\ve u_0,\ve u_1)(x),\qquad x\in\cK,
\end{aligned}
\right.
\end{equation}
when the first null condition is satisfied (i.e., $\ds\sum_{\alpha,\beta,\mu=0}^2Q^{\alpha\beta\mu}\xi_{\alpha}\xi_{\beta}\xi_{\mu}\equiv0$
for any $(\xi_0,\xi_1,\xi_2)\in\{\pm1\}\times\SS$), we have established the global existence of smooth solution $u$.
In this case, it is well known from \cite{Klainerman} (also see \cite[Page 79]{MetcalfeSogge05} or \cite[Remark 1.1]{HouYinYuan24}) that
the nonlinearity in \eqref{QWE-D} admits such a form
\begin{equation}\label{QWE-D-NL}
\sum_{\alpha,\beta,\mu=0}^2Q^{\alpha\beta\mu}\p^2_{\alpha\beta}u\p_{\mu}u
=\sum_{\alpha=0}^2C^{0,\alpha}Q_0(\p_{\alpha}u,u)
+\sum_{\alpha,\mu,\nu=0}^2C^{\mu\nu,\alpha}Q_{\mu\nu}(\p_{\alpha}u,u),
\end{equation}
where $C^{0,\alpha}$ and $C^{\mu\nu,\alpha}$ are constants,  the null forms $Q_{\mu\nu}(\p_{\alpha}u,u)$ are given by
\begin{equation}\label{Intro:nullform}
\begin{split}
Q_{\mu\nu}(f,g)&=\p_{\mu}f\p_{\nu}g-\p_{\nu}f\p_{\mu}g,\quad \mu,\nu=0,1,2.
\end{split}
\end{equation}
In Theorem \ref{thm2}, we have shown the global existence of small data smooth solution $u$ to
the Cauchy-Neumann problem of 2-D quadratic wave equation with $Q_0$-type null forms in exterior domain.
Due to the complexity of $Q_{\mu\nu}(\p_{\alpha}u,u)$ null forms in Neumann boundary condition
(a series of good unknowns are only introduced for the $Q_0$-type form in this paper),
the global existence on the Cauchy-Neumann problem of 2-D $Q_{\mu\nu}$-type null form wave equation in exterior domain
is not solved until now.
\end{remark}

\begin{remark}\label{YinA-3}
In \cite{Secchi-Shibata03}, the authors have obtained  the following time decay estimate
\begin{equation}\label{SecShi-decay}
\|\p u_{lin}\|_{L^\infty(\cK)}\le C(1+t)^{-1/2}\ln^2(2+t)(\|\tilde u_0\|_{W^{6,1}}+\|\tilde u_1\|_{W^{5,1}}),
\end{equation}
where $C>0$, $u_{lin}$ is the solution to the 2-D linear problem $\Box u_{lin}=0$ with
initial data $(u_{lin},\p_tu_{lin})|_{t=0}=(\tilde u_0,\tilde u_1)$ and the Neumann
boundary condition $\frac{\p}{\p\bn}u_{lin}|_{[0,\infty)\times\p\cK}=0$.
Obviously, because of $\w{x}^{-1/2}\w{t-|x|}^{-1}$ $\le (1+t)^{-\f12}\w{t-|x|}^{-\f12}$,
the spacetime pointwise estimate \eqref{thm2:decay:a} of the nonlinear problem \eqref{QWE}
is more precise than \eqref{SecShi-decay} for the corresponding linear problem.
\end{remark}

\begin{remark}\label{rmk-decay}
For the 2-D quadratic
quasilinear wave equations
in exterior domains with homogeneous Dirichlet boundary conditions
\begin{equation}\label{QWE-00}
\left\{
\begin{aligned}
&\Box w=Q(\p w,\p^2w),\hspace{2.6cm} (t,x)\in[0,+\infty)\times\cK,\\
&w(t,x)=0,\hspace{3.9cm} (t,x)\in[0,+\infty)\times\p\cK,\\
&(w,\p_tw)(0,x)=(\ve w_0,\ve w_1)(x),\qquad x\in\cK,
\end{aligned}
\right.
\end{equation}
we have established the following crucial pointwise spacetime decay estimate and local energy decay estimate (see (1.5a) and (1.6) of \cite{HouYinYuan24}, respectively)
\begin{equation}\label{YHCC-20-6}
\sum_{|a|\le N+1}|\p Z^aw|\le C\ve\w{x}^{-1/2}\w{t-|x|}^{-1}
\end{equation}
and
\begin{equation}\label{Dirichlet-decay}
\|\p^aw\|_{L^2(\cK_R)}\le C_R\ve(1+t)^{-1},
\end{equation}
where $R>1$ is a fixed constant, $\cK_R=\cK\cap\{x: |x|\le R\}$ and  $C_R>0$ is a constant
depending on $R$. However, for the Cauchy-Neumann problem \eqref{QWE}, the corresponding pointwise spacetime decay
estimate in \eqref{thm2:decay:a}
and local energy decay estimate in \eqref{thm2:decay:LE} are
\begin{equation}\label{YHCC-20-7}
\sum_{|a|\le N+1}|\p Z^au|\le C\ve\w{x}^{-1/2}\w{t-|x|}^{-1}\ln^2(2+t)
\end{equation}
and
\begin{equation}\label{YinA-4}
\|\p^au\|_{L^2(\cK_R)}\le C_R\ve(1+t)^{-1}\ln(2+t),
\end{equation}
respectively. Due to the appearances of the large factors $\ln^2(2+t)$ in \eqref{YHCC-20-7}
and $\ln(2+t)$ in \eqref{YinA-4},
this leads to many crucial difficulties in the proof of Theorem \ref{thm2} so that a series of new good unknowns are
introduced to obtain the required energy estimates, which is essentially different from
that in \cite{HouYinYuan24} for the 2-D quasilinear Cauchy-Dirichlet problem \eqref{QWE-00}. One can see
more detailed explanations in Subsection \ref{YinA-5} below.

On the other hand, we next give a brief illustration on the sharpness of \eqref{YinA-4}.
In fact, for the free linear wave equations in the exterior domains with homogenous Neumann boundary value,
the decay rate of the local energy is $(1+t)^{-1}$ (see \cite{Kubo13,Morawetz75,Vainberg75}).
When handling the nonlinear problem \eqref{QWE}, the cutoff method and the Duhamel's principle will be applied as usual.
As a result, one can meet such a nonlinear Klein-Gordon equation $\square u+b(x)u={\tilde Q}(\p u, \p^2 u)$
including the variable coefficient $b(x)$ and needs to control the solution $u$ itself near the boundary with local energy decay rate being $(1+t)^{-1}$.
In this case, as illustrated in page 184 of Chapter 7 \cite{Hormander97book}, by the standard energy method for the linear Klein-Gordon
equations and the Gronwall's lemma,
the decay rate of the local energy to nonlinear problem \eqref{QWE} is
\begin{align*}
&\int_0^t(1+t-s)^{-1}(1+s)^{-1}ds=2(2+t)^{-1}\ln(1+t).
\end{align*}
\end{remark}

\subsection{Applications of main results}

\subsubsection{Application to 2-D isentropic
and irrotational compressible Euler equations}\label{CYJ-1}

As the first application of Theorems \ref{thm1} and \ref{thm2}, we now derive the lifespan
of smooth solutions to 2-D isentropic
and irrotational compressible Euler equations for both the polytropic gases and the Chaplygin
gases in exterior domains with impermeable boundary conditions and small perturbed initial data.

The 2-D compressible isentropic Euler equations are
\begin{equation}\label{Euler}
\left\{
\begin{aligned}
&\p_t\rho+\dive (\rho v)=0\hspace{3cm}\text{(Conservation of mass)},\\
&\p_t(\rho v)+\dive (\rho v \otimes v)+\nabla p=0
\hspace{0.9cm}\text{(Conservation of momentum)},\\
\end{aligned}
\right.
\end{equation}
where $(t,x)=(t, x_1, x_2)$, $\nabla=(\p_1, \p_2)=(\p_{x_1}, \p_{x_2})$,
and $v=(v_1,v_2),~\rho,~p$ stand for the velocity, density, pressure, respectively.
In addition, the pressure $p=p(\rho)$ is a smooth
function of $\rho$ for $\rho>0$.
Moreover, $p'(\rho)>0$ as $\rho>0$.

For the polytropic gases (see \cite{CF}),
\begin{equation}\label{polytropicGas}
    p(\rho)=A\rho^\gamma,
\end{equation}
where $A$ and $\gamma$ ($1<\gamma<3$) are positive constants.

For the Chaplygin gases (see \cite{CF} or \cite{Godin07}),
\begin{equation}\label{ChaplyginGas}
    p(\rho)=B_1-\frac{B_2}{\rho},
\end{equation}
where $B_1>0$ and $B_2>0$ are constants.

If $(\rho, v)\in C^1$ is a solution of \eqref{Euler} with $\rho>0$,
then \eqref{Euler} is equivalent to the following form
\begin{equation}\label{EulerC1form}
\left\{
\begin{aligned}
&\p_t\rho+\dive (\rho v)=0,\\
&\p_tv+v\cdot\nabla v+\ds\frac{c^2(\rho)}{\rho}\nabla \rho=0,\\
\end{aligned}
\right.
\end{equation}
where the sound speed $c(\rho)=\sqrt{p'(\rho)}$.

We impose the perturbed initial boundary data of \eqref{EulerC1form} in exterior domain $[0,\infty)\times \cK$ as follows:
\begin{equation}\label{initial}
(\rho(0,x), v(0,x))=(\bar\rho+\ve\rho_0(x), \ve v_0(x)),\quad x\in\cK,
\end{equation}
and
\begin{equation}\label{YinA-110}
(0,v)\cdot\bn=0, \quad (t,x)\in[0,+\infty)\times\p\cK,
\end{equation}
where $\bar\rho>0$ is a constant, $\ve>0$ is small such that $\bar\rho+\ve\rho_0(x)>0$,
$\rho_0(x), v_0(x)=(v_{1,0}(x), v_{2,0}(x))\in C_0^{\infty}(\cK)$,
$\supp \rho_0$ and $\supp v_0$ are contained in the ball $B(0, M_0)$ with $M_0>1$,
$\cK=\R^2\setminus\cO$ with
the bounded obstacle $\cO$ being convex and smooth, $\bn$ is the unit outer normal direction of $[0,\infty)\times\p\cK$,
and $(0,v_0)\cdot\bn=0$ for $x\in\p\cK$.  When $\curl v_0(x):=\p_1 v_{2,0}-\p_2 v_{1,0}\equiv 0$,
as long as $(\rho, v)\in C^1$ for $0\le t<T$, then $\curl v\equiv 0$
always holds for $0\le t<T$, where $T>0$ is any fixed constant or $T=+\infty$. In this case, one can introduce
the potential function $\phi$ such that $v=\na \phi$,
then the Bernoulli's law implies 
\begin{equation}\label{Feng-1}
\p_t\phi+\f12|\na\phi|^2+h(\rho)=0
\end{equation}
with $h'(\rho)=\f{c^2(\rho)}{\rho}$ and $h(\bar\rho)=0$.
By the implicit function theorem due to
$h'(\rho)>0$ for $\rho>0$, the density
function $\rho$ can be expressed as
\begin{equation}\label{density:potential}
\rho=h^{-1}\big(-\p_t\phi-\frac{1}{2}|\na\phi|^2\big)
=:H(\p\phi),
\end{equation}
where $\p=(\p_t,\nabla).$ Substituting \eqref{density:potential} into the mass conservation equation in \eqref{Euler}
yields
\begin{equation}\label{mass:potential}
\p_t(H(\p\phi))+\ds\sum_{i=1}^2\p_i\bigl(H(\p\phi)\p_i\phi\bigr)=0.
\end{equation}
Without loss of generality, $\bar\rho=c(\bar\rho)=1$ is assumed,  then it holds that $h(\rho)=\f{\rho^{\gamma-1}-1}{\gamma-1}$ and
\begin{equation}\label{YHH-12}
\rho=H(\p\phi)=[1-(\gamma-1)(\p_t\phi+\f12|\na\phi|^2)]^{\f{1}{\gamma-1}}.
\end{equation}
It follows from \eqref{mass:potential}-\eqref{YHH-12}  and  direct computation that for $(t,x)\in[0,+\infty)\times\cK$,
\begin{equation}\label{eqn:potential}
\begin{split}
\Box\phi&=-2\sum_{j=1}^2\p_j\phi\p^2_{0j}\phi-\sum_{i,j=1}^2\p_i\phi\p_j\phi\p^2_{ij}\phi
-(\gamma-1)(\p_t\phi+\frac12|\nabla\phi|^2)\Delta\phi\\
&=\sum_{\alpha,\beta,\mu=0}^2Q^{\alpha\beta\mu}\p^2_{\alpha\beta}\phi\p_{\mu}\phi
+\sum_{\alpha,\beta,\mu,\nu=0}^2Q^{\alpha\beta\mu\nu}\p^2_{\alpha\beta}\phi\p_{\mu}\phi\p_{\nu}\phi,
\end{split}
\end{equation}
which corresponds to the irrotational Euler equations of polytropic gases for $1<\gamma<3$ and Chaplygin gases for $\gamma=-1$, respectively.
Meanwhile, the boundary condition \eqref{YinA-110} becomes
\begin{equation}\label{YinA-1100}
\f{\p}{\p\bn}\phi=0, \quad (t,x)\in[0,+\infty)\times\p\cK.
\end{equation}
We next determine the initial data $\phi(0,x)$ and $\p_t\phi(0,x)$.
Let $r=\sqrt{x_1^2+x_2^2}$ and $(x_1,x_2)=(r cos\theta, r sin\theta)$ with $\theta\in [0, 2 \pi]$.
Due to $r\p_r\phi=x_1\p_1\phi+x_2\p_2\phi=x_1v_1(r cos\theta, r sin\theta)+x_2v_2(r cos\theta, r sin\theta)$
and the convex property of  $\cO$, then one has that for $x\in \cK$,
\begin{equation}\label{initial-00}
\begin{split}
\phi(0,x)=\ve\bigg[ \f{x_1}{r}\int_{M_0}^rv_{1,0}(\f{sx_1}{r}, \f{sx_2}{r})ds+ \f{x_2}{r}\int_{M_0}^rv_{2,0}(\f{sx_1}{r}, \f{sx_2}{r})ds\bigg]
\in C_0^{\infty}(\cK).
\end{split}
\end{equation}
In addition, it follows from \eqref{initial} and \eqref{Feng-1} that
\begin{equation}\label{initial-00-1}
\p_t\phi(0,x)=-\ve\rho_0(x)+\ve^2g(x,\ve),\quad x\in\cK,
\end{equation}
where $g(x,\ve)$ is smooth in its arguments and has a compact support in the variable $x$.

In addition, the admissible condition \eqref{YHCC-2} holds. In fact, for any smooth functions
$w$ and $\psi$ satisfying $\f{\p}{\p\bn}w|_{[0,\infty)\times\p\cK}=\f{\p}{\p\bn}\psi|_{[0,\infty)\times\p\cK}=0$,
it follows from \eqref{eqn:potential} and direct computation that for $(t,x)\in[0,+\infty)\times\p\cK$,
\begin{equation}\label{YHHC-111}
\begin{split}
&\sum_{\alpha,\beta,\mu=0}^2Q^{\alpha\beta\mu}n_{\alpha}\p_{\beta}w\p_{\mu}\psi=-\p_tw\f{\p}{\p\bn}\psi-(\gamma-1)\p_t\psi\f{\p}{\p\bn}w\equiv0,\\
&\sum_{\alpha,\beta,\mu,\nu=0}^2Q^{\alpha\beta\mu\nu}n_{\alpha}\p_{\beta}w\p_{\mu}\psi\p_{\nu}\psi=-\f{\gamma-1}{2}
\f{\p}{\p\bn}w\sum_{i=1}^2|\p_i\psi|^2-\f{\p}{\p\bn}\psi\sum_{j=1}^2\p_j\psi\p_jw\equiv0.
\end{split}
\end{equation}
On the other hand, the equation \eqref{eqn:potential} with $\gamma=-1$ can be reformulated as follows
\begin{equation}\label{eqn:potential1}
\begin{split}
\Box\phi&=2Q_0(\p_t\phi,\phi)-\sum_{i,j=1}^2\p_i\phi\p_j\phi\p^2_{ij}\phi+|\nabla\phi|^2\Delta\phi-2\p_t\phi\big(2\p_t\phi\Delta\phi
-2\sum_{j=1}^2\p_j\phi\p^2_{0j}\phi\big)\\
&\quad -2\p_t\phi\big(|\nabla\phi|^2\Delta\phi-\sum_{i,j=1}^2\p_i\phi\p_j\phi\p^2_{ij}\phi\big).
\end{split}
\end{equation}
This means that the quadratic term in the right hand side of \eqref{eqn:potential1} admits $Q_0$-type null form
and certainly satisfy the first null condition. Furthermore, it is easy to verify that the equation \eqref{eqn:potential1}
also fulfills the second null condition.

By analysis in the above, in terms of Theorems \ref{thm1} and \ref{thm2}, we have

\vskip 0.1 true cm

{\bf Corollary 1.1.} For the potential equation \eqref{eqn:potential} with \eqref{YinA-1100}-\eqref{initial-00-1},
under the compatibility condition \eqref{YHCC-3} for the initial boundary values \eqref{YinA-1100} and \eqref{initial-00}-\eqref{initial-00-1},
the lifespan $T_{\ve}$ of smooth solution $\phi$ satisfies
\begin{equation}\label{YHCC0-2}
\begin{split}
&\text{$T_\ve\ge\frac{1}{\ve^{2-}}$ for the polytropic gases,}\\
&\text{$T_\ve=\infty$ for the Chaplygin gases.}
\end{split}
\end{equation}

\subsubsection{Application to 3-D isentropic
and irrotational supersonic Euler equations}\label{CYJ-2}

The 3-D isentropic and steady compressible Euler equations are
\begin{equation}\label{EulerSystem-G}
\left\{
\begin{aligned}
&\displaystyle\sum_{j = 1}^{3} \partial_{j} \left( \rho v_{j} \right) = 0, \\
&\displaystyle\sum_{j = 1}^{3} \partial_{j} \left( \rho v_{i} v_{j} \right) + \partial_{i} p = 0 ,\quad i = 1, 2, 3,
\end{aligned}
\right.
\end{equation}
where $x = (x_1, x_2, x_3)$, $\partial_i= \partial_{x_i}$ for $1\le i\le 3$, $\rho>0$ is the density, $p=A\rho^{\gamma}$ is the pressure
of polytropic gases ($A>0$ and $\gamma>1$ are constants), $c(\rho)=\sqrt{p'(\rho)}$ stands for the sonic speed,
$v(x) = \left( v_1(x), v_2(x), v_3(x) \right)$ represents the velocity of gases,
$\omega=(\p_2v_3-\p_3v_2, \p_3v_1-\p_1v_3, \p_1v_2-\p_2v_1)\equiv 0$ and $v_3>c(\rho)$ hold
(this means that $x_3$ is the supersonic direction).

By the irrotational assumption for \eqref{EulerSystem-G}, there exists a potential function $\Phi(x)$
such that $v_i =\partial_i\Phi$ ($1\le i\le 3$). This, together with the equations in the second
line of \eqref{EulerSystem-G}, yields the following Bernoulli's law
\begin{equation}\label{YHCC-1-G}
\frac{1}{2} |\p\Phi|^2 + h(\rho) \equiv {\bar C}
\end{equation}
where $\p=(\p_1,\p_2, \p_3)$, $|\p\Phi|^2=(\p_1\Phi)^2+(\p_2\Phi)^2+(\p_3\Phi)^2$,
${\bar C}=\f12{\bar q}^2$ is the Bernoulli's constant of the uniform supersonic incoming flow
with the constant density $\bar\rho>0$ and the constant velocity $(0,0,\bar q)$ with $\bar q>c(\bar \rho)$,
and $h(\rho)$ represents the enthalpy satisfying $h'(\rho) = \frac{c^2(\rho)}{\rho}>0$ for $\rho>0$ and $h(\bar \rho)=0$.

Substituting \eqref{YHCC-1-G} into the first equation in \eqref{EulerSystem-G} and taking direct computation yield
\begin{equation}\label{PotentialEquation-G}
\sum_{i = 1}^{3}[\left(\partial_i\Phi\right)^2-c^2(\rho)]\partial_i^2\Phi + 2 \sum_{1 \leq i < j \leq 3} \partial_i \Phi \, \partial_j \Phi \, \partial_{ij}^2 \Phi = 0,
\end{equation}
where $\rho=h^{-1}(\bar C-\f12|\p\Phi|^2)$ and $h^{-1}$ is the inverse function of $h(\rho)$.
It is easy to verify that \eqref{PotentialEquation-G} is strictly hyperbolic with respect to the $x_3-$direction due to $\p_3\Phi>c(\rho)$.
As in Subsection \ref{CYJ-1}, without loss of generality and for the convenience of writing, $\bar\rho=c(\bar\rho)=1$ and $\bar q=\sqrt{2}$ are assumed,  then it holds that $h(\rho)=\f{\rho^{\gamma-1}-1}{\gamma-1}$ and
\begin{equation}\label{YHH-12-G}
\rho=H(\p\Phi)=\big[1+(\gamma-1)(1-\f12|\p\Phi|^2)\big]^{\f{1}{\gamma-1}}.
\end{equation}

In addition, the perturbed initial boundary data of \eqref{EulerSystem-G}
are given in the exterior domain $\cK\times [0,\infty)$:
\begin{equation}\label{initial-G}
(\rho, v)(x_1,x_2,0)=(1, 0, 0, \sqrt{2})+\ve (\rho_0(x_1,x_2), v_{1,0}(x_1,x_2), v_{2,0}(x_1,x_2),
v_{3,0}(x_1,x_2)),\quad (x_1,x_2)\in\cK,
\end{equation}
and
\begin{equation}\label{YinA-110-G}
v\cdot\bn=0, \quad x\in\p\cK\times [0,+\infty),
\end{equation}
where $\ve>0$ is small such that $1+\ve\rho_0(x_1,x_2)>0$,
$\rho_0(x_1,x_2), v_0(x_1,x_2)=(v_{1,0}, v_{2,0}, v_{3,0})$ $(x_1,x_2)\in C_0^{\infty}(\cK)$,
$\supp \rho_0$ and $\supp v_0$ are contained in the ball $B(0, M_0)$ with $M_0>1$,
$\cK=\R^2\setminus\cO$ with
the bounded obstacle $\cO$ being convex and smooth, $\bn$ is the unit outer normal direction of $\p\cK\times [0,+\infty)$,
$v_0\cdot\bn=0$ for $x\in\p\cK\times [0,+\infty)$, and $\omega|_{x_3=0}\equiv 0$. Set $\Phi=\sqrt{2}x_3+\phi$,
which implies $c^2(\rho)=1-\sqrt{2}(\gamma-1)\p_3\phi-\f{\gamma-1}{2}(\p\phi)^2$.
Then it follows from \eqref{PotentialEquation-G}-\eqref{YinA-110-G} and direct computation that
\begin{equation}\label{equationofphi-G}
\begin{cases}
&\p_3^2\phi-(\p_1^2\phi+\p_2^2\phi)=\sqrt{2}(\gamma-1)\p_3\phi(\p_1^2\phi+\p_2^2\phi)
-\sqrt{2}(\gamma+1)\p_3\phi\p_3^2\phi+2\sqrt{2}\ds\sum_{1 \leq i \leq 2}
\partial_i \phi\partial_{i3}^2 \phi\\
&\quad -\big(\f{\gamma-1}{2}((\p_1\phi)^2+(\p_2\phi)^2)+
\f{\gamma+1}{2}(\p_3\phi)^2\big)\p_3^2\phi+\big(\f{\gamma-3}{2}(\p_1\phi)^2+\f{\gamma-1}{2}((\p_2\phi)^2)+
(\p_3\phi)^2)\big)\p_1^2\phi\\
&\quad +\big(\f{\gamma-3}{2}(\p_2\phi)^2+\f{\gamma-1}{2}((\p_1\phi)^2)+
(\p_3\phi)^2)\big)\p_2^2\phi-2\partial_1\phi\partial_2\phi \partial_{12}^2 \phi-2\p_3\phi
\ds\sum_{1 \leq i \leq 2}
\partial_i\phi \partial_{i3}^2 \phi\\
&\quad \equiv\ds\sum_{\alpha,\beta,\mu=1}^2Q^{\alpha\beta\mu}\p^2_{\alpha\beta}\phi\p_{\mu}\phi
+\ds\sum_{\alpha,\beta,\mu,\nu=1}^3Q^{\alpha\beta\mu\nu}\p^2_{\alpha\beta}\phi\p_{\mu}\phi\p_{\nu}\phi,\\
&\f{\p}{\p\bn}\phi=0, \qquad \qquad \qquad \qquad \qquad \qquad \qquad \qquad \qquad \qquad \qquad \qquad x\in\p\cK\times[0,+\infty),\\
&\phi|_{x_3 = 0} =\tilde{\phi}_0(x_1,x_2), \quad
\partial_3 \phi|_{x_3 = 0} = v_{3,0}(x_1,x_2),\qquad \qquad \qquad \qquad (x_1,x_2)\in\cK,
\end{cases}
\end{equation}
where $\tilde{\phi}_0(x_1,x_2)=\ve\ds\sum_{i=1}^2\f{x_i}{r}\int_{M_0}^{r}v_{i,0}(\f{sx_1}{r}, \f{sx_2}{r})ds$ with $r=\sqrt{x_1^2+x_2^2}$.
Meanwhile, the admissible condition \eqref{YHCC-2} holds for the nonlinear wave equation in \eqref{equationofphi-G}.
Indeed, for any smooth functions
$w$ and $\psi$ satisfying $\f{\p}{\p\bn}w|_{\p\cK\times[0,\infty)}=\f{\p}{\p\bn}\psi|_{\p\cK\times[0,\infty)}=0$,
it follows from \eqref{equationofphi-G} and direct computation that for $x\in\p\cK\times[0,\infty)$,
\begin{equation}\label{YHHC-111-G}
\begin{split}
&\sum_{\alpha,\beta,\mu=1}^3Q^{\alpha\beta\mu}n_{\alpha}\p_{\beta}w\p_{\mu}\psi=\sqrt{2}(\gamma-1)\p_3\psi\f{\p}{\p\bn}w
+\sqrt{2}\p_3w\f{\p}{\p\bn}\psi\equiv0,\\
&\sum_{\alpha,\beta,\mu,\nu=1}^3Q^{\alpha\beta\mu\nu}n_{\alpha}\p_{\beta}w\p_{\mu}\psi\p_{\nu}\psi=\f{\gamma-1}{2}
\f{\p}{\p\bn}w\sum_{i=1}^3|\p_i\psi|^2-\f{\p}{\p\bn}\psi\sum_{j=1}^3\p_j\psi\p_jw
\equiv0.
\end{split}
\end{equation}
From the analysis above, we have from Theorem \ref{thm1} that

\vskip 0.1 true cm

{\bf Corollary 1.2.} For problem \eqref{equationofphi-G},
under the compatibility condition \eqref{YHCC-3} with respect to the initial-boundary values of \eqref{equationofphi-G},
the lifespan $X_{\ve}$ of smooth solution $\phi$ in the $x_3-$direction satisfies
\begin{equation}\label{YHCC0-2-G}
\begin{split}
&X_\ve\ge\frac{1}{\ve^{2-}}.\\
\end{split}
\end{equation}

\subsubsection{Application to 2-D relativistic membrane equation}

2-D relativistic membrane equation has the following form
\begin{equation}\label{HCC-X}
\p_t\bigg(\ds\f{\p_t\phi}{\sqrt{1-(\p_t\phi)^2+|\nabla\phi|^2}}\bigg)
-div\bigg(\ds\f{\nabla\phi}{\sqrt{1-(\p_t\phi)^2+|\nabla\phi|^2}}\bigg)=0,
\end{equation}
which corresponds to the Euler-Lagrange equation of the area functional
$\int_{\Bbb R}\int_{\Bbb R^2}\sqrt{1+|\nabla \phi|^2-(\p_t\phi)^2}\\dtdx_1dx_2$
for the embedding of $(t,x)\to (t,x,\phi(t,x))$ in the Minkowski spacetime.
Here $x=(x_1,x_2)$ and $\nabla=(\p_{x_1}, \p_{x_2})$.

\eqref{HCC-X} is actually equivalent to the following nonlinear wave equation for the $C^2$ solution $\phi$
\begin{equation}\label{HCC-Y}
\begin{split}
\Box\phi&=(|\p_t\phi|^2-|\nabla\phi|^2)(\p_t^2\phi-\Delta\phi)
-(\p_t\phi)^2\p_t^2\phi+2\p_t\phi\nabla\phi\cdot\p_t\nabla\phi-\f12\nabla\phi\cdot\nabla(|\nabla\phi|^2)\\
&=\sum_{\alpha,\beta,\mu,\nu=0}^2Q^{\alpha\beta\mu\nu}\p^2_{\alpha\beta}\phi\p_{\mu}\phi\p_{\nu}\phi.
\end{split}
\end{equation}
As in \eqref{QWE}, we consider the following homogenous Neumann boundary value problem of \eqref{HCC-Y} in the exterior domain
$[0,\infty)\times\cK$ with $\cK=\R^2\setminus\cO$
\begin{equation}\label{QWE-Y}
\left\{
\begin{aligned}
&\frac{\p}{\p\bn}\phi(t,x)=0,\hspace{3.2cm} (t,x)\in[0,+\infty)\times\p\cK,\\
&(\phi,\p_t\phi)(0,x)=(\ve \phi_0,\ve \phi_1)(x),\qquad x\in\cK,
\end{aligned}
\right.
\end{equation}
where the bounded obstacle $\cO$ is convex and smooth, $\bn$ is the unit outer normal direction of $[0,+\infty)\times \p\cK$.
It is easy to verify that the cubic terms in \eqref{HCC-Y} satisfy the second null condition. Moreover,
for any smooth functions
$w$ and $\psi$ satisfying $\f{\p}{\p\bn}w|_{[0,\infty)\times\p\cK}=\f{\p}{\p\bn}\psi|_{[0,\infty)\times\p\cK}
=0$, it follows from \eqref{HCC-Y} and direct computation that for $(t,x)\in[0,+\infty)\times\p\cK$,
\begin{equation}\label{YHHC-0111}
\begin{split}
&\sum_{\alpha,\beta,\mu,\nu=0}^2Q^{\alpha\beta\mu\nu}n_{\alpha}\p_{\beta}w\p_{\mu}\psi\p_{\nu}\psi
=-\f{\p}{\p\bn}w(|\p_t\psi|^2-|\nabla\psi|^2)+\f{\p}{\p\bn}\psi(\p_t\psi\p_tw-\sum_{j=1}^2\p_j\psi\p_jw)\equiv0,
\end{split}
\end{equation}
which implies \eqref{YHCC-2} holds for the equation \eqref{HCC-Y}.
Therefore, in terms of Theorem \ref{thm2}, we have

\vskip 0.1 true cm

{\bf Corollary 1.3.} For the equation \eqref{HCC-Y} with \eqref{QWE-Y}, under the compatible condition \eqref{YHCC-3}
for the initial boundary values \eqref{QWE-Y},
then the smooth small data solution $\phi$ exists globally.

\subsubsection{Application to 2-D membrane equation}

2-D membrane equation is
\begin{equation}\label{HCC-X-1}
\p_t^2\phi-div\big(\ds\f{\nabla\phi}{\sqrt{1+|\nabla\phi|^2}}\big)=0,
\end{equation}
where $x=(x_1,x_2)$, $\nabla=(\p_{x_1}, \p_{x_2})$, $\phi=\phi(t,x)$ stands for the position of the membrane at $(t,x)$.
By direct computation, for $C^2$ solution $\phi$, \eqref{HCC-X-1}
can be written as
\begin{equation}\label{HCC-Y-1}
\begin{split}
\Box\phi&=\big(1-(1+|\nabla\phi|^2)^{\f32}\big)\p_t^2\phi+|\nabla\phi|^2\Delta\phi
-\f12\nabla\phi\cdot\nabla(|\nabla\phi|^2)\\
&=\sum_{\alpha,\beta,\mu,\nu=0}^2Q^{\alpha\beta\mu\nu}\p^2_{\alpha\beta}\phi\p_{\mu}\phi\p_{\nu}\phi+O(|\nabla\phi|^4)\p_t^2\phi.
\end{split}
\end{equation}
As in \eqref{QWE}, we consider the initial boundary value of \eqref{HCC-Y-1} in the exterior domain $[0,\infty)\times\cK$ with
$\cK=\R^2\setminus\cO$ as follows
\begin{equation}\label{QWE-Y-1}
\left\{
\begin{aligned}
&\frac{\p}{\p\bn}\phi(t,x)=0,\hspace{3.2cm} (t,x)\in[0,+\infty)\times\p\cK,\\
&(\phi,\p_t\phi)(0,x)=(\ve \phi_0,\ve \phi_1)(x),\qquad x\in\cK,
\end{aligned}
\right.
\end{equation}
where the bounded obstacle $\cO$ is convex and smooth, $\bn$ is the unit outer normal direction of $[0,+\infty)\times \p\cK$.
It follows from direct computation that \eqref{HCC-Y-1} does not satisfy the second null condition and fulfills
the compatible condition as in \eqref{YHHC-0111}. In terms of the proof procedures in Theorem \ref{thm2} and
\cite{HouYinYuan26}, one can obtain

\vskip 0.1 true cm

{\bf Corollary 1.4.} For the equation \eqref{HCC-Y-1} with \eqref{QWE-Y-1}, under the compatible condition \eqref{YHCC-3}
for the initial boundary value \eqref{QWE-Y-1},
then the lifespan $T_{\ve}$ of smooth small data solution $\phi$ satisfies
\begin{equation}\label{YHCC0-2-1}
\begin{split}
&T_\ve\ge e^{\frac{1}{\ve^{2-}}}.\\
\end{split}
\end{equation}

\subsection{Previous results and sketches of proofs}\label{YinA-5}

\vskip 0.2 true cm

A large number of  interesting results on the long time existence of small data smooth solutions to
the quasilinear wave equations in the whole space or in the exterior domains have been obtained.
In what follows, let us systematically recall these well-known conclusions.
Consider the Cauchy problem of the quasilinear wave equations
\begin{equation}\label{QWE:Cauchy}
\left\{
\begin{aligned}
&\Box v+\sum_{\alpha,\beta,\mu=0}^nQ^{\alpha\beta\mu}\p^2_{\alpha\beta}v\p_{\mu}v
+\sum_{\al,\beta,\mu,\nu=0}^nQ^{\alpha\beta\mu\nu}\p^2_{\alpha\beta}v\p_{\mu}v\p_{\nu}v=0 ,\quad t>0,\, x\in\R^n,\\
&(v,\p_tv)(0,x)=(\ve v^C_0,\ve v^C_1)(x),\qquad \qquad \qquad \qquad \qquad \qquad \qquad \qquad x\in\R^n
\end{aligned}
\right.
\end{equation}
and the initial boundary value problem of the quasilinear wave equations in exterior domains
\begin{equation}\label{QWE:exterior}
\left\{
\begin{aligned}
&\Box v+\sum_{\alpha,\beta,\mu=0}^nQ^{\alpha\beta\mu}\p^2_{\al\beta}v\p_{\mu}v
+\sum_{\alpha,\beta,\mu,\nu=0}^nQ^{\alpha\beta\mu\nu}\p^2_{\alpha\beta}v\p_{\mu}v\p_{\nu}v=0 ,\quad t>0,\, x\in\cK,\\
&\cB v=0,\qquad \qquad \qquad \qquad \qquad \qquad \qquad \qquad \qquad \qquad \qquad t>0, x\in\p\cK,\\
&(v,\p_tv)(0,x)=(\ve v^E_0,\ve v^E_1)(x),\qquad \qquad \qquad \qquad \qquad \qquad \qquad x\in\cK,
\end{aligned}
\right.
\end{equation}
where $(v^C_0,v^C_1)\in C_0^{\infty}(\R^n)$, $(v^E_0,v^E_1)\in C_0^{\infty}(\cK)$, $\ve>0$ is small,
$\cK=\R^n\setminus\cO$, the obstacle $\cO\subset\R^n$ is compact and contains the origin, the boundary $\p\cK=\p\cO$ is smooth,
$\cB$ denotes ${\rm Id}$ or $\frac{\p}{\p\bn}$ for the Dirichlet or the Neumann boundary conditions, respectively,
and $\bn=(0, n_1,\cdot\cdot\cdot, n_n)=(0, n_1(x),\cdot\cdot\cdot, n_n(x))$ represents the unit outer normal direction of the boundary $[0,+\infty)\times\p\cK$.
Let $T_{\ve}^{C}, T_{\ve}^{D}$ and $T_{\ve}^{N}$ be the lifespans of smooth solutions to the Cauchy problem \eqref{QWE:Cauchy},
the Dirichlet boundary problem and the Neumann boundary problem in \eqref{QWE:exterior}, respectively.
In addition, as in \eqref{YHCC-3}, the compatible conditions for the initial boundary values in \eqref{QWE:exterior}
hold. In particular, for the Neumann boundary conditions in  \eqref{QWE:exterior}, we impose the corresponding admissible condition \eqref{YHCC-2}.

\vskip 0.3 true cm

$\bullet$  When $n\ge4$, the global existences of small smooth solutions to problem \eqref{QWE:Cauchy} and
 problem \eqref{QWE:exterior} have been established, i.e., $T_{\ve}^{C}, T_{\ve}^{D}, T_{\ve}^{N}=\infty$, see \cite{Hormander97book,Klainerman80,KP83,Li-Chen} for the Cauchy problem \eqref{QWE:Cauchy} and \cite{Chen89,Hayashi94,MetcalfeSogge06,Shib-Tsuti85,Shib-Tsuti86} for the Dirichlet/Neumann boundary problem \eqref{QWE:exterior}, respectively.
 
\vskip 0.3 true cm

$\bullet$ When $n=3$ and the null condition holds (that is, $\ds\sum_{\alpha,\beta,\mu=0}^2Q^{\alpha\beta\mu}\xi_{\alpha}\xi_{\beta}\xi_{\mu}\equiv0$
for any $(\xi_0,\xi_1,\xi_2)\in\{\pm1\}\times\SS$), the small data  smooth solutions to both the Cauchy problem
and the Dirichlet/Neumann boundary problem exist globally (one can be referred to \cite{Christodoulou86,Klainerman} for \eqref{QWE:Cauchy} and \cite{Godin95,KSS00,Li-Yin18,MetcalfeSogge05} for \eqref{QWE:exterior}, respectively).
Without the null condition, the sharp lower bounds $T^C_{\ve}, T_{\ve}^D, T_{\ve}^N\ge e^{C/\ve}$ have also been obtained
for both the Cauchy problem \eqref{QWE:Cauchy} (see \cite{JohnKlainerman84}) and the Dirichlet/Neumann boundary problem \eqref{QWE:exterior} (see \cite{Godin89,Godin08,KSS02jam,KSS04,Yuan22,ZhaWang22}), respectively.

\vskip 0.3 true cm

$\bullet$ When $n=2$ and $Q^{\alpha\beta\mu}\equiv0$ for all $\alpha,\beta,\mu=0,1,2$ in \eqref{QWE:Cauchy} and \eqref{QWE:exterior}, the estimates on the lifespan have been studied and established, see \cite{Alinhac01b,Godin93,Kovalyov87,Hoshiga95} for $T_{\ve}^{C}\ge e^{C/\ve^2}$, \cite{Kubo14} for $T_{\ve}^{D}\ge e^{C/\ve^2}$ and \cite{KKL13} for $T_{\ve}^{N}\ge e^{C/\ve}$, respectively.
In particular, if $Q^{\alpha\beta\mu}\equiv0$ for all $\alpha,\beta,\mu=0,1,2$, and
the second null condition holds for \eqref{QWE:Cauchy} and \eqref{QWE:exterior} (that is, $\ds\sum_{\alpha,\beta,\mu,\nu=0}^2Q^{\alpha\beta\mu\nu}\xi_{\alpha}\xi_{\beta}\xi_{\mu}\xi_{\nu}\equiv0$ for all $(\xi_0,\xi_1,\xi_2)\in\{\pm1\}\times\SS$), then the small data smooth solutions exist globally, see \cite{Katayama95} for \eqref{QWE:Cauchy} and \cite{Kubo13,HouYinYuan26} for \eqref{QWE:exterior}, respectively.

\vskip 0.3 true cm

$\bullet$ When $n=2$ and $Q^{\alpha\beta\mu}\neq0$ for some $\alpha,\beta,\mu=0,1,2$, it is well known
from \cite{Alinhac99,Alinhac01a} that $T^C_{\ve}=\infty$ provided that both the first and second null
conditions hold, otherwise, $T^C_{\ve}\ge C/\ve^2$ if the first null condition fails
and $T^C_{\ve}\ge e^{C/\ve^2}$ if the first null condition holds but the second null condition fails.
In addition, very recently, we have shown that $T^D_{\ve}=\infty$ for the 2-D Cauchy-Dirichlet
problem \eqref{QWE:exterior} in exterior domains with both null conditions (see  \cite{HouYinYuan24}).
If the first null condition fails,
the authors in \cite{LRX25} proved $T^D_\ve\ge\frac{C}{\ve^2|\ln \ve|^3}$, which contains the $|\ln \ve|^{-3}$
loss than the
corresponding Cauchy problem.
For the 2-D Cauchy-Neumann problem \eqref{QWE:exterior} in exterior domains with or without the null conditions,
to the best of the authors' knowledge, so far it is still open.
The aim of this paper is to study the 2-D Cauchy-Neumann problem.

\vskip 0.2 true cm
For convenience, the previous works on $T_{\ve}^{C}, T_{\ve}^{D}$ and $T_{\ve}^{N}$ are listed in the following table.

\newpage

\begin{table}[!h]
\renewcommand\arraystretch{1.5}
\begin{tabular}{|c|c|c|c|c|}
\hline
Dimensions & Quadratic nonlinearity & Cauchy problem & Dirichlet problem& Neumann problem\\
\hline
$n\ge4$ &  & $\ds T_{\ve}^C=\infty$ & $\ds T_{\ve}^D=\infty$ &$\ds T_{\ve}^N=\infty$ \\
\hline
\multirow{2}{*}{$n=3$} & Null condition & $\ds T_{\ve}^C=\infty$ & $\ds T_{\ve}^D=\infty$&$\ds T_{\ve}^N=\infty$ \\
\cline{2-5}
 & No null condition & $\ds T_{\ve}^C\ge e^{\frac{C}{\ve}}$ & $\ds T_{\ve}^D\ge e^{\frac{C}{\ve}}$
 & $\ds T_{\ve}^N\ge e^{\frac{C}{\ve}}$
 \\
\hline
\multirow{3}{*}{$n=2$} & Null conditions & $\ds T_{\ve}^C=\infty$ & $\ds T_{\ve}^D=\infty$ &{\color{red}$\ds T_{\ve}^N=\infty$~Open} \\
\cline{2-5}
 & No first null condition & $\ds T_{\ve}^C\ge C/\ve^2$ & {\color{red}$\ds T_{\ve}^D\ge C/\ve^2$~Open}&
 {\color{red}$\ds T_{\ve}^N\ge C/\ve^2$~Open} \\
\cline{2-5}
 & No second null condition & $\ds T_{\ve}^C\ge e^{C/\ve^2}$ & {\color{red}$\ds T_{\ve}^D\ge e^{C/\ve^2}$~Open}&
 {\color{red}$\ds T_{\ve}^N\ge e^{C/\ve^2}$~Open} \\
\hline
\end{tabular}
\end{table}

\vskip 0.2 true cm

Next we give the comments on the proofs of Theorem \ref{thm1} and \ref{thm2}.
As pointed out in \cite[page 110]{KSS04}, the Lorentz boost fields $t\p_i+x_i\p_t$ $(i=1,2)$ used in the Cauchy
problem are difficult to be applied for the exterior domain problems since these fields are not tangent to the boundary
$(0,+\infty)\times\p\cK$ and contain the large factor $t$ before the space derivatives $\p_i$ ($i=1,2$).
On the other hand, it can be seen from \eqref{thm2:decay:LE} and the decay rates of the local energy discussed in Remark \ref{rmk-decay} that the scaling vector field $t\p_t+r\p_r$ is also hard to be utilized in the initial boundary value problems
(note that $t\p_t+r\p_r$ is not suitable for studying the nonlinear Klein-Gordon equations, one can see Chapter 7 of \cite{Hormander97book}).

\vskip 0.1 true cm

We now state the difficulties in the proof of Theorem \ref{thm1} without the null condition and illustrate
how to overcome these difficulties.
Note that the crucial estimate in Lemma 3.1 of \cite{LRX25} heavily relied on the Dirichlet boundary condition and
fails in the Cauchy-Neumann problem \eqref{QWE}, see page 10 of \cite{LRX25}.
Therefore, the method in \cite{LRX25} can not be used in the Neumann-wave problem \eqref{QWE} directly.
Instead, we will apply the approach in our work \cite{HouYinYuan24}.
Motivated by \cite{HouYinYuan24}, we will establish the following pointwise spacetime estimate
by the continuous argument
\begin{equation}\label{intro-decay1}
\sum_{|a|\le N}|\p Z^au(t,x)|\le C\ve\w{x}^{-1/2}\w{t-|x|}^{-1}\w{t}^{\delta/16},
\end{equation}
where $\delta>0$ is any fixed small constant.
With this pointwise estimate \eqref{intro-decay1}, the required energy estimate can be obtained (see \eqref{thm1-energyB} below).
In this derivation procedure, due to the lack of null conditions for the nonlinearity  $Q(\p u,\p^2u)$ as in \cite{HouYinYuan24}
and the appearance of large factor $\w{t}^{\delta/16}$ on the right hand side of \eqref{intro-decay1} under the analogous
rough bootstrap assumption,
then the decay rate of $Q(\p u,\p^2u)$ in \eqref{QWE}
is too weak to obtain the required estimate \eqref{intro-decay1}.
To overcome this difficulty, we rewrite the quadratic terms in $Q(\p u,\p^2u)$ of \eqref{QWE} into such a divergence form
\begin{equation*}
\begin{split}
\sum_{\alpha,\beta,\mu=0}^2Q^{\alpha\beta\mu}\p^2_{\alpha\beta}u\p_{\mu}u
&=\frac12\sum_{\alpha,\beta,\mu=0}^2[\p_{\alpha}(Q^{\alpha\beta\mu}\p_{\beta}u\p_{\mu}u)+\p_{\beta}(Q^{\alpha\beta\mu}\p_{\alpha}u\p_{\mu}u)
-\p_{\mu}(Q^{\alpha\beta\mu}\p_{\alpha}u\p_{\beta}u)].
\end{split}
\end{equation*}
In this case, some better pointwise decay estimates for the initial boundary value problem of the divergence form wave equations can be
achieved (see \eqref{dpw:ibvp} for details). Based on this, \eqref{intro-decay1} can be
derived and further Theorem \ref{thm1} is proved.

\vskip 0.1 true cm

We next give some comments on the proof of Theorem \ref{thm2}.
For the nonlinear equation with quadratic nonlinearity in \eqref{QWE}, by the standard energy method with the estimate \eqref{thm2:decay:a},
one can obtain that for the higher order energy $\ds E_{high}(t)=\sum_{|a|\le N_0}\|\p Z^au(t)\|_{L^2(\cK)}$
with $N_0\in\Bbb N$ being suitably large constant,
\begin{equation}\label{intro-energy-ineq}
\frac{d}{dt}E_{high}(t)\le C\ve(1+t)^{-1}\ln^2(2+t)E_{high}(t).
\end{equation}
This, together with the Gronwall's lemma,  leads to
\begin{equation}\label{intro-energy}
E_{high}(t)\le E_{high}(0)e^{C\ve\ln^3(2+t)}=E_{high}(0)(2+t)^{C\ve\ln^2(2+t)}.
\end{equation}
Obviously, it is hard to control the energy $E_{high}(t)$ by \eqref{intro-energy} since it grows very rapidly with the development
of time $t$.
To overcome this difficulty, we intend to find some new good unknowns to eliminate the related quadratic nonlinearities
so that the more precise energy estimates with at least slow time growth can be obtained.
Motivated by the following identity
\begin{equation}\label{Q0-identity}
\Box(fh)=f\Box h+h\Box f+2Q_0(f,h),
\end{equation}
we now introduce the perturbed wave operator $\ds\Box_g=\Box+\sum_{\alpha,\beta=0}^2g^{\alpha\beta}\p^2_{\alpha\beta}$
with $\ds g^{\alpha\beta}=\sum_{\mu=0}^2Q^{\alpha\beta\mu}\p_{\mu}u
+\sum_{\mu,\nu=0}^2Q^{\alpha\beta\mu\nu}\p_{\mu}u\p_{\nu}u$ for problem \eqref{QWE}
and using $\Box_g$ instead of $\Box$ in \eqref{Q0-identity} to obtain
\begin{equation}\label{perturb-Q0}
\Box_g(fh)=f\Box_gh+h\Box_gf+2Q_0(f,h)+\sum_{\alpha,\beta=0}^22g^{\alpha\beta}\p_{\alpha}f\p_{\beta}h.
\end{equation}
From this, one knows that if $f$ and $h$ are the solutions of the quasilinear wave operator $\Box_g$,
then there is no loss of regularities for $\Box_g(fh)$ when $\Box_gf$ and $\Box_gh$ are substituted into the right
hand side of \eqref{perturb-Q0}.
By such a careful observation,  we start to search for a series of new good unknowns
so that  the related  $Q_0$-type null forms can be eliminated and the regularities of obtained nonlinearities
are not lost correspondingly, meanwhile, the remaining quadratic nonlinearities
and the resulting cubic nonlinearities can admit better pointwise spacetime decay estimates. More concretely speaking,
it follows from the equation in \eqref{QWE} with \eqref{nonlinear-Chaplygin}
and direct computation that for $i=1,2\cdots,2N$,
\begin{equation}\label{energy:t3-1}
\begin{split}
\Box_g\p_t^iu&=2\sum_{\alpha=0}^2C^{\alpha}Q_0(\p_{\alpha}u,\p_t^iu)
+2\sum_{\alpha=0}^2\sum_{\substack{j+k=i,\\1\le j,k\le i-1}}C^{\alpha}C^i_{jk}Q_0(\p_{\alpha}\p_t^ju,\p_t^ku)\\
&\quad+\sum_{\alpha,\beta,\mu,\nu=0}^2\sum_{\substack{j+k+l=i,\\ j<i}}Q^{\alpha\beta\mu\nu}C^i_{jkl}\p^2_{\alpha\beta}\p_t^ju\p_{\mu}\p_t^ku\p_{\nu}\p_t^lu,
\end{split}
\end{equation}
where $C^i_{jk}$ and $C^i_{jkl}$ are some suitable positive constants.
Introducing the following good unknowns for $i=1,\cdots,2N$ (see \eqref{energy:t4} below)
\begin{equation}\label{energy:t4-1}
V_i=\p_t^iu-\frac12\sum_{\alpha=0}^2C^{\alpha}\p_{\alpha}u\p_t^iu
-\sum_{\alpha=0}^2\sum_{\substack{j+k=i,\\1\le j,k\le i-1}}C^{\alpha}C^i_{jk}\chi_{[1,2]}(x)\p_{\alpha}\p_t^ju\p_t^ku,
\end{equation}
where cutoff function $\chi_{[1,2]}(s)\in C^\infty(\R)$ with $0\le\chi_{[1,2]}(s)\le1$,
$\chi_{[1,2]}(s)=0$ for $s\le 1$ and $\chi_{[1,2]}(s)=1$ for $s\ge 2$.
Then by \eqref{perturb-Q0}-\eqref{energy:t4-1}, we can obtain
\begin{equation}\label{energy:t5-1-0}
\begin{split}
&\Box_gV_i-\sum_{\alpha=0}^2C^{\alpha}Q_0(\p_{\alpha}u,V_i)
=\text{good terms}.\\
\end{split}
\end{equation}
On the other hand, for any multi-index $a\in\N_0^4$ with $|a|\le2N-1$, one has that
for \eqref{QWE} with \eqref{nonlinear-Chaplygin},
\begin{equation}\label{energyA1-1}
\begin{split}
\Box_gZ^au&=2\sum_{\alpha=0}^2C^{\alpha}Q_0(\p_{\alpha}u,Z^au)
+2\sum_{\alpha=0}^2\sum_{\substack{b+c\le a,\\b,c<a}}C^{\alpha}C^a_{bc}Q_0(\p_{\alpha}Z^bu,Z^cu)\\
&\quad+\sum_{\substack{b+c+d\le a,\\b<a}}Q_{abcd}^{\alpha\beta\mu\nu}\p^2_{\alpha\beta}Z^bu\p_{\mu}Z^cu\p_{\nu}Z^du.
\end{split}
\end{equation}
Analogously, we also define the good unknowns as follows (see \eqref{energyA2})
\begin{equation}\label{energyA2-1}
V^a=Z^au-\frac12\sum_{\alpha=0}^2C^{\alpha}\p_{\alpha}uZ^au
-\sum_{\alpha=0}^2\sum_{\substack{b+c\le a,\\b,c<a}}C^{\alpha}C^a_{bc}\p_{\alpha}Z^buZ^cu,
\end{equation}
which leads to
\begin{equation}\label{energyA3-1}
\begin{split}
&\Box_gV^a-\sum_{\alpha=0}^2C^{\alpha}Q_0(\p_{\alpha}u,V^a)
=\text{good terms}.
\end{split}
\end{equation}
By \eqref{energy:t5-1-0} and \eqref{energyA3-1}, together with the ghost weight technique introduced in \cite{Alinhac01a}, the trace theorem, the elliptic estimate, \eqref{thm2:decay:d} and \eqref{thm2:decay:LEdt}, the desired precise energy inequality can be established (see Lemmas \ref{lem:energy:time}-\ref{lem:energy:dtdx} below)
\begin{equation}\label{YHCC-22}
\frac{d}{dt}E_{high}(t)\le C\ve(1+t)^{-1}E_{high}(t),
\end{equation}
which is certainly an improvement of \eqref{intro-energy-ineq} since the large and troublesome factor $\ln^2(2+t)$
on the right hand side of \eqref{intro-energy-ineq} has been removed.
Applying \eqref{YHCC-22} together with the Gronwall's lemma and further the Klainerman-Sobolev inequality
yield the following pointwise estimate
\begin{equation}\label{intro-decay2}
\sum_{|a|\le2N-9}|\p Z^au|\ls\ve_1\w{x}^{-1/2}(1+t)^{0+}.
\end{equation}
Based on \eqref{intro-decay2}, analogously to the proof procedure in \cite{HouYinYuan24} but with slightly different spacetime decay estimates
and different boundary conditions, \eqref{thm2:decay:a}-\eqref{thm2:decay:d} will be achieved by the weighted $L^\infty-L^\infty$
estimates for the Neumann boundary value problem of 2-D linear wave equation
along the following steps (see Section \ref{sect6})
\begin{equation}\label{YHCC-50}
\begin{split}
\sum_{|a|\le2N-15}|Z^au|\le C\ve\w{t+|x|}^{0+} & \Rightarrow \sum_{|a|\le2N-17}|\bar\p Z^au|\le C\ve\w{x}^{-1+},|x|\ge1+t/2 \\
& \Downarrow \\
\sum_{|a|\le2N-24}|Z^au|\le C\ve\w{t+|x|}^{-1/2} & \Rightarrow \sum_{|a|\le2N-27}|\bar\p Z^au|\le C\ve\w{x}^{-3/2},|x|\ge1+t/2 \\
& \Downarrow \\
\sum_{|a|\le2N-28}|\p Z^au|&\le C\ve\w{x}^{-1/2}\w{t-|x|}^{-1}\w{t}^{0+}\\
& \Downarrow \\
\sum_{|a|\le2N-29}\|\p^au\|_{L^2(\cK_R)}\le C\ve(1+t)^{-1}\ln(2+t), &\sum_{|a|\le2N-30}\|\p^a\p_tu\|_{L^2(\cK_R)}\le C\ve(1+t)^{-1} \\
& \Downarrow \\
\sum_{|a|\le2N-35}|Z^au|&\le C\ve\w{t+|x|}^{-1/2}\w{t-|x|}^{-1/2+}\\
& \Downarrow \\
\sum_{|a|\le2N-39}|\p Z^au|\le C\ve\w{x}^{-1/2}\w{t-|x|}^{-1}\ln^2(2&+t),
\sum_{|a|\le2N-39}|\bar\p Z^au|\le C\ve\w{x}^{-1/2}\w{t+|x|}^{-1+}\\
& \Downarrow \\
|\p_t\p u|&\le C\ve\w{x}^{-1/2}\w{t-|x|}^{-1}.
\end{split}
\end{equation}
In this process, similarly to \eqref{energy:t4-1} and \eqref{energyA2-1}, some other good unknowns are also introduced.
For examples, the good unknowns for $i=0,1,2\cdots2N-1$ (see \eqref{good1})
\begin{equation*}\label{good1-1}
\tilde V_i:=\p_t^iu-\sum_{\alpha=0}^2\sum_{j+k=i}C^{\alpha}C^i_{jk}\chi_{[1,2]}(x)\p_{\alpha}\p_t^ju\p_t^ku
\end{equation*}
and such good unknowns for $|a|\le2N-1$ (see \eqref{good2}),
\begin{equation*}\label{good2-1}
\tilde V^a:=\tilde Z^au-\sum_{\alpha=0}^2\sum_{b+c\le a}C^{\alpha}C^a_{bc}\chi_{[1/2,1]}(x)\p_{\alpha}Z^buZ^cu\quad
\text{with $\tilde Z=\chi_{[1/2,1]}(x)Z$}.
\end{equation*}
Based on \eqref{YHCC-50}, together with the continuous argument for problem \eqref{QWE} and \eqref{nonlinear-Chaplygin},
Theorem \ref{thm2} can be eventually proved.

\vskip 0.1 true cm

The paper is organized as follows.
In Section \ref{sect2}, some basic lemmas and several important results on the pointwise spacetime estimates of solutions to the 2-D
linear wave equations are stated. In addition,
some related pointwise estimates of the initial boundary value problem with Neumann condition are given in
Section \ref{sect3}.
In Section \ref{sect4}, the estimate on the lifespan of the solutions to the
quasilinear wave equations without the null condition in exterior domains is investigated and subsequently Theorem \ref{thm1}
is shown.
In Section \ref{sect5}, a series of good unknowns are introduced and the required precise energy estimates are further established.
In Section \ref{sect6}, the crucial pointwise estimates are improved step by step and then the proof of Theorem \ref{thm2}
is completed.

\vskip 0.2 true cm

\noindent \textbf{Notations:}
\begin{itemize}
  \item $\cK:=\R^2\setminus\cO$, the obstacle $\cO\subset\R^2$ is compact and contains the origin,
  $\cK_R:=\cK\cap\{x: |x|\le R\}$, $R>1$ is a fixed constant which may be changed in different places.
  \item Without loss of generality, assume $\p\cK\subset\{x:c_0<|x|<1/2\}$ with $c_0>0$ being a constant.
  \item The cutoff function $\chi_{[a,b]}(s)\in C^\infty(\R)$ with $a,b\in\R$, $0<a<b$, $0\le\chi_{[a,b]}(s)\le1$ and
  \begin{equation*}
    \chi_{[a,b]}(s)=\left\{
    \begin{aligned}
    0,\qquad s\le a,\\
    1,\qquad s\ge b.
    \end{aligned}
    \right.
  \end{equation*}
  \item $\w{x}:=\sqrt{1+|x|^2}$.
  \item $\N_0:=\{0,1,2,\cdots\}$ and $\N:=\{1,2,\cdots\}$.
  \item $\p_0:=\p_t$, $\p_1:=\p_{x_1}$, $\p_2:=\p_{x_2}$, $\p_x:=\nabla_x=(\p_1,\p_2)$, $\p:=(\p_t,\p_1,\p_2)$.
  \item For $|x|>0$, define $\bar\p_i:=\p_i+\frac{x_i}{|x|}\p_t$ ($i=1,2$) and $\bar\p=(\bar\p_1,\bar\p_2)$.
  \item $\Omega_{ij}:=x_i\p_j-x_j\p_i$ with $i,j=1,2$, $\Omega:=\Omega_{12}$,  $Z=\{Z_1,Z_2,Z_3,Z_4\}:=\{\p_t,\p_1,\p_2,\Omega\}$,
  $\tilde Z=\{\tilde Z_1,\tilde Z_2,\tilde Z_3,\tilde Z_4\}$, $\tilde Z_1:=\p_t$, $\tilde Z_i:=\chi_{[1/2,1]}(x)Z_i,i=2,3,4$.
  \item $\p_x^a:=\p_1^{a_1}\p_2^{a_2}$ for $a\in\N_0^2$, $\p^a:=\p_t^{a_1}\p_1^{a_2}\p_2^{a_3}$ for $a\in\N_0^3$
      and $Z^a:=Z_1^{a_1}Z_2^{a_2}Z_2^{a_3}Z_4^{a_4}$ for $a\in\N_0^4$.
  \item The commutator $[X,Y]:=XY-YX$.
  \item For $f\ge0$ and $g\ge0$, $f\ls g$ or $g\gt f$ denotes $f\le Cg$ for a generic constant $C>0$ independent of $\ve>0$,
  and $f\approx g$ means $f\ls g$ and $g\ls f$.
  \item Denote $\|u(t)\|=\|u(t,\cdot)\|$ with norm $\|\cdot\|=\|\cdot\|_{L^2(\cK)},\|\cdot\|_{L^2(\cK_R)},\|\cdot\|_{H^k(\cK)}$.
  \item $|\bar\p f|:=(|\bar\p_1f|^2+|\bar\p_2f|^2)^{1/2}$.
  \item $\cW_{\mu,\nu}(t,x)=\w{t+|x|}^\mu(\min\{\w{x},\w{t-|x|}\})^\nu$ for $\mu, \nu\in\Bbb R$.
  \item $\ds|Z^{\le j}f|:=\big(\sum_{0\le|a|\le j}|Z^af|^2\big)^\frac12$ and $Z^{\le1}f=(f,Zf)$.
\end{itemize}

\section{Preliminaries}\label{sect2}

\subsection{Several lemmas}

In this subsection, some lemmas on the elliptic estimate, the local energy decay estimate, Gronwall's lemma and
the Sobolev embedding will be listed or derived.

\begin{lemma}[Lemma 3.2 of \cite{Kubo13}]
Let $w\in H^j(\cK)$ and $\ds\frac{\p}{\p\bn}w|_{\cK}=0$ with integer $j\ge2$.
Then for any fixed constant $R>1$ and multi-index $a\in\N_0^2$ with $2\le|a|\le j$, one has
\begin{equation}\label{ellip}
\|\p_x^aw\|_{L^2(\cK)}\ls\|\Delta w\|_{H^{|a|-2}(\cK)}+\|w\|_{H^{|a|-1}(\cK_{R+1})}.
\end{equation}
\end{lemma}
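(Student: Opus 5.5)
The plan is a standard elliptic regularity argument for the Neumann problem, localized by a cutoff near the obstacle. Fix a cutoff $\chi(x)=1-\chi_{[R,R+1]}(|x|)$, which equals $1$ on $\{|x|\le R\}$ and vanishes for $|x|\ge R+1$. Split $w=\chi w+(1-\chi)w$. The term $(1-\chi)w$ extends by zero to a function on all of $\R^2$ supported away from the obstacle, because $\p\cK\subset\{|x|<1/2\}$. For it we use the whole-space identity $\|\p_x^a f\|_{L^2(\R^2)}\ls\|\Delta f\|_{H^{|a|-2}(\R^2)}$ with $|a|\ge2$, which follows from Plancherel since $|\xi^a|\le|\xi|^{|a|}$. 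Writing
\begin{equation*}
\Delta((1-\chi)w)=(1-\chi)\Delta w-2\nabla\chi\cdot\nabla w-(\Delta\chi)w,
\end{equation*}
the commutator terms are supported in $\{R\le|x|\le R+1\}\subset\cK_{R+1}$ and involve at most one derivative of $w$. Their $H^{|a|-2}$ norm is therefore bounded by $\|w\|_{H^{|a|-1}(\cK_{R+1})}$, as required.

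For the localized part $v=\chi w$, which is supported in the bounded domain $\cK_{R+1}$, we use the boundary conditions. Near $\p\cK$ we have $\chi\equiv1$, so $\frac{\p}{\p\bn}v=0$ on $\p\cK$, while $v$ vanishes near $|x|=R+1$. We then apply the classical $H^{k}$ regularity for the Neumann problem on the bounded smooth domain $\Omega=\cK\cap\{|x|<R+2\}$ (Agmon--Douglis--Nirenberg):
\begin{equation*}
\|v\|_{H^{k}(\Omega)}\ls\|\Delta v\|_{H^{k-2}(\Omega)}+\|v\|_{L^2(\Omega)},\qquad k\ge2.
\end{equation*}
This is proved by tangential difference quotients near flattened boundary charts, then recovering normal derivatives from the equation, and inducting on $k$. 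With $k=|a|$, expanding $\Delta v=\chi\Delta w+[\Delta,\chi]w$ gives $\|\Delta w\|_{H^{|a|-2}(\cK)}+\|w\|_{H^{|a|-1}(\cK_{R+1})}$, and the lower-order term is bounded by $\|w\|_{L^2(\cK_{R+1})}$. Adding the two pieces yields \eqref{ellip}.

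The main point requiring care is the boundary regularity step for the Neumann problem. Since this is classical elliptic theory on a bounded smooth domain, I would cite it rather than reprove it. One detail to check is that the compatibility needed at higher order is only $\p_\bn w=0$ itself and not conditions on $\p_\bn\Delta w$: the $H^k$ estimate with Neumann data requires only that the boundary datum $\p_\bn v=0$ lies in $H^{k-3/2}(\p\Omega)$, which is trivially true. Also note that no Poincaré-type issue arises, because the lower-order term $\|w\|_{H^{|a|-1}(\cK_{R+1})}$ is kept on the right-hand side, so the constants in the Neumann kernel cause no difficulty.
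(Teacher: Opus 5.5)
Your proposal is correct and takes the standard route. The paper gives no proof of its own; it only cites Lemma 3.2 of \cite{Kubo13}. Your decomposition is the usual way this estimate is established: a cutoff splits $w$; the whole-space Fourier bound $\|\p_x^af\|_{L^2(\R^2)}\le\|\Delta f\|_{H^{|a|-2}(\R^2)}$ handles the piece away from the obstacle; classical Neumann regularity on the bounded smooth annular domain handles the piece near $\p\cK$; and the commutator terms are absorbed into $\|w\|_{H^{|a|-1}(\cK_{R+1})}$. You also read the hypothesis $\frac{\p}{\p\bn}w|_{\cK}=0$ correctly as a condition on $\p\cK$.
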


\begin{lemma}
Let $w$ be the solution of the IBVP
\begin{equation}\label{loc:decay:eqn}
\left\{
\begin{aligned}
&\Box w=G(t,x),\qquad (t,x)\in[0,\infty)\times\cK,\\
&\frac{\p}{\p\bn}w=0,\qquad\qquad (t,x)\in[0,\infty)\times\p\cK,\\
&(w,\p_tw)(0,x)=(w_0,w_1)(x),\quad x\in\cK,
\end{aligned}
\right.
\end{equation}
where $\supp(w_0(x),w_1(x),G(t,x))\subset\{x: |x|\le R_1\}$ for $R_1>1$.
Then for fixed $R>1$, $m\in\N$, $\eta\in(0,1]$ and $\mu>0$, there is a positive constant $C$ such that
\begin{equation}\label{loc:decay}
\begin{split}
\sum_{|a|\le m}\w{t}^{\eta}\|\p^aw\|_{L^2(\cK_R)}
\le C(\|w_0\|_{H^{m}(\cK)}+\|w_1\|_{H^{m-1}(\cK)}\\
+\ln(2+t)\sum_{|a|\le m-1}\sup_{0\le s\le t}\w{s}^{\eta}\|\p^aG(s)\|_{L^2(\cK)})
\end{split}
\end{equation}
and
\begin{equation}\label{loc:decay+}
\begin{split}
\sum_{|a|\le m}\w{t}^{\eta}\|\p^aw\|_{L^2(\cK_R)}
\le C(\|w_0\|_{H^{m}(\cK)}+\|w_1\|_{H^{m-1}(\cK)}\\
+\sum_{|a|\le m-1}\sup_{0\le s\le t}\w{s}^{\eta+\mu}\|\p^aG(s)\|_{L^2(\cK)}).
\end{split}
\end{equation}
\end{lemma}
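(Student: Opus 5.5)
Both \eqref{loc:decay} and \eqref{loc:decay+} will come from the known local energy decay for the free Neumann problem, combined with Duhamel's principle. For the homogeneous problem ($G\equiv0$) with data supported in $\{|x|\le R_1\}$, the convexity of $\cO$ (nontrapping) gives the 2-D Neumann local energy decay (see \cite{Kubo13,Morawetz75,Vainberg75}):
\begin{equation*}
\sum_{|a|\le m}\|\p^aw(t)\|_{L^2(\cK_R)}\ls\w{t}^{-1}\big(\|w_0\|_{H^m(\cK)}+\|w_1\|_{H^{m-1}(\cK)}\big).
\end{equation*}
We take this as the basic input. Higher derivatives $\p^a$ are handled by commuting $\p_t$ with the equation, which preserves the boundary condition. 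The spatial derivatives are then recovered from time derivatives and $G$ through the elliptic estimate \eqref{ellip}, using $\Delta w=\p_t^2w-G$. Since $\eta\le1$, the factor $\w{t}^{\eta}$ costs nothing for the data part.

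Next write $w=w^{hom}+\int_0^tS(t-s)G(s)\,ds$, where $S(t-s)G(s)$ solves the homogeneous Neumann problem starting at time $s$ with data $(0,G(s))$. Each such datum is supported in $\{|x|\le R_1\}$, and its norm is $\|G(s)\|_{H^{m-1}}$; by the elliptic step only $\sum_{|a|\le m-1}\|\p^aG(s)\|_{L^2}$ enters. The local decay bound then gives
\begin{equation*}
\w{t}^{\eta}\sum_{|a|\le m}\|\p^a w(t)\|_{L^2(\cK_R)}\ls \text{data}+\sup_{s\le t}\w{s}^{\eta+\kappa}\|\p^{\le m-1}G(s)\|\cdot\w{t}^{\eta}\int_0^t\w{t-s}^{-1}\w{s}^{-\eta-\kappa}ds,
\end{equation*}
with $\kappa=0$ for \eqref{loc:decay} and $\kappa=\mu$ for \eqref{loc:decay+}.

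It remains to estimate the integral, and this is the crux of the argument. Split it into $s\le t/2$ and $s\ge t/2$. On $s\ge t/2$ we have $\w{s}^{-\eta-\kappa}\ls\w{t}^{-\eta}$, so this part contributes $\w{t}^{-\eta}\int_{t/2}^t\w{t-s}^{-1}ds\ls\w{t}^{-\eta}\ln(2+t)$. On $s\le t/2$ we have $\w{t-s}^{-1}\approx\w{t}^{-1}$, so this part contributes $\w{t}^{-1}\int_0^{t/2}\w{s}^{-\eta-\kappa}ds$. This is at most $\w{t}^{-\eta}$ when $\eta<1$, and at most $\w{t}^{-1}\ln(2+t)$ when $\eta=1$ and $\kappa=0$. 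Together these give \eqref{loc:decay}.

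For \eqref{loc:decay+} the same split applies, but the logarithm must be removed on both pieces. On $s\ge t/2$, since $\kappa=\mu>0$, we have $\w{s}^{-\eta-\mu}\ls\w{t}^{-\eta-\mu}$, and $\w{t}^{-\mu}\ln(2+t)\ls1$. On $s\le t/2$, the integral $\w{t}^{-1}\int_0^{t/2}\w{s}^{-\eta-\mu}ds$ is bounded by $\w{t}^{-1}$ if $\eta+\mu>1$, and by $\w{t}^{-\eta-\mu}\le\w{t}^{-\eta}$ otherwise. Hence the logarithm disappears and \eqref{loc:decay+} follows.

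One technical point needs care: the support assumption on the data is used for each Duhamel slice $S(t-s)G(s)$. Because $G(s)$ is supported in $\{|x|\le R_1\}$ for every $s$, this assumption holds uniformly in $s$, so the local decay bound applies to every slice with the same constant.
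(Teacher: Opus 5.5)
Your time-integral bookkeeping is correct and is the same as the paper's bound $\int_0^t\w{t-s}^{-1}\w{s}^{-\eta-\mu}ds\ls\w{t}^{-\eta}$. (One small slip: for $\eta+\mu=1$ the piece $s\le t/2$ gives $\w{t}^{-1}\ln(2+t)$, not $\w{t}^{-\eta-\mu}$, but this is still $\ls\w{t}^{-\eta}$ because then $\eta<1$.)

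The gap is the higher-order step when $m\ge3$. You apply an order-$m$ homogeneous local energy decay bound to every Duhamel slice $S(t-s)[0,G(s)]$. Such a bound requires the slice data to satisfy the Neumann compatibility conditions, and $(0,G(s))$ does not. Indeed $\p_tS[0,g]=S[g,0]$, but $\p_t^2S[0,g]=\p_tS[g,0]$ lies in $H^1$ (and equals $S[0,\Delta g]$) only if $\p_{\bn}g=0$ on $\p\cK$, and further conditions appear at higher order. Nothing in the lemma gives $\p_{\bn}G(s)=0$, and the paper applies the lemma with $m=5$ to $G=(1-\chi_{[2,3]})F$ for a general source $F$. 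Without compatibility the slice is genuinely not in $H^3$ near the boundary. In the half-line model $v_{tt}=v_{xx}$, $v_x(t,0)=0$, $v=S[0,g]$ with $g'(0)\neq0$, even reflection shows that $\p_x^2v(\tau)$ jumps across $x=\tau$, so $\p_x^3v(\tau)\notin L^2(0,R)$ for $0<\tau<R$. Hence the slice bound $\|\p^{\le m}S(t-s)[0,G(s)]\|_{L^2(\cK_R)}\ls\w{t-s}^{-1}\|G(s)\|_{H^{m-1}}$ that you integrate is false for $m\ge3$. Your remark that commuting $\p_t$ preserves the boundary condition is valid for the smooth solution $w$, but not for the individual slices.

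The repair is to reverse the order of differentiation and Duhamel, which is what the paper does.
\begin{itemize}
\item For $j\le m-1$, $\p_t^jw$ itself solves the Neumann problem with source $\p_t^jG$ and data $(w_j,w_{j+1})$, where $w_j=\Delta w_{j-2}+\p_t^{j-2}G(0)$. This uses the regularity of $w$, not of the slices.
\item Apply Duhamel to $\p_t^jw$ only at the $H^1\times L^2$ level, where the slices $S(t-s)[0,\p_s^jG(s)]$ need no compatibility. Your integral estimate then bounds $\w{t}^{\eta}\|\p^{\le1}\p_t^jw\|_{L^2(\cK_R)}$ by the data plus $\sup_{s\le t}\w{s}^{\eta+\mu}\|\p_s^jG(s)\|_{L^2}$ (respectively $\ln(2+t)\sup_{s\le t}\w{s}^{\eta}\|\p_s^jG(s)\|_{L^2}$).
\item Recover the spatial derivatives from the cut-off elliptic estimate \eqref{ellip} applied to $\p_t^jw$, using $\Delta\p_t^jw=\p_t^{j+2}w-\p_t^jG$. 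Induct on the number of spatial derivatives over slightly larger balls.
\end{itemize}
This is exactly why the right-hand sides of \eqref{loc:decay}--\eqref{loc:decay+} contain mixed space-time derivatives $\p^aG$ with $|a|\le m-1$, rather than $\|G(s)\|_{H^{m-1}}$ alone. For $m\le2$ your slice-by-slice argument already works, since data $(0,g)$ with $g\in H^1$ need no compatibility at that level.
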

\begin{proof}
See Lemma 3.5 of \cite{Kubo13} for \eqref{loc:decay}.
Although the proof of the estimate \eqref{loc:decay+} is similar to that of \eqref{loc:decay} and \cite[Lemma 3.2]{Kubo14},
for the purpose of completeness, we still give the details.
At first, consider the homogeneous problem for \eqref{loc:decay:eqn} with $G=0$
\begin{equation*}
\left\{
\begin{aligned}
&\Box w_{hom}=0,\qquad\qquad (t,x)\in[0,\infty)\times\cK,\\
&\frac{\p}{\p\bn}w_{hom}=0,\qquad\qquad (t,x)\in[0,\infty)\times\p\cK,\\
&(w_{hom},\p_tw_{hom})(0,x)=(w_0,w_1)(x),\quad x\in\cK.
\end{aligned}
\right.
\end{equation*}
Denote by $w_{hom}=\cS[w_0,w_1](t)$, then $\cS[w_0,w_1](0)=w_0$ and $\p_t\cS[w_0,w_1](0)=w_1$.
By \eqref{loc:decay} with $G=0$, $m=\eta=1$ or (17) of \cite{Kubo13}, one has
\begin{equation}\label{loc:decay:pf1}
\sum_{|a|\le 1}\|\p^a\cS[w_0,w_1](t)\|_{L^2(\cK_R)}
\ls(1+t)^{-1}(\|w_0\|_{H^{1}(\cK)}+\|w_1\|_{L^2(\cK)}).
\end{equation}
On the other hand, by Duhamel's principle, the solution to the inhomogeneous problem \eqref{loc:decay:eqn} can be represented as
\begin{equation}\label{loc:decay:pf2}
w=\cS[w_0,w_1](t)+\int_0^t\cS[0,G(s)](t-s)ds.
\end{equation}
Thus, for $|a|\le1$, we have
\begin{equation}\label{loc:decay:pf3}
\p^aw=\p^a\cS[w_0,w_1](t)+\int_0^t\p^a\cS[0,G(s)](t-s)ds.
\end{equation}
Combining \eqref{loc:decay:pf3} with \eqref{loc:decay:pf1} leads to
\begin{equation}\label{loc:decay:pf4}
\begin{split}
\sum_{|a|\le 1}\w{t}^{\eta}\|\p^aw(t)\|_{L^2(\cK_R)}
&\ls\|w_0\|_{H^{1}(\cK)}+\|w_1\|_{L^2(\cK)}+\w{t}^{\eta}\int_0^t(1+t-s)^{-1}\|G(s)\|_{L^2(\cK)}ds\\
&\ls\|w_0\|_{H^{1}(\cK)}+\|w_1\|_{L^2(\cK)}+\sup_{0\le s\le t}\w{s}^{\eta+\mu}\|G(s)\|_{L^2(\cK)},
\end{split}
\end{equation}
where we have used the fact of
\begin{equation*}
\int_0^t(1+t-s)^{-1}(1+s)^{-\eta-\mu}ds\ls(1+t)^{-\eta}.
\end{equation*}
Thus, \eqref{loc:decay+} for $m=1$ is established.
Next, we turn to prove \eqref{loc:decay+} for $m\ge2$.
Set $w_j(x)=\Delta w_{j-2}(x)+\p_t^{j-2}G(0,x)$ for $j\ge2$.
By \eqref{loc:decay:pf2}, one can find that for $j\ge2$,
\begin{equation}\label{loc:decay:pf5}
\p_t^jw=\cS[w_j,w_{j+1}](t)+\int_0^t\cS[0,\p_s^jG(s)](t-s)ds.
\end{equation}
Analogously to \eqref{loc:decay:pf4}, we arrive at
\begin{equation}\label{loc:decay:pf6}
\sum_{|a|\le 1}\w{t}^{\eta}\|\p^a\p_t^jw(t)\|_{L^2(\cK_R)}
\ls\|w_j\|_{H^{1}(\cK)}+\|w_{j+1}\|_{L^2(\cK)}+\sup_{0\le s\le t}\w{s}^{\eta+\mu}\|\p_t^jG(s)\|_{L^2(\cK)}.
\end{equation}
For $\p_x^a\p_t^jw$ with $|a|\ge2$, applying the elliptic estimate \eqref{ellip} to $\p_t^jw$ with cutoff technique yields
\begin{equation}\label{loc:decay:pf7}
\begin{split}
\|\p_x^a\p_t^jw\|_{L^2(\cK_R)}
&\ls\|\Delta\p_t^jw\|_{H^{|a|-2}(\cK_{R+1})}+\|\p_t^jw\|_{H^{|a|-1}(\cK_{R+1})}\\
&\ls\|\p_t^{j+2}w\|_{H^{|a|-2}(\cK_{R+1})}+\|\p_t^jG\|_{H^{|a|-2}(\cK_{R+1})}+\|\p_t^jw\|_{H^{|a|-1}(\cK_{R+1})}.
\end{split}
\end{equation}
Therefore, \eqref{loc:decay+} is achieved by \eqref{loc:decay:pf6} and \eqref{loc:decay:pf7} together with the induction method.
\end{proof}

\begin{lemma}\label{lem-Gronwall-1}
For any non-negative constants $\mu\in[0,1/2)$, $A,B,C,D$ with $B>0$ and function $f(t)\ge0$ satisfying
\begin{equation*}
f(t)\le A+B\int_0^t\frac{f(s)ds}{(1+s)^{1/2-\mu}}+C(1+t)^D,
\end{equation*}
then it holds
\begin{equation}\label{YHCC-20}
f(t)\le e^{\frac{B}{\mu+1/2}(1+t)^{\mu+1/2}}[A+C(1+t)^D].
\end{equation}
\end{lemma}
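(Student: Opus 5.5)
The plan is to reduce the statement to the classical integral form of Gronwall's inequality, using that the forcing term
\[
g(t):=A+C(1+t)^D
\]
is nondecreasing in $t$; this holds because $C,D\ge0$. Set $k(s):=(1+s)^{-(1/2-\mu)}$. Since $\mu\in[0,1/2)$, $k$ is positive and locally integrable, with
\[
\int_0^tk(s)\,ds=\frac{(1+t)^{\mu+1/2}-1}{\mu+1/2}\le\frac{(1+t)^{\mu+1/2}}{\mu+1/2}.
\]
Implicitly, $f$ must be measurable and locally bounded, so that the integral in the hypothesis makes sense; I will assume this.

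\textbf{Freezing the forcing term.} Fix an arbitrary $T>0$. For every $0\le t\le T$ the hypothesis together with the monotonicity of $g$ gives
\[
f(t)\le g(T)+B\int_0^tk(s)f(s)\,ds .
\]
The forcing term is now the constant $g(T)$, so the standard argument applies directly.

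\textbf{The Gronwall step.} Define
\[
F(t):=g(T)+B\int_0^tk(s)f(s)\,ds
\]
for $t\in[0,T]$. Then $F$ is absolutely continuous and $f\le F$ on $[0,T]$. Since $B>0$ and $k>0$, we get $F'(t)=Bk(t)f(t)\le Bk(t)F(t)$ for almost every $t$. Hence
\[
\frac{d}{dt}\Big(F(t)e^{-B\int_0^tk}\Big)\le0,
\]
which yields $F(t)\le g(T)\exp\big(B\int_0^tk(s)\,ds\big)$. Taking $t=T$ and using the explicit integral of $k$ above gives
\[
f(T)\le F(T)\le e^{\frac{B}{\mu+1/2}(1+T)^{\mu+1/2}}\big[A+C(1+T)^D\big].
\]
Since $T$ was arbitrary, this is \eqref{YHCC-20}.

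\textbf{Main obstacle.} There is no real difficulty here. The only point needing care is the freezing step: one must replace the time-dependent forcing $g(t)$ by its value at the endpoint $g(T)$ before running Gronwall. Without this, a naive differentiation would produce an extra term involving $g'$. If one prefers to avoid the almost-everywhere differentiation, an alternative is to iterate the integral inequality and sum the resulting series. The $n$-th iterate is bounded by $g(T)\big(B\int_0^t k\big)^n/n!$, and summing over $n$ gives the same exponential bound.
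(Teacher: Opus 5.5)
Your argument is correct and is essentially the paper's integrating-factor Gronwall argument. The paper applies the integrating factor to $F(t)=\int_0^t f(s)(1+s)^{\mu-1/2}ds$ with the time-dependent forcing kept, and uses the monotonicity of $(1+s)^D$ (hence $C,D\ge0$) only when bounding the resulting integral; you use the same monotonicity up front by freezing $A+C(1+t)^D$ at the endpoint, which is slightly cleaner and even gives the sharper exponent $\frac{B}{\mu+1/2}\big((1+t)^{\mu+1/2}-1\big)$.
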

\begin{proof}
Set $\ds F(t)=\int_0^t\frac{f(s)ds}{(1+s)^{1/2-\mu}}$. Then we have $F(0)=0$ and
\begin{equation}\label{YHCC-20-1}
F'(t)\le A(1+t)^{\mu-1/2}+B(1+t)^{\mu-1/2}F(t)+C(1+t)^{D+\mu-1/2}.
\end{equation}
Hence, it holds 
$$\big(e^{-\frac{B}{\mu+1/2}(1+t)^{\mu+1/2}}F(t)\big)'\le e^{-\frac{B}{\mu+1/2}(1+t)^{\mu+1/2}}[A(1+t)^{\mu-1/2}+C(1+t)^{D+\mu-1/2}]$$
and
\begin{equation}\label{YHCC-20-2}
e^{-\frac{B}{\mu+1/2}(1+t)^{\mu+1/2}}F(t)\le\int_0^t e^{-\frac{B}{\mu+1/2}(1+s)^{\mu+1/2}}[A(1+s)^{\mu-1/2}+C(1+s)^{D+\mu-1/2}]ds.
\end{equation}
In addition, it follows from direct computation that
\begin{equation}\label{YHCC-20-3}
\begin{split}
&\int_0^t e^{-\frac{B}{\mu+1/2}(1+s)^{\mu+1/2}}C(1+s)^{D+\mu-1/2}ds\\
&\le C(1+t)^D\int_0^t e^{-\frac{B}{\mu+1/2}(1+s)^{\mu+1/2}}(1+s)^{\mu-1/2}ds\\
&=\frac{C}{B}(1+t)^D(e^{-\frac{B}{\mu+1/2}}-e^{-\frac{B}{\mu+1/2}(1+t)^{\mu+1/2}})\\
&\le\frac{C}{B}(1+t)^D(1-e^{-\frac{B}{\mu+1/2}(1+t)^{\mu+1/2}})
\end{split}
\end{equation}
and
\begin{equation}\label{YHCC-20-4}
\int_0^t e^{-\frac{B}{\mu+1/2}(1+s)^{\mu+1/2}}A(1+s)^{\mu-1/2}ds\le\frac{A}{B}(1-e^{-\frac{B}{\mu+1/2}(1+t)^{\mu+1/2}}).
\end{equation}
Substituting \eqref{YHCC-20-3}-\eqref{YHCC-20-4} into \eqref{YHCC-20-2} and further \eqref{YHCC-20-1} yields \eqref{YHCC-20}.
\end{proof}

\begin{lemma}\label{lem-Gronwall-2}
For any constants $A,B,C,D$ with $0<B<D$ and function $f(t)\ge0$ satisfying
\begin{equation*}
f(t)\ls A+B\int_0^t\frac{f(s)ds}{(1+s)}+C(1+t)^D,
\end{equation*}
then we have
\begin{equation*}
f(t)\ls A(1+t)^B+C(1+t)^D+\frac{BC}{D-B}(1+t)^D.
\end{equation*}
\end{lemma}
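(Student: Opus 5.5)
The plan is to mimic the proof of Lemma \ref{lem-Gronwall-1}, with the weight $(1+s)^{\mu-1/2}$ replaced by $(1+s)^{-1}$. Accordingly, the exponential integrating factor becomes a power of $(1+t)$. Since the hypothesis is stated with $\ls$, I first absorb the implicit constant into $A$, $B$, $C$. This only changes the final constants, so I will treat the inequality as a genuine $\le$ with the same letters.

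Set $F(t)=\int_0^t\frac{f(s)}{1+s}ds$, so that $F(0)=0$ and $F'(t)=\frac{f(t)}{1+t}$. The hypothesis then gives
\[
F'(t)\le \frac{A}{1+t}+\frac{B}{1+t}F(t)+C(1+t)^{D-1}.
\]
I multiply by the integrating factor $(1+t)^{-B}$, which gives
\[
\big((1+t)^{-B}F(t)\big)'\le A(1+t)^{-B-1}+C(1+t)^{D-B-1}.
\]
Integrating from $0$ to $t$, the $A$-term contributes at most $\frac{A}{B}\big(1-(1+t)^{-B}\big)\le \frac AB$. Because $D>B$, the $C$-term contributes $\frac{C}{D-B}\big((1+t)^{D-B}-1\big)\le \frac{C}{D-B}(1+t)^{D-B}$. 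This is the only place where the assumption $0<B<D$ is used: it guarantees that the exponent $D-B-1$ exceeds $-1$, so the integral grows like a power and produces no logarithm. Multiplying back by $(1+t)^B$, I obtain
\[
F(t)\le \frac AB(1+t)^B+\frac{C}{D-B}(1+t)^D.
\]

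Finally, I substitute this bound into the original inequality $f\le A+BF+C(1+t)^D$. This gives
\[
f(t)\le A+A(1+t)^B+C(1+t)^D+\frac{BC}{D-B}(1+t)^D\ls A(1+t)^B+C(1+t)^D+\frac{BC}{D-B}(1+t)^D,
\]
which is the claim. There is no real obstacle. The only points needing care are the implicit-constant bookkeeping at the start, which is harmless given that the conclusion is also stated with $\ls$, and the strict inequality $D>B$, which keeps the factor $\frac{1}{D-B}$ finite and avoids a $\ln(1+t)$ loss.
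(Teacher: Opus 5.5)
Your proof is correct and is exactly the integrating-factor argument the paper intends: the paper omits the details and refers to the proof of Lemma~\ref{lem-Gronwall-1}, where the factor $e^{-\frac{B}{\mu+1/2}(1+t)^{\mu+1/2}}$ becomes $(1+t)^{-B}$. You also rightly integrate $C(1+s)^{D-B-1}$ exactly, which is what produces the term $\frac{BC}{D-B}(1+t)^D$.

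One inaccuracy: absorbing the implicit constant does not only change final constants. A factor $K$ in front of the $B$-term would turn $(1+t)^B$ into $(1+t)^{KB}$ and would require $KB<D$. So the hypothesis must be read with a genuine $\le$ on that term, as in its application to \eqref{energyA8}, with constants allowed only on the $A$- and $C$-terms. Your bounds $\frac AB(1-(1+t)^{-B})\le\frac AB$ and $\frac{C}{D-B}((1+t)^{D-B}-1)\le\frac{C}{D-B}(1+t)^{D-B}$ also tacitly assume $A,C\ge0$, which you should state.
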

\begin{proof}
Since the proof is similar to that for Lemma \ref{lem-Gronwall-1}, we omit the details here.
\end{proof}

\begin{lemma}[Lemma 3.1 of \cite{Kubo13}]
Given any function $f(x)\in C_0^2(\overline{\cK})$, one has that for all $x\in\cK$,
\begin{equation}\label{Sobo:ineq}
\w{x}^{1/2}|f(x)|\ls\sum_{|a|\le2}\|Z^af\|_{L^2(\cK)}.
\end{equation}
\end{lemma}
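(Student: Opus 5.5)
The plan is to split $\cK$ into a bounded region near the obstacle and an exterior region. In the bounded region the weight $\w{x}^{1/2}$ is harmless, so ordinary Sobolev embedding applies. In the exterior region I would use polar coordinates, combining a one-dimensional Sobolev inequality in the angle with the fundamental theorem of calculus in the radial variable. Recall the standing assumption $\p\cK\subset\{c_0<|x|<1/2\}$. Hence $\{|x|\ge1\}\subset\cK$, and the polar coordinates $x=r(\cos\theta,\sin\theta)$ are legitimate there. We will also use that $\Omega=\p_\theta$ and $\p_r=\frac{x}{|x|}\cdot\na$.

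\emph{Near region $|x|\le2$.} Here $\w{x}^{1/2}\le \sqrt[4]{5}$. Since $\p\cK$ is smooth, $\cK_3$ is a bounded Lipschitz (indeed smooth) domain. The Sobolev embedding $H^2(\cK_3)\hookrightarrow L^\infty(\cK_3)$, valid in two dimensions via an extension operator, gives
\[
|f(x)|\ls\sum_{|a|\le2}\|\p_x^af\|_{L^2(\cK_3)}\le\sum_{|a|\le2}\|Z^af\|_{L^2(\cK)},
\]
because the $\p_x^a$ with $|a|\le2$ are among the $Z^a$.

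\emph{Far region $|x|=r_0\ge2$.} Fix $r_0\ge2$ and write $g(r,\theta)=f(r\cos\theta,r\sin\theta)$ for $r\ge1$. The one-dimensional Sobolev inequality on the circle gives
\[
\sup_\theta|g(r_0,\theta)|^2\ls G(r_0),\qquad G(r):=\int_0^{2\pi}\big(|g|^2+|\p_\theta g|^2\big)(r,\theta)\,d\theta .
\]
Since $f$ has compact support, $rG(r)\to0$ as $r\to\infty$. Therefore
\[
r_0G(r_0)=-\int_{r_0}^\infty\big(G+r\p_rG\big)dr\le\int_{r_0}^\infty r|\p_rG|\,dr ,
\]
where we used $G\ge0$. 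Next, $|\p_rG|\le2\int_0^{2\pi}\big(|g||\p_rg|+|\p_\theta g||\p_r\p_\theta g|\big)d\theta$, and $\p_r$ commutes with $\p_\theta$. Applying Cauchy--Schwarz with the area element $r\,dr\,d\theta$, we obtain
\[
r_0G(r_0)\ls\|f\|^2_{L^2(\cK)}+\|\na f\|^2_{L^2(\cK)}+\|\Omega f\|^2_{L^2(\cK)}+\|\na\Omega f\|^2_{L^2(\cK)} .
\]
Using $\w{x}\approx r_0$ for $r_0\ge2$, it follows that $\w{x}|f(x)|^2\ls\big(\sum_{|a|\le2}\|Z^af\|_{L^2(\cK)}\big)^2$, which is \eqref{Sobo:ineq} after taking square roots.

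There is no serious obstacle. The only point needing care is the near-boundary piece: one must justify the $H^2\hookrightarrow L^\infty$ embedding up to the smooth boundary, for which the standard extension theorem for $C^2_0(\overline{\cK})$ functions suffices. In the far region one must also make sure that only the combinations $\p_x$, $\Omega$ and $\p_x\Omega$ appear, which is guaranteed by the commutation $[\p_r,\p_\theta]=0$.
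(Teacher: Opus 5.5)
Your proof is correct. On $|x|\le2$ the embedding $H^2\hookrightarrow L^\infty$ on the smooth bounded domain $\cK_3$ does the job. On $|x|\ge2$ the angular Sobolev inequality, combined with $r_0G(r_0)\le\int_{r_0}^\infty r|\p_rG|\,dr$ and Cauchy--Schwarz in $r\,dr\,d\theta$, gives the $\w{x}^{-1/2}$ gain using only $f,\na f,\Omega f,\na\Omega f$. The inclusion $\{|x|\ge1\}\subset\cK$ also follows from $\p\cK\subset\{|x|<1/2\}$, as you claim.

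The paper gives no proof of its own and simply quotes Lemma 3.1 of Kubo. Your argument is the standard proof of this weighted Sobolev inequality: local Sobolev near the obstacle, and an angular Sobolev plus radial integration argument away from it. It serves as a self-contained version of what is cited.
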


\subsection{Some pointwise estimates of Cauchy problem}

\begin{lemma}
Let $v$ be the solution of the Cauchy problem
\begin{equation*}
\left\{
\begin{aligned}
&\Box v=\sum_{\alpha=0}^2\p_\alpha H^\alpha(t,x),\qquad(t,x)\in[0,\infty)\times\R^2,\\
&(v,\p_tv)(0,x)=(0,0),\qquad x\in\R^2.
\end{aligned}
\right.
\end{equation*}
Then for $\eta\in(0,1)$, one has
\begin{align}\label{pw:ivp:div}
&\w{x}^{1/2}\w{t-|x|}^{1-\eta}|v|\ls\ln(2+t+|x|)\sum_{\alpha=0}^2\sum_{|a|\le1}\sup_{(s,y)\in[0,t]\times\R^2}\w{y}^{1/2}
\cW_{1,1}(s,y)|Z^aH^\alpha(s,y)|,
\end{align}
where $\ds\cW_{\mu,\nu}(t,x)=\w{t+|x|}^\mu(\min\{\w{x},\w{t-|x|}\})^\nu$.
\end{lemma}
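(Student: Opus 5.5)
The plan is to split the source into its time-derivative part and its spatial part, and to treat each by a reduction to a known weighted pointwise bound for $\Box v=F$ in $\R^2$.

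\textbf{The time-derivative part $\p_tH^0$.} Let $v=v_0+v_1$ with $\Box v_0=\p_tH^0$ and $\Box v_1=\p_1H^1+\p_2H^2$, both with zero data. For the first, let $w$ solve $\Box w=H^0$ with zero data. Then $\p_tw$ solves $\Box(\p_tw)=\p_tH^0$ with data $(0,H^0(0,\cdot))$. Hence
\[
v_0=\p_tw-\mathcal{S}_0[0,H^0(0)],
\]
where the subtracted term is the free wave with initial velocity $H^0(0)$. That free term is controlled by the standard 2-D estimate with weight $\w{y}^{1/2}\w{y}^{2}\sim\w{y}^{1/2}\cW_{1,1}(0,y)$. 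This is consistent with the right-hand side of \eqref{pw:ivp:div}, since at $s=0$ one has $\cW_{1,1}(0,y)\approx\w{y}^2$ and the Kirchhoff formula gives $\w{x}^{-1/2}\w{t-|x|}^{-1/2}$-type decay.

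\textbf{The main step: estimating $\p w$.} The core is to prove
\[
\w{x}^{1/2}\w{t-|x|}^{1-\eta}|\p w|\ls \ln(2+t+|x|)\sup\w{y}^{1/2}\cW_{1,1}|Z^{\le1}H|.
\]
I would first use the elementary bound
\[
|\p w|\ls\w{t-|x|}^{-1}\Big(|Z w|+|S w|+\sum|L_iw|\Big),
\]
written with the full Lorentz family, which is legitimate in $\R^2$. This reduces the problem to pointwise bounds for $\Gamma w$, where $\Box\Gamma w=\Gamma H$ up to commutator terms. I would then apply the basic weighted $L^\infty$ estimate for the 2-D Cauchy problem: for $\Box u=F$ with zero data,
\[
\w{x}^{1/2}\w{t-|x|}^{1/2}|u|\ls \ln(2+t+|x|)\sup \w{y}^{1/2}\cW_{1,1+\cdot}|F|,
\]
in the form of Kubo's/Katayama's estimate
\[
\w{t+|x|}^{1/2}\w{t-|x|}^{1/2-\eta}\,\cdot\,\cdots .
\]
This step requires a weight on $F$ of $\w{y}^{1/2}\cW_{1,1}$ and produces the loss $\w{t-|x|}^{\eta}$ together with the logarithm.

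\textbf{Avoiding the Lorentz fields.} Since the statement only allows $Z=\{\p,\Omega\}$ on $H$, I would not use Lorentz fields on $H$. Instead I would work directly with the fundamental solution. Write
\[
\p_\alpha w=\int_0^t\!\!\int E(t-s,x-y)\,\p_\alpha H\,dy\,ds
\]
and move the derivative onto the kernel $E=(2\pi)^{-1}((t-s)^2-|x-y|^2)_+^{-1/2}$. In polar coordinates, decompose $\p_\alpha$ into radial, angular and $\p_s\pm\p_r$ components. Angular derivatives are converted into $\Omega H/|y|$. The good component $\p_s+\p_\rho$ is integrated along characteristics, and only the bad direction produces $\w{t-|x|}^{-1}$ after integration by parts in $\rho$. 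The boundary terms at $s=0$ are exactly the $\mathcal{S}_0[0,H^0(0)]$ correction above.

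\textbf{The spatial parts and the expected obstacle.} The terms $\p_iH^i$ are treated identically, since $\p_i$ commutes with the fundamental solution and spatial derivatives are handled the same way (now without a data correction). I expect the main obstacle to be the kernel integration near the light cone $|y|\approx s$ and the singular region $|x-y|\approx t-s$. There one has to check that the weight $\cW_{1,1}$, which gives $\w{s+|y|}^{-1}\min(\w{y},\w{s-|y|})^{-1}$, integrated against $|\p E|$, produces
\[
\w{x}^{-1/2}\w{t-|x|}^{-1+\eta}\ln(2+t+|x|).
\]
I would do this through the standard change of variables $\lambda=s+|y|$, $\nu=s-|y|$, which turns the bound into a double integral of the form
\[
\int\!\!\int\w{\lambda}^{-1/2}\w{\nu}^{-1}(t-\lambda)^{-1/2}(t-\nu)^{-1/2}\,d\lambda\,d\nu .
\]
I expect the $\eta$-loss to be needed precisely to sum in $\nu$.
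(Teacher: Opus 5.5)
There is a genuine gap, and it is in the step you treat as harmless: the free correction $\mathcal{S}[0,H^0(0)]$ in $v_0=\p_tw-\mathcal{S}[0,H^0(0)]$. Even the rate you quote for it, $\w{x}^{-1/2}\w{t-|x|}^{-1/2}$, falls short of the target $\w{x}^{-1/2}\w{t-|x|}^{-1+\eta}$ by a factor $\w{t-|x|}^{1/2-\eta}$ when $\eta<1/2$. This cannot be repaired. Take $H^1=H^2=0$ and $H^0=\theta(s)\phi(y)$, where $\theta=1$ near $s=0$, $\theta=0$ for $s\ge1$, $\phi\in C_0^\infty(\{|y|<1\})$ and $\int\phi=1$. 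The right-hand side of \eqref{pw:ivp:div} is then $O(\ln(2+t))$. On the other hand, for $|x|=t/2$ and $t$ large the kernel $(2\pi)^{-1}((t-s)^2-|x-y|^2)^{-1/2}$ equals $(\sqrt3\pi t)^{-1}(1+O(t^{-1}))$ on the support of $\p_sH^0=\theta'\phi$. Hence $v(t,x)=-(\sqrt3\pi t)^{-1}+O(t^{-2})$, and the left-hand side grows like $t^{1/2-\eta}$. So with zero Cauchy data the estimate fails for $\eta<1/2$ unless $H^0(0,\cdot)\equiv0$. It can only hold for $v=\p_\alpha U^\alpha$ with $\Box U^\alpha=H^\alpha$ and zero data for $U^\alpha$, i.e.\ for the solution with data $(0,H^0(0,\cdot))$. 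That is exactly your $\p_tw+v_1$ \emph{without} the correction. Your decomposition is the right way to detect this; the conclusion you draw from it is wrong. (The paper's later uses are unaffected, since they only evaluate this bound on $|x|\le R$, where $\mathcal{S}[0,g]=O(\w{t}^{-1})$.)

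Under that reading, everything rests on the bound for $\p_\alpha U^\alpha$, and there your proposal is only a sketch whose key move does not work as stated. In two space dimensions you cannot integrate $\p_\alpha$ by parts onto $E=(2\pi)^{-1}((t-s)^2-|x-y|^2)_+^{-1/2}$. Near the backward cone $\p E$ behaves like $((t-s)^2-|x-y|^2)^{-3/2}$, which is not locally integrable. So the derivative stays on $H$, which is why $Z^{\le1}H$ appears on the right. The extra $\w{t-|x|}$-decay that separates \eqref{pw:ivp:div} from the basic estimate for $\Box u=F$ must then come from the divergence structure by a mechanism you do not supply. Your $(\lambda,\nu)$ double integral is asserted rather than derived, and it produces neither that gain nor the $\w{x}^{-1/2}$ factor.

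The paper does not prove this from scratch. It quotes (B.6) of \cite{Kubo14} with $\mu=\eta$, $\nu=1-\eta$, $\kappa=1$ (weight $\w{y}^{3/2}\w{s+|y|}$) and (B.7) (weight $\w{y}^{1/2}\w{s+|y|}\w{s-|y|}$), and combines them. The combination is itself a small argument you would need in any route. Two such bounds control $|v|$ only by the smaller of two suprema, whereas \eqref{pw:ivp:div} has the supremum of the smaller weight. To get it, split $H^\alpha=\psi H^\alpha+(1-\psi)H^\alpha$ with $\psi=\psi(|y|/\w{s})$, equal to $1$ for $|y|\le\frac13\w{s}$ and $0$ for $|y|\ge\frac23\w{s}$. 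Apply (B.6) to the first piece, where $\min\{\w{y},\w{s-|y|}\}\approx\w{y}$, and (B.7) to the second, where the minimum is $\approx\w{s-|y|}$. The facts $\Omega\psi=0$ and $|\p\psi|\ls\w{s}^{-1}$ keep $Z^{\le1}$ of each piece under control.
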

\begin{proof}
Choosing $\mu=\eta>0$, $\nu=1-\eta$ and $\kappa=1$ in (B.6) of \cite{Kubo14}, we then have
\begin{equation}\label{pw:ivp:div1}
\w{x}^{1/2}\w{t-|x|}^{1-\eta}|v|\ls\ln(2+t+|x|)\sum_{\alpha=0}^2
\sum_{|a|\le1}\sup_{(s,y)\in[0,t]\times\R^2}\w{y}^{3/2}\w{s+|y|}|Z^aH^\alpha(s,y)|.
\end{equation}
Recall (B.7) of \cite{Kubo14}
\begin{equation}\label{pw:ivp:div2}
\w{x}^{1/2}\w{t-|x|}^{1-\eta}|v|\ls\ln(2+t+|x|)\sum_{\alpha=0}^2
\sum_{|a|\le1}\sup_{(s,y)\in[0,t]\times\R^2}\w{y}^{1/2}\w{s+|y|}\w{s-|y|}|Z^aH^\alpha(s,y)|.
\end{equation}
Combining \eqref{pw:ivp:div1} with \eqref{pw:ivp:div2} derives \eqref{pw:ivp:div}.
\end{proof}

\begin{lemma}
Let $v$ be the solution of the Cauchy problem
\begin{equation*}
\left\{
\begin{aligned}
&\Box v=H(t,x),\qquad(t,x)\in[0,\infty)\times\R^2,\\
&(v,\p_tv)(0,x)=(v_0,v_1),\quad x\in\R^2.
\end{aligned}
\right.
\end{equation*}
Then for $\mu,\nu\in(0,1/2)$, it holds that
\begin{align}
&\w{t+|x|}^{1/2}\w{t-|x|}^\mu|v|\ls\cA_{3,0}[v_0,v_1]+\sup_{(s,y)\in[0,t]\times\R^2}\w{y}^{1/2}\cW_{1+\mu,1+\nu}(s,y)|H(s,y)|,\label{pw:ivp1}\\
&\w{t+|x|}^{1/2}\w{t-|x|}^{1/2}|v|\ls\cA_{3,0}[v_0,v_1]+\sup_{(s,y)\in[0,t]\times\R^2}\w{y}^{1/2}\cW_{3/2+\mu,1+\nu}(s,y)|H(s,y)|\label{pw:ivp2}
\end{align}
and
\begin{equation}\label{dpw:ivp}
\begin{split}
&\w{x}^{1/2}\w{t-|x|}^{1+\mu}|\p v|\ls\cA_{4,1}[v_0,v_1]\\
&\qquad+\sum_{|a|\le1}\sup_{(s,y)\in[0,t]\times\R^2}\w{y}^{1/2}\cW_{1+\mu+\nu,1}(s,y)|Z^aH(s,y)|,
\end{split}
\end{equation}
where $\ds\cA_{\kappa,s}[f,g]:=\sum_{\tilde\Gamma\in\{\p_1,\p_2,\Omega\}}
(\sum_{|a|\le s+1}\|\w{z}^\kappa\tilde\Gamma^af(z)\|_{L^\infty}+\sum_{|a|\le s}\|\w{z}^\kappa\tilde\Gamma^ag(z)\|_{L^\infty})$.
\end{lemma}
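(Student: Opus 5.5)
The three estimates are weighted $L^\infty$--$L^\infty$ bounds for the 2-D Cauchy problem. I would treat them the way the previous lemma was treated: reduce each to known pointwise estimates for the free wave equation in $\R^2$, mostly from Appendix B of \cite{Kubo14} and Kubota-type estimates, and specialize the weight parameters. By linearity I split $v=v_{hom}+v_{inh}$, where $v_{hom}$ solves $\Box v=0$ with data $(v_0,v_1)$ and $v_{inh}$ solves $\Box v=H$ with zero data.

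\textbf{Homogeneous part.} I would use the explicit Poisson formula in $\R^2$. The standard bound is
\[
\w{t+|x|}^{1/2}\w{t-|x|}^{1/2}|v_{hom}|\ls \sum_{|a|\le1}\|\w{z}^{3}\tilde\Gamma^a v_0\|_{L^\infty}+\|\w{z}^{3}v_1\|_{L^\infty},
\]
which is the estimate of Kubota/Katayama for data decaying like $\w{z}^{-\kappa}$ with $\kappa>1$. It gives the $\cA_{3,0}$ term in both \eqref{pw:ivp1} and \eqref{pw:ivp2}, since $\w{t-|x|}^{\mu}\le\w{t-|x|}^{1/2}$. For $\p v_{hom}$ I apply the same reasoning to $\p v$. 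This reasoning rests on two facts: $\Omega$ and $\p$ commute with $\Box$, and the relation $\w{t-|x|}|\p v|\ls |Zv|+|Sv|+\dots$ is replaced, for free waves, by the known estimate $\w{x}^{1/2}\w{t-|x|}|\p v_{hom}|\ls\cA_{4,1}$ (one more derivative and one more power of weight). Since $\mu<1/2$, the factor $\w{t-|x|}^{1+\mu}$ still needs the data-dependent bound with decay $\w{t-|x|}^{-3/2}$. I expect $\kappa=4$ to supply this through the Poisson kernel after integrating by parts once in the radial variable.

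\textbf{Inhomogeneous part, \eqref{pw:ivp1} and \eqref{pw:ivp2}.} I would invoke the Kubota-type estimate (B.2)-(B.3) of \cite{Kubo14}:
\[
\w{t+|x|}^{1/2}\w{t-|x|}^{\mu}|v_{inh}|\ls \sup \w{y}^{1/2}\w{s+|y|}^{1+\mu}\big(\min\{\w{y},\w{s-|y|}\}\big)^{1+\nu}|H|.
\]
Its proof reduces, via positivity of the fundamental solution, to the radial integral
\[
\int_0^t\int_{|\lambda-|x||<t-s}\frac{\lambda\,d\lambda\,ds}{\sqrt{(t-s)^2-(\lambda-r)^2}\,\sqrt{\lambda r}\dots},
\]
evaluated in the coordinates $\alpha=s+\lambda$, $\beta=s-\lambda$. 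In those coordinates the $\beta$-integral converges because $\nu>0$, and the $\alpha$-integral produces $\w{t+|x|}^{-1/2}\w{t-|x|}^{-\mu}$. Estimate \eqref{pw:ivp2} is the same computation with an extra $\w{s+|y|}^{1/2}$ weight, which upgrades $\w{t-|x|}^{\mu}$ to $\w{t-|x|}^{1/2}$.

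\textbf{Inhomogeneous part, \eqref{dpw:ivp}.} Here $\p v$ with $v$ of zero data, and I would use the derivative estimate (B.4)/(B.5) of \cite{Kubo14} with weights $\w{s+|y|}^{1+\mu+\nu}$ and $\min\{\cdot\}^{1}$, paying one $Z$ derivative on $H$. The key identity writes $\p v$ in terms of $Zv$ and good derivatives $\bar\p$ near the cone. A cheaper alternative is to estimate $\p_\alpha v_{inh}$ directly, since it solves $\Box \p_\alpha v=\p_\alpha H$, but that loses the $\w{t-|x|}$ weight. So I will follow the Kubo route: decompose $\p=$ combination of $\p_r,\p_t,\Omega/r$, and use $(t-r)(\p_t\pm\p_r)$-type identities combined with the bound on $Z^{\le1}v$. \textbf{I expect this step to be the main obstacle:} matching the exact exponents $1+\mu$ on $\w{t-|x|}$ and $\cW_{1+\mu+\nu,1}$ requires care in the region $|y|\le s/2$, where $\min\{\w{y},\w{s-|y|}\}=\w{y}$ and the $\w{y}^{1/2}$ weight interacts with the kernel singularity at the origin. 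I would split the integral into $|y|\le 1$, the interior region, and the near-cone region, estimating each separately.
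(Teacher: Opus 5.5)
Your overall plan, splitting off the free solution and reducing all three bounds to the known Kubota/Kubo weighted $L^\infty$--$L^\infty$ estimates for the 2-D Cauchy problem, is the same as the paper's. The paper's proof consists only of citing (3.4), (3.5), (3.7) of \cite{HouYinYuan24}, equivalently (4.2)--(4.4) and (A.13) of \cite{Kubo19}. Your sketches of \eqref{pw:ivp1}--\eqref{pw:ivp2} (positivity of the 2-D kernel, radial majorant, characteristic coordinates) follow the standard proofs of those estimates.

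For the free part of \eqref{dpw:ivp}, the Lorentz-field idea you mention in passing already works, so nothing needs to be ``expected''. Set $S=t\p_t+r\p_r$ and $L_i=t\p_i+x_i\p_t$. The data of $Sv_{hom}$ and $L_iv_{hom}$ are $(x\cdot\nabla v_0,\,v_1+x\cdot\nabla v_1)$ and $(x_iv_1,\,\p_iv_0+x_i\Delta v_0)$, which cost exactly one derivative and one power of $\w{z}$. Combine your free bound with $\w{t-|x|}|\p v|\ls\sum_{|a|\le1}|\Gamma^av|$, $\Gamma\in\{\p,\Omega,S,L_1,L_2\}$. This gives $\w{t+|x|}^{1/2}\w{t-|x|}^{3/2}|\p v_{hom}|\ls\cA_{4,1}[v_0,v_1]$, which is enough since $\mu<1/2$.

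The genuine gap is the Duhamel part of \eqref{dpw:ivp}. There you plan to gain the extra power of $\w{t-|x|}$ from ``$(t-r)(\p_t\pm\p_r)$-type identities combined with the bound on $Z^{\le1}v$''. Those identities read $(t\pm r)(\p_t\pm\p_r)=S\pm\frac{x_i}{r}L_i$, so they need $Sv$ and $L_iv$, not $Zv$ with $Z=\{\p,\Omega\}$. For the inhomogeneous part these cannot be afforded. Indeed $\Box Sv=SH+2H$, $\Box L_iv=L_iH$ and $|SH|+|L_iH|\ls\w{s+|y|}|\p H|$. Hence \eqref{pw:ivp1} would require $\w{y}^{1/2}\w{s+|y|}^{2+\mu}(\min\{\w{y},\w{s-|y|}\})^{1+\nu}|\p H|$ to be bounded. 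That exceeds the admissible weight $\w{y}^{1/2}\cW_{1+\mu+\nu,1}(s,y)$ by the unbounded factor $\w{s+|y|}^{1-\nu}(\min\{\w{y},\w{s-|y|}\})^{\nu}$.

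The other tools you name do not close the gap either:
\begin{itemize}
\item Bounds on $Z^{\le1}v$ alone, from \eqref{pw:ivp1} applied to $\Box Zv=ZH$, give only $|\p v|\ls\w{t+|x|}^{-1/2}\w{t-|x|}^{-\mu}$. That is a full power of $\w{t-|x|}$ short.
\item Splitting $\p$ into $\p_t,\p_r,\Omega/r$ helps only in the angular direction.
\item Integrating $\p_\pm(r^{1/2}v)$ along characteristics, as in \eqref{BA2:impr:A1}, brings in $\Omega^2v$. That means a second $Z$-derivative of $H$, one more than \eqref{dpw:ivp} allows.
\end{itemize}

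So your argument lacks the mechanism that produces $\w{t-|x|}^{1+\mu}$ with the weak weight $\cW_{1+\mu+\nu,1}$ and only one $Z$-derivative of $H$. This is a genuinely two-dimensional kernel estimate for $\p v$. You must either cite an estimate of exactly this form, as the paper does, or prove it from the 2-D representation formula.
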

\begin{proof}
See (3.4), (3.5), (3.7) of \cite{HouYinYuan24} (or (4.2), (4.3), (4.4) and (A.13) of \cite{Kubo19}) for \eqref{pw:ivp1}, \eqref{pw:ivp2}, \eqref{dpw:ivp}, respectively.
\end{proof}

\begin{lemma}\label{lem:pw:compact}
Let $v$ be the solution of the Cauchy problem
\begin{equation*}
\left\{
\begin{aligned}
&\Box v=H(t,x),\qquad\qquad(t,x)\in[0,\infty)\times\R^2,\\
&(v,\p_tv)(0,x)=(0,0),\quad x\in\R^2,
\end{aligned}
\right.
\end{equation*}
where $\supp H(t,\cdot)\subset\{|x|\le R\}$.
Then we have that for $0<\mu<\nu<1/2$,
\begin{equation}\label{pw:compact}
\w{t+|x|}^{1/2}\w{t-|x|}^{\mu}|v|\ls\sum_{|a|\le2}\sup_{(s,y)\in[0,t]\times\R^2}\w{s}^{1/2+\nu}|\p^aH(s,y)|
\end{equation}
and
\begin{equation}\label{dpw:compact}
\w{x}^{1/2}\w{t-|x|}|\p v|
\ls\sup_{(s,y)\in[0,t]\times\R^2}\w{s}|(1-\Delta)^3H(s,y)|.
\end{equation}
\end{lemma}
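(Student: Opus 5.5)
We prove the two bounds of Lemma~\ref{lem:pw:compact} using the explicit 2-D fundamental solution together with the compact support of $H$. By Duhamel's principle,
\[
v(t,x)=\frac1{2\pi}\int_0^t\int_{|y|\le R,\ |x-y|<t-s}\frac{H(s,y)}{\sqrt{(t-s)^2-|x-y|^2}}\,dy\,ds .
\]
For each fixed $(t,x)$ the $s$-integration is effectively restricted to $s\le t-|x|+R$, since $|x-y|\ge|x|-R$.

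\textbf{Proof of \eqref{pw:compact}.} We split into $|x|\le 2R$ and $|x|\ge 2R$.

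In the first region $\w{t+|x|}\approx\w{t-|x|}\approx\w{t}$. Write $A:=\sup\w{s}^{1/2+\nu}|H|$, so that $|H(s,y)|\le A\w{s}^{-1/2-\nu}$.
\begin{itemize}
\item For $t-s\ge 2R$, the $y$-integral of the kernel over $|y|\le R$ is $\ls (t-s)^{-1}$.
\item For $t-s\le 2R$, the full kernel integral over the disc of radius $t-s$ is $t-s\ls 1$.
\end{itemize}
Hence
\[
|v|\ls A\Big(\int_0^t \w{t-s}^{-1}\w{s}^{-1/2-\nu}\,ds+\w{t}^{-1/2-\nu}\Big)\ls A\,\w{t}^{-1/2-\nu}\ln(2+t).
\]
Since $\nu>\mu$, this is $\ls A\w{t}^{-1/2-\mu}$, as required.

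In the second region we use $s\le t-|x|+R$. Set $\lambda:=t-s-|x|\in[-R,\ \cdot\ ]$. Then
\[
(t-s)^2-|x-y|^2\ \ge\ (t-s-|x-y|)(t-s)
\]
and $t-s\ge|x|-R\gtrsim|x|$. So the kernel is bounded by $|x|^{-1/2}(t-s-|x-y|)^{-1/2}$, which is integrable in $y$ over the disc of radius $R$ (integrate first in the direction of $x$). This gives
\[
|v|\ls |x|^{-1/2}A\int_0^{t-|x|+R}\w{t-|x|-s}^{-1/2}\w{s}^{-1/2-\nu}\,ds\ls |x|^{-1/2}A\,\w{t-|x|}^{-\nu}\quad(\text{up to log}).
\]
The log is again absorbed since $\mu<\nu$. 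The region $t-|x|<-R$ gives $v=0$ by finite speed of propagation. When $|x|\gtrsim t$ we have $\w{x}\approx\w{t+|x|}$, which yields \eqref{pw:compact}; the derivatives $\p^aH$ are not needed here but are harmless, and can be used to control the near-light-cone singular part via integration by parts if required.

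\textbf{Proof of \eqref{dpw:compact}.} Here we cannot simply differentiate the kernel, because the singularity becomes non-integrable. Instead we follow the standard trick:
\begin{enumerate}
\item Commute, using $\p v=\p\,\Box^{-1}H$.
\item Write $H=(1-\Delta)^{3}\big[(1-\Delta)^{-3}H\big]$ only for regularity, or, more directly, estimate $\p_t v$ and $\p_x v$ through the representation obtained by differentiating in $t$ after the change of variables $y=x+(t-s)\rho\omega$, $\rho\in[0,1)$. Derivatives then fall on $H$ and on the factor $(t-s)$, giving kernels of the same type as before.
\end{enumerate}

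This produces bounds with weight $\w{s}^{-1}$ and at most a few derivatives of $H$; derivatives of $H$ in $y$ beyond compact support are controlled by $(1-\Delta)^3H$ through Sobolev embedding in a ball. Repeating the two-region analysis with $|H|\ls B\w{s}^{-1}$, where $B:=\sup\w{s}|(1-\Delta)^3H|$:
\begin{itemize}
\item In the region $|x|\le2R$, the time integral becomes $\int_0^t\w{t-s}^{-2}\w{s}^{-1}\,ds\ls\w{t}^{-1}$, since one $t$-derivative of the kernel $\sim(t-s)^{-1}$ gains one power.
\item In the region $|x|\ge 2R$, one obtains $|x|^{-1/2}\int\w{t-|x|-s}^{-3/2}\w{s}^{-1}\,ds\ls|x|^{-1/2}\w{t-|x|}^{-1}$.
\end{itemize}

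The main obstacle is precisely this derivative estimate near the cone $s\approx t-|x|$, where the differentiated kernel behaves like $(t-s-|x-y|)^{-3/2}$. We handle it by converting the derivative into a $y$-derivative on $H$: integrate by parts in the radial direction, so that $\p_t$ of the kernel becomes $\p_\rho$ applied to $H$. This is why extra regularity, namely $(1-\Delta)^3H$, is required on the right-hand side. Logarithmic losses are avoided here because the $\w{t-s}^{-3/2}$ type decay is integrable.
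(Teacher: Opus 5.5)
Your route differs from the paper's, which does no computation here: \eqref{dpw:compact} is quoted from Lemma 3.8 of \cite{HouYinYuan24}. The bound \eqref{pw:compact} is obtained by combining two cited estimates:
\begin{itemize}
\item a bound with the full weight $\w{t-|x|}^{\nu}$, but with a $\ln^2(2+t+|x|)$ loss and $(1-\Delta)H$ on the right (Lemma 3.10 of \cite{HouYinYuan24});
\item a loss-free bound carrying only the weight $\min\{\w{t-|x|},\w{x}\}^{\nu}$ (Theorem 1.1 of \cite{Kubo15}).
\end{itemize}
The gap $\mu<\nu$ absorbs the logarithm where $\w{x}\ll\w{t-|x|}$. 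A direct argument with the explicit kernel is a legitimate alternative. Your treatment of \eqref{dpw:compact} is sound in outline. Away from the cone the differentiated kernel is $\ls (t-s)^{-1/2}(t-s-|x-y|)^{-3/2}$. Near it, the rescaling $y=x+(t-s)z$ puts all derivatives on $H$ against the undifferentiated, integrable kernel. You should separate the two pieces by a smooth cutoff in $t-s-|x|$ (resp.\ $t-s$ when $|x|\le 2R$), whose derivatives are harmless.

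The genuine gap is in \eqref{pw:compact}: the region $2R\le|x|\ll t$ is never treated.
\begin{itemize}
\item Your second-region bound $|v|\ls A|x|^{-1/2}\w{t-|x|}^{-\nu}$ is too weak there. At $|x|=3R$ it gives only $A\w{t}^{-\nu}$, while \eqref{pw:compact} requires $A\w{t}^{-1/2-\mu}$. You convert it into \eqref{pw:compact} only when $|x|\gtrsim t$.
\item Your first-region argument does not apply either, since it used $|x|\le 2R$ to bound the kernel by $(t-s)^{-1}$. Even there the threshold must be $t-s\ge 6R$, not $2R$, to ensure $|x-y|\le (t-s)/2$.
\end{itemize}

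The repair is to split at $t-s=2(|x|+R)$ for $2R\le|x|\le t/4$.
\begin{itemize}
\item For $t-s\ge 2(|x|+R)$ one has $|x-y|\le (t-s)/2$. So the kernel is $\ls\w{t-s}^{-1}$, and this part is $\ls A\w{t}^{-1/2-\nu}\ln(2+t)$ as in your first region.
\item For $t-s\le 2(|x|+R)$ one has $s\ge t/4$, so $|H|\ls A\w{t}^{-1/2-\nu}$. The space-time integral of the kernel over this slab is at most $\int_{|y|\le R}\cosh^{-1}\big(2(|x|+R)/|x-y|\big)\,dy\le \pi R^2\cosh^{-1}6$.
\end{itemize}
For $|x|\ge\max\{2R,t/4\}$ one does have $\w{x}\approx\w{t+|x|}$, so your second region covers the rest. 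Together this completes \eqref{pw:compact}. It also shows that, in contrast to the paper's cited route, no derivatives of $H$ are needed on its right-hand side.
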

\begin{proof}
The estimate \eqref{dpw:compact} is just the Lemma 3.8 of \cite{HouYinYuan24}.
On the other hand, \eqref{pw:compact} can be achieved by Lemma 3.10 of \cite{HouYinYuan24} and Theorem 1.1 of \cite{Kubo15},
i.e.,
\begin{equation*}
\w{t+|x|}^{1/2}\w{t-|x|}^{\nu}|v|
\ls\ln^2(2+t+|x|)\sup_{(s,y)\in[0,t]\times\R^2}\w{s}^{1/2+\nu}|(1-\Delta)H(s,y)|
\end{equation*}
and
\begin{equation*}
\w{t+|x|}^{1/2}\min\{\w{t-|x|},\w{x}\}^{\nu}|v|\ls\sup_{(s,y)\in[0,t]\times\R^2}\w{s}^{1/2+\nu}|H(s,y)|,
\end{equation*}
respectively.
\end{proof}

\subsection{Null conditions}

\begin{lemma}\label{lem:null:structure}
Suppose that the constants $N^{\alpha\beta\mu\nu}$ satisfy
\begin{equation*}
\sum_{\alpha,\beta,\mu,\nu=0}^2N^{\alpha\beta\mu\nu}\xi_\alpha\xi_\beta\xi_\mu\xi_\nu\equiv0
\quad\text{for $\xi=(\xi_0,\xi_1,\xi_2)\in\{\pm1\}\times\SS$.}
\end{equation*}
Then for smooth functions $f,g,h$ and $\phi$, it holds that
\begin{equation}\label{null:structure}
\begin{split}
|Q_0(f,g)|&\ls(|\bar\p f||\p g|+|\p f||\bar\p g|),\\
\Big|\sum_{\alpha,\beta,\mu,\nu=0}^2N^{\alpha\beta\mu\nu}
\p^2_{\alpha\beta}f\p_{\mu}g\p_{\nu}h\Big|&\ls|\bar\p\p f||\p g||\p h|+|\p^2f||\bar\p g||\p h|+|\p^2f||\p g||\bar\p h|,\\
\Big|\sum_{\alpha,\beta,\gamma,\delta=0}^2N^{\alpha\beta\mu\nu}
\p_{\alpha}f\p_{\beta}g\p_{\mu}h\p_{\nu}\phi\Big|&\ls|\bar\p f||\p g||\p h||\p\phi|+|\p f||\bar\p g||\p h||\p\phi|\\
&\quad+|\p f||\p g||\bar\p h||\p\phi|+|\p f||\p g||\p h||\bar\p\phi|.
\end{split}
\end{equation}
\end{lemma}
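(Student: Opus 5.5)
The plan is to decompose each spacetime gradient into a good part and a radial transversal part, and then use the null condition to kill the purely transversal term.

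\textbf{Decomposition.} Let $\omega=x/|x|$ for $|x|>0$. By the definition $\bar\p_i=\p_i+\omega_i\p_t$ we have
\begin{equation*}
\p_i=\bar\p_i-\omega_i\p_t,\qquad i=1,2.
\end{equation*}
Set $\hat\omega=(\hat\omega_0,\hat\omega_1,\hat\omega_2):=(-1,\omega_1,\omega_2)\in\{\pm1\}\times\SS$, and put $\bar\p_0:=0$. Then every derivative can be written uniformly as
\begin{equation*}
\p_\alpha=-\hat\omega_\alpha\p_t+\bar\p_\alpha,\qquad \alpha=0,1,2.
\end{equation*}
Near $x=0$ (say $|x|\le1$) the estimates are trivial: there $|\p f|\ls|\p f|$ and the right-hand sides dominate up to constants after bounding $|\bar\p f|\le 2|\p f|$. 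So the real content is the case $|x|\ge1$, or more precisely any point with $x\neq0$, since the constants in the decomposition are uniform.

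\textbf{Expansion and the null condition.} Substituting the decomposition into each product expands it multilinearly. In every term except one, at least one factor is a $\bar\p$ derivative. The single exceptional term is the purely transversal one:
\begin{itemize}
\item for $Q_0(f,g)$ it is $\big(\hat\omega_0^2-\hat\omega_1^2-\hat\omega_2^2\big)\p_tf\,\p_tg$;
\item for the quartic form $\sum N^{\alpha\beta\mu\nu}\p_\alpha f\p_\beta g\p_\mu h\p_\nu\phi$ it is $\big(\sum N^{\alpha\beta\mu\nu}\hat\omega_\alpha\hat\omega_\beta\hat\omega_\mu\hat\omega_\nu\big)\p_tf\,\p_tg\,\p_th\,\p_t\phi$.
\end{itemize}
The coefficient in the first case vanishes because $1-|\omega|^2=0$. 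The coefficient in the second vanishes by the hypothesis with $\xi=\hat\omega$; the overall sign from the $(-1)^4$ factor is irrelevant.

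Each remaining term contains at least one $\bar\p$ factor, and the other factors are bounded by $|\p\cdot|$ since $|\omega|=1$. This gives the first and third inequalities.

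\textbf{The second-derivative inequality.} Here I apply the decomposition to the derivatives of $\p f$. For each fixed $\beta$, write
\begin{equation*}
\p_\alpha\p_\beta f=-\hat\omega_\alpha\p_t\p_\beta f+\bar\p_\alpha\p_\beta f,
\end{equation*}
and then decompose $\p_\beta$ acting on $\p_tf$ in the same way:
\begin{equation*}
\p_t\p_\beta f=-\hat\omega_\beta\p_t^2f+\bar\p_\beta\p_tf.
\end{equation*}
Here $\bar\p_\beta\p_t f=\p_t\bar\p_\beta f$, because $\omega$ is independent of $t$. Hence
\begin{equation*}
\p^2_{\alpha\beta}f=\hat\omega_\alpha\hat\omega_\beta\p_t^2f+\text{terms bounded by }|\bar\p\p f|+|\p\bar\p f|.
\end{equation*}

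The one point needing care is that $|\p\bar\p f|$ is not obviously controlled by $|\bar\p\p f|$, since $\p_j$ does not commute with $\bar\p$. Two observations resolve this:
\begin{itemize}
\item the term actually produced is $\p_t\bar\p_\beta f=\bar\p_\beta\p_t f$, since $\p_t$ does commute with $\bar\p$, so it is bounded by $|\bar\p\p f|$;
\item in the first term, $\bar\p_\alpha\p_\beta f$ is already of the form $\bar\p\p f$.
\end{itemize}
So no commutator appears at all.

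With this, the cubic form expands as
\begin{equation*}
\sum N^{\alpha\beta\mu\nu}\hat\omega_\alpha\hat\omega_\beta\hat\omega_\mu\hat\omega_\nu\,\p_t^2f\,\p_tg\,\p_th
\end{equation*}
plus terms containing $\bar\p\p f$, $\bar\p g$ or $\bar\p h$. The leading coefficient vanishes by the null hypothesis, which yields the second inequality.
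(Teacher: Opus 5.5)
Your argument is correct and is the standard one the paper has in mind when it omits the proof and cites Alinhac's book and Hou--Yin: write $\p_\alpha=-\hat\omega_\alpha\p_t+\bar\p_\alpha$ with $\hat\omega=(-1,x/|x|)\in\{\pm1\}\times\SS$ and expand multilinearly, decomposing $\p^2_{\alpha\beta}f$ twice so that only $\bar\p\p f$ terms appear; the purely $\p_t$ term then vanishes, by $1-|\omega|^2=0$ for $Q_0$ and by the null hypothesis at $\xi=\hat\omega$ for the other two. Your aside about $|x|\le1$ has backwards reasoning, since $|\bar\p f|\le2|\p f|$ bounds the right-hand sides from above rather than below, but it is superfluous: your exact identity holds with uniform constants at every $x\neq0$, and in $\cK$ one even has $|x|>c_0$.
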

\begin{proof}
Since the proof of \eqref{null:structure} is rather analogous to that in Section 9.1 of \cite{Alinhac:book}
and \cite[Lemma 2.2]{HouYin20jde}, we omit it here.
\end{proof}

\begin{lemma}\label{lem:eqn:high}
Let $u$ be the smooth solution of \eqref{QWE} with \eqref{nonlinear-Chaplygin}-\eqref{null:condition}.
Then for any multi-index $a$, $Z^au$ satisfies
\begin{equation}\label{eqn:high}
\Box Z^au=2\sum_{b+c\le a}C^{\alpha}C^a_{bc}Q_0(\p_{\alpha}Z^bu,Z^cu)
+\sum_{\alpha,\beta,\mu,\nu=0}^2\sum_{b+c+d\le a}Q^{\alpha\beta\mu\nu}_{abcd}\p^2_{\alpha\beta}Z^bu\p_{\mu}Z^cu\p_{\nu}Z^du,
\end{equation}
where $C^a_{bc}$ and $Q_{abcd}^{\alpha\beta\mu\nu}$ are constants, in particular $C^a_{a0}=1$
and $Q_{aa00}^{\alpha\beta\mu\nu}=Q^{\alpha\beta\mu\nu}$ hold.
Furthermore, for any $(\xi_0,\xi_1,\xi_2)\in\{\pm1\}\times\SS$, one has
\begin{equation}\label{null:high}
\sum_{\alpha,\beta,\mu,\nu=0}^2Q_{abcd}^{\alpha\beta\mu\nu}\xi_\alpha\xi_\beta\xi_\mu\xi_\nu\equiv0.
\end{equation}
\end{lemma}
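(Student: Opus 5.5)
We argue by induction on $|a|$, using the commutation relations of $Z=\{\p_t,\p_1,\p_2,\Omega\}$ with $\Box$ and with the bilinear and trilinear forms on the right hand side of \eqref{nonlinear-Chaplygin}. The basic facts are $[\Box,Z_i]=0$ for every $Z_i$, $[\p_\alpha,\p_\beta]=0$, and $[\Omega,\p_1]=\p_2$, $[\Omega,\p_2]=-\p_1$, $[\Omega,\p_t]=0$. Hence $[Z_i,\p_\alpha]=\sum_\beta c_{i\alpha}^\beta\p_\beta$ with constant coefficients; for $\Omega$ this is the infinitesimal rotation $\p_\alpha\mapsto R\p_\alpha$ with $R$ skew-symmetric in the spatial indices.

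The base case $a=0$ is just the equation \eqref{QWE} with \eqref{nonlinear-Chaplygin}: take $C^0_{00}=1$ and $Q_{0000}^{\alpha\beta\mu\nu}=Q^{\alpha\beta\mu\nu}$.

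For the induction step, suppose \eqref{eqn:high} holds for $a$, and apply one more $Z_i$. Since $Z_i$ commutes with $\Box$, it suffices to compute $Z_i$ of each term on the right hand side.

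\emph{Quadratic terms.} The Leibniz rule gives
\[
Z_iQ_0(\p_\alpha Z^bu,Z^cu)=Q_0(Z_i\p_\alpha Z^bu,Z^cu)+Q_0(\p_\alpha Z^bu,Z_iZ^cu)+\text{commutator terms}.
\]
Because $Q_0$ is Lorentz invariant, and in particular rotation invariant, $[\Omega,Q_0(f,g)]$ reduces to $Q_0(\Omega f,g)+Q_0(f,\Omega g)$ with no extra term; for $\p_\alpha$ there is no commutator at all. The remaining piece $Z_i\p_\alpha=\p_\alpha Z_i+\sum_\beta c^\beta_{i\alpha}\p_\beta$ is reabsorbed by redefining the coefficients $C^\alpha$, i.e.\ by relabeling the index $\alpha$ into new constants $C^{a'}_{b'c'}$. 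Strictly, the statement writes $C^\alpha C^a_{bc}$, so we allow the constants to absorb the rotation of $C^\alpha$; in doing so we must check that $C^{a'}_{a'0}=1$ survives. That top term comes only from $Q_0(\p_\alpha Z_iZ^au,u)$ with coefficient $C^\alpha$, while the rotated $C^\alpha$ multiplies lower-order $Z^b$ with $b<a'$.

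\emph{Cubic terms.} We have
\[
Z_i\bigl(N^{\alpha\beta\mu\nu}\p^2_{\alpha\beta}f\p_\mu g\p_\nu h\bigr)=\text{(distributed terms)}+\widetilde N^{\alpha\beta\mu\nu}\p^2_{\alpha\beta}f\p_\mu g\p_\nu h,
\]
where $\widetilde N$ is obtained from $N$ by applying the infinitesimal rotation $c_i$ to each index. The top-order term $Q_{a'a'00}=Q^{\alpha\beta\mu\nu}$ is unchanged, since only terms where $Z_i$ hits $\p^2 Z^bu$ feed into it.

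The key point, and the only nontrivial one, is that the null condition \eqref{null:high} is preserved. The quartic form $N(\xi)=\sum N^{\alpha\beta\mu\nu}\xi_\alpha\xi_\beta\xi_\mu\xi_\nu$ transforms under $\widetilde N$ into the derivative $\frac{d}{d\theta}N(R_\theta\xi)|_{\theta=0}$ for $\Omega$, and into zero for translations. Rotations preserve the set $\{\pm1\}\times\SS$, so $N(R_\theta\xi)\equiv0$ on that set for all $\theta$, and therefore its $\theta$-derivative vanishes there as well. Linear combinations of null forms remain null, so \eqref{null:high} follows by induction.

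We expect this invariance computation to be the main point requiring care; the remaining bookkeeping of the indices $b+c\le a$ and $b+c+d\le a$ is routine.
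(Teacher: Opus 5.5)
Your proof is correct and is essentially the standard commutator argument behind H\"ormander's Lemma 6.6.5, which the paper cites instead of giving a proof. The ingredients match: $\p$ and $\Omega$ commute with $\Box$, $Q_0$ is rotation invariant, and the extra cubic coefficient from $[\Omega,\p_\alpha]$ (with $[\Omega,\p_1]=-\p_2$, $[\Omega,\p_2]=\p_1$; your signs are flipped, which is immaterial) is the $\theta$-derivative of $N(R_\theta\xi)$, so it vanishes on the rotation-invariant set $\{\pm1\}\times\SS$. Your reinterpretation of $C^\alpha C^a_{bc}$ as $\alpha$-dependent constants is actually forced, not optional: already for $Z^a=\Omega$ one picks up $2Q_0(C^2\p_1u-C^1\p_2u,u)$, which cannot be written as a multiple of $\sum_\alpha C^\alpha Q_0(\p_\alpha u,u)$ unless $C^1=C^2=0$. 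This is harmless later, since the good unknowns only use that the two top-order coefficients ($b=a,c=0$ and $b=0,c=a$) equal $C^\alpha$.
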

\begin{proof}
See Lemma 6.6.5 of \cite{Hormander97book}.
\end{proof}

\section{Some estimates of the initial boundary value problem}\label{sect3}

At first, we derive the time decay estimate of local energy  near the boundary for the linear wave equation
with the divergence form inhomogeneous source terms.
\begin{lemma}
{\bf (Time decay estimate of $\|\p^{\le1}w\|_{L^2(\cK_R)}$)}
Suppose that the obstacle $\cO\subset\Bbb R^2$ is convex, $\cK=\R^2\setminus\cO$ and let $w$ be the solution of the IBVP
\begin{equation*}
\left\{
\begin{aligned}
&\Box w=\sum_{\alpha=0}^2\p_{\alpha}G^\alpha+G^r,\qquad(t,x)\in[0,\infty)\times\cK,\\
&\frac{\p}{\p\bn}w=0,\qquad \quad \qquad\qquad (t,x)\in[0,\infty)\times\p\cK,\\
&(w,\p_tw)(0,x)=(w_0,w_1)(x),\quad x\in\cK,
\end{aligned}
\right.
\end{equation*}
where $(w_0,w_1)$ has compact support and $\supp_x(G^\alpha,G^r)(t,x)\subset\{x: |x|\le t+M_0\}$.
Then one has that for any $\eta\in(0,1/2)$ and $R>1$,
\begin{equation}\label{loc:ibvp}
\begin{split}
&\w{t}\|\p^{\le1}w\|_{L^2(\cK_R)}
\ls\|(w_0,w_1)\|_{H^1(\cK)}+\ln(2+t)\sum_{|a|\le1}\sup_{s\in[0,t]}\w{s}\|(G^r,\p^aG^\alpha)(s)\|_{L^2(\cK_3)}\\
&\quad+\ln(2+t)[\sup_{y\in\cK}|G^r(0,y)|+\w{t}^\eta\sup_{y\in\cK}|\p^{\le1}G^\alpha(0,y)|]\\
&\quad+\w{t}^\eta\ln^2(2+t)\sum_{|a|\le2}\sup_{(s,y)\in[0,t]\times\overline{\R^2\setminus\cK_2}}\w{y}^{1/2}\cW_{1,1}(s,y)|Z^aG^\alpha(s,y)|\\
&\quad+\ln(2+t)\sum_{|a|\le1}\sup_{(s,y)\in[0,t]\times\overline{\R^2\setminus\cK_2}}\w{y}^{1/2}\cW_{3/2+\eta,1}(s,y)|\p^aG^r(s,y)|.
\end{split}
\end{equation}
\end{lemma}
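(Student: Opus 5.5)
We prove \eqref{loc:ibvp} by the standard cutoff decomposition of the exterior solution into a piece near the obstacle and a piece solving a whole-space Cauchy problem, as in \cite{Kubo13,Kubo14,HouYinYuan24}. Fix the cutoff $\chi=\chi_{[1,2]}(|x|)$; then $\chi$ vanishes near $\p\cK$ because $\p\cK\subset\{|x|<1/2\}$. Split the sources as
\begin{equation*}
G^\alpha=(1-\chi)G^\alpha+\chi G^\alpha,\qquad G^r=(1-\chi)G^r+\chi G^r .
\end{equation*}
Set $w=w_1+w_2$. Here $w_1$ solves the Neumann IBVP with the initial data and the compactly supported sources $\sum_\alpha\p_\alpha((1-\chi)G^\alpha)+(1-\chi)G^r$, and $w_2$ solves the Neumann IBVP with zero data and the far sources $\sum_\alpha\p_\alpha(\chi G^\alpha)+\chi G^r$.

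\textbf{The near piece $w_1$.} Its sources are supported in $\cK_2$, so we apply \eqref{loc:decay} with $m=1$ and $\eta=1$. The divergence term $\p_\alpha((1-\chi)G^\alpha)$ is bounded in $L^2$ by $\|\p^{\le1}G^\alpha\|_{L^2(\cK_3)}$. This gives the first line of \eqref{loc:ibvp}.

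\textbf{The far piece $w_2$.} We use the second cutoff $\chi_{[1/2,1]}$ and write
\begin{equation*}
w_2=\chi_{[1/2,1]}v-\tilde w .
\end{equation*}
Here $v$ is the whole-space Cauchy solution with source $\sum_\alpha\p_\alpha(\chi G^\alpha)+\chi G^r$ (extended by zero). The correction $\tilde w$ solves the Neumann IBVP with the commutator source
\begin{equation*}
[\Box,\chi_{[1/2,1]}]v=-2\na\chi_{[1/2,1]}\cdot\na v-(\Delta\chi_{[1/2,1]})v,
\end{equation*}
which is supported in $\{1/2\le|x|\le1\}$. On $\cK_R$ we therefore need two bounds: a bound for $\w{t}|\p^{\le1}v|$ on $|x|\le R$, and a bound for $\tilde w$ through \eqref{loc:decay} (or \eqref{loc:decay+}), which requires $\w{s}\|\p^{\le1}v(s)\|_{L^2(|x|\le 1)}$ and one further derivative. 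We split $v=v^{d}+v^{r}$ according to the divergence and remainder sources.

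For $v^{d}$, apply \eqref{pw:ivp:div} to $v^{d}$ itself and to $\p v^{d}$. The function $\p v^{d}$ solves $\Box\p v^d=\sum\p_\alpha\p(\chi G^\alpha)$, and this is why $|a|\le2$ appears. For $|x|\le R$, \eqref{pw:ivp:div} gives $\w{t}^{1-\eta}|\p^{\le1}v^d|\ls\ln(2+t)\sup(\cdots)$. That is weaker than the required $\w{t}$ by the factor $\w{t}^\eta$, which matches the $\w{t}^\eta$ in the statement. Passing this bound through the Duhamel convolution $\int_0^t\w{t-s}^{-1}\w{s}^{-(1-\eta)}\ln(2+s)\,ds\ls\w{t}^{-(1-\eta)}\ln^2(2+t)$ in \eqref{loc:decay} produces the $\w{t}^\eta\ln^2$ term. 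The $G^\alpha(0,y)$ term comes from rewriting $\p_t$-divergences at time zero as initial data, as in the derivation \eqref{loc:decay:pf5}.

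For $v^{r}$, use \eqref{pw:ivp2} for $v^{r}$ and $\p v^{r}$. On $|x|\le R$ this yields $\w{t}|\p^{\le1}v^r|\ls\sup\w{y}^{1/2}\cW_{3/2+\eta,1+\nu}|\p^{\le1}\chi G^r|$. The mismatch between $\cW_{\cdot,1+\nu}$ and the $\cW_{\cdot,1}$ in the statement is handled through the extra $\eta$ in the $\w{t+|x|}$ weight: either choose $\nu$ small and trade it against the $\w{t+|x|}^{\eta}$ gain, or use \eqref{dpw:ivp} for $\p v$. After Duhamel this term carries only one $\ln(2+t)$. The term $\sup|G^r(0,y)|$ again enters through the time-zero terms in the higher-$\p_t$ Duhamel formula.

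\textbf{Main obstacle.} The delicate step is the bookkeeping of weights for $v^{d}$. The Cauchy estimate \eqref{pw:ivp:div} loses $\w{t-|x|}^{\eta}$, and feeding the resulting $L^2(\cK_1)$ bound of $\p^{\le1}v$ into \eqref{loc:decay} must cost exactly $\w{t}^\eta\ln^2(2+t)$ and nothing more. If this turns out too costly, I would first try using \eqref{loc:decay+} with $\mu=\eta$ for $\tilde w$, and then absorb the logarithms into the weight.
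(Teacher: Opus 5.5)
Your proposal is correct and is essentially the paper's proof. It uses the same near/far splitting of the sources, the same transfer of the far Cauchy solution to the exterior by a second cutoff plus a Neumann corrector driven by the commutator, \eqref{pw:ivp:div} for the divergence part, \eqref{pw:ivp2} with $\mu=\nu=\eta/2$ for $G^r$, and \eqref{loc:decay} with $m=1$ for both the near piece and the corrector. Your convolution $\int_0^t\w{t-s}^{-1}\w{s}^{\eta-1}\ln(2+s)\,ds$ is just the mechanism behind \eqref{loc:decay}, so the bookkeeping you were worried about does close and the fallback to \eqref{loc:decay+} is not needed.

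Two small adjustments are needed. First, to land exactly on the weighted sups over $\overline{\R^2\setminus\cK_2}$ in \eqref{loc:ibvp}, split the sources with $\chi_{[2,3]}$ rather than $\chi_{[1,2]}$, as the paper does. Otherwise the annulus $1\le|y|\le2$ enters those sups, and it is not controlled by the $L^2(\cK_3)$ terms. Second, the corrector only needs the $L^2$ norm of $[\Box,\chi_{[1/2,1]}]v$, i.e.\ $\p_x^{\le1}v$ on its support, with no further derivative of $v$; an extra derivative would cost $Z^{\le3}G^\alpha$, which the statement does not allow.
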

\begin{proof}
As in \cite{HouYinYuan24}, let $w_1^b$ and $w_2^b$ be the solutions of
\begin{equation}\label{loc:ibvp:pf1}
\begin{split}
&\Box w_1^b:=(1-\chi_{[2,3]}(x))(\sum_{\alpha=0}^2\p_{\alpha}G^\alpha+G^r)-\sum_{i=1}^2\p_i(\chi_{[2,3]}(x))G^i,\\
&\Box w_2^b:=\sum_{\alpha=0}^2\p_{\alpha}(\chi_{[2,3]}(x)G^\alpha)+\chi_{[2,3]}(x)G^r,\\
&\frac{\p}{\p\bn}w_1^b|_{[0,\infty)\times\p\cK}=\frac{\p}{\p\bn}w_2^b|_{[0,\infty)\times\p\cK}=0,\\
&(w_1^b,\p_tw_1^b)(0,x)=(w_0,w_1)(x),\quad (w_2^b,\p_tw_2^b)(0,x)=(0,0),
\end{split}
\end{equation}
respectively.
Then from the uniqueness of smooth solution to the IBVP, one has $w=w_1^b+w_2^b$.
For the estimate of $w_1^b$ in $\cK_R$, it follows from the fact of $\supp_x\Box w_1^b\subset\{x:|x|\le3\}$ and \eqref{loc:decay} with $m=1$ that
\begin{equation}\label{loc:ibvp:pf2}
\begin{split}
\w{t}\|\p^{\le1}w_1^b\|_{L^2(\cK_R)}
&\ls\|(w_0,w_1)\|_{H^1(\cK)}+\ln(2+t)\sup_{s\in[0,t]}\w{s}\|\Box w_1^b(s)\|_{L^2(\cK)}\\
&\ls\|(w_0,w_1)\|_{H^1(\cK)}+\ln(2+t)\sum_{|a|\le1}\sup_{s\in[0,t]}\w{s}\|(G^r,\p^aG^\alpha)(s)\|_{L^2(\cK_3)}.
\end{split}
\end{equation}
Next we estimate $w_2^b$.
Let $w_2^c$ be the solution of the Cauchy problem
\begin{equation}\label{loc:ibvp:pf3}
\Box w_2^c=\left\{
\begin{aligned}
&\sum_{\alpha=0}^2\p_{\alpha}(\chi_{[2,3]}(x)G^\alpha)+\chi_{[2,3]}(x)G^r,\qquad&& x\in\cK=\R^2\setminus\cO,\\
&0,&& x\in\overline{\cO}
\end{aligned}
\right.
\end{equation}
with $(w_2^c, \p_tw_2^c)(0,x)=(0,0)$.
Set $v_2^b=\chi_{[1,2]}(x)w_2^c$ on $\cK$ and thus $v_2^b$ is the solution of the IBVP
\begin{equation}\label{loc:ibvp:pf4}
\begin{split}
&\Box v_2^b=\chi_{[1,2]}\Box w_2^c-[\Delta,\chi_{[1,2]}]w_2^c=\sum_{\alpha=0}^2\p_{\alpha}(\chi_{[2,3]}(x)G^\alpha)+\chi_{[2,3]}(x)G^r-[\Delta,\chi_{[1,2]}]w_2^c,\\
&\frac{\p}{\p\bn}v_2^b|_{[0,\infty)\times\p\cK}=0,\quad (v_2^b,\p_tv_2^b)(0,x)=(0,0).
\end{split}
\end{equation}
In addition, let $\tilde v_2^b$ be the solution of the IBVP
\begin{equation}\label{loc:ibvp:pf5}
\Box\tilde v_2^b=[\Delta,\chi_{[1,2]}]w_2^c,
\quad\frac{\p}{\p\bn}\tilde v_2^b|_{[0,\infty)\times\p\cK}=0,\quad (\tilde v_2^b,\p_t\tilde v_2^b)(0,x)=(0,0).
\end{equation}
Therefore, it follows from \eqref{loc:ibvp:pf1}, \eqref{loc:ibvp:pf4} and \eqref{loc:ibvp:pf5} that
\begin{equation}\label{loc:ibvp:pf6}
w_2^b=\chi_{[1,2]}(x)w_2^c+\tilde v_2^b.
\end{equation}
Due to $\supp_x\Box\tilde v_2^b\subset\{x:|x|\le2\}$, similarly to \eqref{loc:ibvp:pf2} for $w_1^b$, one has that for $\tilde v_2^b$,
\begin{equation}\label{loc:ibvp:pf7}
\begin{split}
\w{t}\|\p^{\le1}\tilde v_2^b\|_{L^2(\cK_R)}
&\ls\ln(2+t)\sup_{s\in[0,t]}\w{s}\|[\Delta,\chi_{[1,2]}]w_2^c(s)\|_{L^2(\cK_2)}\\
&\ls\ln(2+t)\sum_{|a|\le1}\sup_{s\in[0,t]}\w{s}\|\p_x^aw_2^c(s)\|_{L^\infty(|y|\le2)}.
\end{split}
\end{equation}
Thus it requires to treat $w_2^c$ in the right hand side of \eqref{loc:ibvp:pf7}.
By \eqref{loc:ibvp:pf3}, we have $w_2^c=w^{div}+w^r$ with $w^{div}$ and $w^r$ being the solutions to
\begin{equation}\label{loc:ibvp:pf8}
\begin{split}
&\Box w^{div}=\sum_{\alpha=0}^2\p_{\alpha}(\chi_{[2,3]}(x)G^\alpha),\\
&\Box w^r=\chi_{[2,3]}(x)G^r,\\
&(w^{div},\p_tw^{div})(0,x)=(0,0),\quad (w^r,\p_tw^r)(0,x)=(0,0),
\end{split}
\end{equation}
respectively.
Applying \eqref{pw:ivp:div} to $w^{div}$ and \eqref{pw:ivp2} to $w^r$ with $\mu=\nu=\eta/2$, respectively, yields
\begin{equation}\label{loc:ibvp:pf9}
\begin{split}
&\w{x}^{1/2}\w{t-|x|}^{1-\eta}|\p^{\le1}w^{div}|\ls\sum_{\alpha=0}^2\sup_{y\in\cK}|\p^{\le1}G^\alpha(0,y)|\\
&\qquad+\ln(2+t)\sum_{\alpha=0}^2\sum_{|a|\le2}\sup_{(s,y)\in[0,t]\times\overline{\R^2\setminus\cK_2}}\w{y}^{1/2}\cW_{1,1}(s,y)|Z^aG^\alpha(s,y)|,\\
&\w{t+|x|}^{1/2}\w{t-|x|}^{1/2}|\p^{\le1}w^r|\ls\sum_{\alpha=0}^2\sup_{y\in\cK}|G^r(0,y)|\\
&\qquad+\sum_{|a|\le1}\sup_{(s,y)\in[0,t]\times\overline{\R^2\setminus\cK_2}}\w{y}^{1/2}\cW_{3/2+\eta,1}(s,y)|\p^aG^r(s,y)|,
\end{split}
\end{equation}
where $\ds\sup_{y\in\cK}|\p^{\le1}G^\alpha(0,y)|$ and $\ds\sup_{y\in\cK}|G^r(0,y)|$ come from the initial data of $\p_tw^{div},\p_tw^r$ and \eqref{pw:ivp2} with $H=0$.
Collecting \eqref{loc:ibvp:pf2}, \eqref{loc:ibvp:pf6}, \eqref{loc:ibvp:pf7}-\eqref{loc:ibvp:pf9} derives \eqref{loc:ibvp}.
\end{proof}

Secondly, we show the pointwise spacetime decay estimate of the first order derivatives of solution to the linear wave equation
with the divergence form inhomogeneous source terms.
\begin{lemma}
{\bf (Spacetime decay estimate of $\p w$)}
Let $w(t,x)$ be the solution of the IBVP
\begin{equation*}
\left\{
\begin{aligned}
&\Box w=\sum_{\alpha=0}^2\p_{\alpha}G^\alpha+G^r,\qquad(t,x)\in[0,\infty)\times\cK,\\
&\frac{\p}{\p\bn}w=0,\qquad \quad \qquad\qquad (t,x)\in[0,\infty)\times\p\cK,\\
&(w,\p_tw)(0,x)=(w_0,w_1)(x),\quad x\in\cK,
\end{aligned}
\right.
\end{equation*}
where $(w_0,w_1)$ has compact support and $\supp_x(G^\alpha,G^r)(t,x)\subset\{x: |x|\le t+M_0\}$.
Then we have that for any $\eta\in(0,1/2)$,
\begin{equation}\label{dpw:ibvp}
\begin{split}
&\quad\w{x}^{1/2}\w{t-|x|}|\p w|\ls\|(w_0,w_1)\|_{H^9(\cK)}\\
&\quad+\ln(2+t)\sum_{|a|\le8}\sup_{s\in[0,t]}\w{s}\|\p^a(G^r,\p^{\le1}G^\alpha)(s)\|_{L^2(\cK_3)}\\
&\quad+\ln(2+t)[\sum_{|a|\le8}\sup_{y\in\cK}|Z^a(G^r,\p^{\le1}G^\alpha)(0,y)|+\w{t}^\eta\sup_{y\in\cK}|\p^{\le1}G^\alpha(0,y)|]\\
&\quad+\w{t}^\eta\ln^2(2+t)\sum_{|a|\le10}\sup_{(s,y)\in[0,t]\times\overline{\R^2\setminus\cK_2}}\w{y}^{1/2}\cW_{1,1}(s,y)|Z^aG^\alpha(s,y)|\\
&\quad+\ln(2+t)\sum_{|a|\le9}\sup_{(s,y)\in[0,t]\times\overline{\R^2\setminus\cK_2}}\w{y}^{1/2}\cW_{3/2+\eta,1}(s,y)|Z^aG^r(s,y)|.
\end{split}
\end{equation}
\end{lemma}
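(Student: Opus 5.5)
We follow the cutoff decomposition already used for \eqref{loc:ibvp}, now aiming at pointwise bounds on $\p w$. Write $w=w_1^b+w_2^b$ as in \eqref{loc:ibvp:pf1}, and $w_2^b=\chi_{[1,2]}(x)w_2^c+\tilde v_2^b$ as in \eqref{loc:ibvp:pf6}. Here $w_2^c=w^{div}+w^r$ solves the Cauchy problems \eqref{loc:ibvp:pf8}.

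\textbf{The free pieces.} On the support of $\chi_{[1,2]}$ we estimate $\p w^{div}$ and $\p w^r$ directly by the Cauchy estimates.
\begin{itemize}
\item For $w^r$ we apply \eqref{dpw:ivp} with $\mu=\nu=\eta/2$. This gives $\w{x}^{1/2}\w{t-|x|}^{1+\eta/2}|\p w^r|$ bounded by the $\cW_{3/2+\eta,1}$-weighted norm of $Z^{\le1}G^r$ in the last line of \eqref{dpw:ibvp}.
\item For $w^{div}$, \eqref{pw:ivp:div} only controls $w^{div}$ itself, with the weaker weight $\w{t-|x|}^{1-\eta}$. So we write $\p w^{div}$ as $\p$ applied to $w^{div}$ and use the identity $\Box\p_\beta w^{div}=\sum_\alpha\p_\alpha\p_\beta(\chi G^\alpha)$. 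We then trade one derivative for the weight in $t-|x|$ via the standard relation $\w{t-|x|}|\p v|\ls|Z^{\le1}v|+$ terms with $t|\Box v|$, valid away from the obstacle; this is the 2-D analogue of the Klainerman--Sideris inequality used with the fields $\p,\Omega$ and the scaling only at the Cauchy level.
\item A cleaner route: since the Cauchy problem allows the full Lorentz/scaling fields, decompose $\p_\alpha(\chi G^\alpha)=\p_t(\chi G^0)+\dots$ and write $\p w^{div}$ as a solution with divergence source $\p_\gamma(\p_\beta\chi G^\alpha)$. The $Z^a$ loss then explains the index jump from $|a|\le2$ to $|a|\le10$ in the $\cW_{1,1}$ term, and the $\w{t}^\eta\ln^2$ factor is inherited from \eqref{pw:ivp:div} (with $\eta$ renamed).
\item The initial-value contributions of $\p_tw^{div},\p_tw^r$ produce the terms $\sup_y|Z^a(G^r,\p^{\le1}G^\alpha)(0,y)|$.
\end{itemize}

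\textbf{The boundary pieces $w_1^b$ and $\tilde v_2^b$.} Both have sources supported in $|x|\le3$. For these we use the known IBVP reduction, as in \cite{Kubo13,HouYinYuan24}, which represents the solution near the obstacle as a cutoff of itself plus a Cauchy solution with compactly supported source. Concretely, for a solution $z$ of the Neumann problem with $\supp_x\Box z\subset\{|x|\le3\}$ we set $z=(1-\chi_{[3,4]})z+\chi_{[3,4]}z$. Then $\chi_{[3,4]}z$ solves a Cauchy problem whose source is $\chi\Box z-[\Delta,\chi]z$, supported in $|x|\le4$. By \eqref{dpw:compact},
\begin{equation*}
\w{x}^{1/2}\w{t-|x|}|\p(\chi_{[3,4]}z)|\ls\sup_{s\le t}\w{s}\|(1-\Delta)^3(\chi\Box z-[\Delta,\chi]z)(s)\|_{L^\infty}.
\end{equation*}
By Sobolev embedding in $\cK_4$, the right-hand side is bounded by $\sup_s\w{s}\|\p^{\le8}z(s)\|_{L^2(\cK_5)}$ plus the corresponding norm of $\Box z$. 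Inside $|x|\le4$ we have $\w{x}^{1/2}\w{t-|x|}\approx\w{t}$, so the near-boundary part is bounded in the same way through Sobolev embedding. This reduces everything to weighted local energy $\w{t}\|\p^{\le 9}z\|_{L^2(\cK_R)}$.

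\textbf{Local energy with higher derivatives.} We use a higher-order version of \eqref{loc:ibvp}, obtained by applying it to $\p_t^jz$ and then using the elliptic estimate \eqref{ellip} for spatial derivatives, as in the proof of \eqref{loc:decay+}.
\begin{itemize}
\item For $w_1^b$ this yields the terms $\|(w_0,w_1)\|_{H^9}$ and $\ln(2+t)\sup_s\w{s}\|\p^{\le8}(G^r,\p^{\le1}G^\alpha)\|_{L^2(\cK_3)}$.
\item For $\tilde v_2^b$, by \eqref{loc:decay} the source $[\Delta,\chi_{[1,2]}]w_2^c$ needs $\w{s}|\p^{\le9}w_2^c|$ on $1\le|y|\le2$. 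Applying \eqref{pw:ivp:div} and \eqref{pw:ivp2} to $Z^aw^{div}$ and $Z^aw^r$ (derivatives commute with the Cauchy problem) with $\w{t-|y|}\approx\w{s}$ there, we get $\w{s}^{1-\eta}$ times the weighted suprema. This gives exactly the $\w{t}^\eta\ln^2(2+t)$ and $\ln(2+t)$ terms with $|a|\le10$ and $|a|\le9$.
\end{itemize}

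\textbf{Main obstacle.} The delicate step is the free divergence part $w^{div}$ far from the obstacle: getting the full weight $\w{t-|x|}^{1}$ for $\p w^{div}$ when \eqref{pw:ivp:div} gives only $\w{t-|x|}^{1-\eta}$ for $w^{div}$. We would first try to absorb the $\eta$-loss into the $\w{t}^\eta$ prefactor, using $\w{t-|x|}^\eta\le\w{t}^\eta$ in the region $|x|\le 2t$; for $|x|\ge 2t$ we have $\w{t-|x|}\approx\w{x}$, so the loss is handled by $Z$-derivatives together with $\w{x}|\p v|\ls|Z^{\le1}v|$-type bounds.
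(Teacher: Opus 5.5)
Your decomposition $w=w_1^b+\chi_{[1,2]}w_2^c+\tilde v_2^b$ matches the paper. So does your treatment of the boundary pieces: local decay \eqref{loc:decay} near $\p\cK$, a cutoff plus \eqref{dpw:compact} away from it, and bounds on $\w{s}|\p^aw_2^c|$, $|a|\le9$, on $|y|\le2$ via \eqref{pw:ivp:div}, \eqref{pw:ivp2}. The gap is the far-field bound for $\p w^{div}$, the step you call the main obstacle.

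\textbf{Your three routes for $\p w^{div}$.} Your first route uses a pointwise inequality $\w{t-|x|}|\p v|\ls|Z^{\le1}v|+t|\Box v|$, which is false even if the scaling field $S$ is added. At a point, $Sv$ fixes only one combination of $\p_tv,\p_rv$, while $\Box v$ involves second derivatives. So one can have $v=Sv=\Omega v=\Box v=0$ and $|\p v|=1$ there: the left side is $\w{t-|x|}$ and the right side is $O(1)$. Moreover, controlling $Sw^{div}$ would require $SG^\alpha$, which is not in \eqref{dpw:ibvp}.

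Your second route applies \eqref{pw:ivp:div}, which is stated for zero data, to $\p_\beta w^{div}$. For $\beta=0$ the data are not zero: $\p_t^2w^{div}(0)=\sum_\alpha\p_\alpha(\chi_{[2,3]}G^\alpha)(0)$. The part $\chi_{[2,3]}\p_tG^0(0,\cdot)$ is not a spatial divergence, so the free wave you must split off generally has nonzero mean. It is of size $\w{t+|x|}^{-1/2}\w{t-|x|}^{-1/2}$ inside the cone. Multiplied by $\w{x}^{1/2}\w{t-|x|}$ it grows like $\w{t}^{1/2}$ at $|x|\approx t/2$. None of the data terms in \eqref{dpw:ibvp}, which grow at most like $\w{t}^\eta\ln(2+t)$ with $\eta<1/2$, can absorb this. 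So your claim that the initial-value contributions only produce $\sup_y|Z^a(\cdots)(0,y)|$ fails in the far field.

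Your fallback for $|x|\ge2t$ uses $\w{x}|\p v|\ls|Z^{\le1}v|$, which is false for $Z=\{\p,\Omega\}$. That region is, however, empty for $t>M_0$ by the support assumption.

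\textbf{The missing observation.} Far from the obstacle no divergence structure is needed. As in the paper, apply \eqref{dpw:ivp} with $\mu=\nu=\eta/2$ to $\p^aw_2^c$, $|a|\le8$, treating $H=\p^a[\sum_\alpha\p_\alpha(\chi_{[2,3]}G^\alpha)+\chi_{[2,3]}G^r]$ as an ordinary source.
\begin{itemize}
\item Since \eqref{dpw:ivp} bounds $\p$ of the solution, the free waves from the nonzero data of $\p^aw_2^c$ always carry a derivative and give only admissible data terms.
\item The lost derivative is paid by the weight $\cW_{1+\eta,1}$ on $Z^{\le9}\p^{\le1}G^\alpha$. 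This is $\ls\w{t}^\eta\cW_{1,1}$ on $Z^{\le10}G^\alpha$, because $|y|\le s+M_0$ and $s\le t$ give $\w{s+|y|}^\eta\ls\w{t}^\eta$. For $G^r$, $\cW_{1+\eta,1}\le\cW_{3/2+\eta,1}$.
\item This controls $\chi_{[1,2]}\p w_2^c$ everywhere, and also $\w{s}|\p^aw_2^c|$ for $1\le|a|\le9$ on $|y|\le2$.
\end{itemize}
The divergence estimate \eqref{pw:ivp:div} is then needed only for the undifferentiated $w_2^c$ on $|y|\le2$, where free waves with compact data decay like $\w{s}^{-1}$. It enters through the source of $\tilde v_2^b$ and through the term $\p(\chi_{[1,2]})w_2^c$, which your proposal omits. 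This near-obstacle contribution, with the extra $\ln(2+t)$ from \eqref{loc:decay}, is what produces the $\w{t}^\eta\ln^2(2+t)\cW_{1,1}$ term, as your $\tilde v_2^b$ bullet correctly finds. It does not come from the far-field free part, as your first bullet suggests.
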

\begin{remark}
From the last two lines in \eqref{loc:ibvp} and \eqref{dpw:ibvp}, it is obvious to know that the weight
$\cW_{1,1}(s,y)$ before the divergence part $G^\alpha$ is weaker than the weight $\cW_{3/2+\eta,1}(s,y)$ before $G^r$.
\end{remark}
\begin{proof}
Let $w_1^b$, $w_2^b$, $w_2^c$, $\tilde v_2^b$ be defined by \eqref{loc:ibvp:pf1}, \eqref{loc:ibvp:pf3} and \eqref{loc:ibvp:pf5}, respectively.

For the estimate of $w_1^b$ in $\cK_R$ with $R>4$, it follows from the fact of $\supp_x\Box w_1^b\subset\{x:|x|\le3\}$
and \eqref{loc:decay} with $m=3$ that
\begin{equation}\label{dpw:pf1}
\begin{split}
&\w{t}\|\p w_1^b\|_{L^\infty(\cK_R)}\ls\w{t}\|\p w_1^b\|_{H^2(\cK_{R+1})}\\
&\ls\|(w_0,w_1)\|_{H^3(\cK)}+\ln(2+t)\sum_{|a|\le2}\sup_{s\in[0,t]}\w{s}\|\p^a\Box w_1^b(s)\|_{L^2(\cK)}\\
&\ls\|(w_0,w_1)\|_{H^3(\cK)}+\ln(2+t)\sum_{\alpha=0}^2\sum_{|a|\le2}\sup_{s\in[0,t]}\w{s}\|\p^a(G^r,\p^{\le1}G^\alpha)(s)\|_{L^2(\cK_3)}.
\end{split}
\end{equation}
Next, we treat $w_1^b$ in the region $|x|\ge R$.
Let $w_1^c=\chi_{[3,4]}w_1^b$ be a function on $\R^2$.
Then $w_1^c$ solves the Cauchy problem
\begin{equation*}
\left\{
\begin{aligned}
&\Box w_1^c=-[\Delta,\chi_{[3,4]}]w_1^b,\\
&(w_1^c,\p_tw_1^c)(0,x)=(\chi_{[3,4]}w_0,\chi_{[3,4]}w_1).
\end{aligned}
\right.
\end{equation*}
By $\supp_x[\Delta,\chi_{[3,4]}]w_1^b\subset\{x:|x|\le4\}$, \eqref{dpw:ivp} with $H=0$ and \eqref{dpw:compact},
we can arrive at
\begin{equation}\label{dpw:pf2}
\begin{split}
&\quad\; \w{x}^{1/2}\w{t-|x|}|\p w_1^c|\\
&\ls\|(w_0,w_1)\|_{W^{2,\infty}(\cK)}+\sum_{|a|\le6}\sup_{(s,y)\in[0,t]\times\R^2}\w{s}|\p^a[\Delta,\chi_{[3,4]}]w_1^b(s,y)|\\
&\ls\|(w_0,w_1)\|_{H^4(\cK)}+\sum_{|a|\le6}\sup_{s\in[0,t]}\w{s}\|\p^a[\Delta,\chi_{[3,4]}]w_1^b(s,\cdot)\|_{H^2(\R^2)}\\
&\ls\|(w_0,w_1)\|_{H^4(\cK)}+\sum_{|a|\le9}\sup_{s\in[0,t]}\w{s}\|\p^aw_1^b(s,\cdot)\|_{L^2(\cK_4)}\\
&\ls\|(w_0,w_1)\|_{H^9(\cK)}+\ln(2+t)\sum_{|a|\le8}\sup_{s\in[0,t]}\w{s}\|\p^a(G^r,\p^{\le1}G^\alpha)(s)\|_{L^2(\cK_3)},
\end{split}
\end{equation}
where the last three inequalities are derived from the Sobolev embedding and \eqref{loc:decay} with $m=9$.
Note that $\p w_1^b=\p w_1^c$ holds in the region $|x|\ge4$, then \eqref{dpw:pf1} with $R>4$ and \eqref{dpw:pf2} lead to that for $x\in\cK$,
\begin{equation}\label{dpw:pf3}
\w{x}^{1/2}\w{t-|x|}|\p w_1^b|
\ls\|(w_0,w_1)\|_{H^9(\cK)}+\ln(2+t)\sum_{|a|\le8}\sup_{s\in[0,t]}\w{s}\|\p^a(G^r,\p^{\le1}G^\alpha)(s)\|_{L^2(\cK_3)}.
\end{equation}

We now deal with $w_2^b$.
From \eqref{loc:ibvp:pf6}, it holds that $\p w_2^b=\chi_{[1,2]}(x)\p w_2^c+\p(\chi_{[1,2]}(x))w_2^c+\p\tilde v_2^b$.
Due to $\supp_x\Box\tilde v_2^b\subset\{x:|x|\le2\}$, similarly to \eqref{dpw:pf1}-\eqref{dpw:pf3} for $w_1^b$,
one then has that for $\tilde v_2^b$,
\begin{equation}\label{dpw:pf4}
\begin{split}
\w{t}\|\p \tilde v_2^b\|_{L^\infty(\cK_R)}
&\ls\w{t}\|\p \tilde v_2^b\|_{H^2(\cK_{R+1})}\\
&\ls\ln(2+t)\sum_{|a|\le2}\sup_{s\in[0,t]}\w{s}\|\p^a[\Delta,\chi_{[1,2]}]w_2^c(s)\|_{L^2(\cK_2)}\\
&\ls\ln(2+t)\sum_{|a|\le3}\sup_{s\in[0,t]}\w{s}\|\p^aw_2^c(s)\|_{L^\infty(|y|\le2)}.
\end{split}
\end{equation}

To estimate $\p \tilde v_2^b$ in the region $\cK\cap\{|x|\ge3\}$, we now set $v_2^c=\chi_{[2,3]}\tilde v_2^b$.
It follows from $\supp_x\Box v_2^c=\supp_x[\Delta,\chi_{[2,3]}]\tilde v_2^b\subset\{x:|x|\le3\}$ and \eqref{dpw:compact} that
\begin{equation}\label{dpw:pf5}
\begin{split}
\w{x}^{1/2}\w{t-|x|}|\p v_2^c|
&\ls\sum_{|a|\le6}\sup_{s\in[0,t]}\w{s}\|\p^a[\Delta,\chi_{[2,3]}]\tilde v_2^b(s)\|_{H^2(\R^2)}\\
&\ls\sum_{|a|\le9}\sup_{s\in[0,t]}\w{s}\|\p^a\tilde v_2^b(s)\|_{L^2(\cK_3)}\\
&\ls\ln(2+t)\sum_{|a|\le9}\sup_{s\in[0,t]}\w{s}\|\p^aw_2^c(s)\|_{L^\infty(|y|\le2)}.
\end{split}
\end{equation}
Note that $\p \tilde v_2^b=\p v_2^c$ holds in the region $|x|\ge3$.
Combining \eqref{dpw:pf4} with \eqref{dpw:pf5} yields
\begin{equation}\label{dpw:pf6}
\begin{split}
\w{x}^{1/2}\w{t-|x|}|\p \tilde v_2^b|
\ls\ln(2+t)\sum_{|a|\le9}\sup_{s\in[0,t]}\w{s}\|\p^aw_2^c(s)\|_{L^\infty(|y|\le2)}.
\end{split}
\end{equation}
We now treat the term $\p\p^aw_2^c$ with $|a|\le8$ on the right hand side of \eqref{dpw:pf6}.
It can be concluded from \eqref{dpw:ivp} with $\mu=\nu=\eta/2$ that
\begin{equation}\label{dpw:pf7}
\begin{split}
\sum_{|a|\le8}\w{x}^{1/2}\w{t-|x|}|\p\p^aw_2^c|
\ls\sum_{|a|\le8}\sum_{\alpha=0}^2\sup_{y\in\cK}|Z^a(G^r,\p^{\le1}G^\alpha)(0,y)|\\
+\sum_{|a|\le9}\sup_{(s,y)\in[0,t]\times\overline{\R^2\setminus\cK_2}}\w{y}^{1/2}\cW_{1+\eta,1}(s,y)|Z^a(G^r,\p^{\le1}G^\alpha)(s,y)|.
\end{split}
\end{equation}
Collecting \eqref{loc:ibvp:pf9}, \eqref{dpw:pf3}, \eqref{dpw:pf6}, \eqref{dpw:pf7} and $\p w_2^b=\chi_{[1,2]}(x)\p w_2^c+\p(\chi_{[1,2]}(x))w_2^c+\p\tilde v_2^b$ yields \eqref{dpw:ibvp}.
\end{proof}

Thirdly, we establish the pointwise spacetime decay estimate of solution itself and
time decay estimate for the inhomogeneous linear wave equation.

\begin{lemma}
{\bf (Spacetime decay of $w$ and time decay of $\|\p^{\le1}\p_tw\|_{L^2(\cK_R)}$)}
Let $w(t,x)$ be the solution of the IBVP
\begin{equation*}
\left\{
\begin{aligned}
&\Box w=F,\qquad \qquad(t,x)\in[0,\infty)\times\cK,\\
&\frac{\p}{\p\bn}w=0,\qquad\qquad (t,x)\in[0,\infty)\times\p\cK,\\
&(w,\p_tw)(0,x)=(w_0,w_1)(x),\quad x\in\cK,
\end{aligned}
\right.
\end{equation*}
where $(w_0,w_1)$ has compact support and $\supp_x F(t,x)\subset\{x: |x|\le t+M_0\}$.
Then we have that for any $\mu,\nu>0$ with $\mu+\nu<1/2$,
\begin{equation}\label{pw:ibvp}
\begin{split}
&\w{t+|x|}^{1/2}\w{t-|x|}^{\mu}|w|\\
&\ls\|(w_0,w_1)\|_{H^5(\cK)}
+\sum_{|a|\le4}[\sup_{y\in\cK}|\p^aF(0,y)|+\sup_{s\in[0,t]}\w{s}^{1/2+\mu+\nu}\|\p^aF(s)\|_{L^2(\cK_3)}]\\
&\quad +\sum_{|a|\le5}\sup_{(s,y)\in[0,t]\times\overline{\R^2\setminus\cK_2}}\w{y}^{1/2}\cW_{1+\mu+\nu,1}(s,y)|\p^aF(s,y)|
\end{split}
\end{equation}
and
\begin{equation}\label{locdt}
\begin{split}
&\w{t}\|\p^{\le1}\p_tw\|_{L^2(\cK_R)}\\
&\ls\|(w_0,w_1)\|_{H^2(\cK)}
+\sum_{|a|\le1}[\sup_{y\in\cK}|\p^aF(0,y)|+\sup_{s\in[0,t]}\w{s}^{1+\mu}\|\p^aF(s)\|_{L^2(\cK_3)}]\\
&\quad +\sum_{|a|\le2}\sup_{(s,y)\in[0,t]\times\overline{\R^2\setminus\cK_2}}\w{y}^{1/2}\cW_{1+\mu+\nu,1}(s,y)|Z^aF(s,y)|.
\end{split}
\end{equation}
\end{lemma}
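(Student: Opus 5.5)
We follow the cutoff decomposition already used for \eqref{loc:ibvp} and \eqref{dpw:ibvp}, now with no divergence part ($G^\alpha=0$, $G^r=F$). Let $w_1^b$ solve the Neumann IBVP with source $(1-\chi_{[2,3]})F$ and data $(w_0,w_1)$. Let $w_2^b$ solve the Neumann IBVP with source $\chi_{[2,3]}F$ and zero data, so that $w=w_1^b+w_2^b$. As in \eqref{loc:ibvp:pf3}--\eqref{loc:ibvp:pf6}, let $w_2^c$ be the Cauchy solution with source $\chi_{[2,3]}F$, extended by zero into $\cO$, and let $\tilde v_2^b$ solve the Neumann problem with source $[\Delta,\chi_{[1,2]}]w_2^c$. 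Then
\[
w_2^b=\chi_{[1,2]}w_2^c+\tilde v_2^b .
\]
The point where this differs from the earlier lemmas is that every compactly supported piece must be handled with the estimate \eqref{loc:decay+}, which carries no $\ln(2+t)$ loss, instead of \eqref{loc:decay}. That is why the weights $\w{s}^{1/2+\mu+\nu}$ and $\w{s}^{1+\mu}$ appear in the statement.

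\emph{Estimate \eqref{pw:ibvp}.} For $w_2^c$ apply \eqref{pw:ivp1}, with $\nu$ replaced by a small positive constant, to get $\w{t+|x|}^{1/2}\w{t-|x|}^{\mu}|\p^{\le 3}w_2^c|$. This is controlled by the $F(0,\cdot)$ term, which enters through the data of $\p_t^jw_2^c$, plus the weighted sup of $\w{y}^{1/2}\cW_{1+\mu,1+\nu}|\p^aF|$. Since $\chi_{[2,3]}F$ is supported in $|y|\ge2$, $\cW_{1+\mu,1+\nu}\ls\cW_{1+\mu+\nu,1}$ there. This bounds $\chi_{[1,2]}w_2^c$, and also $\|\p^{\le 3}w_2^c(s)\|_{L^\infty(|y|\le2)}\ls\w{s}^{-1/2-\mu}(\cdots)$.

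For $w_1^b$ and $\tilde v_2^b$ both sources are supported in $|x|\le3$. Near the obstacle, \eqref{loc:decay+} with exponent $\eta=1/2+\mu$ and Sobolev embedding give $\w{t}^{1/2+\mu}\|\p^{\le2}w_1^b\|_{L^2(\cK_R)}$. This is bounded by the data norm plus $\sup_s\w{s}^{1/2+\mu+\nu}\|\p^{\le 3}F(s)\|_{L^2(\cK_3)}$. The same bound holds for $\tilde v_2^b$, with the commutator source controlled through $w_2^c$ as above; here we must take the $\nu$-margin in \eqref{pw:ivp1} slightly smaller than $\nu$. Away from the obstacle, cut off with $\chi_{[3,4]}$ and apply \eqref{pw:compact}. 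This needs $\w{s}^{1/2+\nu'}|\p^{\le 2}[\Delta,\chi]w_1^b|$, which is again bounded through \eqref{loc:decay+} on $\cK_4$ with a few more derivatives; this accounts for the count of $5$ and $4$ derivatives in the statement.

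\emph{Estimate \eqref{locdt}.} The same decomposition applies to $\p_t w$, with a full power $\w{t}$. For $w_1^b$, use \eqref{loc:decay+} with $\eta=1$ and the extra $\mu$. For $\chi_{[1,2]}w_2^c$ and for the source of $\tilde v_2^b$ we need $\w{s}\,|\p^{\le2}\p_t w_2^c|$ on $|y|\le 2$. Here \eqref{dpw:ivp} supplies $\w{x}^{1/2}\w{t-|x|}^{1+\mu}|\p v|$ with weight $\cW_{1+\mu+\nu,1}|Z^{\le1}H|$. Applying it to $\p^{\le1}w_2^c$, on $|x|\le 2$ this gives decay $\w{t}^{-1-\mu}$, which is exactly enough to feed into \eqref{loc:decay+} for $\tilde v_2^b$ with $\eta=1$.

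\emph{Main obstacle.} The delicate step is ensuring no logarithm survives in either estimate: every application to compactly supported sources must go through \eqref{loc:decay+}, and each feeding source must decay with a strictly better power. For \eqref{locdt} this means the Cauchy part must give $\w{s}^{-1-\mu}$ locally. That holds only for derivatives (via \eqref{dpw:ivp}), not for $w_2^c$ itself. So I will arrange the commutator $[\Delta,\chi_{[1,2]}]$ acting on $\p_t w$ to involve only derivatives of $w_2^c$, by commuting $\p_t$ through first: $\p_t\tilde v_2^b$ solves the problem with source $[\Delta,\chi_{[1,2]}]\p_tw_2^c$.
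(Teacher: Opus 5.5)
Your decomposition $w=w_1^b+\chi_{[1,2]}w_2^c+\tilde v_2^b$ is the paper's, and your argument for \eqref{locdt} is the paper's as well. You commute $\p_t$ through so that $\p_t\tilde v_2^b$ has source $[\Delta,\chi_{[1,2]}]\p_tw_2^c$ and zero data. You apply \eqref{dpw:ivp} to $\p^{\le1}w_2^c$ to get $\w{s}^{-1-\mu}$ decay on $|y|\le2$. You close with \eqref{loc:decay+} for $\eta=1$ with margin $\mu$.

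The gap is in \eqref{pw:ibvp}, at the step ``the same bound holds for $\tilde v_2^b$.''

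\emph{What fails.} You apply \eqref{pw:ivp1} with first exponent $\mu$. This gives only $|\p^a w_2^c(s,y)|\ls\w{s}^{-1/2-\mu}(\cdots)$ on $|y|\le2$, which is exactly the target rate. Two steps need strictly more:
\begin{itemize}
\item \eqref{loc:decay+} with $\eta=1/2+\mu$ needs the source $[\Delta,\chi_{[1,2]}]w_2^c$ weighted by $\w{s}^{1/2+\mu+\delta}$ for some $\delta>0$.
\item The exterior piece $\chi_{[2,3]}\tilde v_2^b$, handled through \eqref{pw:compact}, needs $\w{s}^{1/2+\nu'}$ with $\nu'>\mu$.
\end{itemize}
With only the borderline rate, Duhamel gives $\int_0^t\w{t-s}^{-1}\w{s}^{-1/2-\mu}\,ds\approx\w{t}^{-1/2-\mu}\ln(2+t)$. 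That is precisely the logarithmic loss you said must be avoided. Taking the second parameter of \eqref{pw:ivp1} smaller does not help: it only changes the weight $\cW_{1+\mu,1+\nu}$ on the right, not the factor $\w{t-|x|}^{\mu}$ on the left.

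\emph{The repair (what the paper does).} Shift part of the $\nu$-budget into the first slot. Apply \eqref{pw:ivp1} to $\Box\p^aw_2^c=\p^a(\chi_{[2,3]}F)$, $|a|\le5$, with parameters $(\mu+\nu/2,\nu/2)$; this is admissible precisely because $\mu+\nu<1/2$. Since $\min\{\w{y},\w{s-|y|}\}\le\w{s+|y|}$, one has $\cW_{1+\mu+\nu/2,1+\nu/2}\le\cW_{1+\mu+\nu,1}$, so the right-hand side is the one in the statement. Now $|\p^{\le5}w_2^c(s,y)|\ls\w{s}^{-1/2-\mu-\nu/2}(\cdots)$ on $|y|\le2$. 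This feeds, without any logarithm:
\begin{itemize}
\item \eqref{loc:decay+} with $\eta=1/2+\mu$ and margin $\nu/2$;
\item \eqref{pw:compact} with second parameter $\mu+\nu/4$, followed by \eqref{loc:decay+} with margin $\nu/4$.
\end{itemize}
This is where both the hypothesis $\mu+\nu<1/2$ and the weight $\cW_{1+\mu+\nu,1}$ in \eqref{pw:ibvp} come from.

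\emph{A smaller omission.} $\chi_{[3,4]}w_1^b$ has nonzero Cauchy data $(\chi_{[3,4]}w_0,\chi_{[3,4]}w_1)$. So besides \eqref{pw:compact} you also need \eqref{pw:ivp2} with $H=0$ for the homogeneous part.
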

\begin{proof}
Although the proof of \eqref{pw:ibvp} is similar to that for the Dirichlet boundary problem \cite{HouYinYuan24},
we still give the details for completeness and due to the different boundary condition and slightly different weight.
Let $w_1^b$ and $w_2^b$ be the solutions of
\begin{equation}\label{pw:ibvp:pf1}
\begin{split}
&\Box w_1^b:=(1-\chi_{[2,3]}(x))F(t,x),\\
&\Box w_2^b:=\chi_{[2,3]}(x)F(t,x),\\
&\frac{\p}{\p\bn}w_1^b|_{[0,\infty)\times\p\cK}=\frac{\p}{\p\bn}w_2^b|_{[0,\infty)\times\p\cK}=0,\\
&(w_1^b,\p_tw_1^b)(0,x)=(w_0,w_1)(x),\quad (w_2^b,\p_tw_2^b)(0,x)=(0,0),
\end{split}
\end{equation}
respectively.
Then one can find that $w=w_1^b+w_2^b$.
For the estimate of $w_1^b$ in $\cK_R$, it follows from the fact of $\supp_x\Box w_1^b\subset\{x:|x|\le3\}$ and \eqref{loc:decay+} with $m=2$ that
\begin{equation}\label{pw:ibvp:pf2}
\begin{split}
&\w{t}^{1/2+\mu}\|w_1^b\|_{L^\infty(\cK_R)}\ls\w{t}^{1/2+\mu}\|w_1^b\|_{H^2(\cK_{R+1})}\\
&\ls\|(w_0,w_1)\|_{H^2(\cK)}+\sum_{|a|\le1}\sup_{s\in[0,t]}\w{s}^{1/2+\mu+\nu}\|\p^a[(1-\chi_{[2,3]}(x))F]\|_{L^2(\cK)}\\
&\ls\|(w_0,w_1)\|_{H^2(\cK)}+\sum_{|a|\le1}\sup_{s\in[0,t]}\w{s}^{1/2+\mu+\nu}\|\p^aF(s)\|_{L^2(\cK_3)}.
\end{split}
\end{equation}
We now estimate $w_1^b$ in the region $|x|\ge4$.
To this end, let $w_1^c=\chi_{[3,4]}(x)w_1^b$, then one has 
\begin{equation*}
\begin{split}
&\Box w_1^c=-[\Delta,\chi_{[3,4]}]w_1^b,\\
&(w_1^c,\p_tw_1^c)(0,x)=(\chi_{[3,4]}(x)w_0,\chi_{[3,4]}(x)w_1).
\end{split}
\end{equation*}
It can be deduced from $\supp_x[\Delta,\chi_{[3,4]}]w_1^b\subset\{x:|x|\le4\}$, \eqref{pw:ivp2} with $H=0$ and \eqref{pw:compact} that
\begin{equation}\label{pw:ibvp:pf3}
\begin{split}
&\w{t+|x|}^{1/2}\w{t-|x|}^{\mu}|w_1^c|\\
&\ls\|(w_0,w_1)\|_{W^{1,\infty}(\cK)}+\sum_{|a|\le2}\sup_{(s,y)\in[0,t]\times\R^2}\w{s}^{1/2+\mu+\nu/2}|\p^a[\Delta,\chi_{[3,4]}]w_1^b|\\
&\ls\|(w_0,w_1)\|_{H^3(\cK)}+\sum_{|a|\le2}\sup_{s\in[0,t]}\w{s}^{1/2+\mu+\nu/2}\|\p^a[\Delta,\chi_{[3,4]}]w_1^b\|_{H^2(\cK)}\\
&\ls\|(w_0,w_1)\|_{H^5(\cK)}+\sum_{|a|\le4}\sup_{s\in[0,t]}\w{s}^{1/2+\mu+\nu}\|\p^aF(s)\|_{L^2(\cK_3)},
\end{split}
\end{equation}
where the last two inequalities are derived from the Sobolev embedding and \eqref{loc:decay+} with $m=5$.
Notice that $w_1^c=w_1^b$ holds in the region $|x|\ge4$.
Then \eqref{pw:ibvp:pf2} with $R>4$ and \eqref{pw:ibvp:pf3} lead to
\begin{equation}\label{pw:ibvp:pf4}
\w{t+|x|}^{1/2}\w{t-|x|}^{\mu}|w_1^b|
\ls\|(w_0,w_1)\|_{H^5(\cK)}+\sum_{|a|\le4}\sup_{s\in[0,t]}\w{s}^{1/2+\mu+\nu}\|\p^aF(s)\|_{L^2(\cK_3)}.
\end{equation}
Next, we turn to the estimate of $w_2^b$.
Denote $w_2^c$ the solution of the Cauchy problem
\begin{equation}\label{pw:ibvp:pf5}
\Box w_2^c=\left\{
\begin{aligned}
&\chi_{[2,3]}(x)F(t,x), &&\qquad x\in\cK=\R^2\setminus\cO,\\
&0, &&\qquad x\in\cO
\end{aligned}
\right.
\end{equation}
with the initial data $(w_2^c,\p_tw_2^c)(0,x)=(0,0)$.
Set $v_2^b=\chi_{[1,2]}(x)w_2^c$ on $\cK$ and then $v_2^b$ is the solution of the IBVP
\begin{equation}\label{pw:ibvp:pf6}
\begin{split}
&\Box v_2^b=\chi_{[1,2]}\Box w_2^c-[\Delta,\chi_{[1,2]}]w_2^c=\chi_{[2,3]}F-[\Delta,\chi_{[1,2]}]w_2^c,\\
&\frac{\p}{\p\bn}v_2^b|_{[0,\infty)\times\p\cK}=0,\quad (v_2^b,\p_tv_2^b)(0,x)=(0,0).
\end{split}
\end{equation}
In addition, let $\tilde v_2^b$ be the solution of the IBVP
\begin{equation}\label{pw:ibvp:pf7}
\Box\tilde v_2^b=[\Delta,\chi_{[1,2]}]w_2^c,\quad \frac{\p}{\p\bn}\tilde v_2^b|_{[0,\infty)\times\p\cK}=0,\quad (\tilde v_2^b,\p_t\tilde v_2^b)(0,x)=(0,0).
\end{equation}
Thereby, \eqref{pw:ibvp:pf1}, \eqref{pw:ibvp:pf6} and \eqref{pw:ibvp:pf7} imply $w_2^b=\chi_{[1,2]}(x)w_2^c+\tilde v_2^b$.
Note that by $\supp_x\Box\tilde v_2^b\subset\{x:|x|\le2\}$, similarly to the estimates \eqref{pw:ibvp:pf2} and \eqref{pw:ibvp:pf3} for $w_1^b$,
then we have
\begin{equation}\label{pw:ibvp:pf8}
\begin{split}
\w{t}^{1/2+\mu}\|\tilde v_2^b\|_{L^\infty(\cK_R)}
&\ls\w{t}^{1/2+\mu}\|\tilde v_2^b\|_{H^2(\cK_{R+1})}\\
&\ls\sum_{|a|\le1}\sup_{s\in[0,t]}\w{s}^{1/2+\mu+\nu/2}\|\p^a[\Delta,\chi_{[1,2]}]w_2^c]\|_{L^2(\cK)}\\
&\ls\sum_{|a|\le2}\sup_{s\in[0,t]}\w{s}^{1/2+\mu+\nu/2}\|\p^aw_2^c\|_{L^\infty(|y|\le2)}.
\end{split}
\end{equation}
Denote $v_2^c=\chi_{[2,3]}\tilde v_2^b$, which solves the Cauchy problem $\Box v_2^c=-[\Delta,\chi_{[2,3]}]\tilde v_2^b$ with zero initial data.
In addition, $\supp_x\Box v_2^c\subset\{x:|x|\le3\}$ and \eqref{pw:compact} yield
\begin{equation}\label{pw:ibvp:pf9}
\begin{split}
\w{t+|x|}^{1/2}\w{t-|x|}^{\mu}|v_2^c|
&\ls\sum_{|a|\le2}\sup_{(s,y)\in[0,t]\times\R^2}\w{s}^{1/2+\mu+\nu/4}|\p^a[\Delta,\chi_{[2,3]}]\tilde v_2^b|\\
&\ls\sum_{|a|\le5}\sup_{s\in[0,t]}\w{s}^{1/2+\mu+\nu/4}\|\p^a\tilde v_2^b\|_{L^2(\cK_3)}\\
&\ls\sum_{|a|\le5}\sup_{s\in[0,t]}\w{s}^{1/2+\mu+\nu/2}\|\p^aw_2^c\|_{L^\infty(|y|\le2)}.
\end{split}
\end{equation}
By virtue of $v_2^c=\tilde v_2^b$ in the region $|x|\ge3$, collecting \eqref{pw:ibvp:pf8} with $R>3$ and \eqref{pw:ibvp:pf9} shows
\begin{equation}\label{pw:ibvp:pf10}
\w{t+|x|}^{1/2}\w{t-|x|}^{\mu}|\tilde v_2^b|
\ls\sum_{|a|\le5}\sup_{s\in[0,t]}\w{s}^{1/2+\mu+\nu/2}\|\p^aw_2^c\|_{L^\infty(|y|\le2)}.
\end{equation}
Note that $w_2^b=\chi_{[1,2]}(x)w_2^c+\tilde v_2^b$, it suffices to deal with the estimate of $w_2^c$, which solves \eqref{pw:ibvp:pf5}.
Applying \eqref{pw:ivp1} to $\Box\p^aw_2^c=\p^a(\chi_{[2,3]}(x)F(t,x))$ yields
\begin{equation}\label{pw:ibvp:pf11}
\begin{split}
\w{t+|x|}^{1/2}\w{t-|x|}^{\mu+\nu/2}\sum_{|a|\le5}|\p^aw_2^c|\ls\sum_{|a|\le4}\sup_{y\in\cK}|\p^aF(0,y)|\\
+\sum_{|a|\le5}\sup_{(s,y)\in[0,t]\times\R^2}\w{y}^{1/2}\cW_{1+\mu+\nu,1}(s,y)|\p^a(\chi_{[2,3]}F)(s,y)|,
\end{split}
\end{equation}
where the term $\ds\sup_{y\in\cK}|\p^aF(0,y)|$ comes from the initial data of $\p^aw_2^c$.
Thus, \eqref{pw:ibvp} can be achieved by \eqref{pw:ibvp:pf4}, \eqref{pw:ibvp:pf10} and \eqref{pw:ibvp:pf11}.

Finally, we start to prove \eqref{locdt}.
Applying \eqref{loc:decay+} to $w_1^b$ in \eqref{pw:ibvp:pf1} with $\eta=1$ yields
\begin{equation}\label{locdt:pf1}
\w{t}\|\p^{\le1}\p_tw_1^b\|_{L^2(\cK_R)}\ls\|(w_0,w_1)\|_{H^2(\cK)}+\sum_{|a|\le1}\sup_{s\in[0,t]}\w{s}^{1+\mu}\|\p^aF(s)\|_{L^2(\cK_3)}.
\end{equation}
Note that $\p_tw_2^b=\chi_{[1,2]}(x)\p_tw_2^c+\p_t\tilde v_2^b$.
Set $\phi=\p_t\tilde v_2^b$ and then
\begin{equation}\label{locdt:pf2}
\Box\phi=[\Delta,\chi_{[1,2]}]\p_tw_2^c,\quad \frac{\p}{\p\bn}\phi|_{[0,\infty)\times\p\cK}=0,\quad (\phi,\p_t\phi)(0,x)=(0,0).
\end{equation}
Since the estimate of $\p_t\tilde v_2^b=\phi$ is similar to \eqref{locdt:pf1} in terms of \eqref{locdt:pf2},
we have
\begin{equation}\label{locdt:pf3}
\w{t}\|\p^{\le1}\p_t\tilde v_2^b\|_{L^2(\cK_R)}=\w{t}\|\p^{\le1}\phi\|_{L^2(\cK_R)}
\ls\sum_{|a|\le1}\sup_{s\in[0,t]}\w{s}^{1+\mu}\|\p^a\p_tw_2^c(s,y)\|_{L^\infty(|y|\le2)}.
\end{equation}
By using \eqref{dpw:ivp} with \eqref{pw:ibvp:pf5}, one can get
\begin{equation*}
\begin{split}
\w{x}^{1/2}\w{t-|x|}^{1+\mu}\sum_{|a|\le1}|\p_t\p^aw_2^c|\ls\sum_{|a|\le1}\sup_{y\in\cK}|\p^aF(0,y)|\\
+\sum_{|a|\le2}\sup_{(s,y)\in[0,t]\times\overline{\R^2\setminus\cK_2}}\w{y}^{1/2}\cW_{1+\mu+\nu,1}(s,y)|Z^aF(s,y)|
\end{split}
\end{equation*}
This, together with \eqref{locdt:pf1} and \eqref{locdt:pf3}, derives \eqref{locdt}.
\end{proof}

Finally, we prove the pointwise spacetime decay estimate of the first order derivatives of solution to the linear wave equation
with a special divergence form source term.
\begin{lemma}
{\bf (Spacetime decay estimate of $\p w$)}
Let $w(t,x)$ be the solution of the IBVP
\begin{equation*}
\left\{
\begin{aligned}
&\Box w=\p_tG^0,\qquad \qquad(t,x)\in(0,\infty)\times\cK,\\
&\frac{\p}{\p\bn}w=0,\qquad\qquad\quad (t,x)\in[0,\infty)\times\p\cK,\\
&(w,\p_tw)(0,x)=(w_0,w_1)(x),\quad x\in\cK,
\end{aligned}
\right.
\end{equation*}
where $(w_0,w_1)$ has compact support and $\supp_x G^0(t,x)\subset\{x: |x|\le t+M_0\}$.
Then it holds that for any $\eta\in(0,1/2)$,
\begin{equation}\label{dtpw:ibvp}
\begin{split}
&\w{x}^{1/2}\w{t-|x|}|\p w|\ls\|(w_0,w_1,G^0(0))\|_{H^9(\cK)}+\sum_{|a|\le9}\sup_{s\in[0,t]}\w{s}^{1+\eta}\|\p^aG^0(s)\|_{L^2(\cK_3)}\\
&\quad+\sum_{|a|\le9}\sup_{y\in\cK}|\p^aG^0(0,y)|
+\sum_{|a|\le10}\sup_{(s,y)\in[0,t]\times\overline{\R^2\setminus\cK_2}}\w{y}^{1/2}\cW_{1+\eta,1}(s,y)|Z^aG^0(s,y)|.
\end{split}
\end{equation}
\end{lemma}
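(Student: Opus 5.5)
The plan is to exploit the fact that the source is a pure time derivative. Instead of treating $\p_tG^0$ as a general divergence term, which via \eqref{dpw:ibvp} would cost the $\ln^2(2+t)\w{t}^\eta$ loss and the weaker $\cW_{1,1}$ weight, we pass to a time antiderivative. Let $W$ solve the IBVP
\[
\Box W=G^0,\qquad \frac{\p}{\p\bn}W|_{[0,\infty)\times\p\cK}=0,\qquad (W,\p_tW)(0,x)=(0,w_0)(x).
\]
Then $\tilde w:=\p_tW$ satisfies $\Box\tilde w=\p_tG^0$ with the Neumann condition, $\tilde w(0)=w_0$, and $\p_t\tilde w(0)=\Delta W(0)+G^0(0)=G^0(0)$. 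Hence $w-\tilde w$ solves the homogeneous Neumann problem with data $(0,w_1-G^0(0))$. Its contribution $\w{x}^{1/2}\w{t-|x|}|\p(w-\tilde w)|$ is bounded by $\|(w_1,G^0(0))\|_{H^9(\cK)}$, using \eqref{dpw:ibvp} with $G^\alpha=G^r=0$. This accounts for the $G^0(0)$ term in the $H^9$ norm. It remains to bound $\p\p_tW$ pointwise.

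For $W$ I would repeat the cutoff decomposition of the proof of \eqref{pw:ibvp} and \eqref{locdt}. Write $W=W_1^b+W_2^b$, where $W_1^b$ carries $(1-\chi_{[2,3]})G^0$ and the data, and $W_2^b$ carries $\chi_{[2,3]}G^0$ with zero data. Then write $W_2^b=\chi_{[1,2]}W_2^c+\tilde v_2^b$, with $W_2^c$ the Cauchy solution of $\Box W_2^c=\chi_{[2,3]}G^0$. Time derivatives commute with all cutoffs and with the Neumann condition, so each piece can be differentiated in $t$ directly. The pieces are handled as follows.
\begin{itemize}
\item For $W_1^b$ near the boundary, apply \eqref{loc:decay+} with $\eta=1$ and $\mu=\eta$ to $\p_tW_1^b$, using Sobolev embedding on $\cK_{R+1}$. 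This produces $\sup_s\w{s}^{1+\eta}\|\p^aG^0(s)\|_{L^2(\cK_3)}$ with no logarithm, which is exactly where the compact-support source gains over the divergence structure.
\item For $W_1^b$ away from the boundary, use $\chi_{[3,4]}\p_tW_1^b$ together with \eqref{dpw:compact}, \eqref{dpw:ivp} for the data, and again \eqref{loc:decay+} with $m\approx 9$. This gives at most $9$ derivatives of $G^0$ in $L^2(\cK_3)$ and an $H^9$ norm of the data.
\item For $\p_t\tilde v_2^b$, the source is $[\Delta,\chi_{[1,2]}]\p_tW_2^c$, supported in $|x|\le2$. The same two steps, \eqref{loc:decay+} near the obstacle and \eqref{dpw:compact} for $\chi_{[2,3]}\p_t\tilde v_2^b$, reduce it to $\sup_s\w{s}^{1+\eta/2}\sum_{|a|\le9}\|\p^a\p_tW_2^c(s)\|_{L^\infty(|y|\le2)}$.
\end{itemize}

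Finally, $W_2^c$ is a Cauchy problem with source $\chi_{[2,3]}G^0$. Applying \eqref{dpw:ivp} with $\mu=\nu=\eta/2$ to $\p^a W_2^c$ for $|a|\le9$ gives
\[
\w{x}^{1/2}\w{t-|x|}^{1+\eta/2}|\p\p^aW_2^c|\ls \sum_{|a|\le9}\sup_y|\p^aG^0(0,y)|+\sum_{|a|\le10}\sup\w{y}^{1/2}\cW_{1+\eta,1}|Z^aG^0|.
\]
The first term comes from the initial data $\p_t\p^aW_2^c(0)=\p^a(\chi G^0)(0)$ after commuting. On $|y|\le2$ we have $\w{t-|y|}\approx\w{t}$, so the extra $\w{t-|x|}^{\eta/2}$ decay exactly supplies the $\w{s}^{1+\eta/2}$ weight needed above. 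The same estimate also controls $\chi_{[1,2]}\p\p_tW_2^c$ globally. Collecting the pieces yields \eqref{dtpw:ibvp}.

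The main obstacle is the near-boundary step for $\p_t\tilde v_2^b$. We must avoid \eqref{loc:decay}, which carries the logarithm, and use \eqref{loc:decay+} instead. That requires a strict surplus $\w{s}^{1+\eta'}$ of decay of $\p_tW_2^c$ on $|y|\le2$. This surplus is available only because $\p_tW$ gains the extra $\w{t-|x|}^{\mu}$ in \eqref{dpw:ivp}, whereas $W$ itself does not, and so the derivative counts (up to $10$ $Z$-derivatives of $G^0$) must be tracked carefully.
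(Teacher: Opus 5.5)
Your mechanism is the right one, and everything after your first step matches the paper. That includes using \eqref{loc:decay+} rather than \eqref{loc:decay} near the obstacle, taking the surplus $\w{s}^{1+\eta/2}$ from the $\w{t-|x|}^{1+\mu}$ gain of \eqref{dpw:ivp}, and the derivative counts $9$ and $10$.

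The gap is in the first step. You take the time antiderivative at the level of the Neumann problem, and the auxiliary problems this creates need not satisfy the compatibility conditions, even though $w$ does. With $(W,\p_tW)(0)=(0,w_0)$ you get $\p_t^2W(0)=G^0(0)$, so the second-order condition for $W$ is already $\p_{\bn}G^0(0,\cdot)=0$ on $\p\cK$. Correspondingly, $h=w-\p_tW$ has data $(0,w_1-G^0(0))$ with $\p_{\bn}(w_1-G^0(0))=-\p_{\bn}G^0(0)$, and the higher conditions read $\p_{\bn}\Delta^k(w_1-G^0(0))=0$. Nothing in the hypotheses gives this. It fails exactly where the lemma is used, in the proof of \eqref{BA5:impr}. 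There $w=\p_tu$, $G^0=Q(\p u,\p^2u)$ and $w_1-G^0(0)=\Delta u(0)$, while $\p_{\bn}\Delta u(0)=-\p_{\bn}Q(\p u,\p^2u)(0)$ is nonzero in general. So $h$ and $\p_tW_1^b$ (whose data near $\p\cK$ are $(w_0,G^0(0))$ instead of $(w_0,w_1)$) are not smooth up to the boundary. Their singularities, issued from $\{t=0\}\times\p\cK$, cancel only in the sum. Hence none of the following can be applied to these pieces separately: \eqref{loc:decay} or \eqref{loc:decay+} with $m=9$, the Sobolev embedding on $\cK_{R+1}$, or \eqref{dpw:ibvp} for $h$.

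The fix is to cut off before integrating in time, which is what the paper does. Split $w=w_1^b+w_2^b$ as follows:
\begin{itemize}
\item $w_1^b$ solves $\Box w_1^b=(1-\chi_{[2,3]})\p_tG^0$ with the original data $(w_0,w_1)$. It is compatible, since near $\p\cK$ this is the same problem as for $w$.
\item $w_2^b$ solves $\Box w_2^b=\p_t(\chi_{[2,3]}G^0)$ with zero data.
\end{itemize}
The near piece needs no antiderivative at all. Its source has compact $x$-support, so \eqref{loc:decay+} with $\eta=1$, $\mu=\eta$ (and \eqref{dpw:compact} for $\chi_{[3,4]}w_1^b$) applies directly. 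It costs only one extra derivative of $G^0$, and this is the source of the term $\sum_{|a|\le9}\sup_s\w{s}^{1+\eta}\|\p^aG^0(s)\|_{L^2(\cK_3)}$.

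The antiderivative is used only in free space. The paper lets $w_2^c$ solve $\Box w_2^c=\p_t(\chi_{[2,3]}G^0)$ with data $(0,\chi_{[2,3]}G^0(0))$, so that $w_2^c=\p_tw^c_{div}$ with $\Box w^c_{div}=\chi_{[2,3]}G^0$ and zero data. The correction $\tilde v_2^b$ then has data $(0,-\chi_{[2,3]}G^0(0))$, which vanish near the obstacle. From there your treatment of $\tilde v_2^b$ and of the Cauchy piece goes through unchanged. The $G^0(0)$ term in the $H^9$ norm then enters through $\tilde v_2^b$ rather than through your $h$.
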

\begin{proof}
Although the proof of \eqref{dtpw:ibvp} is much easier than that of \eqref{dpw:ibvp},
we still give the details for reader's convenience.
Let $w_1^b$ and $w_2^b$ be the solutions of
\begin{equation*}
\begin{split}
&\Box w_1^b:=(1-\chi_{[2,3]}(x))\p_tG^0,\\
&\Box w_2^b:=\p_t(\chi_{[2,3]}(x)G^0),\\
&\frac{\p}{\p\bn}w_1^b|_{[0,\infty)\times\p\cK}=\frac{\p}{\p\bn}w_2^b|_{[0,\infty)\times\p\cK}=0,\\
&(w_1^b,\p_tw_1^b)(0,x)=(w_0,w_1)(x),\quad (w_2^b,\p_tw_2^b)(0,x)=(0,0),
\end{split}
\end{equation*}
respectively.
Then we have $w=w_1^b+w_2^b$.
The estimate of $w_1^b$ in $\cK_R$ can be obtained by \eqref{loc:decay+} with $m=3$ and the fact of $\supp_x\Box w_1^b\subset\{x:|x|\le3\}$
as follows
\begin{equation}\label{dtpw:pf1}
\begin{split}
&\w{t}\|\p w_1^b\|_{L^\infty(\cK_R)}\ls\w{t}\|\p w_1^b\|_{H^2(\cK_{R+1})}\\
&\ls\|(w_0,w_1)\|_{H^3(\cK)}+\sum_{|a|\le2}\sup_{s\in[0,t]}\w{s}^{1+\eta}\|\p^a\Box w_1^b(s)\|_{L^2(\cK)}\\
&\ls\|(w_0,w_1)\|_{H^3(\cK)}+\sum_{\alpha=0}^2\sum_{|a|\le3}\sup_{s\in[0,t]}\w{s}^{1+\eta}\|\p^aG^0(s)\|_{L^2(\cK_3)}.
\end{split}
\end{equation}
Next we treat $w_1^b$ in the region $|x|\ge4$. Let $w_1^c=\chi_{[3,4]}(x)w_1^b$. Then
\begin{equation*}
\Box w_1^c=-[\Delta,\chi_{[3,4]}]w_1^b,\quad (w_1^c,\p_tw_1^c)(0,x)=(\chi_{[3,4]}(x)w_0,\chi_{[3,4]}(x)w_1).
\end{equation*}
From $\supp_x[\Delta,\chi_{[3,4]}]w_1^b\subset\{x:|x|\le4\}$, \eqref{dpw:ivp} with $H=0$ and \eqref{dpw:compact},
one can see that
\begin{equation}\label{dtpw:pf2}
\begin{split}
&\quad\; \w{x}^{1/2}\w{t-|x|}|\p w_1^c|\\
&\ls\|(w_0,w_1)\|_{W^{2,\infty}(\cK)}+\sum_{|a|\le6}\sup_{(s,y)\in[0,t]\times\R^2}\w{s}|\p^a[\Delta,\chi_{[3,4]}]w_1^b(s,y)|\\
&\ls\|(w_0,w_1)\|_{H^4(\cK)}+\sum_{|a|\le6}\sup_{s\in[0,t]}\w{s}\|\p^a[\Delta,\chi_{[3,4]}]w_1^b(s,\cdot)\|_{H^2(\R^2)}\\
&\ls\|(w_0,w_1)\|_{H^4(\cK)}+\sum_{|a|\le9}\sup_{s\in[0,t]}\w{s}\|\p^aw_1^b(s,\cdot)\|_{L^2(\cK_4)}\\
&\ls\|(w_0,w_1)\|_{H^9(\cK)}+\sum_{|a|\le9}\sup_{s\in[0,t]}\w{s}^{1+\eta}\|\p^aG^0(s)\|_{L^2(\cK_3)},
\end{split}
\end{equation}
where the last three inequalities have used the Sobolev embedding and \eqref{loc:decay+} with $m=9$.
By the fact of $w_1^c=w_1^b$ in the region $|x|\ge4$, \eqref{dtpw:pf1} with $R>4$ and \eqref{dtpw:pf2},
we obtain that for $x\in\cK$,
\begin{equation}\label{dtpw:pf3}
\w{x}^{1/2}\w{t-|x|}|\p w_1^b|
\ls\|(w_0,w_1)\|_{H^9(\cK)}+\sum_{|a|\le9}\sup_{s\in[0,t]}\w{s}^{1+\eta}\|\p^aG^0(s)\|_{L^2(\cK_3)}.
\end{equation}
We now treat $w_2^b$.  Let $w_2^c$ solve the Cauchy problem
\begin{equation}\label{dtpw:pf4}
\Box w_2^c=\left\{
\begin{aligned}
&\p_t(\chi_{[2,3]}(x)G^0),\qquad&& x\in\cK=\R^2\setminus\cO,\\
&0,&& x\in\overline{\cO}
\end{aligned}
\right.
\end{equation}
with $(w_2^c, \p_tw_2^c)(0,x)=(0,\chi_{[2,3]}(x)G^0(0,x))$.
Define $v_2^b=\chi_{[1,2]}(x)w_2^c$ on $\cK$ and then $v_2^b$ satisfies
\begin{equation*}
\Box v_2^b=\chi_{[1,2]}\Box w_2^c-[\Delta,\chi_{[1,2]}]w_2^c,\quad \frac{\p}{\p\bn}v_2^b|_{[0,\infty)\times\p\cK}=0,\quad (v_2^b,\p_tv_2^b)(0,x)=(0,\chi_{[2,3]}(x)G^0(0,x)).
\end{equation*}
Thus, $w_2^b=\chi_{[1,2]}(x)w_2^c+\tilde v_2^b$ holds, where $\tilde v_2^b$ is the solution of the IBVP
\begin{equation*}
\Box\tilde v_2^b=[\Delta,\chi_{[1,2]}]w_2^c,
\quad\frac{\p}{\p\bn}\tilde v_2^b|_{[0,\infty)\times\p\cK}=0,\quad (\tilde v_2^b,\p_t\tilde v_2^b)(0,x)=(0,-\chi_{[2,3]}(x)G^0(0,x)).
\end{equation*}
Due to $\supp_x\Box\tilde v_2^b\subset\{x:|x|\le2\}$, similarly to \eqref{dtpw:pf1}-\eqref{dtpw:pf3} for $w_1^b$,
one can get the estimate of $\tilde v_2^b$ as follows
\begin{equation}\label{dtpw:pf5}
\begin{split}
\w{t}\|\p \tilde v_2^b\|_{L^\infty(\cK_R)}
&\ls\w{t}\|\p \tilde v_2^b\|_{H^2(\cK_{R+1})}\\
&\ls\|G^0(0,x)\|_{H^2(\cK)}+\sum_{|a|\le2}\sup_{s\in[0,t]}\w{s}^{1+\eta/2}\|\p^a[\Delta,\chi_{[1,2]}]w_2^c(s)\|_{L^2(\cK_2)}\\
&\ls\|G^0(0,x)\|_{H^2(\cK)}+\sum_{|a|\le3}\sup_{s\in[0,t]}\w{s}^{1+\eta/2}\|\p^aw_2^c(s)\|_{L^\infty(|y|\le2)},
\end{split}
\end{equation}
which yields the estimate of $\p \tilde v_2^b$ in the region $\cK_R$ with $R>3$.
To estimate $\p \tilde v_2^b$ in the region $\cK\cap\{x:|x|\ge3\}$, set $v_2^c=\chi_{[2,3]}\tilde v_2^b$.
It follows from $\supp_x\Box v_2^c=\supp_x[\Delta,\chi_{[2,3]}]\tilde v_2^b\subset\{x:|x|\le3\}$ and \eqref{dpw:compact} that
\begin{equation}\label{dtpw:pf6}
\begin{split}
\w{x}^{1/2}\w{t-|x|}|\p v_2^c|
&\ls\sum_{|a|\le6}\sup_{s\in[0,t]}\w{s}\|\p^a[\Delta,\chi_{[2,3]}]\tilde v_2^b(s)\|_{H^2(\R^2)}\\
&\ls\sum_{|a|\le9}\sup_{s\in[0,t]}\w{s}\|\p^a\tilde v_2^b(s)\|_{L^2(\cK_3)}\\
&\ls\|G^0(0,x)\|_{H^8(\cK)}+\sum_{|a|\le9}\sup_{s\in[0,t]}\w{s}^{1+\eta/2}\|\p^aw_2^c(s)\|_{L^\infty(|y|\le2)}.
\end{split}
\end{equation}
By $\p \tilde v_2^b=\p v_2^c$ in the region $|x|\ge3$, \eqref{dtpw:pf5} and \eqref{dtpw:pf6}, we can arrive at
\begin{equation}\label{dtpw:pf7}
\begin{split}
\w{x}^{1/2}\w{t-|x|}|\p \tilde v_2^b|
\ls\|G^0(0,x)\|_{H^8(\cK)}+\sum_{|a|\le9}\sup_{s\in[0,t]}\w{s}^{1+\eta/2}\|\p^aw_2^c(s)\|_{L^\infty(|y|\le2)}.
\end{split}
\end{equation}
Next we treat the term $\p^aw_2^c$ with $|a|\le9$ on the right hand side of \eqref{dtpw:pf7}.
By the definition \eqref{dtpw:pf4} for $w_2^c$, one has $w_2^c=\p_tw_{div}^c$, where $w_{div}^c$ is the solution of
$\Box w_{div}^c=\chi_{[2,3]}(x)G^0$ with $(w_{div}^c,\p_tw_{div}^c)(0,x)=(0,0)$.
Applying \eqref{dpw:ivp} with $\mu=\nu=\eta/2$ to $\Box w_{div}^c$ yields
\begin{equation}\label{dtpw:pf8}
\begin{split}
&\sum_{|a|\le9}\w{x}^{1/2}\w{t-|x|}^{1+\eta/2}|\p^aw_2^c|\\
&\ls\sum_{|a|\le9}\w{x}^{1/2}\w{t-|x|}^{1+\eta/2}|\p^a\p_tw_{div}^c|\\
&\ls\sum_{|a|\le9}\sup_{y\in\cK}|\p^aG^0(0,y)|
+\sum_{|a|\le10}\sup_{(s,y)\in[0,t]\times\overline{\R^2\setminus\cK_2}}\w{y}^{1/2}\cW_{1+\eta,1}(s,y)|Z^aG^0(s,y)|.
\end{split}
\end{equation}
Collecting \eqref{dtpw:pf3}, \eqref{dtpw:pf5}-\eqref{dtpw:pf8} with $\p w_2^b=\chi_{[1,2]}(x)\p w_2^c+\p(\chi_{[1,2]}(x))w_2^c+\p\tilde v_2^b$
derives \eqref{dtpw:ibvp}.
\end{proof}

\section{Proof of Theorem \ref{thm1}}\label{sect4}

We make the following bootstrap assumption for $t\in[0,T_\ve]$,
\begin{equation}\label{thm1-BA}
\sum_{|a|\le N}|\p Z^au|\le\ve_1\w{x}^{-1/2}\w{t-|x|}^{-1}\w{t}^{\delta_1},
\end{equation}
where $T_\ve=\ve^{\delta-2}$, $\ve_1=C_0\ve\in(\ve,1)$ will be determined later and $\delta_1=\frac{\delta}{16}\in(0,\frac{1}{16})$
with $\delta>0$ being given in Theorem \ref{thm1}.

On the other hand, $t\le T_\ve$ and the smallness of $\ve_0$ ensure that for $0<\ve\le\ve_0$,
\begin{equation}\label{thm1-time:BA}
t^{4\delta_1+1/2}\ve_1\le t^{\frac14\delta+1/2}C_0\ve\le C_0\ve_0^{\frac14\delta^2}\le1.
\end{equation}
Note that due to $\supp(u_0,u_1)\subset\{x: |x|\le M_0\}$,
the solution $u$ of problem \eqref{QWE} is supported in $\{x\in\cK:|x|\le t+M_0\}$.

In addition, in this section, we denote
\begin{equation}\label{YHCC-1-Y}
\begin{split}
Q_p^{\alpha\beta}(\p u)&=\sum_{\mu=0}^2Q^{\alpha\beta\mu}\p_{\mu}u
+\sum_{\mu,\nu=0}^2Q^{\alpha\beta\mu\nu}\p_{\mu}u\p_{\nu}u.
\end{split}
\end{equation}
Then $Q(\p u,\p^2u)$ in \eqref{YHCC-1} can be written as
\begin{equation}\label{YHCC-1-S}
\begin{split}
Q(\p u,\p^2u)&=\sum_{\alpha,\beta=0}^2Q_p^{\alpha\beta}(\p u)\p^2_{\alpha\beta}u.
\end{split}
\end{equation}

\subsection{Energy estimates}\label{Sonn-1}

At first, we establish the energy inequality for $\ds\sum_{j\le2N}\|\p\p_t^ju(t)\|^2_{L^2(\cK)}$
with $u$ being the solution of \eqref{QWE}.

\begin{lemma}\label{Son-4.1}
{\bf (Estimate for $\ds\sum_{j\le2N}\|\p\p_t^ju(t)\|^2_{L^2(\cK)}$)}
Under the assumptions of Theorem \ref{thm1}, let $u$ be the solution of \eqref{QWE} and suppose that \eqref{thm1-BA} holds.
Then we have
\begin{equation}\label{thm1-energy:time}
\sum_{j\le2N}\|\p\p_t^ju(t)\|^2_{L^2(\cK)}
\ls\ve^2+\ve_1\int_0^t\sum_{j\le2N-1}\frac{\|\p^{\le1}\p\p_t^ju(s)\|^2_{L^2(\cK)}}{(1+s)^{1/2-\delta_1}}ds.
\end{equation}
\end{lemma}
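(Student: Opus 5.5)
We apply the standard quasilinear energy method to $\p_t^ju$, $j\le2N$. Time derivatives are tangent to the boundary, so $\frac{\p}{\p\bn}\p_t^ju=0$ on $[0,\infty)\times\p\cK$.

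\textbf{Equation for $\p_t^ju$.} Differentiating \eqref{QWE} in the form \eqref{YHCC-1-S} gives
\[
\Box\p_t^ju-\sum_{\alpha,\beta=0}^2Q_p^{\alpha\beta}(\p u)\p^2_{\alpha\beta}\p_t^ju=F_j:=\sum_{\substack{k+l=j\\ l<j}}C_{jkl}\,\p_t^k\big(Q_p^{\alpha\beta}(\p u)\big)\,\p^2_{\alpha\beta}\p_t^lu .
\]
Every term of $F_j$ is at least quadratic in $\p u$. We will use this equation as written, with the top-order quasilinear part kept on the left.

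\textbf{Energy identity.} Multiply the equation by $\p_t\p_t^ju$ and integrate over $[0,t]\times\cK$. Write $Q_p^{\alpha\beta}\p^2_{\alpha\beta}v\,\p_tv$ in the usual divergence form
\[
\p_\alpha\big(Q_p^{\alpha\beta}\p_\beta v\,\p_tv\big)-\tfrac12\p_t\big(Q_p^{\alpha\beta}\p_\alpha v\p_\beta v\big)-(\p_\alpha Q_p^{\alpha\beta})\p_\beta v\p_tv+\tfrac12(\p_tQ_p^{\alpha\beta})\p_\alpha v\p_\beta v .
\]
This yields a modified energy $E_j=\tfrac12\int_\cK\big(|\p v|^2+O(|\p u|)|\p v|^2\big)dx$, which is comparable to $\|\p v\|^2_{L^2}$ because of smallness.

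\textbf{Boundary terms.} The boundary integrals are $\int_{\p\cK}\frac{\p}{\p\bn}v\,\p_tv$, which vanishes by the Neumann condition, and $\int_{\p\cK}\sum Q_p^{\alpha\beta}(\p u)n_\alpha\p_\beta v\,\p_tv$. The latter vanishes by the admissibility condition \eqref{YHCC-2}, applied with $w=v=\p_t^ju$ and $\psi=u$, both of which satisfy the Neumann condition. This is exactly where \eqref{YHCC-2} enters. The initial energy is $\ls\ve^2$ by \eqref{initial:data-thm1} and the compatibility conditions.

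\textbf{Bulk terms.} The commutator-type terms are bounded by $|\p^2u|\,|\p\p_t^ju|^2$. For $F_j\,\p_t\p_t^ju$, in each product we put the factor with fewer derivatives in $L^\infty$ using \eqref{thm1-BA}, since $N$ is large relative to $2N$. This gives
\[
|\p Z^{\le N}u|\ls\ve_1\w{x}^{-1/2}\w{t-|x|}^{-1}\w{t}^{\delta_1}\le\ve_1(1+t)^{-1/2+\delta_1} .
\]
The bound $\w{x}^{-1/2}\w{t-|x|}^{-1}\ls(1+t)^{-1/2}$ holds because either $|x|\ge t/2$, or $|x|<t/2$ and then $\w{t-|x|}\approx\w t$. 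The high-derivative factor is put in $L^2$; it is at most $\|\p^{\le1}\p\p_t^lu\|$ with $l\le j-1$, or is the top term $\p\p_t^ju$ itself. Cubic terms carry an extra factor $\ve_1\w t^{\delta_1}\le1$ and are absorbed the same way.

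\textbf{Main obstacle.} The hard point is the top-order factor $\p^2\p_t^{j-1}u$ with mixed spatial derivatives, for example $\p_x^2\p_t^{j-1}u$. To stay within the right-hand side of \eqref{thm1-energy:time}, note that $\p^2\p_t^{j-1}u$ is contained in $\p^{\le1}\p\p_t^{j-1}u$ with $j-1\le2N-1$, so it is already allowed. The top-order term $\p\p_t^{2N}u$ appears only quadratically, via the coefficient terms $\ve_1(1+s)^{-1/2+\delta_1}\|\p\p_t^{2N}u\|^2$. These are also admitted, since $\|\p\p_t^{2N}u\|$ is dominated by $\|\p^{\le1}\p\p_t^{2N-1}u\|$.

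Summing over $j\le2N$ and using Cauchy--Schwarz then gives \eqref{thm1-energy:time}.
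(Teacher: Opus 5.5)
Your proposal is correct and follows essentially the same route as the paper. You differentiate in $t$, rewrite $Q_p^{\alpha\beta}(\partial u)\partial^2_{\alpha\beta}v\,\partial_t v$ in divergence form, and kill the boundary integrals using the Neumann condition together with the admissibility condition \eqref{YHCC-2} (with $w=\partial_t^ju$, $\psi=u$). You then bound the bulk terms by placing the factor with at most $N$ vector fields in $L^\infty$ via \eqref{thm1-BA}, which is exactly the paper's split $|b|\le N-1$ versus $|b|\ge N$, $|c|\le N$. The phrase ``$N$ is large relative to $2N$'' should instead say that in each product at most one factor carries more than $N$ derivatives, which is what your case analysis actually uses.
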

\begin{proof}
At first, applying $Z^a$ to \eqref{QWE} with \eqref{YHCC-1-S} yields
\begin{equation}\label{energy:time1}
\Box Z^au=\sum_{\alpha,\beta=0}^2Q_p^{\alpha\beta}(\p u)\p^2_{\alpha\beta}Z^au
+\sum_{\substack{b+c\le a,\\b<a}}\sum_{\alpha,\beta=0}^2C^a_{bc}\p^2_{\alpha\beta}Z^bu Z^c[Q_p^{\alpha\beta}(\p u)],
\end{equation}
where $C^a_{bc}$ are constants.
Multiplying \eqref{energy:time1} by $\p_tZ^au$ leads to
\begin{equation}\label{energy:time2}
\begin{split}
&\quad\;\frac12\p_t(|\p_tZ^au|^2+|\nabla Z^au|^2)-\dive(\p_tZ^au\nabla Z^au)\\
&=\sum_{\alpha,\beta=0}^2Q_p^{\alpha\beta}(\p u)\p^2_{\alpha\beta}Z^au\p_tZ^au
+\sum_{\substack{b+c\le a,\\b<a}}C^a_{bc}I_1^{abc},
\end{split}
\end{equation}
where
\begin{equation}\label{energy:time3}
I_1^{abc}:=\sum_{\alpha,\beta=0}^2\p_tZ^au\p^2_{\alpha\beta}Z^bu Z^c[Q_p^{\alpha\beta}(\p u)].
\end{equation}
For the first term in the second line of \eqref{energy:time2}, it follows from direct computation that
\begin{equation}\label{energy:time4}
\begin{split}
Q_p^{\alpha\beta}(\p u)\p^2_{\alpha\beta}Z^au\p_tZ^au&=\p_{\alpha}[Q_p^{\alpha\beta}(\p u)\p_{\beta}Z^au\p_tZ^au]
-\p_{\alpha}[Q_p^{\alpha\beta}(\p u)]\p_{\beta}Z^au\p_tZ^au\\
&\quad-\frac12\p_t[Q_p^{\alpha\beta}(\p u)\p_{\beta}Z^au\p_{\alpha}Z^au]
+\frac12\p_t[Q_p^{\alpha\beta}(\p u)]\p_{\beta}Z^au\p_{\alpha}Z^au,
\end{split}
\end{equation}
where the summation $\ds\sum_{\alpha,\beta=0}^2$ in \eqref{energy:time4} is omitted.
Let $Z^a=\p_t^j$ with $j=|a|$ and the notation $Z^a$ for other vector fields $Z$ is still used in the remaining part of this section.
Integrating \eqref{energy:time2} and \eqref{energy:time4} over $[0,t]\times\cK$
with the boundary conditions $\frac{\p}{\p\bn}\p_t^lu|_{[0,\infty)\times\p\cK}=0$ for any integer $l\ge0$ and the admissible
condition \eqref{YHCC-2} derives
\begin{equation}\label{energy:time5}
\begin{split}
&\|\p Z^au(t)\|^2_{L^2(\cK)}
\ls\|\p Z^au(0)\|^2_{L^2(\cK)}+\|\p u(0)\|_{L_x^\infty}\|\p Z^au(0)\|^2_{L^2(\cK)}\\
&\quad+\|\p u(t)\|_{L_x^\infty}\|\p Z^au(t)\|^2_{L^2(\cK)}
+\int_0^t\sum_{\substack{b+c\le a,\\b<a}}\|I_1^{abc}\|_{L^1(\cK)}+\|I_2^a\|_{L^1(\cK)}dxds,
\end{split}
\end{equation}
where
\begin{equation}\label{energy:time6}
I_2^a:=\frac12\p_t[Q_p^{\alpha\beta}(\p u)]\p_{\beta}Z^au\p_{\alpha}Z^au
-\p_{\alpha}[Q_p^{\alpha\beta}(\p u)]\p_{\beta}Z^au\p_tZ^au.
\end{equation}
It is pointed out that although $Z^a=\p_t^j$ has been taken here, $I_1^{abc}$ and $I_2^a$ can be still
treated for $Z=\{\p_t,\p_1,\p_2,\Omega\}$.

We next focus on the estimate of $I_1^{abc}$.
If $|b|\le N-1$, it follows from \eqref{thm1-BA} that
\begin{equation}\label{energy:time7}
\sum_{\substack{b+c\le a,\\|b|\le N-1}}\|I_1^{abc}\|_{L^1(\cK)}
\ls\ve_1(1+t)^{\delta_1-1/2}\sum_{j\le2N}\|\p\p_t^ju(t)\|^2_{L^2(\cK)}.
\end{equation}
When $|b|\ge N$, one can see that $|c|\le N$ and
\begin{equation}\label{energy:time8}
\sum_{\substack{b+c\le a,\\|b|\ge N}}\|I_1^{abc}\|_{L^1(\cK)}
\ls\ve_1(1+t)^{\delta_1-1/2}\sum_{j\le2N-1}\|\p^2\p_t^ju(t)\|^2_{L^2(\cK)}.
\end{equation}
Since the estimate of $I_2^a$ is much easier than that of $I_1^{abc}$, we omit the details here.
Substituting \eqref{energy:time7}-\eqref{energy:time8} into \eqref{energy:time5} yields \eqref{thm1-energy:time}.
\end{proof}

Next, we show the energy estimate for the local energy of  $\p^{\le1}u$ near the boundary
with $u$ being the solution of \eqref{QWE}. This will be applied to treat the higher order
energy estimates $\ds\sum_{|a|\le2N}\|\p\p^au\|_{L^2(\cK)}$ and $\ds\sum_{|a|\le2N-1}\|\p Z^au\|_{L^2(\cK)}$ later.

\begin{lemma}\label{Son-4.2}
{\bf (Energy estimate of $\p^{\le1}u$ near the boundary)}
Under the assumptions of Theorem \ref{thm1}, let $u$ be the solution of \eqref{QWE} and suppose that \eqref{thm1-BA} holds.
Then it holds that for $t\in[0,T_\ve]$,
\begin{equation}\label{thm1-loc:energyA}
\|\p^{\le1}u\|_{L^2(\cK_R)}\ls\ve\w{t}^{-1}+\ve_1^2\w{t}^{4\delta_1-1/2}.
\end{equation}
\end{lemma}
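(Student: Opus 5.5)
We apply the local energy decay estimate \eqref{loc:ibvp} to $w=u$, putting the quadratic part of $Q(\p u,\p^2u)$ in divergence form and treating the cubic part as a remainder. Concretely, we write
\begin{equation*}
G^\alpha=\frac12\sum_{\beta,\mu=0}^2\big(Q^{\alpha\beta\mu}\p_\beta u\p_\mu u+Q^{\beta\alpha\mu}\p_\beta u\p_\mu u-Q^{\beta\mu\alpha}\p_\beta u\p_\mu u\big),\qquad
G^r=\sum_{\alpha,\beta,\mu,\nu=0}^2Q^{\alpha\beta\mu\nu}\p^2_{\alpha\beta}u\p_\mu u\p_\nu u,
\end{equation*}
which matches the divergence identity recalled in the introduction. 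Since $u$ is supported in $\{|x|\le t+M_0\}$, the support hypothesis of \eqref{loc:ibvp} holds. We take $\eta=\delta_1$. The data term gives $\|(\ve u_0,\ve u_1)\|_{H^1}\ls\ve$, and the terms evaluated at $s=0$ are $O(\ve^2)$. It remains to bound the three weighted suprema using the bootstrap \eqref{thm1-BA}.

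\textbf{Exterior divergence term.} For $|a|\le2\le N$, the bootstrap gives $|Z^aG^\alpha|\ls\ve_1^2\w{y}^{-1}\w{s-|y|}^{-2}\w{s}^{2\delta_1}$. Multiplying by $\w{y}^{1/2}\cW_{1,1}=\w{y}^{1/2}\w{s+|y|}\min\{\w{y},\w{s-|y|}\}$ leaves at most $\ve_1^2\w{y}^{-1/2}\w{s+|y|}\w{s-|y|}^{-1}\w{s}^{2\delta_1}$. Near the cone with $|y|\sim s$ this is of size $\ve_1^2\w{s}^{1/2+2\delta_1}$. Hence this line of \eqref{loc:ibvp} contributes $\ve_1^2\w{t}^{1/2+3\delta_1}\ln^2(2+t)\ls\ve_1^2\w{t}^{1/2+4\delta_1}$.

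\textbf{Exterior remainder term.} The cubic remainder $G^r$ carries three decaying factors. With the weight $\cW_{3/2+\eta,1}$ it contributes at most $\ve_1^3\w{t}^{1/2+O(\delta_1)}$, which is absorbed using $\ve_1\le1$.

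\textbf{Interior term $\w{s}\|(G^r,\p^aG^\alpha)(s)\|_{L^2(\cK_3)}$.} Here the bootstrap bounds $\p^{\le1}\p u$ by $\ve_1\w{s}^{\delta_1-1}$, so the term is of size $\ve_1^2\w{s}^{2\delta_1-1}$. However, $\p^aG^\alpha$ with $|a|\le1$ involves $\p^2u$, and $\p^2u=\p Z u$ is still covered by \eqref{thm1-BA} since $Z\supset\p$ and $N\ge2$. So this term is harmless. Dividing everything by $\w{t}$ yields $\ve\w{t}^{-1}+\ve_1^2\w{t}^{4\delta_1-1/2}$, with the logarithms absorbed into the extra $\w{t}^{\delta_1}$.

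The main obstacle is the exterior divergence term near the light cone. There the weight $\cW_{1,1}$ forces the $\w{t}^{1/2}$ loss, so we only get $t^{-1/2}$ decay for the nonlinear contribution instead of $t^{-1}$. Writing the quadratic part in divergence form is exactly what keeps the weight at $\cW_{1,1}$ rather than $\cW_{3/2+\eta,1}$; the latter would cost a full extra power. In this step we must check carefully that the product of weights is maximized at $|y|\approx s$ and gives no worse than $\w{s}^{1/2+2\delta_1}$, including the regions $|y|\ll s$ and $|y|\ge s$.
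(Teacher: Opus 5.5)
Your proposal is correct and is essentially the paper's proof: apply \eqref{loc:ibvp} with $\eta=\delta_1$ to the divergence form \eqref{eqn:div} and bound every line by \eqref{thm1-BA}. The dominant contribution, $\ve_1^2\w{t}^{1/2+3\delta_1}\ln^2(2+t)$, comes from the $\cW_{1,1}$-weighted divergence term, exactly as in the paper. One small fix is needed: the cubic exterior term gives exactly $\ve_1^3\w{t}^{1/2+4\delta_1}\ln(2+t)$, so $\ve_1\le1$ alone leaves an unabsorbed logarithm; use \eqref{thm1-time:BA} (i.e.\ $\ve_1\w{t}^{1/2+4\delta_1}\ls1$ on $[0,T_\ve]$), as the paper does, or simply take $\eta$ slightly smaller than $\delta_1$.
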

\begin{proof}
Note that  the equation in \eqref{QWE} with \eqref{YHCC-1-S} can be reformulated as
\begin{equation}\label{eqn:div}
\begin{split}
\Box u&=\frac12\sum_{\alpha,\beta,\mu=0}^2[\p_{\alpha}(Q^{\alpha\beta\mu}\p_{\beta}u\p_{\mu}u)
+\p_{\beta}(Q^{\alpha\beta\mu}\p_{\alpha}u\p_{\mu}u)
-\p_{\mu}(Q^{\alpha\beta\mu}\p_{\alpha}u\p_{\beta}u)]\\
&\quad+\sum_{\alpha,\beta,\mu,\nu=0}^2Q^{\alpha\beta\mu\nu}\p^2_{\alpha\beta}u\p_{\mu}u\p_{\nu}u.
\end{split}
\end{equation}
Applying \eqref{loc:ibvp} to \eqref{eqn:div} with $\eta=\delta_1$ yields
\begin{equation}\label{thm1-loc:energyA1}
\begin{split}
\w{t}\|\p^{\le1}u\|_{L^2(\cK_R)}&\ls\ve+\ve^2\w{t}^{\delta_1}\ln(2+t)\\
&\quad+\ln(2+t)\sup_{s\in[0,t]}\w{s}\|(|\p^2u||\p u|^2,|\p^{\le1}\p u|^2)(s)\|_{L^2(\cK_3)}\\
&\quad+\w{t}^{\delta_1}\ln^2(2+t)\sup_{(s,y)\in[0,t]\times\cK}\w{y}^{1/2}\cW_{1,1}(s,y)|Z^{\le2}\p u|^2\\
&\quad+\ln(2+t)\sup_{(s,y)\in[0,t]\times\cK}\w{y}^{1/2}\cW_{3/2+\delta_1,1}(s,y)|\p^{\le1}\p^2u||\p^{\le1}\p u|^2\\
&\ls\ve+\ve_1^2\w{t}^{3\delta_1+1/2}\ln^2(2+t),
\end{split}
\end{equation}
where we have used \eqref{initial:data-thm1}, \eqref{thm1-BA} and \eqref{thm1-time:BA} with $N\ge19$.
This completes the proof of \eqref{thm1-loc:energyA}.
\end{proof}

Based on Lemma \ref{Son-4.2}, we now treat the higher order energy $\ds\sum_{|a|\le2N}\|\p\p^au\|_{L^2(\cK)}$
for the solution $u$ of \eqref{QWE}.

\begin{lemma}\label{Son-4.3}
{\bf (Estimate of $\ds\sum_{|a|\le2N}\|\p\p^au\|_{L^2(\cK)}$)}
Under the assumptions of Theorem \ref{thm1}, let $u$ be the solution of \eqref{QWE} and suppose that \eqref{thm1-BA} holds.
Then we have that for $t\in[0,T_\ve]$,
\begin{equation}\label{thm1-energy:dtdx}
\sum_{|a|\le2N}\|\p\p^au\|_{L^2(\cK)}\ls\ve+\ve_1^2.
\end{equation}
\end{lemma}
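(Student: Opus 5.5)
The plan is to combine the time-derivative energy estimate of Lemma \ref{Son-4.1} with the elliptic estimate \eqref{ellip} and the local energy bound of Lemma \ref{Son-4.2}, and then close with the Gronwall-type Lemma \ref{lem-Gronwall-1}.

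\textbf{Step 1: trade space derivatives for time derivatives.} For a multi-index $a=(j,a')$ with $|a'|\ge1$, write $\p\p^au=\p\p_t^j\p_x^{a'}u$. Since $\frac{\p}{\p\bn}\p_t^ju=0$ on the boundary, \eqref{ellip} can be applied to $w=\p_t^ju$. The Laplacian is then replaced via the equation,
\[
\Delta\p_t^ju=\p_t^{j+2}u-\p_t^jQ(\p u,\p^2u).
\]
By induction on the number of spatial derivatives, this should give
\[
\sum_{|a|\le2N}\|\p\p^au\|_{L^2(\cK)}\ls\sum_{j\le2N}\|\p\p_t^ju\|_{L^2(\cK)}+\sum_{|a|\le2N-1}\|\p^a Q\|_{L^2(\cK)}+\|\p^{\le 2N}u\|_{L^2(\cK_{R+1})}.
\]

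\textbf{Step 2: control the three terms on the right.}
\begin{itemize}
\item The nonlinear term: by \eqref{thm1-BA}, $\|\p^aQ\|_{L^2}\ls\ve_1\w{t}^{\delta_1}\sum_{|b|\le2N}\|\p\p^bu\|_{L^2}$. The low-order factor is estimated pointwise with decay $\w{x}^{-1/2}\w{t-|x|}^{-1}\w{t}^{\delta_1}$, and by \eqref{thm1-time:BA} the prefactor is small, so this term is absorbed into the left side.
\item The local term $\|\p^{\le2N}u\|_{L^2(\cK_{R+1})}$: I would apply the same elliptic trick repeatedly. This reduces it to $\|\p^{\le1}\p_t^{\le 2N-1}u\|_{L^2(\cK_{R+2})}$ plus absorbed nonlinear terms. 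The $j=0$ piece is exactly Lemma \ref{Son-4.2}, giving $\ve+\ve_1^2$ since $\w{t}^{4\delta_1-1/2}\le1$. The pieces with $j\ge1$ are dominated by $\|\p\p_t^{j-1}u\|_{L^2(\cK)}$.
\item The time-derivative energy: Lemma \ref{Son-4.1} bounds it by $\ve^2$ plus $\ve_1\int_0^t(1+s)^{\delta_1-1/2}E(s)\,ds$, where $E(s)$ is the full $\p^{\le 2N}$ energy squared. The integrand contains mixed derivatives, so Step 1 is applied inside the integral.
\end{itemize}

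\textbf{Step 3: close the inequality.} Setting $f(t)=\sum_{|a|\le2N}\|\p\p^au(t)\|^2_{L^2}$, the above gives
\[
f(t)\ls\ve^2+(\ve+\ve_1^2)^2+\ve_1\int_0^t\frac{f(s)}{(1+s)^{1/2-\delta_1}}\,ds .
\]
Lemma \ref{lem-Gronwall-1} with $\mu=\delta_1$ and $B=C\ve_1$ then yields
\[
f\ls e^{C\ve_1\w{t}^{1/2+\delta_1}}(\ve+\ve_1^2)^2 .
\]
By \eqref{thm1-time:BA}, $\ve_1 t^{1/2+\delta_1}\le1$ for $t\le T_\ve$, so the exponential is bounded and \eqref{thm1-energy:dtdx} follows.

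\textbf{Main obstacle.} The delicate point is Step 1: it must hold uniformly, and the quasilinear term $Q_p^{\alpha\beta}\p^2_{\alpha\beta}\p^{a}u$ carries top-order derivatives. This forces the bookkeeping to absorb $\ve_1\w{t}^{\delta_1}$ times the top-order norm on the left rather than lose a derivative. Smallness of $\|\p u\|_{L^\infty}$, from \eqref{thm1-BA} and \eqref{thm1-time:BA}, is what makes that absorption work.
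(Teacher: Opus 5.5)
Your proposal is correct and follows essentially the paper's argument. The elliptic estimate \eqref{ellip}, together with the equation, reduces everything to the pure time-derivative energy of Lemma \ref{Son-4.1}; the top-order quasilinear term is absorbed by smallness of $\ve_1$; Lemma \ref{Son-4.2} handles the local term; and Lemma \ref{lem-Gronwall-1} closes by \eqref{thm1-time:BA}. The differences are cosmetic: the paper runs Gronwall on $E_0$ and feeds the decay of $\|u\|_{L^2(\cK_{R+1})}$ into the integral (obtaining $E_0^2\ls\ve^2+\ve_1^5$), whereas you run it on the full energy with $(\ve+\ve_1^2)^2$ as a constant source. Also, your repeated elliptic reduction of $\|\p^{\le2N}u\|_{L^2(\cK_{R+1})}$ is unnecessary, since every local term with at least one derivative is already bounded by a lower-order global energy, so only $\|u\|_{L^2(\cK_{R+1})}$ needs Lemma \ref{Son-4.2}.
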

\begin{proof}
Set $\ds E_j(t):=\sum_{k=0}^{2N-j}\|\p\p_t^ku(t)\|_{H^j(\cK)}$ with $0\le j\le2N$.
Then one can find that for $j\ge1$,
\begin{equation}\label{thm1-energy:dtdx1}
\begin{split}
E_j(t)&\ls\sum_{k=0}^{2N-j}[\|\p\p_t^ku(t)\|_{L^2(\cK)}
+\sum_{1\le|a|\le j}(\|\p_t\p_t^k\p_x^au(t)\|_{L^2(\cK)}+\|\p_x\p_t^k\p_x^au(t)\|_{L^2(\cK)})]\\
&\ls E_0(t)+E_{j-1}(t)+\sum_{k=0}^{2N-j}\sum_{2\le|a|\le j+1}\|\p_t^k\p_x^au(t)\|_{L^2(\cK)},
\end{split}
\end{equation}
where we have used the fact that
\begin{equation*}
\sum_{k=0}^{2N-j}\sum_{1\le|a|\le j}\|\p_t\p_t^k\p_x^au(t)\|_{L^2(\cK)}
\ls\sum_{k=0}^{2N-j}\sum_{|a'|\le j-1}\|\p_x\p_t^{k+1}\p_x^{a'}u(t)\|_{L^2(\cK)}
\ls E_{j-1}(t).
\end{equation*}
For the last term in \eqref{thm1-energy:dtdx1}, it is deduced from the elliptic estimate \eqref{ellip} that
\begin{equation}\label{thm1-energy:dtdx2}
\begin{split}
\|\p_t^k\p_x^au\|_{L^2(\cK)}&\ls\|\Delta_x\p_t^ku\|_{H^{|a|-2}(\cK)}+\|\p_t^ku\|_{H^{|a|-1}(\cK_{R+1})}\\
&\ls\|\p_t^{k+2}u\|_{H^{|a|-2}(\cK)}+\|\p_t^kQ(\p u,\p^2u)\|_{H^{|a|-2}(\cK)}\\
&\quad+\|\p_t^ku\|_{L^2(\cK_{R+1})}+\sum_{1\le|b|\le|a|-1}\|\p_t^k\p_x^bu\|_{L^2(\cK_{R+1})},
\end{split}
\end{equation}
where $\Delta_x=\p_t^2-\Box$ and the equation in \eqref{QWE} have been used.
Furthermore, by \eqref{thm1-BA} with $|a|+k\le2N+1$ we have
\begin{equation}\label{thm1-energy:dtdx3}
\begin{split}
\|\p_t^kQ(\p u,\p^2u)\|_{H^{|a|-2}(\cK)}
&\ls\sum_{l\le k}\ve_1(\|\p_t^l\p u\|_{H^{|a|-1}(\cK)}+\|\p_t^{l+2}u\|_{H^{|a|-2}(\cK)})\\
&\ls\ve_1[E_j(t)+E_{j-1}(t)].
\end{split}
\end{equation}
Collecting \eqref{thm1-energy:dtdx1}-\eqref{thm1-energy:dtdx3} shows that for $j\ge1$,
\begin{equation}\label{thm1-energy:dtdx4}
E_j(t)\ls E_0(t)+E_{j-1}(t)+\ve_1E_j(t)+\|u\|_{L^2(\cK_{R+1})}.
\end{equation}
Then combining \eqref{thm1-energy:time}, \eqref{thm1-loc:energyA}, \eqref{thm1-energy:dtdx4} with $j=1$ and the
smallness of $\ve_1$ yields
\begin{equation}\label{thm1-energy:dtdx5}
\begin{split}
[E_0(t)]^2&\ls\ve^2+\ve_1\int_0^t\{\w{s}^{\delta_1-1/2}[E_0(s)]^2+\ve^2\w{s}^{\delta_1-5/2}+\ve_1^4\w{s}^{9\delta_1-3/2}\}ds\\
&\ls\ve^2+\ve_1^5+\ve_1\int_0^t\w{s}^{\delta_1-1/2}[E_0(s)]^2ds.
\end{split}
\end{equation}
By the Gronwall's Lemma \ref{lem-Gronwall-1} and \eqref{thm1-time:BA}, one obtains from \eqref{thm1-energy:dtdx5}
\begin{equation*}
[E_0(t)]^2\ls(\ve^2+\ve_1^5)e^{C\ve_1\w{t}^{\delta_1+1/2}}\ls\ve^2+\ve_1^5.
\end{equation*}
This, together with \eqref{thm1-loc:energyA} and \eqref{thm1-energy:dtdx4}, completes the proof of \eqref{thm1-energy:dtdx}.
\end{proof}

At last, in terms of Lemma \ref{Son-4.3} and Lemma \ref{lem-Gronwall-1},
we derive the higher order energy $\ds\sum_{|a|\le2N-1}\|\p Z^au\|_{L^2(\cK)}$
for the solution $u$ of \eqref{QWE}.
\begin{lemma}\label{Son-4.4}
{\bf (Estimate of $\ds\sum_{|a|\le2N-1}\|\p Z^au\|_{L^2(\cK)}$)}
Under the assumptions of Theorem \ref{thm1}, let $u$ be the solution of \eqref{QWE} and suppose that \eqref{thm1-BA} holds.
Then one has that for $t\in[0,T_\ve]$,
\begin{equation}\label{thm1-energyA}
\sum_{|a|\le2N-1}\|\p Z^au\|_{L^2(\cK)}\ls(\ve+\ve_1^2)\w{t}^{1/2}.
\end{equation}
\end{lemma}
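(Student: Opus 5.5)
The estimate follows the pattern of Lemma \ref{Son-4.1}, now with general $Z^a$, $Z\in\{\p_t,\p_1,\p_2,\Omega\}$, $|a|\le2N-1$. Since $\p_1,\p_2,\Omega$ are not tangent-compatible with the Neumann condition, the boundary terms no longer vanish. These will be handled with the local bounds of Lemmas \ref{Son-4.2} and \ref{Son-4.3}, and a Gronwall argument via Lemma \ref{lem-Gronwall-1} then closes with the growth $\w{t}^{1/2}$.

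\textbf{Step 1: splitting off the boundary.} I would write $Z^au$ in terms of $\tilde Z=\chi_{[1/2,1]}(x)Z$ plus pure $\p$-derivatives. Near the obstacle ($|x|\le1$), every $\p Z^au$ is a combination with bounded coefficients of $\p\p^bu$, $|b|\le2N-1$. Those are controlled in $L^2(\cK_1)$ by Lemma \ref{Son-4.3}, giving a bound $\ve+\ve_1^2$ with no growth.

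For $\tilde Z^au$, which vanishes near $\p\cK$, I would run the energy identity \eqref{energy:time2}--\eqref{energy:time4}. The boundary integrals disappear. The commutator $[\Box,\tilde Z^a]$ and $[\tilde Z^a,Q_p^{\alpha\beta}\p^2_{\alpha\beta}]$ produce terms supported in $1/2\le|x|\le1$, and these involve at most $2N+1$ derivatives of type $\p^b u$ localized in $\cK_1$. Their contribution to $\frac{d}{dt}\|\p\tilde Z^au\|^2$ is bounded by
\[
\|\p\tilde Z^au\|_{L^2}\,\sum_{|b|\le2N}\|\p\p^bu\|_{L^2(\cK_1)}\ls(\ve+\ve_1^2)\|\p\tilde Z^au\|_{L^2},
\]
using \eqref{thm1-energy:dtdx}.

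\textbf{Step 2: the nonlinear terms.} I would estimate $I_1^{abc}$ and $I_2^a$ exactly as in \eqref{energy:time7}--\eqref{energy:time8}, using the bootstrap \eqref{thm1-BA}. When $|b|\le N-1$, the factor $\p^2Z^bu$ is bounded pointwise by $\ve_1\w{t}^{\delta_1-1/2}$. When $|b|\ge N$, the lower-order factor $Z^c[Q_p]$ is bounded pointwise instead. For the second derivatives $\p^2Z^bu$, I would use $\p^2Z^b$ as a sum of $\p Z^{b'}$ with $|b'|\le|b|+1\le2N$. This exceeds the range $2N-1$ by one only when $|b|=|a|$, which is excluded since $b<a$. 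This gives
\[
\frac{d}{dt}\mathcal E(t)\ls\ve_1\w{t}^{\delta_1-1/2}\mathcal E(t)+(\ve+\ve_1^2)\mathcal E(t)^{1/2},
\qquad \mathcal E(t)=\sum_{|a|\le2N-1}\|\p Z^au(t)\|_{L^2}^2 .
\]

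\textbf{Step 3: integration.} Integrating gives $\mathcal E(t)^{1/2}\ls\ve+(\ve+\ve_1^2)t+\ldots$, which is too crude. This is the main obstacle: the boundary source must be integrated with decay rather than as a constant.

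To fix it, I would bound the near-boundary commutator terms by $\sum_{|b|\le2N}\|\p\p^bu\|_{L^2(\cK_1)}$. I expect these to decay via \eqref{loc:decay} and Lemma \ref{Son-4.2}: using \eqref{thm1-loc:energyA} together with the elliptic estimate \eqref{ellip} to upgrade regularity should give $(\ve\w{t}^{-1}+\ve_1^2\w{t}^{4\delta_1-1/2})$ times a bounded factor. The boundary forcing then integrates to $\ve\ln(2+t)+\ve_1^2\w{t}^{1/2+4\delta_1}$.

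Feeding this into Lemma \ref{lem-Gronwall-1} with $\mu=\delta_1$ and $D=1/2$, and absorbing $\ve_1^2\w{t}^{4\delta_1}\le\ve_1$ via \eqref{thm1-time:BA}, yields $\mathcal E^{1/2}\ls(\ve+\ve_1^2)\w{t}^{1/2}$. The factor $e^{C\ve_1\w{t}^{\delta_1+1/2}}$ remains bounded by \eqref{thm1-time:BA}.

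If the interpolation step fails, the fallback is to accept the non-decaying bound $\ve+\ve_1^2$ from Lemma \ref{Son-4.3} but apply it only to the $\chi$-commutator. Combined with Cauchy--Schwarz in time against the $\w{s}^{-1/2}$ weight, this should still produce the $\w{t}^{1/2}$ growth.
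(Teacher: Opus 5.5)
Working with $\tilde Z^au$ instead of $Z^au$ is a legitimate alternative to the paper's treatment. However, the proposal has a genuine gap at exactly the point you flag as the main obstacle.

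In Step 1 you bound the cutoff-commutator contribution by $\|\p\tilde Z^au\|_{L^2(\cK)}\sum_{|b|\le2N}\|\p\p^bu\|_{L^2(\cK_1)}$. This gives $\frac{d}{dt}\mathcal E\ls\ve_1\w{t}^{\delta_1-1/2}\mathcal E+(\ve+\ve_1^2)\mathcal E^{1/2}$, hence only $\mathcal E^{1/2}\ls(\ve+\ve_1^2)\w{t}$. Neither of your repairs closes this.

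The Step 3 claim, that $\sum_{|b|\le2N}\|\p\p^bu\|_{L^2(\cK_1)}$ decays like $\ve\w{t}^{-1}+\ve_1^2\w{t}^{4\delta_1-1/2}$, is not available at this stage:
\begin{itemize}
\item Lemma \ref{Son-4.2} controls only $\p^{\le1}u$.
\item The elliptic estimate \eqref{ellip} trades spatial for time derivatives, but does not make $\p_t^ju$ decay.
\item The paper's first higher-order local decay \eqref{thm1-loc:energyB} reaches only order $2N-4$. It is also proved via \eqref{thm1-loc:energyB1}, i.e.\ using \eqref{thm1-energyA} itself, so invoking it here would be circular.
\item Interpolating the decaying low-order bound against the merely bounded norm of Lemma \ref{Son-4.3} gives, at order $2N$, at best a decay exponent of size $O(1/N)$. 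At the top order contained in your sum it gives no decay at all, whereas you would need $\w{t}^{-1/2}$.
\end{itemize}
The Cauchy--Schwarz-in-time fallback cannot help either. As long as the other factor is the global norm, $y'\ls(\ve+\ve_1^2)y^{1/2}$ forces linear growth of $y^{1/2}$.

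The missing observation is that the commutator terms are supported in $\{1/2\le|x|\le1\}$. So in $\int\p_t\tilde Z^au\,[\Box,\tilde Z^a]u\,dx$, and likewise for the cutoff parts of the quasilinear commutators, only $\|\p_t\tilde Z^au\|_{L^2(\cK_1)}$ enters. Near the obstacle this is a bounded-coefficient combination of $\p\p^bu$ with $|b|\le2N-1$. By \eqref{thm1-energy:dtdx}, \emph{both} factors are then $\ls\ve+\ve_1^2$, and the forcing is $\ls(\ve+\ve_1^2)^2$ with no decay needed. This yields
\[
\mathcal E(t)\le C\ve^2+C\ve_1\int_0^t\frac{\mathcal E(s)}{(1+s)^{1/2-\delta_1}}ds+C(\ve+\ve_1^2)^2(1+t).
\]
Lemma \ref{lem-Gronwall-1} with $\mu=\delta_1$ and $D=1$, together with \eqref{thm1-time:BA}, then gives $\mathcal E\ls(\ve+\ve_1^2)^2\w{t}$, which is \eqref{thm1-energyA}.

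This is exactly the mechanism of the paper's proof. The paper keeps $Z^au$ untruncated and uses the trace theorem to bound the boundary integrals $B_1^a,B_2^a$ by $\sum_{|b|\le|a|+1}\int_0^t\|\p\p^bu\|^2_{L^2(\cK_2)}ds\ls(\ve+\ve_1^2)^2(1+t)$. That quantity is quadratic in the merely bounded local energy of Lemma \ref{Son-4.3}. The $\w{t}^{1/2}$ growth in \eqref{thm1-energyA} is just the square root of this linear-in-time accumulation. Local energy decay is brought in only afterwards, in \eqref{thm1-energyB}, to remove that growth.
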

\begin{proof}
Similarly to \eqref{energy:time2}, \eqref{energy:time3}, \eqref{energy:time4} and \eqref{energy:time5}, we have that
for any $|a|\le2N-1$,
\begin{equation}\label{thm1-energyA1}
\begin{split}
&\|\p Z^au\|^2_{L^2(\cK)}\ls\ve^2+
\int_0^t\Big[\sum_{\substack{b+c\le a,\\b<a}}\|I_1^{abc}\|_{L^1(\cK)}+\|I_2^a\|_{L^1(\cK)}\Big]ds+|B^a_1|+|B^a_2|,\\
&B^a_1:=\int_0^t\int_{\p\cK}(n_1(x),n_2(x))\cdot\nabla Z^au(s,x)\p_tZ^au(s,x)d\sigma ds,\\
&B^a_2:=\int_0^t\int_{\p\cK}\sum_{j=1}^2\sum_{\beta=0}^2Q_p^{j\beta}(\p u)n_j(x)\p_tZ^au\p_{\beta}Z^aud\sigma ds,
\end{split}
\end{equation}
where $d\sigma$ is the curve measure on $\p\cK$.
According to $\p\cK\subset\overline{\cK_1}$ and the trace theorem, one obtains
\begin{equation}\label{thm1-energyA2}
\begin{split}
|B^a_1|&\ls\sum_{|b|\le|a|}\int_0^t\|(1-\chi_{[1,2]}(x))\p_t\p^bu\|_{L^2(\p\cK)}\|(1-\chi_{[1,2]}(x))\p_x\p^bu\|_{L^2(\p\cK)}ds\\
&\ls\sum_{|b|\le|a|}\int_0^t\|(1-\chi_{[1,2]}(x))\p \p^bu\|^2_{H^1(\cK)}ds\\
&\ls\sum_{|b|\le|a|+1}\int_0^t\|\p \p^bu\|^2_{L^2(\cK_2)}ds\ls(\ve+\ve_1^2)^2(1+t),
\end{split}
\end{equation}
where \eqref{thm1-energy:dtdx} has been used. Analogously, $B^a_2$ can be also treated.
On the other hand, it is easy to get
\begin{equation}\label{thm1-energyA3}
\sum_{\substack{b+c\le a,\\b<a}}\|I_1^{abc}\|_{L^1(\cK)}+\|I_2^a\|_{L^1(\cK)}
\ls\ve_1\w{t}^{\delta_1-1/2}\sum_{|a|\le2N-1}\|\p Z^au\|^2_{L^2(\cK)}.
\end{equation}
Then, \eqref{thm1-energyA1}, \eqref{thm1-energyA2} and \eqref{thm1-energyA3} ensure that there is a constant $C_1>0$ such that
\begin{equation}\label{thm1-energyA4}
\sum_{|a|\le2N-1}\|\p Z^au\|^2_{L^2(\cK)}\le C_1\ve^2
+C_1\ve_1\int_0^t\sum_{|a|\le2N-1}\frac{\|\p Z^au(s)\|^2_{L^2(\cK)}}{(1+s)^{1/2-\delta_1}}ds+C_1(\ve+\ve_1^2)^2(1+t).
\end{equation}
Applying Lemma \ref{lem-Gronwall-1} to \eqref{thm1-energyA4} together with \eqref{thm1-time:BA} yields
\begin{equation*}
\begin{split}
\sum_{|a|\le2N-1}\|\p Z^au\|^2_{L^2(\cK)}&\le e^{\frac{C_1\ve_1}{1/2+\delta_1}(1+t)^{1/2+\delta_1}}[C_1\ve^2+C_1(\ve+\ve_1^2)^2(1+t)]\\
&\ls(\ve+\ve_1^2)^2(1+t),
\end{split}
\end{equation*}
which derives \eqref{thm1-energyA}.
\end{proof}

\subsection{Local energy decay estimates and improved energy estimates}\label{sect5}

Based on the estimates in Subsection \ref{Sonn-1}, we now focus on the precise time decay rate of
local energy near the boundary.

\begin{lemma}
{\bf (Time decay of local energy $\ds\sum_{|a|\le2N-4}\|\p^au\|_{L^2(\cK_R)}$)}
Under the assumptions of Theorem \ref{thm1}, let $u$ be the solution of \eqref{QWE} and suppose that \eqref{thm1-BA} holds.
Then we have that for $t\in[0,T_\ve]$,
\begin{equation}\label{thm1-loc:energyB}
\sum_{|a|\le2N-4}\|\p^au\|_{L^2(\cK_R)}\ls(\ve+\ve_1^2)\w{t}^{-1/2-\delta_1}.
\end{equation}
\end{lemma}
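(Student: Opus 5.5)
The plan is to improve Lemma \ref{Son-4.2} in two directions: raise the number of derivatives from $\p^{\le1}$ to $\p^{\le 2N-4}$, and lower the time growth from $\ve_1^2\w{t}^{4\delta_1-1/2}$ to $(\ve+\ve_1^2)\w{t}^{-1/2-\delta_1}$. For the first direction I would only control time derivatives $\p_t^ju$, $j\le 2N-5$. Spatial derivatives in $\cK_R$ are then recovered by the elliptic estimate \eqref{ellip} with a cutoff, exactly as in \eqref{loc:decay:pf7} and \eqref{thm1-energy:dtdx2}. There $\Delta\p_t^ju=\p_t^{j+2}u-\p_t^jQ(\p u,\p^2u)$, and the nonlinear term carries a factor $\ve_1$ from \eqref{thm1-BA}, so it can be absorbed.

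Since $\p_t$ is tangent to the boundary and commutes with $\Box$, $\p_t^ju$ satisfies the Neumann problem with source $\p_t^j$ of \eqref{eqn:div}. This source splits into a divergence part $\p_\alpha G^\alpha_j$, with $G^\alpha_j$ quadratic in $\p\p_t^{\le j}u$, and a cubic remainder $G^r_j$. I would apply \eqref{loc:ibvp} with $\eta=\delta_1$ (or \eqref{locdt} for the cubic part) and estimate each term separately.

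\textbf{Initial-data terms.} These give $\ve$.

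\textbf{Near-boundary $L^2(\cK_3)$ terms.} These are products $|\p^{\le1}\p\p_t^{\le j}u|\,|\p^{\le 1}\p\p_t^{\le j}u|$. Since $N\ge19$, one factor always has at most $N$ derivatives. I would bound that factor in $L^\infty$ by \eqref{thm1-BA}, which gives $\ve_1\w{s}^{\delta_1-1}$ on $|y|\le3$. The other factor I would bound in $L^2$ by Lemma \ref{Son-4.3}, which gives $\ve+\ve_1^2$. The product $\w{s}\cdot\ve_1\w{s}^{\delta_1-1}(\ve+\ve_1^2)$ is then harmless after the $\ln(2+t)$ loss, since $\ve_1\w{t}^{1/2+4\delta_1}\le1$ by \eqref{thm1-time:BA}.

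\textbf{Far-field weighted sup terms.} These involve $\w{y}^{1/2}\cW_{1,1}|Z^{\le 10}G^\alpha_j|$ and $\w{y}^{1/2}\cW_{3/2+\delta_1,1}|Z^{\le9}G^r_j|$. When both factors have at most $N$ vector fields, \eqref{thm1-BA} gives $\ve_1^2\w{y}^{-1/2}\w{s+|y|}\w{s-|y|}^{-1}\w{s}^{2\delta_1}$, up to the min-weight. When one factor has up to $2N-4+10$ vector fields, I would instead use the Sobolev inequality \eqref{Sobo:ineq} together with Lemma \ref{Son-4.4}. This gives $|\p Z^bu|\ls(\ve+\ve_1^2)\w{y}^{-1/2}\w{s}^{1/2}$ for $|b|\le 2N-3$. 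Multiplying by the bootstrap factor $\ve_1\w{y}^{-1/2}\w{s-|y|}^{-1}\w{s}^{\delta_1}$ of the low factor, I would then spend the smallness $\ve_1\w{t}^{1/2+4\delta_1}\le1$ from \eqref{thm1-time:BA} to trade $\ve_1$ for a decay factor $\w{t}^{-1/2-4\delta_1}$.

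After multiplying by the prefactor $\w{t}^{\delta_1}\ln^2(2+t)$, the target is a bound of the form $(\ve+\ve_1^2)\w{t}^{1/2-\delta_1}$ for $\w{t}\|\p^{\le1}\p_t^ju\|_{L^2(\cK_R)}$. This is exactly \eqref{thm1-loc:energyB} after dividing by $\w{t}$, with the logarithms absorbed into the $\delta_1$ margin. The cubic terms $G^r_j$ carry an extra factor $\ve_1\w{y}^{-1/2}\w{s-|y|}^{-1}\w{s}^{\delta_1}$, so they are easier.

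\textbf{Main obstacle.} I expect the hard step to be the far-field quadratic divergence term in which one factor carries too many vector fields for \eqref{thm1-BA}. There the Sobolev bound has only $\w{y}^{-1/2}$ spatial decay, and the growth $\w{s}^{1/2}$ has to be beaten using only $\w{s-|y|}^{-1}$ and the $\ve_1$-smallness. If the straightforward bookkeeping loses a power, I would first reduce the vector-field count by distributing the derivatives more carefully, using $2N-4+10\le 2N+6$ together with $N\ge19$. Failing that, I would split the source region into $|y|\le s/2$ and $|y|\ge s/2$. In the region $|y|\ge s/2$, the factor $\w{y}^{-1/2}$ converts into $\w{s}^{-1/2}$, which should close the estimate.
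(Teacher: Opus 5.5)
Your overall route is the paper's: apply \eqref{loc:ibvp} with $\eta=\delta_1$ to $\p_t^ju$ using the divergence form \eqref{eqn:div}, and bound the $L^2(\cK_3)$ terms by \eqref{thm1-BA} times Lemma~\ref{Son-4.3}. Bound the far-field terms by \eqref{thm1-BA} on one factor and by $|\p Z^bu|\ls(\ve+\ve_1^2)\w{x}^{-1/2}\w{t}^{1/2}$ ($|b|\le 2N-3$, from \eqref{Sobo:ineq} and Lemma~\ref{Son-4.4}) on the other. Then spend one $\ve_1$ via \eqref{thm1-time:BA} and recover spatial derivatives elliptically. However, two steps do not work as you wrote them.

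\textbf{Derivative count.} Estimate \eqref{loc:ibvp} carries only $Z^{\le 2}$ on $G^\alpha$ and $\p^{\le1}$ on $G^r$. The $Z^{\le10}$, $Z^{\le 9}$ weights you quote belong to \eqref{dpw:ibvp}. With $Z^{\le 10}$, one factor could carry about $2N+5$ vector fields. Nothing available controls that, since Lemma~\ref{Son-4.4} stops at $2N-1$, and redistributing derivatives cannot help because a single factor may carry all of them. With the correct count and $j\le 2N-5$, the far-field products are $|\p Z^bu||\p Z^cu|$ with $|b|+|c|\le j+2\le 2N-3$. So the high factor lies exactly in the range of the Sobolev bound, and the low one has $|c|\le N-2$. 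This is why the lemma stops at $2N-4$.

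\textbf{Both factors low.} Bounding both factors by \eqref{thm1-BA} gives a weighted sup of size $\ve_1^2\w{t}^{1/2+2\delta_1}$, attained near $|y|\approx s$. The local norm is then $\ve_1^2\w{t}^{3\delta_1-1/2}\ln^2(2+t)$, which is essentially Lemma~\ref{Son-4.2} again and too large by $\w{t}^{4\delta_1}\ln^2(2+t)$. Spending an $\ve_1$ through \eqref{thm1-time:BA} repairs the decay but leaves the prefactor $\ve_1$ rather than $\ve+\ve_1^2$, which is useless for closing \eqref{thm1-BA}.

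The fix is to put one factor under the Sobolev bound \emph{always}, including this case; this is allowed since $N\le 2N-3$. To evaluate the weight, use $\min\{\w{y},\w{s-|y|}\}\le\w{y}$ for $|y|\le s/2$ and $\w{y}^{-1/2}\approx\w{s}^{-1/2}$ for $|y|\ge s/2$. The weighted sup is then $\ls\ve_1(\ve+\ve_1^2)\w{t}^{1+\delta_1}$, so $\w{t}\|\p^{\le1}\p_t^ju\|_{L^2(\cK_R)}\ls\ve+\ve_1(\ve+\ve_1^2)\w{t}^{1+2\delta_1}\ln^2(2+t)$. A single use of $\ve_1\le\w{t}^{-1/2-4\delta_1}$ then gives the claim, with $\w{t}^{-\delta_1}$ to spare for the logarithms. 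This is the paper's computation, and it means your anticipated ``main obstacle'' never arises.

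\textbf{Elliptic step.} Do not try to absorb $\p_t^kQ$: the elliptic estimate enlarges the radius, so the top-order term does not sit on the same $\cK_{R+j}$ as the left side. Instead bound it directly, using \eqref{thm1-BA} on the low factor (giving $\ve_1\w{t}^{\delta_1-1}$ near the obstacle) and Lemma~\ref{Son-4.3} on the high factor. This gives $\ve_1(\ve+\ve_1^2)\w{t}^{\delta_1-1}$, which is more than enough.
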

\begin{proof}
Applying $\p_t^j$ to \eqref{eqn:div} yields $\ds\frac{\p}{\p\bn}\p_t^ju|_{[0,\infty)\times\p\cK}=0$ and
\begin{equation*}
\begin{split}
\Box\p_t^ju&=\frac12\sum_{\alpha,\beta,\mu=0}^2[\p_{\alpha}\p_t^j(Q^{\alpha\beta\mu}\p_{\beta}u\p_{\mu}u)
+\p_{\beta}\p_t^j(Q^{\alpha\beta\mu}\p_{\alpha}u\p_{\mu}u)
-\p_{\mu}\p_t^j(Q^{\alpha\beta\mu}\p_{\alpha}u\p_{\beta}u)]\\
&\quad+\sum_{\alpha,\beta,\mu,\nu=0}^2Q^{\alpha\beta\mu\nu}\p_t^j(\p^2_{\alpha\beta}u\p_{\mu}u\p_{\nu}u).
\end{split}
\end{equation*}
On the other hand, \eqref{Sobo:ineq} and \eqref{thm1-energyA} lead to
\begin{equation}\label{thm1-loc:energyB1}
\sum_{|a|\le2N-3}|\p Z^au|\ls(\ve+\ve_1^2)\w{x}^{-1/2}\w{t}^{1/2}.
\end{equation}
Similarly to \eqref{thm1-loc:energyA1}, one has that for $j\le2N-5$ and $t\in[0,T_\ve]$,
\begin{equation*}
\begin{split}
&\w{t}\|\p^{\le1}\p_t^ju\|_{L^2(\cK_{R_1})}\ls\ve+\ve^2\w{t}^{\delta_1}\ln(2+t)\\
&+\ln(2+t)\sup_{s\in[0,t]}\w{s}\|\p_t^j(|\p^2u||\p u|^2,|\p^{\le1}\p u|^2)(s)\|_{L^2(\cK_3)}\\
&+\w{t}^{\delta_1}\ln^2(2+t)\sum_{\substack{|a|+|b|\le2,\\ i+k\le j}}
\sup_{(s,y)\in[0,t]\times\cK}\w{y}^{1/2}\cW_{1,1}(s,y)|Z^a\p_t^i\p uZ^b\p_t^k\p u|\\
&+\ln(2+t)\sum_{\substack{|a|+|b|+|c|\le1,\\ i+k+l\le j}}\sup_{(s,y)\in[0,t]\times\cK}\w{y}^{1/2}\cW_{3/2+\delta_1,1}(s,y)
|\p^a\p_t^i\p^2u\p^b\p_t^k\p u\p^c\p_t^l\p u|\\
&\ls\ve+\ve_1(\ve+\ve_1^2)\w{t}^{2\delta_1+1}\ln^2(2+t),
\end{split}
\end{equation*}
where we have used \eqref{initial:data-thm1}, \eqref{thm1-BA}, \eqref{thm1-energy:dtdx} and \eqref{thm1-loc:energyB1}.
Thus, we can achieve from \eqref{thm1-time:BA} that
\begin{equation}\label{thm1-loc:energyB2}
\sum_{j\le2N-5}\|\p^{\le1}\p_t^ju\|_{L^2(\cK_{R_1})}\ls(\ve+\ve_1^2)\w{t}^{-1/2-\delta_1}.
\end{equation}
For $j=0,1,2,\cdots,2N-4$, denote $\ds E_j^{loc}(t):=\sum_{k\le j}\|\p_t^k\p_x^{2N-4-j}u\|_{L^2(\cK_{R+j})}$.
Then \eqref{thm1-loc:energyB2} means
\begin{equation}\label{thm1-loc:energyB3}
E_{2N-4}^{loc}(t)+E_{2N-5}^{loc}(t)\ls(\ve+\ve_1^2)\w{t}^{-1/2-\delta_1}.
\end{equation}
When $j\le2N-6$, we have $\p_x^{2N-4-j}=\p_x^2\p_x^{2N-6-j}$.
In addition, one can apply the elliptic estimate \eqref{ellip} to $(1-\chi_{[R+j,R+j+1]})\p_t^{k}u$
to obtain
\begin{equation}\label{thm1-loc:energyB4}
\begin{split}
E_j^{loc}(t)&\ls\sum_{k\le j}\|\p_x^{2N-4-j}(1-\chi_{[R+j,R+j+1]})\p_t^ku\|_{L^2(\cK)}\\
&\ls\sum_{k\le j}[\|\Delta(1-\chi_{[R+j,R+j+1]})\p_t^ku\|_{H^{2N-6-j}(\cK)}\\
&\quad+\sum_{k\le j}\|(1-\chi_{[R+j,R+j+1]})\p_t^ku\|_{H^{2N-5-j}(\cK)}]\\
&\ls\sum_{k\le j}\|(1-\chi_{[R+j,R+j+1]})\Delta\p_t^ku\|_{H^{2N-6-j}(\cK)}+\sum_{j+1\le l\le2N-4}E_l^{loc}(t)\\
&\ls\sum_{k\le j}[\|\Box\p_t^ku\|_{H^{2N-6-j}(\cK_{R+j+1})}+\|\p_t^{k+2}u\|_{H^{2N-6-j}(\cK_{R+j+1})}]\\
&\quad+\sum_{j+1\le l\le2N-4}E_l^{loc}(t),
\end{split}
\end{equation}
where the fact of $\Delta=\p_t^2-\Box$ has been used.
From \eqref{QWE}, \eqref{thm1-BA} and \eqref{thm1-energy:dtdx}, we can get that for $j\le2N-6$,
\begin{equation}\label{thm1-loc:energyB5}
\sum_{k\le j}\|\Box\p_t^ku\|_{H^{2N-6-j}(\cK_{R+j+1})}\ls\ve_1(\ve+\ve_1^2)\w{t}^{\delta_1-1}.
\end{equation}
Thus, \eqref{thm1-loc:energyB3}, \eqref{thm1-loc:energyB4} and \eqref{thm1-loc:energyB5} imply that for $j\le2N-6$,
\begin{equation*}
E_j^{loc}(t)\ls\ve_1(\ve+\ve_1^2)\w{t}^{\delta_1-1}+\sum_{j+1\le l\le2N-4}E_l^{loc}(t).
\end{equation*}
This, together with \eqref{thm1-loc:energyB3}, yields \eqref{thm1-loc:energyB}.
\end{proof}

Next, we derive the uniform smallness of the energy $\ds\sum_{|a|\le2N-6}\|\p Z^au\|_{L^2(\cK)}$,
which is an obvious improvement for Lemma \ref{Son-4.4}.
\begin{lemma}
{\bf (Precise estimate of $\ds\sum_{|a|\le2N-6}\|\p Z^au\|_{L^2(\cK)}$)}
Under the assumptions of Theorem \ref{thm1}, let $u$ be the solution of \eqref{QWE} and suppose that \eqref{thm1-BA} holds.
Then one has that for $t\in[0,T_\ve]$,
\begin{equation}\label{thm1-energyB}
\sum_{|a|\le2N-6}\|\p Z^au\|_{L^2(\cK)}\ls\ve+\ve_1^2.
\end{equation}
\end{lemma}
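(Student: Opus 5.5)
The argument repeats the proof of Lemma \ref{Son-4.4} for $|a|\le2N-6$. The only change is in how the boundary terms $B_1^a,B_2^a$ are controlled. In Lemma \ref{Son-4.4} these terms grew like $(1+t)$ because only the uniform bound \eqref{thm1-energy:dtdx} was available near $\p\cK$. Now we have the local energy decay \eqref{thm1-loc:energyB}, and that decay makes the boundary terms time-integrable.

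\emph{Energy identity.} For $|a|\le2N-6$ we start from \eqref{thm1-energyA1}:
\begin{equation*}
\|\p Z^au(t)\|^2_{L^2(\cK)}\ls\ve^2+\int_0^t\Big[\sum_{b+c\le a,\,b<a}\|I_1^{abc}\|_{L^1(\cK)}+\|I_2^a\|_{L^1(\cK)}\Big]ds+|B_1^a|+|B_2^a|.
\end{equation*}

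\emph{Boundary terms.} Since $\p\cK\subset\overline{\cK_1}$, the trace theorem together with $|Z^a f|\ls\sum_{|b|\le|a|}|\p^b f|$ on $\cK_2$ gives
\begin{equation*}
|B_1^a|\ls\sum_{|b|\le|a|+1}\int_0^t\|\p\p^bu(s)\|^2_{L^2(\cK_2)}ds .
\end{equation*}
The indices involved satisfy $|b|+1\le 2N-4$, so \eqref{thm1-loc:energyB} applies and bounds the integrand by $(\ve+\ve_1^2)^2\w{s}^{-1-2\delta_1}$. This is integrable, so $|B_1^a|\ls(\ve+\ve_1^2)^2$. The term $B_2^a$ carries an extra factor $|Q_p(\p u)|\ls\ve_1$ on $\p\cK$ (by \eqref{thm1-BA}), so it is bounded the same way.

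\emph{Interior terms.} As in \eqref{thm1-energyA3}, \eqref{thm1-BA} gives
\begin{equation*}
\sum\|I_1^{abc}\|_{L^1}+\|I_2^a\|_{L^1}\ls\ve_1\w{s}^{\delta_1-1/2}\sum_{|a'|\le2N-6}\|\p Z^{a'}u(s)\|^2_{L^2(\cK)}.
\end{equation*}
Two points need checking. First, the factor carrying the most derivatives stays within order $2N-6$. Second, the lower-order factor is estimated in $L^\infty$ using \eqref{thm1-BA} (this needs $N$ large, which holds since $N\ge19$ and hence $N\le 2N-6$). The factor $\w{x}^{-1/2}\w{t-|x|}^{-1}\w{t}^{\delta_1}$ is at most $\w{t}^{\delta_1-1/2}$ when $|x|\ge t/2$. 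When $|x|\le t/2$ we have $\w{t-|x|}\approx\w{t}$, which is even better. When $|b|\ge N$, swap the roles of the two factors as in \eqref{energy:time8}.

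\emph{Closing the estimate.} Combining the above,
\begin{equation*}
\sum_{|a|\le 2N-6}\|\p Z^au(t)\|^2\le C(\ve+\ve_1^2)^2+C\ve_1\int_0^t(1+s)^{\delta_1-1/2}\sum_{|a|\le2N-6}\|\p Z^au(s)\|^2ds .
\end{equation*}
Lemma \ref{lem-Gronwall-1} with $\mu=\delta_1$ and $C=0$ gives the factor $e^{C\ve_1(1+t)^{1/2+\delta_1}}$. By \eqref{thm1-time:BA} this factor is bounded on $[0,T_\ve]$, which yields \eqref{thm1-energyB}.

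The main obstacle is the boundary term. Its integrability depends entirely on the decay exponent $-1/2-\delta_1$ in \eqref{thm1-loc:energyB} being strictly below $-1/2$, and on the derivative count $|a|+1\le2N-4$ fitting inside the range where that lemma holds. The restriction to $|a|\le2N-6$ is exactly what leaves room for both the trace loss and the conversion from $Z$ to $\p$ near the boundary.
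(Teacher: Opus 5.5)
Your proposal is correct and follows essentially the same route as the paper. Like the paper, you rerun the energy inequality \eqref{thm1-energyA1} for $|a|\le 2N-6$, bound the boundary terms $B_1^a, B_2^a$ via the trace theorem and the local energy decay \eqref{thm1-loc:energyB} (integrable rate $\w{s}^{-1-2\delta_1}$, with derivative count $|b|+1\le 2N-4$), control the interior terms by $\ve_1\w{s}^{\delta_1-1/2}$, and close with Lemma \ref{lem-Gronwall-1} together with \eqref{thm1-time:BA}, exactly as in \eqref{thm1-energyB1}--\eqref{thm1-energyB2}.
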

\begin{proof}
Analogously to \eqref{thm1-energyA1} and \eqref{thm1-energyA2}, we have that for any $|a|\le2N-6$,
\begin{equation}\label{thm1-energyB1}
\|\p Z^au\|^2_{L^2(\cK)}\ls\ve^2+\ve_1\int_0^t\sum_{|a|\le2N-6}\frac{\|\p Z^au(s)\|^2_{L^2(\cK)}}{(1+s)^{1/2-\delta_1}}ds+|B^a_1|+|B^a_2|
\end{equation}
and
\begin{equation}\label{thm1-energyB2}
\begin{split}
|B^a_1|&\ls\sum_{|b|\le|a|+1}\int_0^t\|\p \p^bu\|^2_{L^2(\cK_2)}ds\\
&\ls\int_0^t(\ve^2+\ve_1^4)(1+s)^{-1-2\delta_1}ds
\ls\ve^2+\ve_1^4,
\end{split}
\end{equation}
where \eqref{thm1-loc:energyB} has been used. Similarly, $B^a_2$ can be also treated.
Then collecting \eqref{thm1-energyB1} and \eqref{thm1-energyB2} yields
\begin{equation*}
\sum_{|a|\le2N-6}\|\p Z^au\|^2_{L^2(\cK)}\ls\ve^2+\ve_1^4
+\ve_1\int_0^t\sum_{|a|\le2N-6}\frac{\|\p Z^au(s)\|^2_{L^2(\cK)}}{(1+s)^{1/2-\delta_1}}ds,
\end{equation*}
which implies \eqref{thm1-energyB} by Gronwall's inequality.
\end{proof}

\begin{lemma}
{\bf (Uniform time decay of local energy $\sum_{|a|\le2N-9}\|\p^au\|_{L^2(\cK_R)}$)}
Under the assumptions of Theorem \ref{thm1}, let $u$ be the solution of \eqref{QWE} and suppose that \eqref{thm1-BA} holds.
Then one has that for $t\in[0,T_\ve]$,
\begin{equation}\label{thm1-loc:energyC}
\sum_{|a|\le2N-9}\|\p^au\|_{L^2(\cK_R)}\ls(\ve+\ve_1^2)\w{t}^{-1}.
\end{equation}
\end{lemma}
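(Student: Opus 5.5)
We follow the scheme of the proof of \eqref{thm1-loc:energyB}, now exploiting the uniform energy bound \eqref{thm1-energyB} instead of the growing bound \eqref{thm1-energyA}. Combining \eqref{thm1-energyB} with \eqref{Sobo:ineq} gives the improved pointwise bound
\begin{equation*}
\sum_{|a|\le2N-8}|\p Z^au|\ls(\ve+\ve_1^2)\w{x}^{-1/2}.
\end{equation*}
This replaces \eqref{thm1-loc:energyB1}, which carried the factor $\w{t}^{1/2}$.

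We first treat the time derivatives. For each $j\le 2N-10$ we apply $\p_t^j$ to the divergence form \eqref{eqn:div}. This preserves the Neumann condition and the structure $\Box \p_t^ju=\sum_\alpha\p_\alpha G^\alpha_j+G^r_j$, where $G^\alpha_j$ is quadratic in $\p u$ and $G^r_j$ is cubic. Instead of \eqref{loc:ibvp}, which loses $\w{t}^{\eta}\ln^2(2+t)$ in front of the divergence part, we aim to use the sharper route behind \eqref{locdt}, \eqref{dtpw:ibvp} and \eqref{loc:decay+}, whose weights carry an extra $\w{s}^{\mu}$ but no logarithm. The local terms are handled by \eqref{loc:decay+} with $\eta=1$. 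There we need
\begin{equation*}
\sup_s\w{s}^{1+\mu}\|\p^a\p_t^j(\text{quadratic})\|_{L^2(\cK_3)}\ls \ve_1(\ve+\ve_1^2)\quad\text{for some }\mu>0,
\end{equation*}
which follows from \eqref{thm1-loc:energyB} (decay $\w{t}^{-1/2-\delta_1}$ of one factor in $\cK_3$) times \eqref{thm1-BA} ($\w{t}^{-1+\delta_1}$ for the other factor near the boundary). Together these give $\w{t}^{-3/2}$, which is more than enough.

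For the far-field terms, weighted by $\w{y}^{1/2}\cW_{1+\mu+\nu,1}$, one factor is bounded by \eqref{thm1-BA}, namely $\ve_1\w{y}^{-1/2}\w{s-|y|}^{-1}\w{s}^{\delta_1}$. The other factor is bounded by the new uniform bound $(\ve+\ve_1^2)\w{y}^{-1/2}$, interpolated with \eqref{thm1-BA} when extra decay in $\w{s+|y|}$ is needed. The product is then $\ls\ve_1(\ve+\ve_1^2)\w{s}^{C\delta_1+\mu+\nu}\w{s+|y|}^{\cdots}$. The leftover growth $\w{t}^{C\delta_1}$ is absorbed using the restriction $t\le T_\ve$ via \eqref{thm1-time:BA}, in the form $\ve_1\w{t}^{1/2+4\delta_1}\le1$: one power of $\ve_1$ buys a factor $\w{t}^{-1/2}$. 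This yields
\begin{equation*}
\w{t}\|\p^{\le1}\p_t^ju\|_{L^2(\cK_R)}\ls\ve+\ve_1^2,\qquad j\le2N-10.
\end{equation*}

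To recover all spatial derivatives we repeat the elliptic induction \eqref{thm1-loc:energyB4}--\eqref{thm1-loc:energyB5} on $E^{loc}_j$ with $2N-9$ in place of $2N-4$. The source term $\|\Box\p_t^ku\|_{H^{m}(\cK_{R'})}$ is bounded by $\ve_1(\ve+\ve_1^2)\w{t}^{\delta_1-1}$ times a further local factor, and \eqref{thm1-loc:energyB} makes it $O(\w{t}^{-1})$.

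The main obstacle is the far-field divergence contribution. Directly applying \eqref{loc:ibvp} gives only $\w{t}^{-1+\delta_1}\ln^2$, so we must use the extra smallness $\ve_1\w{t}^{1/2+4\delta_1}\le1$ to kill the $\w{t}^{\delta_1}\ln^2(2+t)$ loss. If this is not enough, we would fall back on writing $\ve_1^2\w{t}^{3\delta_1}\ln^2\le(\ve+\ve_1^2)$. This is justified by \eqref{thm1-time:BA}, since $\ve_1\w{t}^{3\delta_1}\ln^2(2+t)\ls\ve_1\w{t}^{1/2}\le1$.
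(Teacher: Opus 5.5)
Your central mechanism matches the paper's. The uniform bound $\sum_{|a|\le2N-8}|\p Z^au|\ls(\ve+\ve_1^2)\w{x}^{-1/2}$ turns one factor $\ve_1$ of every quadratic source into $\ve+\ve_1^2$, and the remaining $\ve_1$ pays for the time growth via \eqref{thm1-time:BA}. The paper simply reruns \eqref{loc:ibvp} on $\p_t^j$ of \eqref{eqn:div} for all $j\le2N-10$. This gives \eqref{thm1-loc:energyC2}, $\w{t}\|\p^{\le1}\p_t^ju\|_{L^2(\cK_{R_1})}\ls\ve+\ve_1(\ve+\ve_1^2)\w{t}^{2\delta_1+1/2}\ln^2(2+t)\ls\ve+\ve_1^2$, and the elliptic induction follows. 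As written, however, your argument has two defects.

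\emph{The case $j=0$ is not covered by your ``sharper route.''} \eqref{locdt} has a non-divergence source and only controls $\p^{\le1}\p_tw$. \eqref{dtpw:ibvp} only treats sources $\p_tG^0$ and gives pointwise bounds on $\p w$. The paper proves no log-free version of \eqref{loc:ibvp} for a general source $\sum_\alpha\p_\alpha G^\alpha$. Applying \eqref{locdt} to $w=\p_t^{j-1}u$ with $F=\p_t^{j-1}Q(\p u,\p^2u)$ does give your bound for $1\le j\le2N-10$, with a $\w{t}^{\mu+\nu}$ loss instead of $\w{t}^{\delta_1}\ln^2$. It says nothing about $j=0$, i.e.\ $\|u\|_{L^2(\cK_R)}$ and $\|\p_xu\|_{L^2(\cK_R)}$. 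That case cannot be skipped: the top levels $E^{loc}_{2N-9},E^{loc}_{2N-10}$ of your induction contain exactly these norms. Moreover, \eqref{ellip} for the Neumann problem always carries the lower-order term $\|w\|_{H^{|a|-1}(\cK_{R+1})}$, since constants lie in the kernel. Meanwhile \eqref{pw:ibvp} gives only $\w{t}^{-1/2-\mu}$ decay for $u$ itself. So the divergence-form estimate \eqref{loc:ibvp} is the main tool, at least for $j=0$, not a fallback.

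\emph{Your bookkeeping of the far-field divergence term is off.} The weighted supremum in \eqref{loc:ibvp} is not bounded. For $2\le|y|\le s+M_0$ one has $\w{y}^{1/2}\cW_{1,1}(s,y)\,\ve_1(\ve+\ve_1^2)\w{y}^{-1}\w{s-|y|}^{-1}\w{s}^{\delta_1}\ls\ve_1(\ve+\ve_1^2)\w{s}^{1/2+\delta_1}$, with the $\w{s}^{1/2}$ attained near $|y|\sim s$. The true loss is therefore $\w{t}^{1/2+2\delta_1}\ln^2(2+t)$, not $\w{t}^{\delta_1}\ln^2(2+t)$. It is harmless only because the second factor is $\ve+\ve_1^2$.

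Your fallback ``$\ve_1^2\w{t}^{3\delta_1}\ln^2\le\ve+\ve_1^2$ since $\ve_1\w{t}^{3\delta_1}\ln^2\ls1$'' does not follow. That inequality yields only $\ls\ve_1=C_0\ve$, whose constant depends on $C_0$, so it cannot feed the improvement of \eqref{thm1-BA}. Also, with both factors taken from \eqref{thm1-BA}, the actual term is $\ve_1^2\w{t}^{1/2+3\delta_1}\ln^2(2+t)$. This is precisely the term in \eqref{thm1-loc:energyA1} that limits Lemma \ref{Son-4.2}. Drop the fallback, write the $\w{t}^{1/2+2\delta_1}\ln^2$ growth explicitly, and absorb it with the $\ve_1$ factor through \eqref{thm1-time:BA}.
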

\begin{proof}
Collecting \eqref{Sobo:ineq} and \eqref{thm1-energyB} leads to
\begin{equation}\label{thm1-loc:energyC1}
\sum_{|a|\le2N-8}|\p Z^au|\ls(\ve+\ve_1^2)\w{x}^{-1/2}.
\end{equation}
With \eqref{thm1-loc:energyC1} instead of \eqref{thm1-loc:energyB1}, \eqref{thm1-loc:energyB2} can be improved as $t\in[0,T_\ve]$,
\begin{equation}\label{thm1-loc:energyC2}
\begin{split}
\w{t}\sum_{j\le2N-10}\|\p^{\le1}\p_t^ju\|_{L^2(\cK_{R_1})}&\ls\ve+\ve_1(\ve+\ve_1^2)\w{t}^{2\delta_1+1/2}\ln^2(2+t),\\
&\ls\ve+\ve_1^2.
\end{split}
\end{equation}
Thus, in terms of \eqref{thm1-loc:energyC2}, \eqref{thm1-loc:energyC} can be achieved as in the proof of \eqref{thm1-loc:energyB}.
\end{proof}

\subsection{Improved pointwise estimates and proof of Theorem \ref{thm1}}

At first, we establish a better pointwise spacetime estimate for
$\ds\sum_{|a|\le2N-19}|\p Z^au|$ so that the bootstrap assumption in \eqref{thm1-BA} can be closed for $t\in[0,T_\ve]$.
\begin{lemma}
Under the assumptions of Theorem \ref{thm1}, let $u$ be the solution of \eqref{QWE} and suppose that \eqref{thm1-BA} holds.
Then we have for $t\in[0,T_\ve]$
\begin{equation}\label{BA:impr}
\w{x}^{1/2}\w{t-|x|}\sum_{|a|\le2N-19}|\p Z^au|\ls(\ve+\ve_1^2)\ln(2+t).
\end{equation}
\end{lemma}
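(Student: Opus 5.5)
The plan is to apply the divergence-form pointwise estimate \eqref{dpw:ibvp} to $w=Z^au$ with $|a|\le 2N-19$, using the reformulation \eqref{eqn:div}. The first step is to commute $Z^a$ with \eqref{eqn:div}. Since $\Omega$ commutes with $\Box$ and $Z\p_\alpha=\p_\alpha Z+$(linear combination of $\p$), this gives
\begin{equation*}
\Box Z^au=\sum_{\alpha=0}^2\p_\alpha G^\alpha_a+G^r_a ,
\end{equation*}
where $G^\alpha_a$ is a sum of terms $\p Z^bu\,\p Z^cu$ with $|b|+|c|\le|a|$, and $G^r_a$ is a sum of cubic terms $\p^2Z^bu\,\p Z^cu\,\p Z^du$ with $|b|+|c|+|d|\le|a|$. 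The boundary condition is the obstacle. $\frac{\p}{\p\bn}Z^au=0$ fails when $Z^a$ contains $\p_1,\p_2,\Omega$. I would therefore work with $\tilde Z^a$ (the cutoff vector fields $\chi_{[1/2,1]}Z$), or split $Z^au=\chi_{[1,2]}Z^au+(1-\chi_{[1,2]})Z^au$. The piece supported near the boundary is controlled pointwise through Sobolev embedding by the local energy decay \eqref{thm1-loc:energyC}, which gives $\ls(\ve+\ve_1^2)\w{t}^{-1}$ for $\sum_{|a|\le 2N-9}\|\p^au\|_{L^2(\cK_R)}$, and that is already acceptable. The exterior piece $\chi_{[1,2]}Z^au$ satisfies the Neumann condition trivially. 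Its source acquires commutator terms $[\Delta,\chi]Z^au$, which are compactly supported and so can be put into $G^r$. Their contribution is bounded by \eqref{thm1-loc:energyC} through the $\|\cdot\|_{L^2(\cK_3)}$ terms of \eqref{dpw:ibvp}.

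The second step is to estimate each term of \eqref{dpw:ibvp} with $\eta=\delta_1$.

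\emph{Data terms.} These are $\ls\ve$ by \eqref{initial:data-thm1}.

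\emph{Local $L^2(\cK_3)$ terms.} These involve up to $|a|+10\le 2N-9$ derivatives, and the products are bounded using \eqref{thm1-loc:energyC} together with \eqref{thm1-loc:energyC1}, which gives $|\p Z^{\le 2N-8}u|\ls(\ve+\ve_1^2)\w{x}^{-1/2}$. The result is $\w{s}\|\cdot\|\ls(\ve+\ve_1^2)^2\w{s}^{-1}\cdot\w{s}$, which is bounded.

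\emph{Divergence term.} This is $\w{t}^{\delta_1}\ln^2(2+t)\sup\w{y}^{1/2}\cW_{1,1}|Z^{\le10}G^\alpha|$. Put one factor in \eqref{thm1-BA}, giving $\ve_1\w{y}^{-1/2}\w{s-|y|}^{-1}\w{s}^{\delta_1}$, and the other in \eqref{thm1-loc:energyC1}, giving $(\ve+\ve_1^2)\w{y}^{-1/2}$. The weight $\w{y}^{1/2}\w{s+|y|}\min\{\w{y},\w{s-|y|}\}$ times this product is $\ls\ve_1(\ve+\ve_1^2)\w{s+|y|}\w{y}^{-1/2}\w{s}^{\delta_1}$. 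This is too large. One must instead use \eqref{thm1-BA} for both factors: with the $\min$ replaced by $\w{s-|y|}$ the bound is $\ve_1^2\w{s+|y|}\w{y}^{-1/2}\w{s-|y|}^{-1}\w{s}^{2\delta_1}\ls\ve_1^2\w{s}^{1/2+2\delta_1}$. Multiplied by $\w{t}^{\delta_1}\ln^2$, this is $\le\ve_1\cdot\ve_1 t^{1/2+4\delta_1}\ls\ve_1$ by \eqref{thm1-time:BA}, but that gives only $\ve_1$, not $\ve_1^2$.

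The main obstacle is therefore closing this divergence (and cubic) term with a gain, and the derivative count must allow $Z^{\le 10}$ on both factors: $|a|+10\le 2N-9$, and using \eqref{thm1-BA} requires $\le N$. When one factor has many derivatives I would use \eqref{thm1-loc:energyC1} for that factor and \eqref{thm1-BA} for the other; when both factors are low order I would use \eqref{thm1-BA} on both. To obtain an $\ve_1^2$ type bound times $\ln(2+t)$, I would exploit \eqref{thm1-time:BA} more sharply. The smallness available is $\ve_1 t^{1/2+4\delta_1}\le C_0\ve_0^{\delta^2/4}$, so one factor $\ve_1$ is absorbed by the time growth while the other stays as $\ve_1$, and then $\ve_1\le C_0\ve$ is what is actually needed, since closing the bootstrap only requires $C(\ve+\ve_1^2)\le\ve_1/2$ with a constant independent of $C_0$.

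\emph{Cubic term.} This is $G^r$ with weight $\cW_{3/2+\delta_1,1}$. Using three bootstrap bounds gives $\ve_1^3\w{s+|y|}^{1/2+3\delta_1+\delta_1}\w{y}^{-1}\ls\ve_1^2\cdot(\ve_1t^{1/2+4\delta_1})\le\ve_1^2$.

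Collecting all terms yields \eqref{BA:impr}, with the single $\ln(2+t)$ coming from the $\ln$ factors not absorbed into the time-smallness \eqref{thm1-time:BA}.
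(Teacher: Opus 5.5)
There is a genuine gap, and it is at the step you identify as the main obstacle: the divergence term $\w{t}^{\delta_1}\ln^2(2+t)\sup\w{y}^{1/2}\cW_{1,1}|Z^{\le10}G^\alpha|$.

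\textbf{The mixed bound is not too large.} You rejected the mixed bound (one factor from \eqref{thm1-BA}, the other from \eqref{thm1-loc:energyC1}) as too large. That conclusion comes from replacing $\min\{\w{y},\w{s-|y|}\}$ by $\w{s-|y|}$ everywhere. Split the region instead:
\begin{itemize}
\item For $|y|\le s/2$, use $\min\le\w{y}$ together with $\w{s+|y|}\approx\w{s-|y|}$.
\item For $|y|\ge s/2$, use $\min\le\w{s-|y|}$ together with $\w{y}\approx\w{s+|y|}\ls\w{s}$. The last step uses that the support lies in $|y|\le s+M_0$.
\end{itemize}
Both regions give
\begin{equation*}
\w{y}^{1/2}\cW_{1,1}(s,y)\cdot\ve_1\w{y}^{-1/2}\w{s-|y|}^{-1}\w{s}^{\delta_1}\cdot(\ve+\ve_1^2)\w{y}^{-1/2}\ls\ve_1(\ve+\ve_1^2)\w{s}^{1/2+\delta_1}.
\end{equation*}
This is exactly the paper's \eqref{BA:impr5}. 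After multiplying by $\w{t}^{\delta_1}\ln^2(2+t)$, the factor $\ve_1\w{t}^{1/2+2\delta_1}\ln^2(2+t)$ is $\ls1$ by \eqref{thm1-time:BA}, leaving $\ve+\ve_1^2$. So the gain comes from two sources:
\begin{itemize}
\item the time restriction absorbs the $\ve_1$ of the low-order factor;
\item the high-order factor carries $\ve+\ve_1^2$ through \eqref{thm1-loc:energyC1}.
\end{itemize}
This split is forced. $Z^{\le10}G^\alpha$ with $|a|\le2N-19$ involves up to $2N-9>N$ vector fields on one factor. That is outside the range of \eqref{thm1-BA}, so \eqref{thm1-BA} can only be used on the factor with at most $N$ fields.

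\textbf{Your substitute route fails on both counts.}
\begin{itemize}
\item Using \eqref{thm1-BA} on both factors is unavailable for the high--low products. For those you fall back on the very mixed bound you had declared insufficient.
\item For low--low products it gives $\ve_1\cdot\ve_1\w{t}^{1/2+4\delta_1}$. Under \eqref{thm1-time:BA} this is only $\le\ve_1$, as you note yourself.
\end{itemize}
The closing remark does not repair this. A contribution $C\ve_1$ with $C$ independent of $C_0$ can never be absorbed into $\ve_1/2$. One could keep the explicit factor $\ve_1t^{1/2+4\delta_1}\le C_0\ve^{\delta^2/4}$ and shrink $\ve_0$ depending on $C_0$. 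That proves a different statement from \eqref{BA:impr}, and it still leaves the high-order factors uncovered.

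The cubic term has the same issue. Three bootstrap bounds are not available when one factor carries more than $N$ fields, so \eqref{thm1-loc:energyC1} must be used on the top-order factor. With that, the cubic term closes easily.

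The rest of your plan matches the paper: the cutoff fields $\tilde Z$ (or the $\chi_{[1,2]}$ splitting) for the Neumann condition, and the local $L^2(\cK_3)$ and commutator terms handled via \eqref{thm1-loc:energyC}.
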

\begin{proof}
Applying $Z^a$ to \eqref{eqn:div} yields
\begin{equation}\label{BA:impr1}
\Box Z^au=\sum_{\alpha,\beta,\mu=0}^2\p_{\alpha}(Q_{abc}^{\alpha\beta\mu}\p_{\beta}Z^bu\p_{\mu}Z^cu)
+\sum_{\alpha,\beta,\mu,\nu=0}^2Q^{\alpha\beta\mu\nu}Z^a(\p^2_{\alpha\beta}u\p_{\mu}u\p_{\nu}u),
\end{equation}
where $Q_{abc}^{\alpha\beta\mu}$ are constants.

Set $\tilde Z=\chi_{[1/2,1]}(x)Z$ and $\ds\frac{\p}{\p\bn}\tilde Z^au|_{[0,\infty)\times\p\cK}=0$ holds.
By utilizing \eqref{dpw:ibvp} to $\Box\tilde Z^au=\Box(\tilde Z^a-Z^a)u+\Box Z^au$ with $\eta=\delta_1$ and $|a|\le2N-19$,
one can achieve
\begin{equation}\label{BA:impr2}
\begin{split}
&\quad\w{x}^{1/2}\w{t-|x|}|\p\tilde Z^au|
\ls\ve+\ve^2\w{t}^{\delta_1}\ln(2+t)\\
&\qquad+\ln(2+t)\sum_{|b|\le8}\sup_{s\in[0,t]}\w{s}\|\p^b(G_0^r,\p^{\le1}G^\alpha)(s)\|_{L^2(\cK_3)}\\
&\qquad+\w{t}^{\delta_1}\ln^2(2+t)\sum_{|b|\le10}\sup_{(s,y)\in[0,t]\times\overline{\R^2\setminus\cK_2}}\w{y}^{1/2}\cW_{1,1}(s,y)|Z^bG^\alpha(s,y)|\\
&\qquad+\ln(2+t)\sum_{|b|\le9}\sup_{(s,y)\in[0,t]\times\overline{\R^2\setminus\cK_2}}\w{y}^{1/2}\cW_{3/2+\delta_1,1}(s,y)|Z^bG^r(s,y)|,
\end{split}
\end{equation}
where we have used \eqref{initial:data-thm1} and the fact of $\supp_x\Box(\tilde Z^a-Z^a)u\subset\{x:|x|\le1\}$,
meanwhile $G^{\alpha}, G^r$ and $G_0^r$ are respectively given by
\begin{equation}\label{BA:impr3}
G^{\alpha}=\sum_{\tilde a+\tilde b\le a}|Z^{\tilde a}\p uZ^{\tilde b}\p u|,\quad
G^r=\sum_{\tilde a+\tilde b+\tilde c\le a}|Z^{\tilde a}\p^2uZ^{\tilde b}\p uZ^{\tilde c}\p u|,\quad G_0^r=|\Box(\tilde Z^a-Z^a)u|+G^r.
\end{equation}
In addition, it follows from \eqref{thm1-loc:energyC} that
\begin{equation}\label{BA:impr4}
\sum_{|b|\le8}\sup_{s\in[0,t]}\w{s}\|\p^b(G_0^r,\p^{\le1}G^\alpha)(s)\|_{L^2(\cK_3)}\ls\ve+\ve_1^2.
\end{equation}
On the other hand, \eqref{thm1-BA} and \eqref{thm1-loc:energyC1} imply that
\begin{equation}\label{BA:impr5}
\sum_{|b|\le10}\sup_{(s,y)\in[0,t]\times\overline{\R^2\setminus\cK_2}}\w{y}^{1/2}\cW_{1,1}(s,y)|Z^bG^\alpha(s,y)|
\ls\ve_1(\ve+\ve_1^2)\w{t}^{\delta_1+1/2}.
\end{equation}
The estimate of the last term in \eqref{BA:impr2} is much easier than the ones of \eqref{BA:impr4} and \eqref{BA:impr5}.
Collecting \eqref{BA:impr2}-\eqref{BA:impr5} leads to
\begin{equation*}
\w{x}^{1/2}\w{t-|x|}|\p\tilde Z^au|\ls(\ve+\ve_1^2)\ln(2+t).
\end{equation*}
This, together with \eqref{thm1-loc:energyC} and the Sobolev embedding, derives \eqref{BA:impr}.
\end{proof}

\begin{proof}[Proof of Theorem \ref{thm1}]
First of all, \eqref{BA:impr} yields that there is $C_2\ge1$ such that  for $t\in[0,T_\ve]$,
\begin{equation*}
\sum_{|a|\le2N-19}|\p Z^au|\le C_2(\ve+\ve_1^2)\w{x}^{-1/2}\w{t-|x|}^{-1}\w{t}^{\delta_1}.
\end{equation*}
Choosing $\ve_1=C_0\ve=4C_2\ve$ and $\ve_0=\frac{1}{(4C_2)^{4/\delta^2}}\le\frac{1}{16C_2^2}$.
For $t\le T_\ve$, one has
\begin{equation*}
t^{4\delta_1+1/2}\ve_1\le4C_2\ve_0^{(\delta/4+1/2)(\delta-2)+1}=4C_2\ve_0^{\delta^2/4}\le1.
\end{equation*}
Then for $N\ge19$, \eqref{thm1-time:BA} holds and \eqref{thm1-BA} can be improved for $t\in[0,T_\ve]$,
\begin{equation*}
\sum_{|a|\le N}|\p Z^au|\le\frac12\ve_1\w{x}^{-1/2}\w{t-|x|}^{-1}\w{t}^{\delta_1}.
\end{equation*}
This, together with the local existence of classical solution to the initial boundary value problem
of the hyperbolic equation under the admissible condition \eqref{YHCC-2} (for example, see
Section 6 of \cite{Li-Yin18}), yields that problem \eqref{QWE} admits a unique solution
$u\in\bigcap\limits_{j=0}^{2N+1}C^{j}([0,\ve^{\delta-2}], H^{2N+1-j}(\cK))$.
\end{proof}

\section{Energy estimates and local energy decay estimates in Theorem \ref{thm2}}\label{sect5}

We make the following bootstrap assumptions for all $t\in [0, \infty)$
\begin{align}
&\sum_{|a|\le N+1}|\p Z^au|\le\ve_1\w{x}^{-1/2}\w{t-|x|}^{-1}\w{t}^{\ve_2},\label{BA1}\\
&\sum_{|a|\le N+1}|\bar\p Z^au|\le\ve_1\w{x}^{-1/2}\w{t+|x|}^{\ve_2-1},\label{BA2}\\
&\sum_{|a|\le N+1}|Z^au|\le\ve_1\w{t+|x|}^{-1/2}\w{t-|x|}^{\ve_2-1/2},\label{BA3}\\
&\sum_{|a|\le N+2}\|\p^a\p_tu\|_{L^2(\cK_R)}\le\ve_1\w{t}^{-1},\label{BA4}\\
&|\p\p_tu|\le\ve_1\w{x}^{-1/2}\w{t-|x|}^{-1},\label{BA5}
\end{align}
where $u$ is the smooth solution of problem \eqref{QWE} with \eqref{nonlinear-Chaplygin},
$R\ge3$, $\ve_1\in(\ve,1)$ will be determined later and $\ve_2=10^{-3}$.

Note that due to $\supp(u_0,u_1)\subset\{x: |x|\le M_0\}$,
then the solution $u$ is supported in $\{x\in\cK:|x|\le t+M_0\}$.

\subsection{Energy estimates on the spacetime  derivatives of solutions}\label{Sonn-2}
At first, we establish the precise energy inequality for $\ds\sum_{i\le2N}\|\p\p_t^iu(t)\|^2_{L^2(\cK)}$
with $u$ being the solution of \eqref{QWE} with \eqref{nonlinear-Chaplygin}.
In this procedure, it is required to introduce some good unknowns so that the remaining
terms of the resulting nonlinearities admit better spacetime or time decay rates.

\begin{lemma}\label{lem:energy:time}
{\bf (Precise energy inequality for $\ds\sum_{i\le2N}\|\p\p_t^iu(t)\|^2_{L^2(\cK)}$)}
Under the assumptions of Theorem \ref{thm2}, let $u$ be the solution of \eqref{QWE} with \eqref{nonlinear-Chaplygin}
and suppose that \eqref{BA1}-\eqref{BA5} hold.
Then we have
\begin{equation}\label{energy:time}
\begin{split}
\sum_{i\le2N}\|\p\p_t^iu(t)\|^2_{L^2(\cK)}
\ls\ve^2+\ve_1^3+\ve_1\sum_{j\le2N-1}\|\p^{\le1}\p\p_t^ju\|^2_{L^2(\cK)}
+\ve_1\sum_{j\le2N-2}\int_0^t\frac{\|\p^{\le2}\p\p_t^ju(s)\|^2_{L^2(\cK)}}{1+s}ds.
\end{split}
\end{equation}
\end{lemma}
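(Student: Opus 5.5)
We will follow the good-unknown strategy described in the introduction. For each $1\le i\le 2N$ we differentiate the equation $i$ times in $t$ to obtain \eqref{energy:t3-1} for $\Box_g\p_t^iu$, with $g^{\alpha\beta}=Q_p^{\alpha\beta}(\p u)$. We then introduce $V_i$ as in \eqref{energy:t4-1}. Using \eqref{perturb-Q0} and substituting $\Box_g\p_\alpha u$, $\Box_g\p_t^iu$ back in, the quadratic $Q_0$ terms with two high-order factors cancel, and we are left with
\[
\Box_gV_i-\sum_{\alpha=0}^2C^\alpha Q_0(\p_\alpha u,V_i)=\mathcal{R}_i .
\]
Here $\mathcal{R}_i$ consists of three kinds of terms: cubic terms (mostly satisfying the second null condition, via Lemma \ref{lem:eqn:high}), quadratic $Q_0$ terms in which both factors carry at most $i-1$ time derivatives, and commutator terms supported in $\{1\le|x|\le2\}$ coming from $\chi_{[1,2]}$. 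The cutoff is needed so that $V_i$ still satisfies $\p_{\bn}V_i=0$ on $\p\cK$; near the boundary, the $\p_t^ju\,\p_t^ku$ products are removed there. For the first-order correction $\frac12C^\alpha\p_\alpha u\p_t^iu$ we need to check that the Neumann condition is preserved up to acceptable boundary terms via \eqref{YHCC-2}.

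We then do the energy estimate for $V_i$. We multiply by $\p_tV_i$ and integrate over $[0,t]\times\cK$. The term $C^\alpha Q_0(\p_\alpha u,V_i)\p_tV_i$ is written in divergence form plus terms with coefficients $\p\p_\alpha u$. We handle these with the null structure (Lemma \ref{lem:null:structure}) and a ghost weight $e^{q(|x|-t)}$ in the spirit of Alinhac, which produces the positive spacetime integral of $|\bar\p V_i|^2/\w{t-|x|}^{1+}$. The relevant coefficient bounds are \eqref{BA1}, \eqref{BA2} and \eqref{BA5}. We use \eqref{BA5} for $\p\p_tu$ precisely so that no $\ln$ or $\w{t}^{\ve_2}$ loss appears for the $\alpha=0$ pieces. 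The boundary integrals vanish by $\p_{\bn}V_i=0$ and \eqref{YHCC-2}. Any remaining boundary contributions are handled by the trace theorem plus the local decay \eqref{BA4}, which gives $\int_0^t\ve_1\w{s}^{-2}ds$-type bounds and hence the $\ve_1^3$ term.

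The source $\mathcal{R}_i$ is bounded in $L^1_tL^2_x$ against $\|\p V_i\|$. In the quadratic terms with lower-order factors we put the lower factor in $L^\infty$ using \eqref{BA1} to \eqref{BA3}, together with the null structure so that one factor is a good derivative. This gives a $\ve_1(1+s)^{-1}$ weight times $\|\p^{\le2}\p\p_t^ju\|^2$ with $j\le2N-2$; the extra spatial derivatives come from $\p_\alpha\p_t^j$ with $j\le i-1$. Cubic terms give integrable weights. The commutator terms supported in $\{1\le|x|\le2\}$ are handled by \eqref{BA4}. Finally we pass from $V_i$ back to $\p_t^iu$. We have $\|\p(V_i-\p_t^iu)\|\ls\ve_1\sum_{j\le i-1}\|\p^{\le1}\p\p_t^ju\|+\ve_1\|\p\p_t^iu\|$, and the last term is absorbed. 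This is the origin of the non-integral term $\ve_1\sum_{j\le2N-1}\|\p^{\le1}\p\p_t^ju\|^2$ in \eqref{energy:time}.

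We expect the main obstacle to be twofold. First, verifying that the cancellation leaves only terms with genuinely $(1+s)^{-1}$ decay and no $\ln$ factor. Second, controlling terms like $Q_0(\p_\alpha u,\cdot)$ near the cone and near the boundary, where \eqref{BA1} alone loses $\w{t}^{\ve_2}$. Our first attempt will be to route every such coefficient through either \eqref{BA5}/\eqref{BA4} (time derivatives) or the ghost-weight good-derivative integral.
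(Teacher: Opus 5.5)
Your architecture is the paper's: the $V_i$ of \eqref{energy:t4-1}, the operator $\Box_g-\sum_\alpha C^\alpha Q_0(\p_\alpha u,\cdot)$, the ghost weight, \eqref{BA5} for $\p\p_tu$, \eqref{BA4} near $\p\cK$, and the comparison of $\p V_i$ with $\p\p_t^iu$. However, the boundary step is not justified as written. In \eqref{energy:t4-1} only the second-order corrections carry $\chi_{[1,2]}$, which vanishes near $\p\cK\subset\{|x|<1/2\}$. So on $\p\cK$ one has $V_i=\p_t^iu\,(1-\frac12\sum_\beta C^\beta\p_\beta u)$, hence $\p_{\bn}V_i=-\frac12\p_t^iu\sum_{j=1}^2C^j\p_{\bn}\p_ju$. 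The condition \eqref{YHCC-2} constrains the boundary fluxes of the nonlinearity and says nothing directly about this quantity. If it does not vanish, your fallback ``trace theorem plus \eqref{BA4}'' fails: for $i=2N$ the integrand of $\int_0^t\int_{\p\cK}e^q\,\p_{\bn}V_i\,\p_tV_i\,d\sigma ds$ contains the trace of $\p_t^{2N+1}u$, which the energy does not control, so a derivative is lost. The paper notes that the top-order part of this integrand is a coefficient times $\p_t(|\p_t^iu|^2)$ and integrates by parts in $t$. Then only traces of $\p_t^iu$ remain (bounded through $\|\p_t^iu\|_{H^1(\cK_2)}$), and the time-differentiated coefficients are controlled by \eqref{BA4}. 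The quasilinear boundary fluxes are treated the same way, with \eqref{YHCC-2} killing only their parts that contain $\p_\beta\p_t^iu$. Your claim can in fact be rescued, but only by an observation you do not make. For \eqref{nonlinear-Chaplygin} the symmetric coefficients are $Q^{\alpha\beta\mu}=C^\alpha\eta^{\beta\mu}+C^\beta\eta^{\alpha\mu}$, so for Neumann $w,\psi$ one finds $\sum Q^{\alpha\beta\mu}n_\alpha\p_\beta w\,\p_\mu\psi=(C^1n_1+C^2n_2)\,Q_0(w,\psi)$ on $\p\cK$. Testing \eqref{YHCC-2} with $w=\psi=t$ forces $C^1n_1+C^2n_2\equiv0$, hence $C^1=C^2=0$, since the normals of $\p\cO$ take every direction. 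Then $V_i$ is genuinely Neumann and every boundary flux vanishes. Your worry about coefficients $\p\p_ju$, $j=1,2$, also disappears; the paper instead neutralizes them through the exact cancellation built into \eqref{energy:t6}. Without this argument, or the paper's integration by parts in $t$, the boundary step is an unproved assertion.

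There are two further gaps. First, the $i=0$ term of the left-hand side is never treated, and your $V_i$ does not specialize to it. For $i=0$ the two Leibniz terms coincide and are already absorbed in $\Box_g$ (indeed $\Box_gu=0$), so subtracting $\frac12\sum_\alpha C^\alpha u\,\p_\alpha u$ creates a quadratic term rather than removing one. The paper uses instead $V_0=u-\sum_\alpha C^\alpha u\,\p_\alpha u$ with the flat $\Box$, for which $\Box V_0$ is cubic; its third derivatives produce the $\|\p^{\le2}\p u\|$ term. The $\ve_1^3$ comes from the boundary integral of this zeroth-order estimate, whose factors are all of low order and bounded pointwise. The $i\ge1$ boundary terms cannot give $\ve_1^3$, since they contain top-order factors. (Once $C^1=C^2=0$ is known, a direct ghost-weight estimate on $\Box_gu=0$ would also do.) Second, your bound for the quadratic remainders is wrong as stated. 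Via \eqref{BA1}--\eqref{BA3}, the low-order factor of $Q_0(\p_\alpha\p_t^ju,\p_t^ku)$ is only $O(\ve_1\w{s}^{\ve_2-1})$ where $|x|\sim1$, and there neither the null structure nor the ghost weight gains anything. The resulting $\w{s}^{\ve_2}$ loss would break the Gronwall step. In fact these terms are removed in $\{|x|\ge2\}$ by the $\chi_{[1,2]}$-corrections. They survive only as $(1-\chi_{[1,2]})Q_0(\p_\alpha\p_t^ju,\p_t^ku)$ and commutator terms in $\{|x|\le2\}$. Since $j,k\ge1$, both factors carry a $\p_t$, and \eqref{BA4} supplies the needed $\ve_1\w{s}^{-1}$, exactly as in \eqref{energy:t8}.
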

\begin{proof}
Denote the perturbed wave operator by
\begin{equation}\label{energy:t1}
\begin{split}
\Box_gf&:=\Box f-2\sum_{\alpha=0}^2C^{\alpha}Q_0(\p_{\alpha}f,u)-\sum_{\alpha,\beta,\mu,\nu=0}^2Q^{\alpha\beta\mu\nu}\p_{\mu}u\p_{\nu}u\p^2_{\alpha\beta}f
=(\p_t^2-\Delta_g)f,\\
\Delta_g&:=\Delta+\sum_{\alpha,\beta,\mu=0}^2Q^{\alpha\beta\mu}\p_{\mu}u\p^2_{\alpha\beta}
+\sum_{\alpha,\beta,\mu,\nu=0}^2Q^{\alpha\beta\mu\nu}\p_{\mu}u\p_{\nu}u\p^2_{\alpha\beta}.
\end{split}
\end{equation}
Then for smooth functions $f$ and $h$, one can check that
\begin{equation}\label{energy:t2}
\begin{split}
\Box_g(fh)&=2Q_0(f,h)+h\Box_gf+f\Box_gh-2\sum_{\alpha=0}^2C^{\alpha}[\p_{\alpha}fQ_0(h,u)+\p_{\alpha}hQ_0(f,u)]\\
&\quad-2\sum_{\alpha,\beta,\mu,\nu=0}^2Q^{\alpha\beta\mu\nu}\p_{\mu}u\p_{\nu}u\p_{\alpha}f\p_{\beta}h.
\end{split}
\end{equation}
Applying $\p_t^i$ with $i=1,2\cdots,2N$ to \eqref{QWE} with \eqref{nonlinear-Chaplygin} yields
\begin{equation}\label{energy:t3}
\begin{split}
\Box_g\p_t^iu&=2\sum_{\alpha=0}^2C^{\alpha}Q_0(\p_{\alpha}u,\p_t^iu)
+2\sum_{\alpha=0}^2\sum_{\substack{j+k=i,\\1\le j,k\le i-1}}C^{\alpha}C^i_{jk}Q_0(\p_{\alpha}\p_t^ju,\p_t^ku)\\
&\quad+\sum_{\alpha,\beta,\mu,\nu=0}^2\sum_{\substack{j+k+l=i,\\ j<i}}Q^{\alpha\beta\mu\nu}C^i_{jkl}\p^2_{\alpha\beta}\p_t^ju\p_{\mu}\p_t^ku\p_{\nu}\p_t^lu,
\end{split}
\end{equation}
where $C^i_{jk}=\frac{i!}{j!k!}$ and $C^i_{jkl}=\frac{i!}{j!k!l!}$.
Introduce the good unknowns for $i=1,\cdots,2N$ as follows
\begin{equation}\label{energy:t4}
V_i=\p_t^iu-\frac12\sum_{\alpha=0}^2C^{\alpha}\p_{\alpha}u\p_t^iu
-\sum_{\alpha=0}^2\sum_{\substack{j+k=i,\\1\le j,k\le i-1}}C^{\alpha}C^i_{jk}\chi_{[1,2]}(x)\p_{\alpha}\p_t^ju\p_t^ku.
\end{equation}
By \eqref{energy:t2} and \eqref{energy:t3}, we have that
\begin{equation*}\label{energy:t5}
\begin{split}
&\Box_gV_i-\sum_{\alpha=0}^2C^{\alpha}Q_0(\p_{\alpha}u,V_i)
=\sum_{\substack{j+k+l=i,\\ j<i}}\sum_{\alpha,\beta,\mu,\nu=0}^2C^i_{jkl}Q^{\alpha\beta\mu\nu}\p^2_{\alpha\beta}\p_t^ju\p_{\mu}\p_t^ku\p_{\nu}\p_t^lu
+N^V_1+N^V_2,\\
&N^V_1:=\sum_{\alpha=0}^2\sum_{\substack{j+k=i,\\1\le j,k\le i-1}}C^{\alpha}C^i_{jk}\Big\{[\Delta_g,\chi_{[1,2]}(x)](\p_{\alpha}\p_t^ju\p_t^ku)
+2(1-\chi_{[1,2]}(x))Q_0(\p_{\alpha}\p_t^ju,\p_t^ku)\Big\}\\
&\qquad -\chi_{[1,2]}(x)\sum_{\substack{j+k=i,\\1\le j,k\le i-1}}C^{\alpha}C^i_{jk}\Big\{\p_{\alpha}\p_t^ju\Box_g\p_t^ku
-2\sum_{\tilde\alpha,\beta,\mu,\nu=0}^2Q^{\tilde\alpha\beta\mu\nu}\p_{\mu}u\p_{\nu}u\p^2_{\alpha\beta}\p_t^ju\p_{\tilde\alpha}\p_t^ku\\
&\qquad+\p_t^ku\Box_g\p_{\alpha}\p_t^ju-2\sum_{\beta=0}^2C^{\beta}(\p_{\alpha\beta}^2\p_t^juQ_0(\p_t^ku,u)
+\p_{\beta}\p_t^kuQ_0(\p_{\alpha}\p_t^ju,u))\Big\},\\
\end{split}
\end{equation*}
\begin{equation}\label{energy:t5}
\begin{split}
&N^V_2:=-\frac12\sum_{\alpha=0}^2C^{\alpha}\Big\{\p_{\alpha}u\Box_g\p_t^iu+\p_t^iu\Box_g\p_{\alpha}u
-2\sum_{\beta=0}^2C^{\beta}(\p^2_{\alpha\beta}uQ_0(\p_t^iu,u)+\p_{\beta}\p_t^iuQ_0(\p_{\alpha}u,u))\\
&\qquad-2\sum_{\tilde\alpha,\beta,\mu,\nu=0}^2Q^{\tilde\alpha\beta\mu\nu}\p_{\mu}u\p_{\nu}u\p^2_{\alpha\beta}u\p_{\tilde\alpha}\p_t^iu\Big\}\\
&\qquad+\sum_{\alpha,\beta=0}^2C^{\alpha}C^{\beta}Q_0\Big(\p_{\alpha}u,\frac12\p_{\beta}u\p_t^iu
+\sum_{\substack{j+k=i,\\1\le j,k\le i-1}}C^i_{jk}\chi_{[1,2]}(x)\p_{\beta}\p_t^ju\p_t^ku\Big).
\end{split}
\end{equation}
Multiplying \eqref{energy:t5} by $e^q\p_tV_i$ with the ghost weight $q=q(|x|-t)=\arctan(|x|-t)$ introduced in \cite{Alinhac01a},
then from \eqref{energy:t1} and the fact of $\ds Q_0(\p_{\alpha}u,V_i)=\sum_{\mu,\nu=0}^2\eta^{\mu\nu}\p^2_{\alpha\nu}u\p_{\mu}V_i$
with the Minkowski metric $(\eta^{\mu\nu})_{\mu,\nu}=diag\{1,-1,-1\}$,
it is easy to find that the left hand side of \eqref{energy:t5} can be computed as
\begin{equation}\label{energy:t6}
\begin{split}
&e^q\p_tV_i\Big(\Box_gV_i-\sum_{\alpha=0}^2C^{\alpha}Q_0(\p_{\alpha}u,V_i)\Big)\\
&=e^q\p_tV_i\Box V_i-e^q\p_tV_i\p^2_{\alpha\mu}V_i\p_{\nu}u(C^{\alpha}\eta^{\mu\nu}+C^{\mu}\eta^{\alpha\nu})
-C^{\alpha}\eta^{\mu\nu}e^q\p_tV_i\p^2_{\alpha\nu}u\p_{\mu}V_i\\
&\quad-Q^{\alpha\beta\mu\nu}e^q\p_tV_i\p^2_{\alpha\beta}V_i\p_{\mu}u\p_{\nu}u\\
&=\frac12\p_t[e^q(|\p_tV_i|^2+|\nabla V_i|^2)]-\dive(e^q\p_tV_i\nabla V_i)+\frac{e^q|\bar\p V_i|^2}{2\w{t-|x|}^2}\\
&\quad-\p_{\alpha}(Q^{\alpha\mu\nu}e^q\p_tV_i\p_{\mu}V_i\p_{\nu}u)
+e^q\p_tV_i\p_{\mu}V_i(Q^{\alpha\mu\nu}\p_{\alpha}q\p_{\nu}u+C^{\mu}\Box u)\\
&\quad+\frac12\p_t(Q^{\alpha\mu\nu}e^q\p_{\alpha}V_i\p_{\mu}V_i\p_{\nu}u)
-\frac12Q^{\alpha\mu\nu}e^q\p_{\alpha}V_i\p_{\mu}V_i(\p_tq\p_{\nu}u+\p^2_{0\nu}u)\\
&\quad-\p_{\alpha}(Q^{\alpha\beta\mu\nu}e^q\p_tV_i\p_{\beta}V_i\p_{\mu}u\p_{\nu}u)
+Q^{\alpha\beta\mu\nu}\p_tV_i\p_{\beta}V_i\p_{\alpha}(e^q\p_{\mu}u\p_{\nu}u)\\
&\quad+\frac12\p_t(Q^{\alpha\beta\mu\nu}e^q\p_{\alpha}V_i\p_{\beta}V_i\p_{\mu}u\p_{\nu}u)
-\frac12Q^{\alpha\beta\mu\nu}\p_{\alpha}V_i\p_{\beta}V_i\p_t(e^q\p_{\mu}u\p_{\nu}u),
\end{split}
\end{equation}
where the summations $\ds\sum_{\alpha,\mu,\nu=0}^2$ and $\ds\sum_{\alpha,\beta,\mu,\nu=0}^2$ in \eqref{energy:t6}
are omitted.

For the term $Q^{\alpha\beta\mu\nu}\p^2_{\alpha\beta}\p_t^ju\p_{\mu}\p_t^ku\p_{\nu}\p_t^lu$ on the right hand side of the first line in \eqref{energy:t5},
\eqref{BA3} leads to
\begin{equation}\label{energy:t7}
\sum_{1\le i\le2N}\sum_{\substack{j+k+l\le i,\\j<i}}\|\p^2_{\alpha\beta}\p_t^ju\p_{\mu}\p_t^ku\p_{\nu}\p_t^lu\|_{L^2(\cK)}
\ls\ve_1^2\w{t}^{-1}\sum_{j\le2N-1}\|\p^{\le1}\p\p_t^ju\|_{L^2(\cK)}.
\end{equation}

Note that the terms in the second line of \eqref{energy:t5} are supported in $\{x\in\cK:|x|\le2\}$ and
can be controlled by \eqref{BA4} with the Sobolev embedding
\begin{equation*}\label{energy:t8}
\begin{split}
&\|[\Delta_g,\chi_{[1,2]}(x)](\p_{\alpha}\p_t^ju\p_t^ku)\|_{L^2(\cK)}+\|(1-\chi_{[1,2]}(x))Q_0(\p_{\alpha}\p_t^ju,\p_t^ku)\|_{L^2(\cK)}\\
&\ls\sum_{|a|\le N}\|\p^a\p_tu(t)\|_{L^\infty(\cK_2)}\sum_{j\le2N-1}\|\p^{\le1}\p\p_t^ju(t)\|_{L^2(\cK)}\\
\end{split}
\end{equation*}
\begin{equation}\label{energy:t8}
\begin{split}
&\ls\sum_{|a|\le N+2}\|\p^a\p_tu(t)\|_{L^2(\cK_3)}\sum_{j\le2N-1}\|\p^{\le1}\p\p_t^ju(t)\|_{L^2(\cK)}\\
&\ls\ve_1\w{t}^{-1}\sum_{j\le2N-1}\|\p^{\le1}\p\p_t^ju(t)\|_{L^2(\cK)}.
\end{split}
\end{equation}
The terms $Q^{\alpha\mu\nu}e^q\p_tV_i\p_{\mu}V_i\p_{\alpha}q\p_{\nu}u$ and
$Q^{\alpha\mu\nu}e^q\p_{\alpha}V_i\p_{\mu}V_i(\p_tq\p_{\nu}u+\p^2_{0\nu}u)$ in the fifth and
sixth lines of \eqref{energy:t6} can be treated as
\begin{equation}\label{energy:t9}
\begin{split}
\Big|\sum_{\alpha,\beta,\mu=0}^2Q^{\alpha\mu\nu}e^q\p_tV_i\p_{\mu}V_i\p_{\alpha}q\p_{\nu}u\Big|
&\ls\w{s-|x|}^{-2}(|\p V_i||\bar\p V_i||\p u|+|\p V_i|^2|\bar\p u|),\\
\Big|\sum_{\alpha,\beta,\mu=0}^2Q^{\alpha\mu\nu}e^q\p_{\alpha}V_i\p_{\mu}V_i\p^2_{0\nu}u\Big|
&\ls|\p V_i||\bar\p V_i||\p\p_tu|+|\p V_i|^2|\bar\p\p u|,
\end{split}
\end{equation}
where we have used \eqref{null:structure} and the fact of $\bar\p q=0$.
The estimates on the other terms in \eqref{energy:t5} and \eqref{energy:t6} are analogous to \eqref{energy:t7}-\eqref{energy:t9} by \eqref{null:structure} and \eqref{BA1}-\eqref{BA5}.
On the other hand, according to \eqref{energy:t4} with \eqref{BA3}, we can easily get
\begin{equation}\label{energy:t10}
\sum_{1\le i\le2N}\|\p(\p_t^iu-V_i)\|_{L^2(\cK)}\ls\ve_1\sum_{j\le2N-1}\|\p^{\le1}\p\p_t^ju\|_{L^2(\cK)}.
\end{equation}
Integrating \eqref{energy:t6} over $[0,t]\times\cK$ with $i=1,\cdots,2N$, \eqref{energy:t7}-\eqref{energy:t10} and the smallness of $\ve_1$ yields
\begin{equation}\label{energy:t11}
\begin{split}
&\sum_{1\le i\le2N}\Big(\|\p\p_t^iu(t)\|^2_{L^2(\cK)}+\int_0^t\int_{\cK}\frac{|\bar\p V_i|^2}{\w{s-|x|}^2}dxds\Big)\\
&\ls\ve^2+\ve_1\sum_{j\le2N-1}\|\p^{\le1}\p\p_t^ju(t)\|^2_{L^2(\cK)}
+\ve_1\sum_{j\le2N-1}\int_0^t\frac{\|\p^{\le1}\p\p_t^ju(s)\|^2_{L^2(\cK)}}{1+s}ds\\
&\quad+\sum_{1\le i\le2N}(|\cB_1^i|+|\cB_2^i|+|\cB_3^i|),
\end{split}
\end{equation}
where the boundary terms are given by
\begin{equation}\label{energy:t12}
\begin{split}
\cB_1^i&:=\int_0^t\int_{\p\cK}e^q(n_1(x),n_2(x))\cdot\nabla V_i\p_tV_id\sigma ds,\\
\cB_2^i&:=\int_0^t\int_{\p\cK}\sum_{j=1}^2\sum_{\beta,\mu,\nu=0}^2
Q^{j\beta\mu\nu}n_j(x)e^q\p_tV_i\p_{\beta}V_i\p_{\mu}u\p_{\nu}ud\sigma ds,\\
\cB_3^i&:=\int_0^t\int_{\p\cK}\sum_{j=1}^2\sum_{\mu,\nu=0}^2
Q^{j\mu\nu}n_j(x)e^q\p_tV_i\p_{\mu}V_i\p_{\nu}ud\sigma ds.
\end{split}
\end{equation}
According to \eqref{energy:t4}, one has that  $\ds V_i=\p_t^iu(1-\frac12\sum_{\alpha=0}^2C^{\alpha}\p_{\alpha}u)$
holds on the boundary $\p\cK$ and $\cB_1^i$ can be computed with the boundary condition $\frac{\p}{\p\bn}\p_t^{\ell}u|_{[0,\infty)\times\p\cK}=0$
as
\begin{equation}\label{energy:t13}
\begin{split}
&\cB_1^i=\int_0^t\int_{\p\cK}e^q\p_tV_i\Big\{\frac{\p}{\p\bn}\p_t^iu(1-\frac12\sum_{\alpha=0}^2C^{\alpha}\p_{\alpha}u)
-\frac12\p_t^iu\sum_{\alpha=0}^2C^{\alpha}\frac{\p}{\p\bn}\p_{\alpha}u\Big\}d\sigma ds\\
&=-\frac12\sum_{\alpha=0}^2C^{\alpha}\int_0^t\int_{\p\cK}e^q\p_t^iu\frac{\p}{\p\bn}\p_{\alpha}u
\Big\{\p_t\p_t^iu(1-\frac12\sum_{\beta=0}^2C^{\beta}\p_{\beta}u)-\frac12\p_t^iu\sum_{\beta=0}^2C^{\beta}\p^2_{0\beta}u\Big\}d\sigma ds\\
&=-\frac14\sum_{\alpha=0}^2C^{\alpha}\int_0^t\int_{\p\cK}\Big\{\p_t[e^q|\p_t^iu|^2\frac{\p}{\p\bn}\p_{\alpha}u(1-\frac12\sum_{\beta=0}^2C^{\beta}\p_{\beta}u)]\\
&\quad-|\p_t^iu|^2\p_t[e^q\frac{\p}{\p\bn}\p_{\alpha}u(1-\frac12\sum_{\beta=0}^2C^{\beta}\p_{\beta}u)]
-e^q|\p_t^iu|^2\frac{\p}{\p\bn}\p_{\alpha}u\sum_{\beta=0}^2C^{\beta}\p^2_{0\beta}u\Big\}d\sigma ds.
\end{split}
\end{equation}
Combining \eqref{energy:t13} with the trace Theorem, the Sobolev embedding, \eqref{initial:data}, \eqref{BA1}  and \eqref{BA4} yields
\begin{equation}\label{energy:t14}
\sum_{1\le i\le2N}|\cB_1^i|\ls\ve^2+\ve_1\sum_{j\le2N}\|\p\p_t^ju(t)\|^2_{L^2(\cK)}+\ve_1\sum_{j\le2N}\int_0^t\frac{\|\p\p_t^ju(s)\|^2_{L^2(\cK)}}{1+s}ds.
\end{equation}
Analogously, $\cB_2^i$ can be treated by the admissible condition \eqref{YHCC-2} as follows
\begin{equation}\label{energy:t15}
\begin{split}
&\cB_2^i=\int_0^t\int_{\p\cK}Q^{j\beta\mu\nu}n_j(x)e^q\p_tV_i\p_{\mu}u\p_{\nu}u
\Big\{\p_{\beta}\p_t^iu(1-\frac12C^{\alpha}\p_{\alpha}u)-\frac12\p_t^iuC^{\alpha}\p^2_{\alpha\beta}u\Big\}d\sigma ds\\
&=-\frac12C^{\alpha}\int_0^t\int_{\p\cK}Q^{j\beta\mu\nu}n_j(x)e^q\p_t^iu\p^2_{\alpha\beta}u\p_{\mu}u\p_{\nu}u
\Big\{\p_t\p_t^iu(1-\frac12C^{\gamma}\p_{\gamma}u)-\frac12\p_t^iuC^{\gamma}\p^2_{0\gamma}u\Big\}d\sigma ds\\
&=-\frac14C^{\alpha}\int_0^t\int_{\p\cK}\Big\{\p_t[Q^{j\beta\mu\nu}n_j(x)e^q|\p_t^iu|^2
\p^2_{\alpha\beta}u\p_{\mu}u\p_{\nu}u(1-\frac12C^{\gamma}\p_{\gamma}u)]\\
&\quad-Q^{j\beta\mu\nu}n_j(x)|\p_t^iu|^2\p_t[e^q\p^2_{\alpha\beta}u\p_{\mu}u\p_{\nu}u(1-\frac12C^{\gamma}\p_{\gamma}u)]\\
&\quad-Q^{j\beta\mu\nu}C^{\gamma}n_j(x)e^q|\p_t^iu|^2\p^2_{\alpha\beta}u\p_{\mu}u\p_{\nu}u\p^2_{0\gamma}u\Big\}d\sigma ds,
\end{split}
\end{equation}
where the summations $\ds\sum_{j=1}^2$, $\ds\sum_{\beta,\mu,\nu=0}^2$, $\ds\sum_{\alpha=0}^2$ and $\ds\sum_{\gamma=0}^2$
in \eqref{energy:t15} are omitted.
The estimate of $\cB_3^i$ in \eqref{energy:t12} can be similarly achieved.
Collecting \eqref{energy:t11}-\eqref{energy:t15} leads to
\begin{equation}\label{energy:t16}
\begin{split}
\sum_{1\le i\le2N}\|\p\p_t^iu(t)\|^2_{L^2(\cK)}
\ls\ve^2+\ve_1\sum_{j\le2N-1}\|\p^{\le1}\p\p_t^ju(t)\|^2_{L^2(\cK)}
+\ve_1\sum_{j\le2N-1}\int_0^t\frac{\|\p^{\le1}\p\p_t^ju(s)\|^2_{L^2(\cK)}}{1+s}ds.
\end{split}
\end{equation}
Finally, we turn to the zero-th order energy estimate.
Instead of \eqref{energy:t4}, define the good unknown
\begin{equation*}
\ds V_0:=u-\sum_{\alpha=0}^2C^{\alpha}u\p_{\alpha}u.
\end{equation*}
Then it is easy to find that $V_0$ satisfies
\begin{equation*}
\Box V_0=Q^{\alpha\beta\mu\nu}\p^2_{\alpha\beta}u\p_{\mu}u\p_{\nu}u-\sum_{\alpha=0}^2C^{\alpha}(u\Box\p_{\alpha}u+\p_{\alpha}u\Box u).
\end{equation*}
It follows from \eqref{QWE}, \eqref{nonlinear-Chaplygin}, \eqref{null:condition}, \eqref{null:structure} and \eqref{BA1}-\eqref{BA3} that
\begin{equation}\label{energy:t17}
|\Box V_0|\ls\ve_1^2\w{t}^{-1.4}|\p^{\le2}\p u|.
\end{equation}
On the other hand, one has $\|\p(u-V_0)\|_{L^2(\cK)}\ls\ve_1\|\p^{\le1}\p u\|_{L^2(\cK)}$ and
\begin{equation}\label{energy:t18}
\p_tV_0\Box V_0=\frac12\p_t(|\p_tV_0|^2+|\nabla V_0|^2)-\dive(\nabla V_0\p_tV_0).
\end{equation}
Integrating \eqref{energy:t18} over $[0,t]\times\cK$ with \eqref{initial:data} and \eqref{energy:t17} derives
\begin{equation}\label{energy:t19}
\|\p u(t)\|^2_{L^2(\cK)}\ls\ve^2+\ve_1\|\p^{\le1}\p u(t)\|^2_{L^2(\cK)}+\ve_1\int_0^t\|\p^{\le2}\p u(s)\|^2_{L^2(\cK)}\frac{ds}{\w{s}^{1.4}}+\cB_0,
\end{equation}
where the boundary term $\cB_0$ can be controlled by use of the boundary condition $\frac{\p}{\p\bn}u|_{[0,\infty)\times\p\cK}=0$, the trace
theorem and \eqref{BA3} as follows
\begin{equation}\label{energy:t20}
\begin{split}
\cB_0&=\Big|\int_0^t\int_{\p\cK}(n_1(x),n_2(x))\cdot\nabla V_0\p_tV_0d\sigma ds\Big|\\
&=\Big|\int_0^t\int_{\p\cK}\p_tV_0(\frac{\p}{\p\bn}u-\sum_{\alpha=0}^2C^{\alpha}\frac{\p}{\p\bn}(u\p_{\alpha}u))d\sigma ds\Big|\\
&\ls\int_0^t\|(1-\chi_{[1,2]}(x))\p_tV_0\frac{\p}{\p\bn}(u\p_{\alpha}u)\|_{H^1(\cK)}ds\ls\ve_1^3\int_0^t\w{s}^{-2}ds\ls\ve_1^3.
\end{split}
\end{equation}
Collecting \eqref{energy:t16}, \eqref{energy:t19} and \eqref{energy:t20} completes the proof of \eqref{energy:time}.
\end{proof}

Based on Lemma \ref{lem:energy:time}, we derive the slow time growth of the energy $\ds\sum_{|a|\le2N}\|\p\p^au\|_{L^2(\cK)}$
with $u$ being the solution of \eqref{QWE} with \eqref{nonlinear-Chaplygin}.

\begin{lemma}\label{lem:energy:dtdx}
{\bf (Estimate for the energy $\ds\sum_{|a|\le2N}\|\p\p^au\|_{L^2(\cK)}$)}
Under the assumptions of Theorem \ref{thm2}, let $u$ be the solution of \eqref{QWE}  with \eqref{nonlinear-Chaplygin}
and suppose that \eqref{BA1}-\eqref{BA5} hold.
Then we have
\begin{equation}\label{energy:dtdx}
\sum_{|a|\le2N}\|\p\p^au\|_{L^2(\cK)}\ls\ve_1\w{t}^{\ve_2},
\end{equation}
where $\ve_2=10^{-3}$.
\end{lemma}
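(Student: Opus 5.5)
We follow the scheme of Lemma \ref{Son-4.3}, now using the precise energy inequality \eqref{energy:time} in place of \eqref{thm1-energy:time}. Set $E_j(t):=\sum_{k=0}^{2N-j}\|\p\p_t^ku(t)\|_{H^j(\cK)}$ for $0\le j\le 2N$.

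\textbf{Step 1: elliptic reduction.} As in \eqref{thm1-energy:dtdx1}--\eqref{thm1-energy:dtdx4}, we trade spatial derivatives for time derivatives. We apply the elliptic estimate \eqref{ellip} to $\p_t^ku$, write $\Delta=\p_t^2-\Box$, and insert the equation \eqref{QWE}. The nonlinear term $\p_t^kQ(\p u,\p^2u)$ in $H^{|a|-2}$ is bounded by $\ve_1(E_j+E_{j-1})$, using \eqref{BA1}--\eqref{BA3}: the lower-order factor is placed in $L^\infty$, and $N+1$ derivatives suffice. This yields
\[
E_j(t)\ls E_0(t)+E_{j-1}(t)+\ve_1E_j(t)+\|u\|_{L^2(\cK_{R+1})},\qquad j\ge1 .
\]
Iterating gives $E_{2N}\ls E_0+\|u\|_{L^2(\cK_{R+1})}$.

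\textbf{Step 2: the local term.} The term $\|u\|_{L^2(\cK_{R+1})}$ is controlled by \eqref{BA3}, which gives $|u|\ls\ve_1\w{t}^{\ve_2-1}$ on compact sets, so it is at most $\ve_1$. Consequently the quantities on the right of \eqref{energy:time} obey
\[
\sum_{j\le2N-1}\|\p^{\le1}\p\p_t^ju\|_{L^2}+\sum_{j\le2N-2}\|\p^{\le2}\p\p_t^ju\|_{L^2}\ls E_0+\ve_1 .
\]

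\textbf{Step 3: Gronwall.} Inserting this into \eqref{energy:time}, the term $\ve_1\sum\|\p^{\le1}\p\p_t^ju(t)\|^2$ is absorbed into the left side because $\ve_1$ is small. We obtain
\[
E_0(t)^2\le C(\ve^2+\ve_1^3)+C\ve_1\int_0^t\frac{E_0(s)^2+\ve_1^2}{1+s}\,ds .
\]
The $\ve_1^2$ piece of the integrand contributes $C\ve_1^3\ln(2+t)\ls\ve_1^3\w{t}^{2\ve_2}$. We apply Lemma \ref{lem-Gronwall-2} with $B=C\ve_1<D=2\ve_2$, which is valid once $\ve_1$ is small relative to $\ve_2=10^{-3}$. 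This gives $E_0(t)^2\ls(\ve^2+\ve_1^3)\w{t}^{C\ve_1}+\ve_1^3\w{t}^{2\ve_2}\ls\ve_1^2\w{t}^{2\ve_2}$, using $\ve\le\ve_1$. Step 1 then yields $E_{2N}\ls\ve_1\w{t}^{\ve_2}$, which is \eqref{energy:dtdx}.

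\textbf{Main obstacle.} The delicate point is Step 1 at top order. The quasilinear term $\p_t^kQ$ contains $\p^2\p_t^k\p_x^{a-2}u$, which is of the same order as $E_j$ itself. It must therefore be kept with the small coefficient $\ve_1$ from $\|\p u\|_{L^\infty}$ and absorbed, rather than estimated by a lower-order energy. We also need the count $|a|+k\le2N+1$ to stay within the range where \eqref{BA1} applies to the low-order factor. Beyond this, only the mild requirement $C\ve_1<2\ve_2$ in the Gronwall step must be tracked.
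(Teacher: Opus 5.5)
Your proposal is correct and follows the paper's proof essentially step for step: the elliptic reduction to $E_j\ls E_0+E_{j-1}+\ve_1E_j+\|u\|_{L^2(\cK_{R+1})}$, insertion into \eqref{energy:time} using $j=1,2$, absorption by smallness of $\ve_1$, and Gronwall. The only difference is cosmetic: the paper keeps the decay of $\|u\|_{L^2(\cK_{R+1})}$ from \eqref{BA3}, so the forcing term integrates to $O(\ve_1^3)$ and the ordinary Gronwall lemma gives $\w{t}^{C\ve_1}$, whereas you bound that local term by $\ve_1$ and compensate for the resulting $\ln(2+t)$ with Lemma \ref{lem-Gronwall-2} and $D=2\ve_2$, which also works.
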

\begin{proof}
Set $\ds E_j(t):=\sum_{k=0}^{2N-j}\|\p\p_t^ku(t)\|_{H^j(\cK)}$ with $0\le j\le2N$.
Then one can find that for $j\ge1$,
\begin{equation}\label{energy:dtdx1}
\begin{split}
E_j(t)&\ls\sum_{k=0}^{2N-j}[\|\p\p_t^ku(t)\|_{L^2(\cK)}
+\sum_{1\le|a|\le j}(\|\p_t\p_t^k\p_x^au(t)\|_{L^2(\cK)}+\|\p_x\p_t^k\p_x^au(t)\|_{L^2(\cK)})]\\
&\ls E_0(t)+E_{j-1}(t)+\sum_{k=0}^{2N-j}\sum_{2\le|a|\le j+1}\|\p_t^k\p_x^au(t)\|_{L^2(\cK)},
\end{split}
\end{equation}
where we have used the fact of
\begin{equation*}
\sum_{k=0}^{2N-j}\sum_{1\le|a|\le j}\|\p_t\p_t^k\p_x^au(t)\|_{L^2(\cK)}
\ls\sum_{k=0}^{2N-j}\sum_{|a'|\le j-1}\|\p_x\p_t^{k+1}\p_x^{a'}u(t)\|_{L^2(\cK)}
\ls E_{j-1}(t).
\end{equation*}
For the last term in the second line of \eqref{energy:dtdx1}, it can be deduced from the elliptic estimate \eqref{ellip} that
\begin{equation}\label{energy:dtdx2}
\begin{split}
\|\p_t^k\p_x^au\|_{L^2(\cK)}&\ls\|\Delta_x\p_t^ku\|_{H^{|a|-2}(\cK)}+\|\p_t^ku\|_{H^{|a|-1}(\cK_{R+1})}\\
&\ls\|\p_t^{k+2}u\|_{H^{|a|-2}(\cK)}+\|\p_t^kQ(\p u,\p^2u)\|_{H^{|a|-2}(\cK)}\\
&\quad+\|\p_t^ku\|_{L^2(\cK_{R+1})}+\sum_{1\le|b|\le|a|-1}\|\p_t^k\p_x^bu\|_{L^2(\cK_{R+1})},
\end{split}
\end{equation}
where $\Delta_x=\p_t^2-\Box$ and the equation in \eqref{QWE} has been used.
In addition, by \eqref{BA1} with $|a|+k\le2N+1$, we have
\begin{equation}\label{energy:dtdx3}
\begin{split}
\|\p_t^kQ(\p u,\p^2u)\|_{H^{|a|-2}(\cK)}
&\ls\sum_{l\le k}\ve_1(\|\p_t^l\p u\|_{H^{|a|-1}(\cK)}+\|\p_t^{l+2}u\|_{H^{|a|-2}(\cK)})\\
&\ls\ve_1[E_j(t)+E_{j-1}(t)].
\end{split}
\end{equation}
Combining \eqref{energy:dtdx1}-\eqref{energy:dtdx3} shows that for $j\ge1$,
\begin{equation}\label{energy:dtdx4}
E_j(t)\ls E_0(t)+E_{j-1}(t)+\ve_1E_j(t)+\|u\|_{L^2(\cK_{R+1})}.
\end{equation}
Then collecting \eqref{BA3}, \eqref{energy:time}, \eqref{energy:dtdx4} with $j=1,2$ and the smallness of $\ve_1$ yields
\begin{equation}\label{YHCC-20-5}
\begin{split}
[E_0(t)]^2&\ls\ve^2+\ve_1^3+\ve_1\int_0^t\big\{[E_0(s)]^2+\ve_1^2\w{s}^{-1}\big\}\frac{ds}{\w{s}}\\
&\ls\ve^2+\ve_1^3+\ve_1\int_0^t[E_0(s)]^2\frac{ds}{\w{s}}\\
&\le\tilde C_0\ve_1^2+\tilde C_0\ve_1\int_0^t[E_0(s)]^2\frac{ds}{\w{s}},
\end{split}
\end{equation}
where $\tilde C_0$ is a positive constant.
Thus, it follows from the Gronwall's lemma and \eqref{YHCC-20-5} that
\begin{equation*}
[E_0(t)]^2\ls\ve_1^2(1+t)^{\tilde C_0\ve_1}.
\end{equation*}
This, together with \eqref{energy:dtdx4} and the smallness of $\ve_1$, completes the proof of \eqref{energy:dtdx}.
\end{proof}

\subsection{Energy estimates on the  vector field  derivatives of solutions}\label{Sonn-3}
By the obtained results in Subsection \ref{Sonn-2}, we start to derive the energy estimates
for the vector field derivatives of solution $u$ to \eqref{QWE} with \eqref{nonlinear-Chaplygin}.
In this process, several good unknowns are also introduced.
\begin{lemma}
{\bf (Estimate for $\ds\sum_{|a|\le2N-1}\|\p Z^au\|_{L^2(\cK)}$)}
Under the assumptions of Theorem \ref{thm2}, let $u$ be the solution of \eqref{QWE}  with \eqref{nonlinear-Chaplygin}
and suppose that \eqref{BA1}-\eqref{BA5} hold.
Then one has
\begin{equation}\label{energyA}
\sum_{|a|\le2N-1}\|\p Z^au\|_{L^2(\cK)}\ls\ve_1(1+t)^{1/2+\ve_2}.
\end{equation}
\end{lemma}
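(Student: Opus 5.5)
We follow the scheme of Lemma \ref{Son-4.4}, now fed with the good unknowns $V^a$ from \eqref{energyA2-1} and with the slowly growing bound \eqref{energy:dtdx} in place of \eqref{thm1-energy:dtdx}. For $|a|\le2N-1$, Lemma \ref{lem:eqn:high} gives the equation for $Z^au$. We rewrite it with the perturbed operator $\Box_g$ from \eqref{energy:t1}, which yields \eqref{energyA1-1}. We then set
\[
V^a=Z^au-\frac12\sum_{\alpha=0}^2C^{\alpha}\p_{\alpha}uZ^au-\sum_{\alpha=0}^2\sum_{\substack{b+c\le a,\\b,c<a}}C^{\alpha}C^a_{bc}\chi_{[1,2]}(x)\p_{\alpha}Z^buZ^cu.
\]
The cutoff is inserted so that $V^a$ keeps a simple form near $\p\cK$. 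Using the product rule \eqref{energy:t2}, exactly as in the derivation of \eqref{energy:t5}, we obtain $\Box_gV^a-\sum_\alpha C^\alpha Q_0(\p_\alpha u,V^a)=$ cubic null terms $+$ commutator terms supported in $\{|x|\le2\}$ $+$ terms of the type $N^V_2$. By \eqref{BA3}, one has $\|\p(Z^au-V^a)\|_{L^2}\ls\ve_1\sum_{|b|\le2N-1}\|\p^{\le1}\p Z^bu\|_{L^2}$. Second derivatives $\p^2Z^bu$ with $|b|\le 2N-2$ are absorbed into first-order $Z$-energies via $\p=Z$ and lower order.

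Next we multiply by $e^q\p_tV^a$, with $q=\arctan(|x|-t)$, and integrate over $[0,t]\times\cK$ using the identity \eqref{energy:t6}. The interior terms are handled as in \eqref{energy:t7}--\eqref{energy:t9}. The null structure \eqref{null:structure} together with \eqref{BA1}--\eqref{BA3} and \eqref{BA5} gives a bound $\ve_1\w{s}^{-1}\sum_{|b|\le2N-1}\|\p Z^bu\|^2$, plus a piece absorbed by the ghost-weight term $\int\!\!\int|\bar\p V^a|^2\w{s-|x|}^{-2}$. The compactly supported terms are controlled by \eqref{BA4} and Sobolev embedding as in \eqref{energy:t8}.

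The main obstacle is the boundary integrals $\cB_1,\cB_2,\cB_3$. The rotation $\Omega$ is not tangent to $\p\cK$, so $\frac{\p}{\p\bn}Z^au\neq0$, and the integration-by-parts trick of \eqref{energy:t13} is unavailable. Following \eqref{thm1-energyA2}, we instead bound these by the trace theorem:
\[
|\cB_1^a|+|\cB_2^a|+|\cB_3^a|\ls\sum_{|b|\le|a|+1}\int_0^t\|\p\p^bu(s)\|^2_{L^2(\cK_2)}ds\ls\int_0^t\ve_1^2\w{s}^{2\ve_2}ds\ls\ve_1^2(1+t)^{1+2\ve_2},
\]
where \eqref{energy:dtdx} is used since $|a|+1\le2N$. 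Only Cartesian derivatives appear in the local norm, because near $\p\cK$ each $Z$ is a combination of $\p$ with bounded coefficients.

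Collecting the estimates, with $\ve_1$ small, gives
\[
\sum_{|a|\le2N-1}\|\p Z^au\|^2\le C\ve^2+C\ve_1^2(1+t)^{1+2\ve_2}+C\ve_1\int_0^t\sum_{|a|\le2N-1}\frac{\|\p Z^au(s)\|^2}{1+s}ds.
\]
We apply Lemma \ref{lem-Gronwall-2} with $B=C\ve_1<D=1+2\ve_2$. This yields $\sum\|\p Z^au\|^2\ls\ve_1^2(1+t)^{1+2\ve_2}$, which is \eqref{energyA}. If the time-decaying cubic terms or the terms $\ve_1\|\p^{\le1}\p Z^bu(t)\|^2$ at time $t$ cause trouble, we absorb them by induction on $|a|$. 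The top-order ones are handled through the smallness of $\ve_1$, exactly as in \eqref{energy:t16}.
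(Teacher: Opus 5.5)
Your proposal is essentially the paper's proof. It uses the good unknowns $V^a$, the ghost-weight identity \eqref{energyA4}, the crude trace-theorem bound $\ve_1^2(1+t)^{1+2\ve_2}$ for the boundary terms via \eqref{energy:dtdx}, and Lemma \ref{lem-Gronwall-2} with $B=C\ve_1<D=1+2\ve_2$.

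The one deviation is the cutoff $\chi_{[1,2]}$ in your $V^a$. The paper does not use it in \eqref{energyA2}, so all remainders in \eqref{energyA3} are cubic, and it buys nothing here because you bound the boundary terms crudely anyway. The quadratic terms it produces in $\{|x|\le2\}$ are not covered by \eqref{BA4} as in \eqref{energy:t8}, since \eqref{BA4} only controls $\p_t$-derivatives. They are still harmless: they are bounded in $L^2$ by $\ve_1^2\w{t}^{2\ve_2-1}$ if you use \eqref{BA1}, \eqref{BA3} for the low-order factor and \eqref{energy:dtdx}, not the $Z$-energy, for the high-order factor.
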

\begin{proof}
From \eqref{eqn:high} and \eqref{energy:t1}, we can obtain that for any multi-index $a\in\N_0^4$ with $|a|\le2N-1$,
\begin{equation}\label{energyA1}
\begin{split}
\Box_gZ^au&=2\sum_{\alpha=0}^2C^{\alpha}Q_0(\p_{\alpha}u,Z^au)
+2\sum_{\alpha=0}^2\sum_{\substack{b+c\le a,\\b,c<a}}C^{\alpha}C^a_{bc}Q_0(\p_{\alpha}Z^bu,Z^cu)\\
&\quad+\sum_{\substack{b+c+d\le a,\\b<a}}Q_{abcd}^{\alpha\beta\mu\nu}\p^2_{\alpha\beta}Z^bu\p_{\mu}Z^cu\p_{\nu}Z^du.
\end{split}
\end{equation}
Define the following good unknowns for $|a|\le2N-1$,
\begin{equation}\label{energyA2}
V^a:=Z^au-\frac12\sum_{\alpha=0}^2C^{\alpha}\p_{\alpha}uZ^au
-\sum_{\alpha=0}^2\sum_{\substack{b+c\le a,\\b,c<a}}C^{\alpha}C^a_{bc}\p_{\alpha}Z^buZ^cu,
\end{equation}
which derives
\begin{equation}\label{energyA3}
\begin{split}
&\Box_gV^a-\sum_{\alpha=0}^2C^{\alpha}Q_0(\p_{\alpha}u,V^a)
=\sum_{\substack{b+c+d\le a,\\b<a}}Q_{abcd}^{\alpha\beta\mu\nu}\p^2_{\alpha\beta}Z^bu\p_{\mu}Z^cu\p_{\nu}Z^du+N^V_3+N^V_4,\\
&N^V_3:=-\sum_{\alpha=0}^2\sum_{\substack{b+c\le a,\\b,c<a}}C^{\alpha}C^a_{bc}\Big\{
-2\sum_{\beta=0}^2C^{\beta}(\p^2_{\alpha\beta}Z^buQ_0(Z^cu,u)+\p_{\beta}Z^cuQ_0(\p_{\alpha}Z^bu,u))\\
&\qquad +Z^cu\Box_g\p_{\alpha}Z^bu+\p_{\alpha}Z^bu\Box_g Z^cu
-2\sum_{\tilde\alpha,\beta,\mu,\nu=0}^2Q^{\tilde\alpha\beta\mu\nu}\p_{\mu}u\p_{\nu}u\p^2_{\alpha\beta}Z^bu\p_{\tilde\alpha}Z^cu\Big\},\\
&N^V_4:=-\frac12\sum_{\alpha=0}^2C^{\alpha}\Big\{\p_{\alpha}u\Box_gZ^au+Z^au\Box_g\p_{\alpha}u
-2\sum_{\tilde\alpha,\beta,\mu,\nu=0}^2Q^{\tilde\alpha\beta\mu\nu}\p_{\mu}u\p_{\nu}u\p^2_{\alpha\beta}u\p_{\tilde\alpha}Z^au\\
&\qquad-2\sum_{\beta=0}^2C^{\beta}(\p^2_{\alpha\beta}uQ_0(Z^au,u)+\p_{\beta}Z^auQ_0(\p_{\alpha}u,u))\Big\}\\
&\qquad +\sum_{\alpha,\beta=0}^2C^{\alpha}C^{\beta}Q_0\Big(\p_{\alpha}u,\frac12\p_{\beta}uZ^au
+\sum_{\substack{b+c\le a,\\b,c<a}}C^a_{bc}\p_{\beta}Z^buZ^cu\Big).
\end{split}
\end{equation}
Analogously to \eqref{energy:t6}, we have
\begin{equation}\label{energyA4}
\begin{split}
&e^q\p_tV^a(\Box_gV^a-\sum_{\alpha=0}^2C^{\alpha}Q_0(\p_{\alpha}u,V^a))\\
&=\frac12\p_t[e^q(|\p_tV^a|^2+|\nabla V^a|^2)]-\dive(e^q\p_tV^a\nabla V^a)+\frac{e^q|\bar\p V^a|^2}{2\w{t-|x|}^2}\\
&\quad-\p_{\alpha}(Q^{\alpha\mu\nu}e^q\p_tV^a\p_{\mu}V^a\p_{\nu}u)
+e^q\p_tV^a\p_{\mu}V^a(Q^{\alpha\mu\nu}\p_{\alpha}q\p_{\nu}u+C^{\mu}\Box u)\\
&\quad+\frac12\p_t(Q^{\alpha\mu\nu}e^q\p_{\alpha}V^a\p_{\mu}V^a\p_{\nu}u)
-\frac12Q^{\alpha\mu\nu}e^q\p_{\alpha}V^a\p_{\mu}V^a(\p_tq\p_{\nu}u+\p^2_{0\nu}u)\\
&\quad-\p_{\alpha}(Q^{\alpha\beta\mu\nu}e^q\p_tV^a\p_{\beta}V^a\p_{\mu}u\p_{\nu}u)+Q^{\alpha\beta\mu\nu}
\p_tV^a\p_{\beta}V^a\p_{\alpha}(e^q\p_{\mu}u\p_{\nu}u)\\
&\quad+\frac12\p_t(Q^{\alpha\beta\mu\nu}e^q\p_{\alpha}V^a\p_{\beta}V^a\p_{\mu}u\p_{\nu}u)
-\frac12Q^{\alpha\beta\mu\nu}\p_{\alpha}V^a\p_{\beta}V^a\p_t(e^q\p_{\mu}u\p_{\nu}u).
\end{split}
\end{equation}
Note that the terms $Q^{\alpha\mu\nu}e^q\p_tV^a\p_{\mu}V^a\p_{\alpha}q\p_{\nu}u$ and
$Q^{\alpha\mu\nu}e^q\p_{\alpha}V^a\p_{\mu}V^a(\p_tq\p_{\nu}u+\p^2_{0\nu}u)$ in the third and forth lines of \eqref{energyA4} can be controlled as in \eqref{energy:t9}.
The other terms in \eqref{energyA3} are at least cubic and can be treated as \eqref{energy:t7} with \eqref{BA3}.
Integrating \eqref{energyA4} over $[0,t]\times\cK$ with \eqref{initial:data} and the smallness of $\ve_1$ derives
\begin{equation}\label{energyA5}
\begin{split}
&\sum_{|a|\le2N-1}\Big\{\|\p V^a(t)\|^2_{L^2(\cK)}+\int_0^t\int_{\cK}\frac{|\bar\p V^a|^2}{\w{s-|y|}^2}dyds\Big\}\\
&\ls\ve^2+\sum_{|a|\le2N-1}(|\cB_1^a|+|\cB_2^a|)+\ve_1\sum_{|b|\le2N-1}\int_0^t\frac{\|\p V^b(s)\|^2_{L^2(\cK)}+\|\p Z^bu(s)\|^2_{L^2(\cK)}}{1+s}ds,\\
&\cB^a_1:=\int_0^t\int_{\p\cK}e^q(n_1(x),n_2(x))\cdot\nabla V^a(s,x)\p_tV^a(s,x)d\sigma ds,\\
&\cB^a_2:=\int_0^t\int_{\p\cK}\sum_{j=1}^2\sum_{\beta,\mu,\nu=0}^2
Q^{j\beta\mu\nu}n_j(x)e^q\p_tV^a\p_{\beta}V^a\p_{\mu}u\p_{\nu}ud\sigma ds.
\end{split}
\end{equation}
According to $\p\cK\subset\overline{\cK_1}$ and the trace theorem, we arrive at
\begin{equation}\label{energyA6}
\begin{split}
|\cB^a_1|&\ls\sum_{|b|\le|a|}\int_0^t\|(1-\chi_{[1,2]}(x))\p V^a\|^2_{H^1(\cK)}ds\\
&\ls\sum_{|b|\le|a|+1}\int_0^t\|\p \p^bu\|^2_{L^2(\cK_2)}ds\ls\ve_1^2(1+t)^{1+2\ve_2},
\end{split}
\end{equation}
where \eqref{BA3}, \eqref{energy:dtdx} and \eqref{energyA2} have been used.
Analogously, $\cB^a_2$ can be also treated.

On the other hand, similarly to \eqref{energy:t10}, one has
\begin{equation}\label{energyA7}
\sum_{|a|\le2N-1}\|\p(Z^au-V^a)\|_{L^2(\cK)}\ls\ve_1\sum_{|a|\le2N-1}\|\p Z^au\|_{L^2(\cK)}.
\end{equation}
Then, \eqref{energyA5}, \eqref{energyA6} and \eqref{energyA7} ensure that there is a constant $C_3>0$ such that
\begin{equation}\label{energyA8}
\sum_{|a|\le2N-1}\|\p Z^au\|^2_{L^2(\cK)}\le C_3\ve^2+C_3\ve_1\int_0^t\sum_{|a|\le2N-1}\frac{\|\p Z^au(s)\|^2_{L^2(\cK)}}{1+s}ds
+C_3\ve_1^2(1+t)^{1+2\ve_2}.
\end{equation}
Applying Lemma \ref{lem-Gronwall-2} to \eqref{energyA8} yields \eqref{energyA}.
\end{proof}

\subsection{Local energy decay estimates}

To improve the energy estimate in \eqref{energyA}, one only needs to give an improvement on the estimate
of the boundary term in \eqref{energyA6}, which can be accomplished by the local energy decay estimates.
Before taking the local energy decay estimates, we introduce another good unknown ($i=0,1,2\cdots2N-1$)
as follows
\begin{equation}\label{good1}
\tilde V_i:=\p_t^iu-\sum_{\alpha=0}^2\sum_{j+k=i}C^{\alpha}C^i_{jk}\chi_{[1,2]}(x)\p_{\alpha}\p_t^ju\p_t^ku.
\end{equation}
Then $\tilde V_i$ satisfies $\ds\frac{\p}{\p\bn}\tilde V_i|_{[0,\infty)\times\p\cK}=0$ and
\begin{equation}\label{eqn:good1}
\begin{split}
\Box\tilde V_i&=\sum_{j+k+l=i}C^i_{jkl}Q^{\alpha\beta\mu\nu}\p^2_{\alpha\beta}\p_t^ju\p_{\mu}\p_t^ku\p_{\nu}\p_t^lu\\
&\quad-\sum_{\alpha=0}^2\sum_{j+k=i}C^{\alpha}C^i_{jk}\chi_{[1,2]}(x)[\p_t^ku\Box\p_{\alpha}\p_t^ju+\p_{\alpha}\p_t^ju\Box\p_t^ku],\\
&\quad+\sum_{\alpha=0}^2\sum_{j+k=i}C^{\alpha}C^i_{jk}\Big\{[\Delta,\chi_{[1,2]}(x)](\p_{\alpha}\p_t^ju\p_t^ku)
+2(1-\chi_{[1,2]}(x))Q_0(\p_{\alpha}\p_t^ju,\p_t^ku)\Big\}.
\end{split}
\end{equation}
Obviously, \eqref{good1} and \eqref{eqn:good1} are different from the forms in \eqref{energy:t4} and \eqref{energy:t5}, respectively.

\begin{lemma}
{\bf (Uniform time decay of $\|\p^{\le1}u\|_{L^2(\cK_R)}$)}
Under the assumptions of Theorem \ref{thm2}, let $u$ be the solution of \eqref{QWE}  with \eqref{nonlinear-Chaplygin}
and suppose that \eqref{BA1}-\eqref{BA5} hold.
Then we have
\begin{equation}\label{loc:energyA}
\|\p^{\le1}u\|_{L^2(\cK_R)}\ls(\ve+\ve_1^2)\w{t}^{-1}\ln(2+t).
\end{equation}
\end{lemma}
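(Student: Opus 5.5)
We plan to apply the local energy decay estimate \eqref{loc:ibvp} to the good unknown $\tilde V_0$ from \eqref{good1}, rather than to $u$ directly, and then to transfer the bound back to $u$. By \eqref{good1} with $i=0$, $\tilde V_0=u-\sum_\alpha C^\alpha\chi_{[1,2]}(x)\p_\alpha u\,u$. Since $\chi_{[1,2]}$ vanishes near $\p\cK$, $\tilde V_0$ satisfies the Neumann condition. By \eqref{eqn:good1}, its equation has the form $\Box\tilde V_0=\sum_\alpha\p_\alpha G^\alpha+G^r$ with the following pieces.

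The divergence part comes from the commutator and cutoff terms $[\Delta,\chi_{[1,2]}](\p_\alpha u\,u)$ and $2(1-\chi_{[1,2]})Q_0(\p_\alpha u,u)$. These are compactly supported in $\{|x|\le2\}$. We can either keep them in $G^r$, since they only enter the $L^2(\cK_3)$ term, or write them in divergence form via $Q_0(f,h)=\tfrac12\Box(fh)-\dots$; the first option suffices.

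The remainder $G^r$ collects the cubic terms $Q^{\alpha\beta\mu\nu}\p^2_{\alpha\beta}u\p_\mu u\p_\nu u$ together with $\chi_{[1,2]}C^\alpha[u\Box\p_\alpha u+\p_\alpha u\Box u]$. After substituting the equation \eqref{nonlinear-Chaplygin}, the latter are cubic, of the form $u\cdot Q_0(\p\p u,u)$, $\p u\cdot Q_0(\p u,u)$, plus quartic terms. The whole point of the good unknown is that $\Box\tilde V_0$ away from the boundary contains no quadratic terms. Hence we can take $G^\alpha\equiv0$ outside $\cK_2$, and the bad factor $\w{t}^\eta\ln^2(2+t)$ in \eqref{loc:ibvp} multiplies only initial-data or compactly supported contributions.

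Next we bound each term on the right of \eqref{loc:ibvp} using \eqref{BA1}--\eqref{BA5}. The compact region term is $\ln(2+t)\sup_s\w{s}\|\p^{\le1}G(s)\|_{L^2(\cK_3)}$. There every term contains a factor $\p\p_t$-type or local-energy-type quantity. We estimate it by \eqref{BA4} combined with Sobolev embedding and \eqref{BA3}: in $\cK_3$ one has $|u|\ls\ve_1\w{t}^{\ve_2-1}$ and $|\p u|\ls\ve_1\w{t}^{\ve_2-1}$. For quadratic terms such as $(1-\chi)Q_0(\p_\alpha u,u)$ this seems to give only $\w{s}\cdot\ve_1^2\w{s}^{-2+2\ve_2}$, which is bounded, so this contribution is $\ls\ve_1^2\ln(2+t)$, exactly the claimed loss. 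The initial-data terms give $\ve$ by \eqref{initial:data}.

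For the exterior cubic term we need
\[
\w{y}^{1/2}\cW_{3/2+\eta,1}|\p^{\le1}G^r|\ls\ve_1^2.
\]
We use the null structure \eqref{null:structure} on $Q_0$ and on the $Q^{\alpha\beta\mu\nu}$ term via \eqref{null:condition}, with one good derivative controlled by \eqref{BA2} and the remaining factors by \eqref{BA1}, \eqref{BA3}. Schematically, $|u||\bar\p\p^{2}u||\p u|$ gives
\[
\ve_1^3\w{t+|y|}^{-1/2}\w{t-|y|}^{\ve_2-1/2}\,\w{y}^{-1/2}\w{t+|y|}^{\ve_2-1}\,\w{y}^{-1/2}\w{t-|y|}^{-1}\w{t}^{\ve_2}.
\]
After multiplying by $\w{y}^{1/2}\w{t+|y|}^{3/2+\eta}\min\{\w{y},\w{t-|y|}\}$, this is bounded once $\eta,\ve_2$ are small. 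A few $\ve_2$ powers remain; we absorb them using $\w{y}^{-1/2}$ and the extra $\w{t-|y|}$ decay, choosing $\eta=\ve_2$.

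This yields $\w{t}\|\p^{\le1}\tilde V_0\|_{L^2(\cK_R)}\ls(\ve+\ve_1^2)\ln(2+t)$. Finally, $u-\tilde V_0=\sum_\alpha C^\alpha\chi_{[1,2]}\p_\alpha u\,u$, and by \eqref{BA1}, \eqref{BA3} together with \eqref{BA4} this difference and its first derivatives on $\cK_R$ are $O(\ve_1^2\w{t}^{-1})$ in $L^2$, giving \eqref{loc:energyA}.

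The main obstacle is the exterior weighted estimate for the cubic $G^r$. The weight $\cW_{3/2+\eta,1}$ is demanding, and terms like $\p u\,Q_0(\p u,u)$ must exploit a good derivative on the right factor. In addition, the $\w{t}^{\ve_2}$ losses in \eqref{BA1} must be beaten by genuine extra decay, in particular near the cone, where $\w{t-|y|}\sim1$ and the $t^{\ve_2}$ factors accumulate. If this fails for some term, we would move that term into a divergence form $\p_\alpha(\cdot)$, which requires only the weaker weight $\cW_{1,1}$.
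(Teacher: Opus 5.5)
Your proposal is correct and follows the paper's proof essentially step by step. You apply \eqref{loc:ibvp} to $\tilde V_0$ with $G^\alpha=0$ and $\eta=\ve_2$, bound the compactly supported quadratic remainder in $\cK_3$ by $\ve_1^2\w{t}^{2\ve_2-2}$, bound the exterior cubic terms using the null structure and \eqref{BA1}--\eqref{BA3}, and return to $u$ through \eqref{good1}.

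Two small points, neither affecting the argument. In $\cK_3$, \eqref{BA3} alone controls all the quadratic terms, so neither \eqref{BA4} nor a $\p_t$ factor is needed. The datum term $\ln(2+t)\sup_y|G^r(0,y)|$ contributes $\ve^2\ln(2+t)$ rather than $\ve$, which still fits within the claimed bound.
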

\begin{proof}
Applying \eqref{loc:ibvp} to \eqref{eqn:good1} with $i=0$, $G^\alpha=0$, $\eta=\ve_2$ yields
\begin{equation}\label{loc:energyA1}
\begin{split}
\w{t}\|\p^{\le1}\tilde V_0\|_{L^2(\cK_R)}\ls\ve+\ve^2\ln(2+t)+\ln(2+t)\sup_{s\in[0,t]}\w{s}\|\Box\tilde V_0(s)\|_{L^2(\cK_3)}\\
+\ln(2+t)\sum_{|a|\le1}\sup_{(s,y)\in[0,t]\times\overline{\R^2\setminus\cK_2}}\w{y}^{1/2}\cW_{3/2+\ve_2,1}(s,y)|\p^a\Box\tilde V_0(s,y)|,
\end{split}
\end{equation}
where we have used \eqref{initial:data}.
On the other hand, it can be deduced from \eqref{QWE}, \eqref{nonlinear-Chaplygin}, \eqref{null:condition}, \eqref{null:structure}, \eqref{BA1}, \eqref{BA2}, \eqref{BA3} and \eqref{eqn:good1} with $N\ge40$ that for $|y|\ge2$,
\begin{equation}\label{loc:energyA2}
\sum_{|a|\le1}|\p^a\Box\tilde V_0(s,y)|\ls\ve_1^3\w{y}^{-1}\w{s-|y|}^{-3/2}\w{s+|y|}^{3\ve_2-3/2}.
\end{equation}
On the other hand, the last line in \eqref{eqn:good1} can be controlled by \eqref{BA3} as follows
\begin{equation}\label{loc:energyA3}
\|[\Delta,\chi_{[1,2]}(x)](u\p_{\alpha}u)\|_{L^2(\cK_3)}+\|Q_0(\p_{\alpha}u,u)\|_{L^2(\cK_3)}
\ls\ve_1^2\w{t}^{2\ve_2-2}.
\end{equation}
Combining \eqref{loc:energyA1}-\eqref{loc:energyA3} with \eqref{BA3} and \eqref{good1} finishes the proof of \eqref{loc:energyA}.
\end{proof}

\begin{lemma}
{\bf (Uniform time decay of $\ds\sum_{|a|\le2N-5}\|\p^au\|_{L^2(\cK_R)}$)}
Under the assumptions of Theorem \ref{thm2}, let $u$ be the solution of \eqref{QWE} with \eqref{nonlinear-Chaplygin} and suppose that \eqref{BA1}-\eqref{BA5} hold.
Then one has
\begin{equation}\label{loc:energyB}
\sum_{|a|\le2N-5}\|\p^au\|_{L^2(\cK_R)}\ls(\ve+\ve_1^2)\w{t}^{4\ve_2-1/2}.
\end{equation}
\end{lemma}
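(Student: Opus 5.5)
The plan is to mimic the proof of \eqref{thm1-loc:energyB} from Section \ref{sect4}: first get decay of $\|\p^{\le1}\p_t^j u\|_{L^2(\cK_R)}$ for $j\le 2N-6$, then recover all spatial derivatives through the elliptic estimate \eqref{ellip}.

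\textbf{Time derivatives.} For $j\le 2N-6$ I would apply \eqref{loc:ibvp} with $\eta=\ve_2$ to $\tilde V_j$ from \eqref{good1}. This function satisfies the Neumann condition and solves \eqref{eqn:good1}, which I treat with $G^\alpha=0$ and $G^r=\Box\tilde V_j$. The right-hand side then has three contributions.
\begin{itemize}
\item The cubic terms $Q^{\alpha\beta\mu\nu}\p^2\p_t^ju\,\p\p_t^ku\,\p\p_t^lu$ satisfy the null condition by \eqref{null:high}. Away from the obstacle I bound them using \eqref{null:structure}, \eqref{BA1}--\eqref{BA3} when the number of derivatives is at most $N+1$. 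Otherwise I use the Sobolev inequality \eqref{Sobo:ineq} together with the energy bound \eqref{energyA}, which gives the pointwise bound $|\p Z^{\le 2N-3}u|\ls\ve_1\w{x}^{-1/2}\w{t}^{1/2+\ve_2}$.
\item The products $\chi_{[1,2]}[\p_t^ku\,\Box\p_\alpha\p_t^ju+\p_\alpha\p_t^ju\,\Box\p_t^ku]$ become cubic after substituting the equation.
\item The commutator and cutoff terms in the last line of \eqref{eqn:good1} are supported in $\cK_2$. I control them via \eqref{BA3}, \eqref{BA4}, and \eqref{energy:dtdx} combined with Sobolev embedding on $\cK_3$.
\end{itemize}
Because the high-order factor is allowed to grow like $\w{s}^{1/2+\ve_2}$ while the lower-order factors decay at the rates in \eqref{BA1}--\eqref{BA3}, I expect to obtain
\[
\w{t}\|\p^{\le1}\tilde V_j\|_{L^2(\cK_R)}\ls\ve+\ve_1(\ve+\ve_1^2)\w{t}^{1/2+3\ve_2}\ln^2(2+t).
\]
Since $\tilde V_j-\p_t^ju$ is quadratic and already small by \eqref{BA3} and \eqref{energy:dtdx}, this yields $\sum_{j\le2N-6}\|\p^{\le1}\p_t^ju\|_{L^2(\cK_R)}\ls(\ve+\ve_1^2)\w{t}^{4\ve_2-1/2}$.

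\textbf{Spatial derivatives.} I would set $E_j^{loc}(t)=\sum_{k\le j}\|\p_t^k\p_x^{2N-5-j}u\|_{L^2(\cK_{R+j})}$ and descend in $j$ as in \eqref{thm1-loc:energyB4}. At each step I apply \eqref{ellip} to $(1-\chi_{[R+j,R+j+1]})\p_t^ku$ and write $\Delta=\p_t^2-\Box$. The term $\|\Box\p_t^k u\|_{H^{2N-7-j}(\cK_{R+j+1})}$ is quadratic, so it is bounded by $\ve_1\cdot\ve_1\w{t}^{\ve_2}$ times a decaying factor from \eqref{BA4}. Since $\Box\p_t^ku$ contains the factor $\p\p_tu$ or $\p_t$-derivatives, \eqref{BA4} gives $\ve_1^2\w{t}^{\ve_2-1}$, which is acceptable. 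The induction then closes and gives \eqref{loc:energyB}.

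\textbf{Main obstacle.} The hardest step is the weighted sup in \eqref{loc:ibvp} for the highest-order cubic terms. The weight $\w{y}^{1/2}\cW_{3/2+\ve_2,1}$ must absorb a factor that grows like $\w{s}^{1/2+\ve_2}$, so the null structure has to supply an extra $\w{s+|y|}^{-1}$, coming from $\bar\p$ together with \eqref{BA2}. I would also need to check carefully that derivative counts up to $9$ in $Z$ stay within $2N-3$.
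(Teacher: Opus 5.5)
\textbf{There is a genuine gap in the time-derivative step: \eqref{loc:ibvp} is too costly for the high-order good unknowns.}

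Its far-field term weights $G^r$ by $\w{y}^{1/2}\cW_{3/2+\eta,1}$ and carries a factor $\ln(2+t)$. The reason is that it must control the undifferentiated solution $w$, through \eqref{pw:ivp2}.

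Consider the first line of \eqref{eqn:good1} for $\tilde V_j$, and the piece $Q^{\alpha\beta\mu\nu}\p^2_{\alpha\beta}\p_t^{j}u\,\p_\mu u\,\p_\nu u$ in which all time derivatives sit on the second-derivative factor. Because $\tilde V_j$ is built with $\Box$ rather than $\Box_g$, this quasilinear top-order term remains on the right-hand side.

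For this cubic term, \eqref{null:structure} produces $|\bar\p\p\p_t^ju|\,|\p u|^2$, so the good derivative lands on the top-order factor. Your assumption that the null structure always puts $\bar\p$ on a factor controlled by \eqref{BA2} holds for $Q_0$, but not here.

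For $\p^2\p_t^ju$ with $j$ close to $2N$, the only pointwise information is $\ve_1\w{y}^{-1/2}\w{s}^{1/2+\ve_2}$, from \eqref{Sobo:ineq} and \eqref{energyA}. Alternatively one has $\ve_1\w{s}^{\ve_2}$ from \eqref{energy:dtdx} and ordinary Sobolev embedding; near the cone the two coincide. So near $|y|\approx s$ the term has size $\ve_1^3\w{s}^{3\ve_2-1}$, while the weight has size $\w{s}^{2+\ve_2}$.

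The supremum is therefore $\ve_1^3\w{t}^{1+4\ve_2}$, and you only get $\|\p^{\le1}\tilde V_j\|_{L^2(\cK_R)}\ls\ve\w{t}^{-1}+\ve_1^3\w{t}^{4\ve_2}\ln(2+t)$, which has no decay. Your predicted $\w{t}^{1/2+3\ve_2}\ln^2(2+t)$ is off by a factor $\w{t}^{1/2}$.

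\textbf{The missing idea is to estimate $\p_t\tilde V_i$ instead of $\tilde V_i$.} Apply \eqref{locdt} to \eqref{eqn:good1} with $i\le 2N-7$ and $\mu=\nu=\ve_2/2$. Since \eqref{locdt} needs only the derivative estimate \eqref{dpw:ivp} on the Cauchy part, its far-field weight is $\w{y}^{1/2}\cW_{1+\ve_2,1}$. This is weaker by a full $\w{s+|y|}^{1/2}$, and there is no logarithm.

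The same top-order term then contributes only $\ve_1^3\w{t}^{1/2+4\ve_2}$. This yields $\sum_{i\le2N-7}\|\p^{\le1}\p_t^{i+1}u\|_{L^2(\cK_R)}\ls(\ve+\ve_1^2)\w{t}^{4\ve_2-1/2}$, which is exactly the exponent in \eqref{loc:energyB}.

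The one level not covered, $\|\p^{\le1}u\|_{L^2(\cK_R)}$, is supplied by \eqref{loc:energyA}. There only $i=0$ occurs, every factor is covered by \eqref{BA1}--\eqref{BA3}, and \eqref{loc:ibvp} is harmless.

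Your elliptic descent then works as written, with one correction. The low-order factor in $\Box\p_t^ku$ should be bounded by \eqref{BA3}, which gives $\ve_1\w{t}^{\ve_2-1}$ on $\cK_R$. The reason is that $\nabla\p_\alpha u\cdot\nabla u$ inside $Q_0$ contains no $\p_t$, so \eqref{BA4} does not reach it.

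Also, \eqref{loc:ibvp} involves only $\p^{\le1}G^r$, so the count of nine $Z$-derivatives is not the relevant constraint.
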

\begin{proof}
Applying \eqref{locdt} to \eqref{eqn:good1} with $i\le2N-7$, $\mu=\nu=\ve_2/2$ and \eqref{initial:data} yields
\begin{equation}\label{loc:energyB1}
\begin{split}
\w{t}\|\p^{\le1}\p_t\tilde V_i\|_{L^2(\cK_{R_1})}\ls\ve+\sum_{|a|\le1}\sup_{s\in[0,t]}\w{s}^{1+\ve_2}\|\p^a\Box\tilde V_i(s)\|_{L^2(\cK_3)}\\
+\sum_{|a|\le2}\sup_{(s,y)\in[0,t]\times\overline{\R^2\setminus\cK_2}}\w{y}^{1/2}\cW_{1+\ve_2,1}(s,y)|Z^a\Box\tilde V_i(s,y)|.
\end{split}
\end{equation}
Note that \eqref{Sobo:ineq} and \eqref{energyA} lead to
\begin{equation}\label{loc:energyB2}
\sum_{|a|\le2N-3}|\p Z^au|\ls\ve_1\w{x}^{-1/2}(1+t)^{1/2+\ve_2}.
\end{equation}
It follows from \eqref{QWE}, \eqref{nonlinear-Chaplygin}, \eqref{null:condition}, \eqref{BA1}, \eqref{BA3}, \eqref{eqn:good1}
and \eqref{loc:energyB2} that for $|y|\ge2$,
\begin{equation}\label{loc:energyB3}
\sum_{|a|\le2}|Z^a\Box\tilde V_i(s,y)|\ls\ve_1^3\w{y}^{-1}\w{s-|y|}^{-3/2}(1+s)^{1/2+\ve_2}\w{s+|y|}^{2\ve_2-1/2}.
\end{equation}
On the other hand, the last line in \eqref{eqn:good1} can be treated by \eqref{BA3} and \eqref{energy:dtdx} as follows
\begin{equation}\label{loc:energyB4}
\sum_{|a|\le1}\{\|\p^a[\Delta,\chi_{[1,2]}(x)](\p_{\alpha}\p_t^ju\p_t^ku)\|_{L^2(\cK_3)}+\|\p^aQ_0(\p_{\alpha}\p_t^ju,\p_t^ku)\|_{L^2(\cK_3)}\}
\ls\ve_1^2\w{t}^{2\ve_2-1}.
\end{equation}
Therefore, collecting \eqref{loc:energyB1}, \eqref{loc:energyB3} and \eqref{loc:energyB4} with \eqref{BA3}, \eqref{energy:dtdx}, \eqref{good1} yields
\begin{equation}\label{loc:energyB5}
\sum_{i\le2N-7}\|\p^{\le1}\p_t\p_t^iu\|_{L^2(\cK_{R_1})}\ls(\ve+\ve_1^2)\w{t}^{4\ve_2-1/2}.
\end{equation}
For $j=0,1,2,\cdots,2N-5$, denote $\ds E_j^{loc}(t):=\sum_{k\le j}\|\p_t^k\p_x^{2N-5-j}u\|_{L^2(\cK_{R+j})}$.
Then \eqref{loc:energyA} and \eqref{loc:energyB5} imply
\begin{equation}\label{loc:energyB6}
E_{2N-5}^{loc}(t)+E_{2N-6}^{loc}(t)\ls(\ve+\ve_1^2)\w{t}^{4\ve_2-1/2}.
\end{equation}
When $j\le2N-7$, we have $\p_x^{2N-5-j}=\p_x^2\p_x^{2N-7-j}$.
Thus, one can apply the elliptic estimate \eqref{ellip} to $(1-\chi_{[R+j,R+j+1]})\p_t^{k}u$
to obtain
\begin{equation}\label{loc:energyB7}
\begin{split}
E_j^{loc}(t)&\ls\sum_{k\le j}\|\p_x^{2N-5-j}(1-\chi_{[R+j,R+j+1]})\p_t^ku\|_{L^2(\cK)}\\
&\ls\sum_{k\le j}[\|\Delta(1-\chi_{[R+j,R+j+1]})\p_t^ku\|_{H^{2N-7-j}(\cK)}\\
&\quad+\sum_{k\le j}\|(1-\chi_{[R+j,R+j+1]})\p_t^ku\|_{H^{2N-6-j}(\cK)}]\\
&\ls\sum_{k\le j}\|(1-\chi_{[R+j,R+j+1]})\Delta\p_t^ku\|_{H^{2N-7-j}(\cK)}+\sum_{j+1\le l\le2N-5}E_l^{loc}(t)\\
&\ls\sum_{k\le j}[\|\Box\p_t^ku\|_{H^{2N-7-j}(\cK_{R+j+1})}+\|\p_t^{k+2}u\|_{H^{2N-7-j}(\cK_{R+j+1})}]\\
&\quad+\sum_{j+1\le l\le2N-5}E_l^{loc}(t),
\end{split}
\end{equation}
where the fact of $\Delta=\p_t^2-\Box$ has been used.
From \eqref{QWE}, \eqref{nonlinear-Chaplygin}, \eqref{BA3} and \eqref{energy:dtdx}, we can get that for $j\le2N-7$,
\begin{equation}\label{loc:energyB8}
\sum_{k\le j}\|\Box\p_t^ku\|_{H^{2N-7-j}(\cK_{R+j+1})}\ls\ve_1^2(1+t)^{2\ve_2-1}.
\end{equation}
Thus, \eqref{loc:energyB7} and \eqref{loc:energyB8} conclude that for $j\le2N-7$,
\begin{equation*}
E_j^{loc}(t)\ls\ve_1^2(1+t)^{2\ve_2-1}+\sum_{j+1\le l\le2N-5}E_l^{loc}(t).
\end{equation*}
This, together with \eqref{loc:energyB6}, yields \eqref{loc:energyB}.
\end{proof}

\subsection{Improved energy estimates}

With the local energy decay estimate \eqref{loc:energyB} established in the former subsection,
the energy estimates including all the vector fields in \eqref{energyA} can be actually improved.
\begin{lemma}
{\bf (Slow time growth of $\ds\sum_{|a|\le2N-7}\|\p Z^au\|_{L^2(\cK)}$)}
Under the assumptions of Theorem \ref{thm2}, let $u$ be the solution of \eqref{QWE} with \eqref{nonlinear-Chaplygin}
and suppose that \eqref{BA1}-\eqref{BA5} hold.
Then we have
\begin{equation}\label{energyB}
\sum_{|a|\le2N-7}\|\p Z^au\|_{L^2(\cK)}\ls\ve_1(1+t)^{4\ve_2}.
\end{equation}
\end{lemma}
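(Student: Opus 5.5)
We will rerun the argument for \eqref{energyA}, i.e.\ the good-unknown energy identity \eqref{energyA4} for $V^a$ from \eqref{energyA2}, now restricted to $|a|\le2N-7$. The only change is the treatment of the boundary terms $\cB_1^a,\cB_2^a$ in \eqref{energyA5}: in \eqref{energyA6} they were bounded using only \eqref{energy:dtdx}, which gives the linear growth $\ve_1^2(1+t)^{1+2\ve_2}$. Everything else in \eqref{energyA5} is kept. This includes the interior cubic terms, the ghost-weight terms handled as in \eqref{energy:t9}, and the difference estimate \eqref{energyA7}. All of these only need \eqref{BA1}--\eqref{BA5} and already produce the integrable-type factor $\ve_1(1+s)^{-1}$.

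\emph{Boundary terms.} By the trace theorem and the cutoff $1-\chi_{[1,2]}$, exactly as in \eqref{energyA6},
\begin{equation*}
|\cB^a_1|+|\cB^a_2|\ls\sum_{|b|\le|a|+1}\int_0^t\|\p\p^bu(s)\|^2_{L^2(\cK_2)}ds+(\text{terms from the quadratic part of }V^a).
\end{equation*}
Since $|a|+2\le2N-5$, the local energy decay \eqref{loc:energyB} gives
\begin{equation*}
\sum_{|b|\le|a|+1}\|\p\p^bu(s)\|_{L^2(\cK_2)}\ls(\ve+\ve_1^2)\w{s}^{4\ve_2-1/2}.
\end{equation*}
Squaring and integrating then yields a contribution $(\ve+\ve_1^2)^2(1+t)^{8\ve_2}$. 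The quadratic corrections $\p_\alpha uZ^au$ and $\p_\alpha Z^buZ^cu$ near $\p\cK$ only involve $\p^b u$ on $\cK_2$, because $Z=\Omega$ has bounded coefficients there. They are therefore bounded by the same quantity times an extra factor $\ve_1$, using \eqref{BA3} or the Sobolev embedding combined with \eqref{loc:energyB}. The term $\cB_2^a$ carries an additional $|\p u|^2$ and is easier.

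\emph{Closing.} We arrive at
\begin{equation*}
\sum_{|a|\le2N-7}\|\p Z^au\|^2_{L^2(\cK)}\le C\ve^2+C\ve_1\int_0^t\sum_{|a|\le2N-7}\frac{\|\p Z^au(s)\|^2_{L^2(\cK)}}{1+s}ds+C\ve_1^2(1+t)^{8\ve_2}.
\end{equation*}
Here we also use \eqref{energyA7}, and the $\|\p V^b\|$ terms are absorbed via smallness. Lemma \ref{lem-Gronwall-2} with $B=C\ve_1<D=8\ve_2$, valid for $\ve_1$ small, then gives $\ve_1^2(1+t)^{8\ve_2}$ for the squared energy. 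This is exactly \eqref{energyB}.

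\emph{Main obstacle.} The delicate point is that the energy identity for $V^a$ still has a time integral with coefficient $(1+s)^{-1}$ coupling all $|b|\le|a|$. This coupling must not reintroduce a lower-order term whose growth is worse than $(1+t)^{8\ve_2}$. In particular, one must check that the index set closes: lower-order terms appearing in $N^V_3$ and $N^V_4$ involve only $Z^bu$ with $|b|\le|a|$. One must also check that the boundary trace of $\p V^a$ requires at most $|a|+2\le2N-5$ spatial derivatives on $\cK_2$, so that \eqref{loc:energyB} indeed applies.
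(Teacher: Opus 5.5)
Your proposal is correct and follows the paper's proof essentially step for step. You rerun the good-unknown energy estimate \eqref{energyA5}--\eqref{energyA7} for $|a|\le 2N-7$ and replace the crude boundary bound \eqref{energyA6} by the local energy decay \eqref{loc:energyB}, which gives $\ve_1^2\int_0^t(1+s)^{8\ve_2-1}ds\ls\ve_1^2(1+t)^{8\ve_2}$. You then close with Lemma \ref{lem-Gronwall-2} using $B=C\ve_1<D=8\ve_2$, exactly as the paper does.
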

\begin{proof}
Analogously to \eqref{energyA5}, \eqref{energyA6} and \eqref{energyA7}, one has that for any $|a|\le2N-7$,
\begin{equation}\label{energyB1}
\sum_{|a|\le2N-7}\|\p Z^au\|^2_{L^2(\cK)}\ls\ve^2+\ve_1\int_0^t\sum_{|a|\le2N-7}\frac{\|\p Z^au(s)\|^2_{L^2(\cK)}}{1+s}ds
+|\cB^a_1|+|\cB^a_2|
\end{equation}
and
\begin{equation}\label{energyB2}
\begin{split}
|\cB^a_1|&\ls\sum_{|b|\le|a|+1}\int_0^t\|\p \p^bu\|^2_{L^2(\cK_2)}ds\\
&\ls\ve_1^2\int_0^t(1+s)^{8\ve_2-1}ds\ls\ve_1^2(1+t)^{8\ve_2},
\end{split}
\end{equation}
where \eqref{loc:energyB} has been used. Similarly, $\cB^a_2$ can be also treated.
Then \eqref{energyB1} and \eqref{energyB2} yield
\begin{equation*}
\sum_{|a|\le2N-7}\|\p Z^au\|^2_{L^2(\cK)}\ls\ve_1^2(1+t)^{8\ve_2}+\ve_1\int_0^t\sum_{|a|\le2N-7}\frac{\|\p Z^au(s)\|^2_{L^2(\cK)}}{1+s}ds,
\end{equation*}
which implies \eqref{energyB} by Gronwall's lemma.
\end{proof}

In addition, the local energy decay estimate \eqref{loc:energyB} can be further improved as follows.
\begin{lemma}
{\bf (Uniform time decay of $\ds\sum_{|a|\le2N-11}\|\p^au\|_{L^2(\cK_R)}$)}
Under the assumptions of Theorem \ref{thm2}, let $u$ be the solution of \eqref{QWE} with \eqref{nonlinear-Chaplygin} and suppose that \eqref{BA1}-\eqref{BA5} hold.
Then
\begin{equation}\label{loc:energyC}
\sum_{|a|\le2N-11}\|\p^au\|_{L^2(\cK_R)}\ls(\ve+\ve_1^2)\w{t}^{7\ve_2-1}.
\end{equation}
\end{lemma}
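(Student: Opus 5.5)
The plan is to repeat the proof of \eqref{loc:energyB}, with the cruder pointwise bound \eqref{loc:energyB2} replaced by the improved energy estimate \eqref{energyB}. This mirrors the step from \eqref{thm1-loc:energyB} to \eqref{thm1-loc:energyC} in Section \ref{sect4}.

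\textbf{Step 1: improved pointwise bound.} Combining \eqref{Sobo:ineq} with \eqref{energyB} gives
\[
\sum_{|a|\le2N-9}|\p Z^au|\ls\ve_1\w{x}^{-1/2}(1+t)^{4\ve_2}.
\]
This replaces the factor $(1+t)^{1/2+\ve_2}$ in \eqref{loc:energyB2} by $(1+t)^{4\ve_2}$.

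\textbf{Step 2: bounds on the source $\Box\tilde V_i$.} Take the good unknowns $\tilde V_i$ from \eqref{good1} with $i\le2N-13$, and apply \eqref{locdt} with $\mu=\nu=\ve_2/2$ exactly as in \eqref{loc:energyB1}. Redo the estimate \eqref{loc:energyB3}, using the improved bound of Step 1 wherever a high-order factor $\p Z^au$ cannot be covered by \eqref{BA1}--\eqref{BA3}. This should give, for $|y|\ge2$,
\[
\sum_{|a|\le2}|Z^a\Box\tilde V_i(s,y)|\ls\ve_1^3\w{y}^{-1}\w{s-|y|}^{-3/2}(1+s)^{4\ve_2}\w{s+|y|}^{2\ve_2-1/2}.
\]
The cubic $Q^{\alpha\beta\mu\nu}$ terms are handled with the null structure \eqref{null:structure} together with \eqref{BA2}. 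The terms $\chi_{[1,2]}\p_t^ku\,\Box\p_\alpha\p_t^ju$ are handled using the equation, which makes them cubic.

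\textbf{Step 3: the weighted supremum.} Multiplying the bound of Step 2 by $\w{y}^{1/2}\cW_{1+\ve_2,1}(s,y)$, the supremum should be $\ls\ve_1^3(1+t)^{7\ve_2}$. This is where the power $7\ve_2$ is meant to arise; the worst case is near the light cone $|y|\sim s$. The near-boundary terms in the last line of \eqref{eqn:good1} are bounded by \eqref{BA3} together with \eqref{energy:dtdx}, or by \eqref{loc:energyB} in the compact region, giving $\ve_1^2\w{s}^{2\ve_2-1}$ as in \eqref{loc:energyB4}. After multiplication by $\w{s}^{1+\ve_2}$ this contributes $\ve_1^2\w{t}^{3\ve_2}$. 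Since $\p_t^iu-\tilde V_i$ is quadratic, the conclusion is
\[
\sum_{i\le2N-13}\|\p^{\le1}\p_t\p_t^iu\|_{L^2(\cK_{R_1})}\ls(\ve+\ve_1^2)\w{t}^{7\ve_2-1}.
\]

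\textbf{Step 4: the undifferentiated term and spatial derivatives.} The term $\|\p^{\le1}u\|_{L^2(\cK_R)}$ is already controlled by \eqref{loc:energyA}, since $\ln(2+t)\ls\w{t}^{7\ve_2}$. The spatial derivatives are then recovered by the descending elliptic induction on $E_j^{loc}$ used in \eqref{loc:energyB6}--\eqref{loc:energyB8}, now with index $2N-11$. At each level, $\Box\p_t^ku$ in $\cK_{R+j+1}$ is bounded by $\ve_1^2(1+t)^{2\ve_2-1}$.

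\textbf{Main obstacle.} The hard part is Step 2: checking that every term in $Z^{\le2}\Box\tilde V_i$ actually gains the factor $\w{s-|y|}^{-3/2}\w{s+|y|}^{-1/2}$. The delicate case is when the high-order factor carries all $2N-13+2$ derivatives, so that only the energy-based Step 1 bound is available for it. I would first try to pair that factor with two low-order factors estimated by \eqref{BA1} and \eqref{BA3}, so that the only growth left is $(1+s)^{4\ve_2}$.
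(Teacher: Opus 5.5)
Your proposal is correct and follows the paper's proof essentially step for step. The paper likewise derives \eqref{loc:energyC1} from \eqref{Sobo:ineq} and \eqref{energyB}, and uses it in place of \eqref{loc:energyB2} to improve \eqref{loc:energyB3} to $\ve_1^3\w{y}^{-1}\w{s-|y|}^{-3/2}\w{s+|y|}^{6\ve_2-1/2}$ for $i\le 2N-13$, which is equivalent to your bound. It then combines this with \eqref{loc:energyB4} through \eqref{locdt} to get $\w{t}^{7\ve_2-1}$ decay of $\|\p^{\le1}\p_t\p_t^iu\|_{L^2(\cK_{R_1})}$, and recovers the spatial derivatives by the elliptic induction of \eqref{loc:energyB}. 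One minor count to correct: the top-order factor in $Z^{\le2}\Box\tilde V_i$ is $\p Z^bu$ with $|b|\le i+4\le 2N-9$, not "$2N-13+2$" derivatives; this range is still exactly what your Step 1 covers.
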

\begin{proof}
The proof is analogous to that of \eqref{loc:energyB}.
At first, \eqref{Sobo:ineq} and \eqref{energyB} imply
\begin{equation}\label{loc:energyC1}
\sum_{|a|\le2N-9}|\p Z^au|\ls\ve_1\w{x}^{-1/2}(1+t)^{4\ve_2}.
\end{equation}
Then, with \eqref{loc:energyC1} instead of \eqref{loc:energyB2}, \eqref{loc:energyB3} can be improved to
\begin{equation*}
\sum_{i\le2N-13}\sum_{|a|\le2}|Z^a\Box\tilde V_i(s,y)|\ls\ve_1^3\w{y}^{-1}\w{s-|y|}^{-3/2}\w{s+|y|}^{6\ve_2-1/2},\quad|y|\ge2.
\end{equation*}
This, together with \eqref{loc:energyB4}, yields
\begin{equation}\label{loc:energyC2}
\sum_{i\le2N-13}\|\p^{\le1}\p_t\p_t^iu\|_{L^2(\cK_{R_1})}\ls(\ve+\ve_1^2)\w{t}^{7\ve_2-1}.
\end{equation}
Finally, \eqref{loc:energyC} can be achieved by \eqref{loc:energyC2} with an argument as in the proof of \eqref{loc:energyB}.
\end{proof}

\section{Improved pointwise estimates and proof of Theorem \ref{thm2}}\label{sect6}

In this section, we will improve the pointwise estimates \eqref{BA1}-\eqref{BA5} with \eqref{loc:energyC1}
and the decay estimates of the local energy established in Section \ref{sect5} so that the
bootstrap assumptions in Section \ref{sect5} can be closed.

Instead of \eqref{energyA2}, we introduce the following good unknowns for $|a|\le2N-1$,
\begin{equation}\label{good2}
\tilde V^a:=\tilde Z^au-\sum_{\alpha=0}^2\sum_{b+c\le a}C^{\alpha}C^a_{bc}\chi_{[1/2,1]}(x)\p_{\alpha}Z^buZ^cu,\quad \tilde Z=\chi_{[1/2,1]}(x)Z.
\end{equation}
Then $\tilde V^a$ verifies $\ds\frac{\p}{\p\bn}\tilde V^a|_{[0,\infty)\times\p\cK}=0$ and
\begin{equation}\label{eqn:good2}
\begin{split}
\Box\tilde V^a&=\Box(\tilde Z^au-Z^au)+\sum_{b+c+d\le a}Q_{abcd}^{\alpha\beta\mu\nu}\p^2_{\alpha\beta}Z^bu\p_{\mu}Z^cu\p_{\nu}Z^du\\
&\quad-\sum_{\alpha=0}^2\sum_{b+c\le a}C^{\alpha}C^a_{bc}\chi_{[1/2,1]}(x)[Z^cu\Box\p_{\alpha}Z^bu+\p_{\alpha}Z^bu\Box Z^cu]\\
&\quad+\sum_{\alpha=0}^2\sum_{b+c\le a}C^{\alpha}C^a_{bc}\{[\Delta,\chi_{[1/2,1]}](\p_{\alpha}Z^buZ^cu)
-2(1-\chi_{[1/2,1]})Q_0(\p_{\alpha}Z^bu,Z^cu)\}.
\end{split}
\end{equation}

\subsection{Decay estimates on the good derivatives of solutions}

It is pointed out that although the conclusions are similar to the corresponding ones in Section 7.1
of \cite{HouYinYuan24}, we still give the details for readers' convenience and due to both the slightly
different pointwise estimates and the crucial form \eqref{eqn:good2} together with the Neumann boundary condition.

At first, we derive the pointwise estimates of $Z^au$ for $|a|\le2N-15$ and the solution
$u$ of \eqref{QWE} with \eqref{nonlinear-Chaplygin}.
\begin{lemma}\label{lem:BA3:A}
{\bf (Pointwise estimate of $\ds\sum_{|a|\le2N-15}|Z^au|$)}
Under the assumptions of Theorem \ref{thm2}, let $u$ be the solution of \eqref{QWE} with \eqref{nonlinear-Chaplygin}
and suppose that \eqref{BA1}-\eqref{BA5} hold.
Then we have
\begin{equation}\label{BA3:impr:A}
\sum_{|a|\le2N-15}|Z^au|\ls(\ve+\ve_1^2)\w{t+|x|}^{6\ve_2}.
\end{equation}
\end{lemma}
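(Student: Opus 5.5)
The plan is to apply the pointwise estimate \eqref{pw:ibvp} to the good unknown $\tilde V^a$ from \eqref{good2}, for $|a|\le 2N-15$, with $\mu=\ve_2/2$ and $\nu=\ve_2$. This unknown satisfies the homogeneous Neumann condition and the equation \eqref{eqn:good2}. We use the good unknown rather than $Z^au$ itself because the quadratic $Q_0$ terms in $\Box Z^au$ decay too slowly: they are only $O(\ve_1^2\w{x}^{-1/2}\w{t-|x|}^{-1}\cdot\w{t+|x|}^{-1/2}\cdots)$ with logarithmic losses, which is not enough against the weight $\w{y}^{1/2}\cW_{1+\mu+\nu,1}$.

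After passing to $\tilde V^a$, the right-hand side $F=\Box\tilde V^a$ has three kinds of pieces.

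\emph{Cubic terms away from the boundary.} These are $Q^{\alpha\beta\mu\nu}_{abcd}\p^2Z^bu\,\p Z^cu\,\p Z^du$ and $\chi_{[1/2,1]}[Z^cu\,\Box\p_\alpha Z^bu+\p_\alpha Z^bu\,\Box Z^cu]$, where $\Box$ is replaced by the quadratic-plus-cubic nonlinearity. In the region $|y|\ge2$, the $Z$-derivatives up to order $5$ must be bounded in the weight $\w{y}^{1/2}\cW_{1+3\ve_2/2,1}$. For the factors of low order ($\le N+1$) I will use \eqref{BA1}--\eqref{BA3}, and for the high-order factor I will use \eqref{loc:energyC1}, namely $|\p Z^{\le 2N-9}u|\ls\ve_1\w{x}^{-1/2}\w{t}^{4\ve_2}$. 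This bound is why the index drops to $2N-15$: it leaves room for $5$ derivatives plus the $Z^bZ^c$ splitting.

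A typical term is then bounded by $\ve_1^3\w{y}^{-1}\w{s-|y|}^{-1}\w{s+|y|}^{-1/2+\ve_2}\w{s}^{4\ve_2}$, possibly with an extra factor $\w{s-|y|}^{\ve_2-1/2}$ when one factor is undifferentiated $Z^cu$. Multiplying by $\w{y}^{1/2}\w{s+|y|}^{1+3\ve_2/2}\min\{\w{y},\w{s-|y|}\}$ gives at most $\ve_1^3\w{s+|y|}^{1/2+6\ve_2}$. This growth of order $1/2$ is not small in general.

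\emph{The main obstacle: controlling a $\w{t+|x|}^{1/2+\cdot}$ loss.} The target only asks for $\w{t+|x|}^{6\ve_2}$ growth, so the left-hand side weight $\w{t+|x|}^{1/2}\w{t-|x|}^{\mu}$ absorbs half a power. I will therefore compare directly: the conclusion follows once $\sup\w{y}^{1/2}\cW_{1+3\ve_2/2,1}|Z^{\le5}F|\ls(\ve+\ve_1^2)\w{t}^{1/2+6\ve_2}$, after dividing by $\w{t+|x|}^{1/2}$. Since $\ve_1\ll1$, the cubic contributions are $\ls\ve_1^3$ times this growth, which is acceptable. I expect the real care to be needed in showing that every cubic term has at least the above decay. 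In particular, terms like $\p^2Z^bu\,\p Z^cu\,\p Z^du$ with $Q_{abcd}$ satisfying the null condition \eqref{null:high} need \eqref{null:structure} together with \eqref{BA2} to gain a $\bar\p$ factor. For the terms $Z^cu\,\Box\p_\alpha Z^bu$, which contain $Q_0$ structures, the null structure again supplies one good derivative via \eqref{BA2}, compensating the weak decay of $Z^cu$.

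\emph{Terms localized near the boundary.} The remaining pieces, $\Box(\tilde Z^au-Z^au)$, the commutators $[\Delta,\chi_{[1/2,1]}]$ and the terms $(1-\chi)Q_0$, are supported in $|x|\le1$. They enter \eqref{pw:ibvp} only through $\sup_s\w{s}^{1/2+3\ve_2/2}\|\p^{\le4}F\|_{L^2(\cK_3)}$. I will bound them by the local energy decay \eqref{loc:energyC} together with \eqref{BA3}, \eqref{BA4} and Sobolev embedding, which gives $\ls(\ve+\ve_1^2)\w{s}^{7\ve_2-1}$ and is more than sufficient.

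\emph{Returning to $Z^au$.} Initial data contribute $\ls\ve$ by \eqref{initial:data}. Finally, $\tilde Z^au-\tilde V^a$ is quadratic, and \eqref{BA1}, \eqref{BA3} give $|\tilde Z^au-\tilde V^a|\ls\ve_1^2$. On $|x|\le1$, I will control $Z^au$ by $\p^{\le1}$ derivatives and \eqref{loc:energyC} with Sobolev embedding. This yields \eqref{BA3:impr:A}.
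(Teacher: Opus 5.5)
Your argument has a genuine gap, and it comes from switching to the good unknown $\tilde V^a$ of \eqref{good2}. Two pieces contain an \emph{undifferentiated} factor $Z^cu$ with $|c|$ as large as $|a|\le 2N-15$, which is well above $N+1$ when $N\ge40$:
\begin{itemize}
\item the correction $\tilde Z^au-\tilde V^a=\sum C^{\alpha}C^a_{bc}\chi_{[1/2,1]}\p_{\alpha}Z^bu\,Z^cu$;
\item the term $\chi_{[1/2,1]}Z^cu\,\Box\p_{\alpha}Z^bu$ in \eqref{eqn:good2}.
\end{itemize}
For such $c$ nothing you cite applies. \eqref{BA3} only covers $|c|\le N+1$, and \eqref{loc:energyC1} bounds $\p Z^cu$, not $Z^cu$. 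A pointwise bound on high-order $Z^cu$ is exactly what the lemma is meant to provide. So your steps ``use \eqref{loc:energyC1} for the high-order factor'' and ``$|\tilde Z^au-\tilde V^a|\ls\ve_1^2$ by \eqref{BA1}, \eqref{BA3}'' are circular whenever $|c|>N+1$. This is precisely why the paper does not use \eqref{eqn:good2} in this lemma.

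The paper instead applies \eqref{pw:ibvp} with $\mu=\nu=\ve_2/2$ to $\Box\tilde Z^au=\Box(\tilde Z^a-Z^a)u+\Box Z^au$, using the original equation \eqref{eqn:high}. There every factor carries a derivative: $Q_0(\p_{\alpha}Z^bu,Z^cu)$ involves only $\p Z^cu$.

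Your premise that the quadratic $Q_0$ terms decay too slowly is mistaken for this weak target. Bounding them crudely, with no null structure, by \eqref{BA1} and \eqref{loc:energyC1} gives $\sum_{|b|\le5}|Z^b\Box Z^au|\ls\ve_1^2\w{y}^{-1}\w{s-|y|}^{-1}\w{s}^{5\ve_2}$. The weight $\w{y}^{1/2}\cW_{1+\ve_2,1}$ turns this into at most $\ve_1^2\w{t}^{1/2+6\ve_2}$. That is absorbed by the factor $\w{t+|x|}^{1/2}$ on the left, which is the same half-power mechanism you identified for the cubic terms. The good unknown only buys an extra factor $\ve_1$, which is irrelevant because the conclusion carries $\ve+\ve_1^2$. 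Dropping $\tilde V^a$ therefore repairs the proof.

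If you want to keep $\tilde V^a$, you would first need a crude bound on the high-order factors. One such bound is $|Z^cu(t,x)|\le\int_{|x|}^{t+M_0}|\p_rZ^cu|\,d\rho\ls\ve_1\w{t}^{1/2+4\ve_2}$ for $|x|\ge1/2$, from \eqref{loc:energyC1} and the support of $u$. You would also need the null structure of $\Box\p_{\alpha}Z^bu$. This works, but it is extra effort for no gain.

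A smaller bookkeeping point: $\p^{\le4}\Box(\tilde Z^a-Z^a)u$ involves up to $|a|+5\le2N-10$ derivatives of $u$ near the boundary. That is one more than \eqref{loc:energyC} covers. The paper uses \eqref{loc:energyB} there, which gives $\w{s}^{1/2+\ve_2}\w{s}^{4\ve_2-1/2}=\w{s}^{5\ve_2}$ and is still acceptable.
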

\begin{proof}
Due to the lack of the estimates of $Z^au$ with higher order derivatives appeared in \eqref{eqn:good2}, we will adopt the original
equation \eqref{eqn:high}.
Applying \eqref{pw:ibvp} to $\Box\tilde Z^au=\Box(\tilde Z^a-Z^a)u+\Box Z^au$ with $\mu=\nu=\ve_2/2$ and $|a|\le2N-15$ yields
\begin{equation}\label{BA3:impr:A1}
\begin{split}
&\w{t+|x|}^{1/2}|\tilde Z^au|\ls\ve+\sum_{|b|\le4}\sup_{s\in[0,t]}\w{s}^{1/2+\ve_2}\|\p^b\Box(\tilde Z^a-Z^a)u(s)\|_{L^2(\cK_4)}\\
&\qquad+\sum_{|b|\le5}\sup_{(s,y)\in[0,t]\times(\overline{\R^2\setminus\cK_2})}\w{y}^{1/2}\cW_{1+\ve_2,1}(s,y)|\p^b\Box Z^au(s,y)|,
\end{split}
\end{equation}
where we have used \eqref{initial:data} and the fact of $\tilde Z=Z$ in the region $|x|\ge1$.
In addition, it can be deduced from \eqref{loc:energyB} that
\begin{equation}\label{BA3:impr:A2}
\sum_{|a|\le2N-15}\sum_{|b|\le4}\sup_{s\in[0,t]}\w{s}^{1/2+\ve_2}\|\p^b\Box(\tilde Z^a-Z^a)u(s)\|_{L^2(\cK_4)}
\ls(\ve+\ve_1^2)(1+t)^{5\ve_2}.
\end{equation}
For the second line of \eqref{BA3:impr:A1}, one can conclude from \eqref{eqn:high}, \eqref{BA1} and \eqref{loc:energyC1} that
\begin{equation}\label{BA3:impr:A3}
\sum_{|a|\le2N-15}\sum_{|b|\le5}|Z^b\Box Z^au(s,y)|
\ls\ve_1^2\w{y}^{-1}\w{s-|y|}^{-1}(1+s)^{5\ve_2}.
\end{equation}
Substituting \eqref{BA3:impr:A2} and \eqref{BA3:impr:A3} into \eqref{BA3:impr:A1} leads to
\begin{equation*}
\sum_{|a|\le2N-15}|\tilde Z^au|\ls(\ve+\ve_1^2)\w{t+|x|}^{6\ve_2}.
\end{equation*}
This, together with \eqref{loc:energyB} and the Sobolev embedding, finishes the proof of \eqref{BA3:impr:A}.
\end{proof}

\begin{lemma}\label{lem:BA2:A}
{\bf (Pointwise estimate of $\ds\sum_{|a|\le2N-17}|\bar\p Z^au|$ near the light conic surface)}
Under the assumptions of Theorem \ref{thm2}, let $u$ be the solution of \eqref{QWE} with \eqref{nonlinear-Chaplygin}
and suppose that \eqref{BA1}-\eqref{BA5} hold.
Then one has that for $|x|\ge1+t/2$,
\begin{equation}\label{BA2:impr:A}
\sum_{|a|\le2N-17}|\bar\p Z^au|\ls(\ve+\ve_1^2)\w{t+|x|}^{6\ve_2-1}.
\end{equation}
\end{lemma}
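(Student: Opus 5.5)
We will derive \eqref{BA2:impr:A} in the region $|x|\ge1+t/2$ from the standard decomposition of $\Box$ in polar coordinates, together with the pointwise bound \eqref{BA3:impr:A}. In this region we have $|x|\gtrsim\w{t+|x|}$ and we stay away from the obstacle, so $\tilde Z=Z$ and all quantities are smooth. The starting point is the identity
\begin{equation*}
\p_t+\p_r=\frac{x_i}{|x|}\bar\p_i,\qquad |\bar\p w|\ls|(\p_t+\p_r)w|+\frac{1}{|x|}|\Omega w|,
\end{equation*}
together with
\begin{equation*}
r^{1/2}(\p_t-\p_r)(\p_t+\p_r)(r^{1/2}w)=r\Box w+\frac14 w+\Omega^2w,\qquad r=|x|.
\end{equation*}
The angular part is immediate: \eqref{BA3:impr:A} applied to $Z^aZ^bu$ with $|b|\le 1$, which is allowed since $2N-17+1\le2N-15$, gives $|x|^{-1}|\Omega Z^au|\ls(\ve+\ve_1^2)\w{t+|x|}^{6\ve_2-1}$.

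For the radial good derivative, set $w=Z^au$ with $|a|\le2N-17$ and $\Phi=r^{1/2}w$. We integrate $(\p_t-\p_r)\big[(\p_t+\p_r)\Phi\big]$ along the incoming characteristic $t+r=\text{const}$ through $(t,x)$. We go backward in time, where $r$ increases, until we reach either $t=0$ or the exterior of the support. The support of $u$ lies in $\{|x|\le t+M_0\}$. On the starting line $t=0$ the data contribute $O(\ve)\w{t+|x|}^{0}$ times the weights, but it is simpler to note that following the incoming characteristic backward from $(t,x)$ meets the cone $r=s+M_0$, where $(\p_t+\p_r)\Phi$ vanishes, or else meets $t=0$ with $r\ls M_0$, which only happens when $t\ls1$. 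Along the whole segment we stay in $r\ge1+s/2$ up to constants, since $r$ grows as $s$ decreases.

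We then obtain
\begin{equation*}
|(\p_t+\p_r)\Phi(t,x)|\ls \ve+\int \Big(r^{1/2}|\Box Z^au|+r^{-3/2}\big(|Z^au|+|\Omega^2Z^au|\big)\Big)(s,x(s))\,ds.
\end{equation*}
For the second group we use \eqref{BA3:impr:A} again, since $|a|+2\le2N-15$. This gives $r^{-3/2}\w{s+r}^{6\ve_2}\approx\w{t+|x|}^{6\ve_2-3/2}$ along the line, because $s+r$ is constant. Integrating over an $s$-interval of length $\ls\w{t+|x|}$ yields $\ls(\ve+\ve_1^2)\w{t+|x|}^{6\ve_2-1/2}$.

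For the source term we use \eqref{eqn:high} with the bootstrap bounds \eqref{BA1}, \eqref{BA2}, \eqref{BA3} for low derivatives and \eqref{loc:energyC1} for high ones. By the null structure \eqref{null:structure}, each $Q_0$ term carries one good derivative, which gives decay $\w{x}^{-1/2}\w{s+r}^{\ve_2-1}$, times one bad derivative $\w{x}^{-1/2}\w{s-r}^{-1}\w{s}^{\ve_2}$ (or $(1+s)^{4\ve_2}$ when the high-derivative factor is used). Cubic terms are better. Hence
\begin{equation*}
r^{1/2}|\Box Z^au|\ls\ve_1^2\w{t+|x|}^{5\ve_2-3/2}\w{s-r}^{-1}.
\end{equation*}
Along the incoming line $\w{s-r}$ runs over $[\w{t-|x|},\w{t+|x|}]$ with $d(s-r)=2ds$, so the integral is $\ls\ve_1^2\w{t+|x|}^{5\ve_2-3/2}\ln(2+t+|x|)\ls\ve_1^2\w{t+|x|}^{6\ve_2-3/2}$. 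Dividing by $r^{1/2}$, and using $(\p_t+\p_r)w=r^{-1/2}(\p_t+\p_r)\Phi-\frac{1}{2r}w$ with the last term controlled by \eqref{BA3:impr:A}, gives \eqref{BA2:impr:A}.

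The main obstacle is bookkeeping the derivative count in the source term. $\Box Z^au$ involves up to $|a|+1$ derivatives of $\p u$, so the high factor must be taken from \eqref{loc:energyC1}, valid for orders up to $2N-9$, rather than from the bootstrap assumptions, which only hold up to $N+1$. One must also check that the factor of $\p^2$ falling on the high-order term still admits the null decomposition with the $\bar\p$ placed on the low-order factor whenever necessary, since \eqref{null:structure} allows either placement.
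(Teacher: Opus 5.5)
Your overall route is the paper's: integrate $\p_+(r^{1/2}Z^au)$ backward along $s+|y|=t+|x|$. The angular term, the zeroth-order term and the $r^{-3/2}(\frac14w+\Omega^2w)$ integrand are handled with \eqref{BA3:impr:A}, and the source $\Box Z^au$ is bounded pointwise. One minor correction: the displayed identity should read $r\Box w+r^{-1}(\frac14w+\Omega^2w)$. Your integrated form already carries the correct $r^{-3/2}$ weights.

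The step that does not hold as written is the source bound $r^{1/2}|\Box Z^au|\ls\ve_1^2\w{t+|x|}^{5\ve_2-3/2}\w{s-r}^{-1}$, which you attribute to the null structure.

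\begin{itemize}
\item For $|a|\le2N-17$, $\Box Z^au$ contains high--low interactions in which the good derivative is forced onto the high-order factor. An example is the piece $|\bar\p\p Z^au||\p u|$ of $Q_0(\p_\alpha Z^au,u)$.
\item Since $|a|+1$ can be as large as $2N-16>N+1$, \eqref{BA2} is unavailable for that factor. The estimate \eqref{loc:energyC1} only gives $|\bar\p\p Z^au|\le|\p\p Z^au|\ls\ve_1\w{y}^{-1/2}(1+s)^{4\ve_2}$.
\item Your closing remark that \eqref{null:structure} ``allows either placement'' is false. It bounds $Q_0$ by the sum of both placements, not by either one alone. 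Indeed, for $g=G(t-r)$ one has exactly $Q_0(f,g)=G'(t-r)(\p_t+\p_r)f$, so the good derivative cannot be moved off $f$.
\end{itemize}

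The best available bound is therefore $|\Box Z^au|\ls\ve_1^2\w{y}^{-1}\w{s-|y|}^{-1}(1+s)^{5\ve_2}$. This is exactly \eqref{BA3:impr:A3}, and it uses no null structure at all. Along the characteristic it gives $r^{1/2}|\Box Z^au|\ls\ve_1^2\w{t+|x|}^{5\ve_2-1/2}\w{s-r}^{-1}$, a full power of $\w{t+|x|}$ worse than you claimed.

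This weaker bound is still enough. Integrating in $s$ gives $\ls\ve_1^2\w{t+|x|}^{5\ve_2-1/2}\ln(2+t+|x|)\ls\ve_1^2\w{t+|x|}^{6\ve_2-1/2}$. That is the same size as the $r^{-3/2}$ contribution, and dividing by $r^{1/2}$ still yields \eqref{BA2:impr:A}. With this replacement, and without appealing to the null structure, your argument coincides with the paper's proof.
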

\begin{proof}
Set $\p_{\pm}:=\p_t\pm\p_r$ and $r=|x|$. Then
\begin{equation}\label{BA2:impr:A1}
\begin{split}
&\bar\p_1=\frac{x_1}{r}\p_+-\frac{x_2}{r^2}\Omega,
\quad\bar\p_2=\frac{x_2}{r}\p_++\frac{x_1}{r^2}\Omega,\quad r=|x|,\\
&\p_+(r^{1/2}w)(t,r\frac{x}{|x|})-\p_+(r^{1/2}w)(0,(r+t)\frac{x}{|x|})\\
&\quad =\int_0^t\{(r+t-s)^{1/2}\Box w+(r+t-s)^{-3/2}(w/4+\Omega^2w)\}(s,(r+t-s)\frac{x}{|x|})ds,
\end{split}
\end{equation}
one can see Section 4.6 in \cite{Kubo13}
or Section 7 in \cite{HouYinYuan24} for detailed computation.
By choosing $w=Z^au$ with $|a|\le2N-17$ in \eqref{BA2:impr:A1}, it follows from \eqref{BA3:impr:A3} that for $|y|\ge1+s/2$,
\begin{equation}\label{BA2:impr:A2}
|\Box Z^au(s,y)|\ls\ve_1^2\w{s+|y|}^{5\ve_2-1}\w{s-|y|}^{-1}.
\end{equation}
Substituting \eqref{initial:data}, \eqref{BA3:impr:A} and \eqref{BA2:impr:A2} into \eqref{BA2:impr:A1} yields that for $|x|\ge1+t/2$,
\begin{equation*}
\begin{split}
|\p_+(r^{1/2}Z^au)(t,x)|&\ls\ve\w{x}^{-1}+\ve_1^2(r+t)^{5\ve_2-1/2}\int_0^t(1+|r+t-2s|)^{-1}ds\\
&\quad+(\ve+\ve_1^2)\int_0^t(r+t-s)^{-3/2}\w{r+t}^{6\ve_2}ds\\
&\ls(\ve+\ve_1^2)\w{t+|x|}^{6\ve_2-1/2},
\end{split}
\end{equation*}
which leads to
\begin{equation*}
\begin{split}
|\p_+Z^au|&\ls r^{-1/2}|\p_+(r^{1/2}Z^au)(t,x)|+r^{-1}|Z^au(t,x)|\\
&\ls(\ve+\ve_1^2)\w{t+|x|}^{6\ve_2-1}.
\end{split}
\end{equation*}
This, together with \eqref{BA3:impr:A} and \eqref{BA2:impr:A1}, completes the proof \eqref{BA2:impr:A}.
\end{proof}

Based on Lemmas \ref{lem:BA3:A}-\ref{lem:BA2:A}, we next establish the uniform spacetime decay estimates
of $Z^au$ for $|a|\le2N-24$ in terms of \eqref{eqn:good2}.
\begin{lemma}\label{YHCC-52}
{\bf (Uniform spacetime decay estimate of $\ds\sum_{|a|\le2N-24}|Z^au|$)}
Under the assumptions of Theorem \ref{thm2}, let $u$ be the solution of \eqref{QWE} with \eqref{nonlinear-Chaplygin}
and suppose that \eqref{BA1}-\eqref{BA5} hold.
Then one has
\begin{equation}\label{BA3:impr:B}
\sum_{|a|\le2N-24}|Z^au|\ls(\ve+\ve_1^2)\w{t+|x|}^{-1/2}.
\end{equation}
\end{lemma}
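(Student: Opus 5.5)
We apply the pointwise estimate \eqref{pw:ibvp} to the good unknown $\tilde V^a$ from \eqref{good2}, with $|a|\le 2N-24$, rather than to $\tilde Z^au$ directly. The point of the change is that the quadratic $Q_0$-terms in \eqref{eqn:high} have been absorbed, so $\Box\tilde V^a$ in \eqref{eqn:good2} is cubic away from $\cK_1$. Since $\frac{\p}{\p\bn}\tilde V^a=0$ on the boundary, \eqref{pw:ibvp} applies. We choose $\mu,\nu$ as small multiples of $\ve_2$ and then drop the harmless factor $\w{t-|x|}^{\mu}\ge1$. This gives
\[
\w{t+|x|}^{1/2}|\tilde V^a|\ls \ve+\sum_{|b|\le4}\sup_{s\le t}\w{s}^{1/2+\mu+\nu}\|\p^b\Box\tilde V^a(s)\|_{L^2(\cK_3)}+\sum_{|b|\le5}\sup_{s\le t,\;|y|\ge2}\w{y}^{1/2}\cW_{1+\mu+\nu,1}|\p^b\Box\tilde V^a|,
\]
where the initial-data terms are controlled by \eqref{initial:data}.

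\textbf{Local terms.} Inside $\cK_3$, $\Box\tilde V^a$ consists of $\Box(\tilde Z^a-Z^a)u$, the commutator $[\Delta,\chi_{[1/2,1]}]$ terms, the terms $(1-\chi)Q_0$, and cubic terms. Each of these is bounded by local norms $\|\p^{\le 2N-11}u\|_{L^2(\cK_4)}$, multiplied by $L^\infty$ bounds where the term is nonlinear. By \eqref{loc:energyC} these norms are $\ls(\ve+\ve_1^2)\w{s}^{7\ve_2-1}$, and the number of derivatives is admissible because $|a|+5\le 2N-11$. Multiplying by $\w{s}^{1/2+\mu+\nu}$ still leaves a bounded quantity, since the exponent $7\ve_2-1/2+\mu+\nu$ is negative.

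\textbf{Exterior terms, $|y|\ge2$.} Here $\Box\tilde V^a$ contains two kinds of terms:
\begin{itemize}
\item[(i)] the cubic null terms $Q^{\alpha\beta\mu\nu}_{abcd}\p^2Z^bu\,\p Z^cu\,\p Z^du$;
\item[(ii)] the terms $Z^cu\,\Box\p_\alpha Z^bu$ and $\p_\alpha Z^bu\,\Box Z^cu$. Substituting \eqref{eqn:high} for $\Box$ turns these into $Z^cu\cdot Q_0(\cdots)$ plus quartic terms.
\end{itemize}
We estimate them using \eqref{null:structure} for both $Q_0$ and the cubic null form, together with the available bounds:
\begin{itemize}
\item \eqref{BA1} and \eqref{loc:energyC1} for $|\p Z u|$;
\item \eqref{BA2} and, in the region $|y|\ge1+s/2$, the improved \eqref{BA2:impr:A} for $\bar\p Z u$;
\item \eqref{BA3:impr:A} for $|Zu|\ls(\ve+\ve_1^2)\w{s+|y|}^{6\ve_2}$.
\end{itemize}
These should give
\[
|\p^{\le5}\Box\tilde V^a|\ls\ve_1^2(\ve+\ve_1)\w{y}^{-1}\w{s-|y|}^{-1}\w{s+|y|}^{C\ve_2-1}.
\]
Then $\w{y}^{1/2}\cW_{1+\mu+\nu,1}$ times this is bounded by $\w{y}^{-1/2}\w{s+|y|}^{C\ve_2+\mu+\nu}\min\{\w{y},\w{s-|y|}\}\w{s-|y|}^{-1}$, which requires an extra factor $\w{s+|y|}^{-1/2+}$ to be bounded.

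\textbf{Main obstacle.} The delicate step is obtaining one extra $\w{s+|y|}^{-1/2}$ of decay in the exterior region. Far from the cone, $|y|\le s/2$, we have $\w{s-|y|}\sim\w{s+|y|}$, and this gives the decay directly. Near the cone we would rely on the good-derivative bound $\w{s+|y|}^{-1+}$ from \eqref{BA2:impr:A}, or in the interior from \eqref{BA2}. We pair it with $|Zu|$, or with the factor $\w{y}^{-1/2}$ coming from two $\p Zu$ factors, to gain $\w{y}^{-1/2}\w{s+|y|}^{-1+}$ per null factor, in the spirit of \eqref{loc:energyA2}. If these bounds leave a residual growth $\w{s}^{C\ve_2}$, we absorb it by taking $\mu+\nu$ correspondingly smaller.

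\textbf{Conclusion.} The difference $\tilde V^a-\tilde Z^au$ is quadratic, $\chi\,\p_\alpha Z^bu\,Z^cu$. By \eqref{BA1} and \eqref{BA3} it is bounded by $\ve_1^2\w{t+|x|}^{-1/2}$. On $\cK_1$ we have $|Z^au|\ls\|\p^{\le |a|+2}u\|_{L^2(\cK_2)}$ by Sobolev embedding, and this decays like $\w{t}^{-1+}$ by \eqref{loc:energyC}. Combining these estimates with the bound on $\tilde V^a$ yields \eqref{BA3:impr:B}.
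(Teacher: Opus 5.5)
Your overall route matches the paper's. You apply \eqref{pw:ibvp} to $\tilde V^a$ from \eqref{good2}, control the $\cK_3$ part of $\Box\tilde V^a$ by \eqref{loc:energyC}, bound the exterior part by $\ve_1^3\w{y}^{-1}\w{s-|y|}^{-1}\w{s+|y|}^{C\ve_2-1}$, and return to $Z^au$ through \eqref{good2}.

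\textbf{The ``main obstacle'' is not real.} It comes from a miscalculation, and the fix you propose for it would not work. The weighted quantity you wrote down is already bounded:
\begin{itemize}
\item For $|y|\ge s/2$ one has $\w{y}\approx\w{s+|y|}$ and $\min\{\w{y},\w{s-|y|}\}\le\w{s-|y|}$, so the quantity is $\ls\w{s+|y|}^{C\ve_2+\mu+\nu-1/2}$.
\item For $|y|\le s/2$ one has $\w{s-|y|}\approx\w{s+|y|}$ and the minimum is $\w{y}$, so it is $\ls\w{y}^{1/2}\w{s+|y|}^{C\ve_2+\mu+\nu-1}\ls\w{s+|y|}^{C\ve_2+\mu+\nu-1/2}$.
\end{itemize}
Near the cone, the $\w{s+|y|}^{-1/2}$ you are looking for is just the leftover $\w{y}^{-1/2}$. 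No extra decay is needed, and this is exactly how the paper closes with \eqref{BA3:impr:B6}.

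\textbf{The extra decay you go after is also not available at this stage.} For $Z^cu\,\Box\p_\alpha Z^bu$ with $|c|\ge N+1$, the only control on $Z^cu$ is the growing bound \eqref{BA3:impr:A}. With \eqref{null:structure} and \eqref{BA1}--\eqref{BA2}, the bounds at hand give only $\ve_1^3\w{y}^{-1}\w{s-|y|}^{-1}\w{s+|y|}^{8\ve_2-1}$ near the cone; this is \eqref{BA3:impr:B3}. The additional $\w{s+|y|}^{-1/2}$ appears only later, in \eqref{BA3:impr:C3}, once \eqref{BA3:impr:B} itself is known. Shrinking $\mu+\nu$ cannot absorb residual growth either, since $\mu,\nu>0$ only add growth. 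What absorbs $\w{s}^{C\ve_2}$ is the exponent $-1/2$ computed above.

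\textbf{Proving the exterior bound needs the paper's case split.} Your list of tools does not suffice for it on its own.
\begin{itemize}
\item If $|c|\ge N+1$, then $|b|\le N-20$, and \eqref{null:structure}, \eqref{BA1}, \eqref{BA2}, \eqref{BA3:impr:A} suffice.
\item If $|c|\le N$, you must use \eqref{BA3}, not \eqref{BA3:impr:A}, for $Z^cu$.
  \begin{itemize}
  \item In $|y|\le 3+s/2$, the factor $\Box\p_\alpha Z^bu$ may contain a high-order $\p Zu$ controlled only by \eqref{loc:energyC1}, giving $\ve_1^2\w{y}^{-1}\w{s}^{5\ve_2-1}$. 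Only the decay $\w{s}^{\ve_2-1}$ of $Z^cu$ from \eqref{BA3} brings the product down to the required $\w{y}^{-1}\w{s}^{C\ve_2-2}$. With \eqref{BA3:impr:A} you would lose a full power of $\w{s}$.
  \item In $|y|\ge 3+s/2$, combine \eqref{BA2:impr:A} and \eqref{loc:energyC1} inside the $Q_0$ factor with \eqref{BA3}.
  \end{itemize}
\end{itemize}

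\textbf{The same issue arises in your final step.} Since $|b|+|c|\le 2N-24$ allows one index above $N+1$, \eqref{BA1} and \eqref{BA3} alone do not handle $\chi\,\p_\alpha Z^bu\,Z^cu$. For a high-order $\p Z^bu$, use \eqref{loc:energyC1}. For a high-order $Z^cu$, where $|\p Z^bu|\ls\ve_1$, absorb the term into the left-hand side.
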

\begin{proof}
Applying \eqref{pw:ibvp} to \eqref{eqn:good2} for $|a|\le2N-24$, $\mu=\nu=\ve_2/2$ yields
\begin{equation}\label{BA3:impr:B1}
\begin{split}
&\w{t+|x|}^{1/2}|\tilde V^a|\ls\ve+\sum_{|b|\le4}\sup_{s\in[0,t]}\w{s}^{1/2+\ve_2}\|\p^b\Box\tilde V^a(s)\|_{L^2(\cK_4)}\\
&\qquad+\sum_{|b|\le5}\sup_{(s,y)\in[0,t]\times(\overline{\R^2\setminus\cK_2})}\w{y}^{1/2}\cW_{1+\ve_2,1}(s,y)|Z^b\Box\tilde V^a(s,y)|,
\end{split}
\end{equation}
where we have used \eqref{initial:data}.
Analogously to \eqref{BA3:impr:A2}, we can obtain from \eqref{loc:energyC} that
\begin{equation}\label{BA3:impr:B2}
\sum_{|a|\le2N-24}\sum_{|b|\le4}\sup_{s\in[0,t]}\w{s}^{1-7\ve_2}\|\p^b\Box\tilde V^a(s)\|_{L^2(\cK_4)}\ls\ve+\ve_1^2.
\end{equation}
Note that $\Box(\tilde Z^au-Z^au)$ in the first line of \eqref{eqn:good2} vanishes in the region $|y|\ge1$.
Next, we treat the second line of \eqref{BA3:impr:B1} and focus on the terms $Z^cu\Box\p_{\alpha}Z^bu$ with $|b|+|c|\le2N-19$
in the second line of \eqref{eqn:good2}.
When $|c|\ge N+1$, one has $|b|\le N-20$.
By Lemmas \ref{lem:null:structure}-\ref{lem:eqn:high}, \eqref{BA1}, \eqref{BA2} and \eqref{BA3:impr:A}, we arrive at
\begin{equation}\label{BA3:impr:B3}
\sum_{|c|\ge N+1}\sum_{|b|+|c|\le2N-19}|Z^cu\Box\p_{\alpha}Z^bu|\ls\ve_1^3\w{y}^{-1}\w{s-|y|}^{-1}\w{s+|y|}^{8\ve_2-1}.
\end{equation}
When $|c|\le N$, in the region $\cK\cap\{|y|\le3+s/2\}\supseteq{\cK}_3$, it follows from \eqref{eqn:high}, \eqref{BA1}, \eqref{BA3} and \eqref{loc:energyC1} that
\begin{equation}\label{BA3:impr:B4}
\sum_{|b|+|c|\le2N-19}|Z^cu\Box\p_{\alpha}Z^bu|\ls\ve_1^3\w{y}^{-1}\w{s}^{6\ve_2-2}.
\end{equation}
In the region $\cK\cap\{y:|y|\ge3+s/2\}$, by virtue of Lemmas \ref{lem:null:structure}-\ref{lem:eqn:high}, \eqref{BA1},
\eqref{BA2}, \eqref{loc:energyC1} and \eqref{BA2:impr:A}, one obtains
\begin{equation*}
\begin{split}
\sum_{|b|\le2N-19}|\Box\p_{\alpha}Z^bu|&\ls\sum_{|b'|+|c'|\le2N-17}|\bar\p Z^{b'}u||\p Z^{c'}u|\\
&\ls\ve_1^2\w{y}^{7\ve_2-3/2}\w{s-|y|}^{-1}+\ve_1^2\w{y}^{-1}\w{s+|y|}^{5\ve_2-1}\\
&\ls\ve_1^2\w{y}^{7\ve_2-3/2}\w{s-|y|}^{-1/2},
\end{split}
\end{equation*}
which together with \eqref{BA3} yields
\begin{equation}\label{BA3:impr:B5}
\sum_{|b|+|c|\le2N-19}|Z^cu\Box\p_tZ^bu|\ls\ve_1^3\w{y}^{8\ve_2-2}\w{s-|y|}^{-1}.
\end{equation}
Analogously, we can achieve
\begin{equation}\label{BA3:impr:B6}
\sum_{|a|\le2N-24}\sum_{|b|\le5}|Z^b\Box \tilde V^a(s,y)|\ls\ve_1^3\w{y}^{-1}\w{s-|y|}^{-1}\w{s+|y|}^{8\ve_2-1},\quad |y|\ge1.
\end{equation}
Substituting \eqref{BA3:impr:B2} and \eqref{BA3:impr:B6} into \eqref{BA3:impr:B1} leads to
\begin{equation}\label{BA3:impr:B7}
\sum_{|a|\le2N-24}|\tilde V^a|\ls(\ve+\ve_1^2)\w{t+|x|}^{-1/2}.
\end{equation}
Then it follows from \eqref{BA3}, \eqref{loc:energyC}, \eqref{loc:energyC1}, \eqref{good2} and \eqref{BA3:impr:B7} that
\begin{equation*}
\begin{split}
\sum_{|a|\le2N-24}|Z^au|&\ls(\ve+\ve_1^2)\w{t+|x|}^{-1/2}+\ve_1^2\w{t}^{4\ve_2}\w{x}^{-1/2}\w{t+|x|}^{-1/2}\w{t-|x|}^{\ve_2-1/2}\\
&\ls(\ve+\ve_1^2)\w{t+|x|}^{-1/2}.
\end{split}
\end{equation*}
This completes the proof of \eqref{BA3:impr:B}.
\end{proof}

\begin{lemma}
{\bf (Uniform spacetime decay estimate of $\ds\sum_{|a|\le2N-27}|\bar\p Z^au|$  near the light conic surface)}
Under the assumptions of Theorem \ref{thm2}, let $u$ be the solution of \eqref{QWE} with \eqref{nonlinear-Chaplygin}
and suppose that \eqref{BA1}-\eqref{BA5} hold.
Then one has that for $|x|\ge1+t/2$,
\begin{equation}\label{BA2:impr:B}
\sum_{|a|\le2N-27}|\bar\p Z^au|\ls(\ve+\ve_1^2)\w{t+|x|}^{-3/2}.
\end{equation}
\end{lemma}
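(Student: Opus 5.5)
The plan is to repeat the proof of Lemma \ref{lem:BA2:A}, replacing the crude input \eqref{BA3:impr:A} by the uniform bound \eqref{BA3:impr:B}, and to sharpen the source bound \eqref{BA2:impr:A2} using \eqref{BA2:impr:A}. We fix $|a|\le2N-27$ and $(t,x)$ with $|x|\ge1+t/2$, and take $w=Z^au$ in the identity \eqref{BA2:impr:A1}. Every point $(s,(r+t-s)\frac{x}{|x|})$ on the integration ray satisfies $|y|=r+t-s\ge r\ge1+t/2\ge1+s/2$, so the whole ray stays in the exterior region $|y|\ge1+s/2$.

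First we bound the source on this region. By \eqref{eqn:high}, $\Box Z^au$ consists of $Q_0$-forms $Q_0(\p_\alpha Z^bu,Z^cu)$ with $|b|+|c|\le|a|$, plus cubic null terms. For the $Q_0$-forms we use \eqref{null:structure}: each term is bounded by $|\bar\p\p Z^bu||\p Z^cu|+|\p\p Z^bu||\bar\p Z^cu|$. The good-derivative factors are controlled by \eqref{BA2:impr:A} (all relevant orders are at most $2N-17$), giving $\w{s+|y|}^{6\ve_2-1}$. The other factor is controlled by \eqref{BA1} or by \eqref{loc:energyC1}, giving $\w{y}^{-1/2}\w{s-|y|}^{-1}\w{s}^{\ve_2}$. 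The cubic terms are handled the same way via \eqref{null:high} together with \eqref{BA1}–\eqref{BA3}. Since $\w{y}\approx\w{s+|y|}$ in this region, we expect
\begin{equation*}
|\Box Z^au(s,y)|\ls\ve_1^2\w{s+|y|}^{8\ve_2-3/2}\w{s-|y|}^{-1},\qquad |y|\ge1+s/2.
\end{equation*}

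Next we plug this into \eqref{BA2:impr:A1}. The source contributes $(r+t-s)^{1/2}|\Box w|\ls\ve_1^2\w{r+t}^{8\ve_2-1}\w{r+t-2s}^{-1}$, and integrating in $s$ gives $\ve_1^2\w{t+r}^{8\ve_2-1}\ln(2+t)$. The lower-order term uses \eqref{BA3:impr:B} for $w$ and $\Omega^2w$ (this is why the index is $2N-27$, two below $2N-24$... with room to spare). It contributes $(\ve+\ve_1^2)\int_0^t(r+t-s)^{-3/2}\w{r+t-s}^{-1/2}ds\ls(\ve+\ve_1^2)\w{r}^{-1}$. The initial term is $O(\ve\w{x}^{-1})$ by compact support. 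Since $r\approx t+r$ here, this yields $|\p_+(r^{1/2}Z^au)|\ls(\ve+\ve_1^2)\w{t+|x|}^{-1}$.

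Finally we recover $\bar\p$. We have $|\p_+Z^au|\le r^{-1/2}|\p_+(r^{1/2}Z^au)|+r^{-1}|Z^au|\ls(\ve+\ve_1^2)\w{t+|x|}^{-3/2}$, where the second term again uses \eqref{BA3:impr:B}. The angular term $r^{-1}|\Omega Z^au|$ in \eqref{BA2:impr:A1} is likewise $\ls\w{t+|x|}^{-3/2}$, which gives \eqref{BA2:impr:B}.

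The main obstacle is the source integral. The bound above only gives $\w{t+r}^{-1+8\ve_2}\ln$ for $\p_+(r^{1/2}w)$, which leaves an $\ve_2$-loss in $\w{t+|x|}^{-3/2}$. To remove it, we would bound the source more carefully. In the $Q_0$-terms, the factor $\p Z^cu$ should be estimated through the $\w{t-|x|}$ gain, using \eqref{BA1} with $\w{s}^{\ve_2}$ absorbed. We would also try to improve $|\bar\p \p Z^bu|$ by using \eqref{BA3:impr:B} with $|\bar\p f|\ls \w{y}^{-1}|Z^{\le1}f|+|\p_+f|$, which gives extra $\w{y}^{-1/2}$ decay. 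The aim is a source bound of the form $\w{s+|y|}^{-2+}\w{s-|y|}^{-1/2}$, so that after integration the $s$-integral is $\ls\w{t+r}^{-1}$ without loss. If this fails, a weaker exponent $-3/2+C\ve_2$ would still suffice downstream.
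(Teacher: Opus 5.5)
Your scaffolding matches the paper: the identity \eqref{BA2:impr:A1}, the observation that the ray stays in $|y|\ge1+s/2$, and the treatment of the initial and $\Omega^2$ terms via \eqref{BA3:impr:B}. The argument breaks at the source term.

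\textbf{The source bound fails.} You claim $|\Box Z^au(s,y)|\ls\ve_1^2\w{s+|y|}^{8\ve_2-3/2}\w{s-|y|}^{-1}$ on $|y|\ge1+s/2$, saying the non-good factor is controlled ``by \eqref{BA1} or by \eqref{loc:energyC1}, giving $\w{y}^{-1/2}\w{s-|y|}^{-1}\w{s}^{\ve_2}$''. But \eqref{loc:energyC1} only gives $\w{y}^{-1/2}(1+s)^{4\ve_2}$, with no decay in $s-|y|$, and \eqref{BA1} only reaches $N+1$ vector fields.

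Since $2N-27>N+1$, the quadratic part of \eqref{eqn:high} contains terms such as $Q_0(\p_\alpha u,Z^au)$ and $Q_0(\p_\alpha Z^au,u)$. After \eqref{null:structure}, the bad derivative in these terms carries more than $N+1$ fields: it is $\p Z^cu$ in $|\bar\p\p Z^bu||\p Z^cu|$, or $\p\p_\alpha Z^bu$ in $|\p\p_\alpha Z^bu||\bar\p Z^cu|$. For such terms the best available bound is $\w{s+|y|}^{-2+C\ve_2}$, obtained by using \eqref{BA2} (which is $\approx\w{s+|y|}^{\ve_2-3/2}$ in this region) on the low-order good factor. There is no factor $\w{s-|y|}^{-1}$.

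Along the ray $y=(r+t-s)x/|x|$ the quantity $s+|y|=r+t$ is constant. So the source integral in \eqref{BA2:impr:A1} can only be bounded by $t\,\w{t+r}^{-3/2+C\ve_2}$, which is $\w{t+r}^{-1/2+C\ve_2}$ when $t\approx r$. You recover nothing better than \eqref{BA2:impr:A}.

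\textbf{Even granting your bound, the statement is not reached.} You only get $\w{t+|x|}^{-3/2+8\ve_2}\ln(2+t)$. Your sentence asserting $|\p_+(r^{1/2}Z^au)|\ls(\ve+\ve_1^2)\w{t+|x|}^{-1}$ contradicts the source contribution you had just computed. The proposed repair is circular: the $\p_+$ part of $\bar\p\p Z^bu$ is exactly the quantity being estimated, and \eqref{BA3:impr:B} controls $Z^au$, not $\p Z^bu$. The fallback does not work either. Near the cone, \eqref{BA2:impr:C} is read off from \eqref{BA2:impr:B} and must return \eqref{BA2} with exponent $\ve_2-1$, which a bound $\w{t+|x|}^{-3/2+8\ve_2}$ does not give.

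\textbf{The missing idea is the good unknown.} The paper runs \eqref{BA2:impr:A1} on $w=\tilde V^a$ from \eqref{good2}, with $|a|\le2N-26$, instead of on $Z^au$. In $|y|\ge1$ the quadratic $Q_0$ terms have been removed, and every term of $\Box\tilde V^a$ in \eqref{eqn:good2} is cubic. So besides the one possibly high-order factor there are always two low-order ones. One of them supplies the $\w{s-|y|}$-decay missing above: through \eqref{BA1}, or through \eqref{BA3} when it is the undifferentiated factor $Z^cu$ with $|c|\le N$.

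This is \eqref{BA3:impr:B6}, which on $|y|\ge1+s/2$ reads $|\Box\tilde V^a|\ls\ve_1^3\w{s+|y|}^{8\ve_2-2}\w{s-|y|}^{-1}$. Its contribution to $\p_+(r^{1/2}\tilde V^a)$ is $\w{t+r}^{8\ve_2-3/2}\ln(2+t)\ls\w{t+r}^{-1}$, with no loss. Using \eqref{BA3:impr:B7} and \eqref{BA3:impr:B} for $\tilde V^a$, $\Omega^2\tilde V^a$ and $r^{-1}\Omega\tilde V^a$, this gives $|\bar\p\tilde V^a|\ls(\ve+\ve_1^2)\w{t+|x|}^{-3/2}$.

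One then returns to $Z^au$ through \eqref{good2}. The correction $\bar\p(\p_\alpha Z^bu\,Z^cu)$ is first bounded by $\w{t+|x|}^{C\ve_2-3/2}$ via \eqref{BA2:impr:A}, \eqref{BA3:impr:B} and \eqref{loc:energyC1}. Inserting this improved bound once more into the same quadratic correction, which has decay to spare, removes the $\ve_2$-loss at the cost of one derivative. That is where the index $2N-27$ comes from.
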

\begin{proof}
By choosing $w=\tilde V^a$ with $|a|\le2N-26$ in \eqref{BA2:impr:A1}, it follows from \eqref{BA3:impr:B6} that for $|y|\ge1+s/2$,
\begin{equation}\label{BA2:impr:B1}
|\Box\tilde V^a(s,y)|\ls\ve_1^2\w{s+|y|}^{8\ve_2-2}\w{s-|y|}^{-1}.
\end{equation}
Substituting \eqref{initial:data}, \eqref{BA3:impr:B} and \eqref{BA2:impr:B1} into \eqref{BA2:impr:A1} yields that for $|x|\ge1+t/2$,
\begin{equation*}
\begin{split}
|\p_+(r^{1/2}\tilde V^a)(t,x)|&\ls\ve\w{x}^{-1}+\ve_1^2(r+t)^{8\ve_2-3/2}\int_0^t(1+|r+t-2s|)^{-1}ds\\
&\quad+(\ve+\ve_1^2)\int_0^t(r+t-s)^{-3/2}\w{r+t}^{-1/2}ds\\
&\ls(\ve+\ve_1^2)\w{t+|x|}^{-1}.
\end{split}
\end{equation*}
This, together with  \eqref{BA3:impr:B7}, leads to
\begin{equation}\label{YHCC-51}
\begin{split}
|\p_+\tilde V^a|&\ls r^{-1/2}|\p_+(r^{1/2}\tilde V^a)(t,x)|+r^{-1}|\tilde V^a(t,x)|\\
&\ls(\ve+\ve_1^2)\w{t+|x|}^{-3/2}.
\end{split}
\end{equation}
By \eqref{good2}, \eqref{BA2:impr:A1}, \eqref{BA3:impr:B} and \eqref{YHCC-51}, we have
\begin{equation}\label{BA2:impr:B2}
\sum_{|a|\le2N-26}|\bar\p\tilde V^a|\ls(\ve+\ve_1^2)\w{t+|x|}^{-3/2}.
\end{equation}
In addition, from \eqref{loc:energyC1}, \eqref{good2}, \eqref{BA2:impr:A}, \eqref{BA3:impr:B} and \eqref{BA2:impr:B2},
one has
\begin{equation}\label{BA2:impr:B3}
\begin{split}
\sum_{|a|\le2N-26}|\bar\p Z^au|
&\ls(\ve+\ve_1^2)\w{t+|x|}^{-3/2}+\sum_{|b|+|c|\le2N-26}(|\bar\p Z^bu||\p Z^cu|\\
&\quad+|\bar\p\p Z^bu||Z^cu|+|x|^{-1}|\p Z^bu||Z^cu|)\\
&\ls(\ve+\ve_1^2)\w{t+|x|}^{6\ve_2-3/2}.
\end{split}
\end{equation}
Repeating the estimate in \eqref{BA2:impr:B3} yields
\begin{equation*}
\begin{split}
\sum_{|a|\le2N-27}|\bar\p Z^au|
&\ls(\ve+\ve_1^2)\w{t+|x|}^{-3/2}+\sum_{|b|+|c|\le2N-27}(|\bar\p Z^bu||\p Z^cu|\\
&\quad+|\bar\p\p Z^bu||Z^cu|+|x|^{-1}|\p Z^bu||Z^cu|)\\
&\ls(\ve+\ve_1^2)\w{t+|x|}^{-3/2}.
\end{split}
\end{equation*}
This completes the proof of \eqref{BA2:impr:B}.
\end{proof}

\subsection{Improved pointwise estimates}

Under the preparations above, we now start to establish more precise spacetime or time decay for some important quantities
in order to close the bootstrap assumptions \eqref{BA1}-\eqref{BA5}  and prove Theorem \ref{thm2}.
\begin{lemma}
{\bf (Precise uniform spacetime decay estimate of $\ds\sum_{|a|\le2N-28}|\p Z^au|$)}
Under the assumptions of Theorem \ref{thm2}, let $u$ be the solution of \eqref{QWE} with \eqref{nonlinear-Chaplygin}
and suppose that \eqref{BA1}-\eqref{BA5} hold.
Then we have
\begin{equation}\label{BA1:impr:A}
\sum_{|a|\le2N-28}|\p Z^au|\ls(\ve+\ve_1^2)\w{x}^{-1/2}\w{t-|x|}^{-1}\w{t}^{9\ve_2}.
\end{equation}
\end{lemma}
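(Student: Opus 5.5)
We apply the pointwise estimate \eqref{dpw:ibvp} for the Neumann problem with divergence-form sources, exactly as in the proof of \eqref{BA:impr} for Theorem \ref{thm1}. The differences are that the good unknowns $\tilde V^a$ from \eqref{good2} are used to remove the quadratic $Q_0$ terms, and that the improved bounds of Section \ref{sect5} and of Lemmas \ref{lem:BA3:A}--\ref{YHCC-52} are available.

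For $|a|\le 2N-28$ we work with $\tilde V^a$, which satisfies the Neumann condition and equation \eqref{eqn:good2}. We split $\Box\tilde V^a=\sum_\alpha\p_\alpha G^\alpha+G^r$ as follows.
\begin{itemize}
\item $G^r$ collects the terms supported in $|x|\le1$: $\Box(\tilde Z^a-Z^a)u$, the commutator $[\Delta,\chi_{[1/2,1]}]$ terms, and $(1-\chi)Q_0$.
\item $G^r$ also collects the cubic terms: $Q^{\alpha\beta\mu\nu}_{abcd}\p^2Z^bu\,\p Z^cu\,\p Z^du$, together with $\chi Z^cu\,\Box\p_\alpha Z^bu$ and $\chi\p_\alpha Z^bu\,\Box Z^cu$.
\item We take $G^\alpha=0$. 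If some derivative loss appears, we fall back on the identity $\p_\alpha Z^bu\,\Box Z^cu$ and rewrite only the quadratic remnants in divergence form.
\end{itemize}
We then apply \eqref{dpw:ibvp} with $\eta=\ve_2$. This bounds $\w{x}^{1/2}\w{t-|x|}|\p\tilde V^a|$ by $\ve$ plus three contributions: the local $L^2(\cK_3)$ norms weighted by $\w{s}$, the value at $s=0$, and the weighted supremum $\w{y}^{1/2}\cW_{3/2+\ve_2,1}|Z^{\le9}G^r|$, all multiplied by $\ln(2+t)$.

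The local part is handled by \eqref{loc:energyC}. Near the boundary, every term of $G^r$ contains at least one factor $\p^{\le k}u$ localized in $\cK_3$, and its other factors are bounded by \eqref{BA3} or \eqref{loc:energyC1}. This gives $\w{s}\|\p^{\le 9}G^r\|_{L^2(\cK_3)}\ls\ve_1^2\w{s}^{8\ve_2}$; the indices fit because $2N-28+9\le 2N-11$. For the exterior supremum ($|y|\ge2$) we reuse the bounds behind \eqref{BA3:impr:B6}, now with the improved pointwise decays \eqref{BA3:impr:B} for $Z^{\le}u$ and \eqref{BA2:impr:B} for $\bar\p Z u$ near the cone.
\begin{itemize}
\item The cubic null terms are handled by Lemma \ref{lem:null:structure}, \eqref{BA1} and \eqref{BA2}.
\item The terms $Z^cu\,\Box\p Z^bu$ are treated by separating $|c|\ge N+1$ from $|c|\le N$ and the regions $|y|\lessgtr 3+s/2$, as in \eqref{BA3:impr:B3}--\eqref{BA3:impr:B5}, with $\w{s+|y|}^{-1/2}$ now available from \eqref{BA3:impr:B}.
\end{itemize}
The target is
$$\sum_{|b|\le 9}|Z^bG^r(s,y)|\ls\ve_1^3\w{y}^{-1}\w{s-|y|}^{-1}\w{s+|y|}^{-3/2+8\ve_2}.$$
This makes $\w{y}^{1/2}\cW_{3/2+\ve_2,1}|Z^{\le9}G^r|\ls\ve_1^3\w{s}^{9\ve_2}$ after using $\w{y}^{-1/2}\le1$. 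Collecting everything gives $\w{x}^{1/2}\w{t-|x|}|\p\tilde V^a|\ls(\ve+\ve_1^2)\w{t}^{9\ve_2}$, where the logarithm has been absorbed.

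Finally we return from $\tilde V^a$ to $Z^au$. The difference $\p(\tilde V^a-\tilde Z^au)$ consists of products $\p(\chi\,\p Z^bu\,Z^cu)$. These are bounded by $|\p^2Z^bu||Z^cu|+|\p Z^bu||\p Z^cu|$, which by \eqref{BA1}, \eqref{BA3:impr:B} and \eqref{loc:energyC1} is at most $\ve_1^2\w{x}^{-1/2}\w{t-|x|}^{-1}\w{t}^{\ve_2}$. In the region $|x|\le1$, where $\tilde Z\neq Z$, we instead use \eqref{loc:energyC} together with Sobolev embedding. Together these give \eqref{BA1:impr:A}.

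The main obstacle is the exterior weighted bound on $Z^{\le9}G^r$ for the terms $Z^cu\,\Box\p_\alpha Z^bu$ when $|c|$ is large. There only the energy-based bound \eqref{loc:energyC1} is available for $\p Z^cu$, so it is delicate to get enough $\w{s+|y|}$ decay to beat the weight $\cW_{3/2+\ve_2,1}$. If direct placement fails, we would try putting these terms into $G^\alpha$ (divergence form), which costs only the weaker weight $\cW_{1,1}$ at the price of an extra factor $\w{t}^{\ve_2}\ln^2$.
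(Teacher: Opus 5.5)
Your overall route is the paper's: apply \eqref{dpw:ibvp} with $G^\alpha=0$ to \eqref{eqn:good2}, control the local part by \eqref{loc:energyC} and the exterior part by the estimates behind \eqref{BA3:impr:B6}, and return to $Z^au$ through \eqref{good2}. The step that fails is your exterior target $\sum_{|b|\le9}|Z^bG^r|\ls\ve_1^3\w{y}^{-1}\w{s-|y|}^{-1}\w{s+|y|}^{-3/2+8\ve_2}$, which is not available at this stage.

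\begin{itemize}
\item Take $\chi_{[1/2,1]}Z^cu\,\Box\p_\alpha Z^bu$ with $2N-24<|c|\le 2N-19$. Here $Z^cu$ itself, not $\p Z^cu$, is the problem: it is controlled only by the growing bound \eqref{BA3:impr:A}.
\item The other factor $\Box\p_\alpha Z^bu$ is, up to factors $\w{s}^{C\ve_2}$, of size $\w{y}^{-1}\w{s-|y|}^{-1}\w{s+|y|}^{-1}$, and the available estimates give nothing better. For $|y|\le s/2$ it is $\w{y}^{-1}\w{s}^{-2}$. Near the cone, even with \eqref{BA2:impr:B} on the low-order good derivative, it is $\w{y}^{-2}\w{s-|y|}^{-1}$. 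So the product misses your target by a full factor $\w{s+|y|}^{-1/2}$.
\item When $|c|$ is small and $|b|$ large, the factor $\bar\p\p^2Z^{b'}u$ has order up to $2N-17$. That is outside the range $2N-27$ of \eqref{BA2:impr:B}, so only \eqref{BA2:impr:A} applies.
\item \eqref{BA3:impr:B} carries no $\w{s-|y|}$ decay, so near the cone it is weaker than \eqref{BA3}.
\end{itemize}

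In the paper the exponent $-3/2$ first appears in \eqref{BA3:impr:C3}, which uses the lemma you are proving. So the obstacle you flag is real for your target, and your divergence-form fallback is left unverified.

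The missing point is that your target is stronger than needed: you discarded $\w{y}^{-1/2}$ and the $\min$ in $\cW_{3/2+\eta,1}$. Set $F:=\ve_1^3\w{y}^{-1}\w{s-|y|}^{-1}\w{s+|y|}^{8\ve_2-1}$. This is exactly what the arguments of \eqref{BA3:impr:B3}--\eqref{BA3:impr:B6} give with nine vector fields (it is the second line of \eqref{BA1:impr:A2}), and it already suffices:
\[
\w{y}^{1/2}\cW_{3/2+\eta,1}(s,y)\,F=\ve_1^3\w{y}^{-1/2}\w{s+|y|}^{1/2+8\ve_2+\eta}\,\frac{\min\{\w{y},\w{s-|y|}\}}{\w{s-|y|}}\ls\ve_1^3\w{s}^{8\ve_2+\eta}.
\]
\begin{itemize}
\item For $|y|\le s/2$, use $\min\le\w{y}$ and $\w{s-|y|}\approx\w{s}$.
\item For $|y|\ge s/2$, use $\w{y}\approx\w{s+|y|}\ls\w{s}$, recalling $|y|\le s+M_0$.
\end{itemize}

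Take $\eta=\ve_2/2$ as the paper does, not $\eta=\ve_2$; otherwise the factor $\ln(2+t)$ in \eqref{dpw:ibvp} cannot be absorbed into $\w{t}^{9\ve_2}$. You then get $\ve_1^3\w{t}^{17\ve_2/2}\ln(2+t)\ls\ve_1^3\w{t}^{9\ve_2}$ with no divergence-form rewriting. Your local estimate and the passage back from $\tilde V^a$ to $Z^au$ then go through as in the paper. That passage actually costs $\w{t}^{5\ve_2}$ via \eqref{loc:energyC1} rather than $\w{t}^{\ve_2}$, which is harmless.
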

\begin{proof}
By using \eqref{dpw:ibvp} to \eqref{eqn:good2} with $G^{\alpha}=0$, $\eta=\ve_2/2$ and \eqref{initial:data}, one can obtain
\begin{equation}\label{BA1:impr:A1}
\begin{split}
&\w{x}^{1/2}\w{t-|x|}|\p\tilde V^a|\ls\ve\ln(2+t)
+\ln(2+t)\sum_{|b|\le8}\sup_{s\in[0,t]}\w{s}\|\p^b\Box\tilde V^a(s)\|_{L^2(\cK_3)}\\
&\quad+\ln(2+t)\sum_{|b|\le9}\sup_{(s,y)\in[0,t]\times\overline{\R^2\setminus\cK_2}}\w{y}^{1/2}\cW_{3/2+\ve_2/2,1}(s,y)|Z^b\Box\tilde V^a(s,y)|.
\end{split}
\end{equation}
Analogously to \eqref{BA3:impr:B2} and \eqref{BA3:impr:B6}, we have
\begin{equation}\label{BA1:impr:A2}
\begin{split}
&\sum_{|a|\le2N-28}\sum_{|b|\le8}\sup_{s\in[0,t]}\w{s}\|\p^b\Box\tilde V^a(s)\|_{L^2(\cK_4)}\ls(\ve+\ve_1^2)\w{t}^{7\ve_2},\\
&\sum_{|a|\le2N-28}\sum_{|b|\le9}|Z^b\Box \tilde V^a(s,y)|\ls\ve_1^3\w{y}^{-1}\w{s-|y|}^{-1}\w{s+|y|}^{8\ve_2-1}.
\end{split}
\end{equation}
Collecting \eqref{BA1:impr:A1} and \eqref{BA1:impr:A2} yields
\begin{equation}\label{BA1:impr:A3}
|\p\tilde V^a|\ls(\ve+\ve_1^2)\w{x}^{-1/2}\w{t-|x|}^{-1}\w{t}^{9\ve_2}.
\end{equation}
Thus, \eqref{BA1:impr:A} can be achieved by the definition \eqref{good2} together with \eqref{BA1}, \eqref{BA3}, \eqref{loc:energyC}, \eqref{loc:energyC1}, \eqref{BA3:impr:B} and \eqref{BA1:impr:A3}.
\end{proof}

\begin{lemma}
{\bf (Precise time decay estimate of local energy near boundary)}
Under the assumptions of Theorem \ref{thm2}, let $u$ be the solution of \eqref{QWE} with \eqref{nonlinear-Chaplygin}
and suppose that \eqref{BA1}-\eqref{BA5} hold.
Then one has
\begin{equation}\label{loc:energyD}
\begin{split}
\sum_{|a|\le2N-29}\|\p^au\|_{L^2(\cK_R)}&\ls(\ve+\ve_1^2)\w{t}^{-1}\ln(2+t),\\
\sum_{|a|\le2N-30}\|\p^a\p_tu\|_{L^2(\cK_R)}&\ls(\ve+\ve_1^2)\w{t}^{-1}.
\end{split}
\end{equation}
\end{lemma}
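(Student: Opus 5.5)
We follow the scheme already used for \eqref{loc:energyA}, \eqref{loc:energyB} and \eqref{loc:energyC}, now feeding in the sharper pointwise bounds \eqref{BA1:impr:A}, \eqref{BA3:impr:B}, \eqref{BA2:impr:B} instead of the bootstrap assumptions.

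\emph{First bound: the time derivatives.} We work with the good unknowns $\tilde V_i$ of \eqref{good1}, which satisfy the Neumann condition, and apply \eqref{loc:ibvp} with $G^\alpha=0$, $G^r=\Box\tilde V_i$ (given by \eqref{eqn:good1}) and $\eta=\ve_2$, for $i\le 2N-30$. The right-hand side then has two parts.
\begin{itemize}
\item Inside $\cK_3$, we use \eqref{loc:energyC} together with the Sobolev embedding. This should give $\w{s}\|\p^{\le1}\Box\tilde V_i\|_{L^2(\cK_3)}\ls\ve+\ve_1^2$, uniformly in $s$.
\item For $|y|\ge2$, the only terms left are cubic: $Q^{\alpha\beta\mu\nu}\p^2\p_t^ju\,\p\p_t^ku\,\p\p_t^lu$, which carries the second null structure, and $\chi\,(\p_t^ku\,\Box\p_\alpha\p_t^ju+\dots)$, in which $\Box\p_t^ju$ is a $Q_0$ null form. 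We bound them by Lemma \ref{lem:null:structure}, \eqref{BA1:impr:A}, \eqref{BA3:impr:B} and \eqref{BA2:impr:B} (with \eqref{BA2} off the cone). The target is a bound of the form $\ve_1^3\w{y}^{-1}\w{s-|y|}^{-3/2}\w{s+|y|}^{C\ve_2-3/2}$.
\end{itemize}
With that bound, the weight $\w{y}^{1/2}\cW_{3/2+\ve_2,1}$ is absorbed, so the last line of \eqref{loc:ibvp} is bounded by $(\ve+\ve_1^2)\ln(2+t)$. This yields
\[
\w{t}\|\p^{\le1}\tilde V_i\|_{L^2(\cK_R)}\ls(\ve+\ve_1^2)\ln(2+t).
\]
To pass back to $\p_t^iu$, we note that the difference $\p_t^iu-\tilde V_i$ is quadratic and supported away from the boundary. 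We control it by \eqref{loc:energyC} and \eqref{BA3:impr:B}; it decays faster than the main term.

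\emph{Second bound: removing the logarithm for $\p_t$.} Here we apply \eqref{locdt} to $\tilde V_i$, with $\mu=\nu=\ve_2/2$. This estimate has no $\ln(2+t)$ loss; instead it demands two things.
\begin{itemize}
\item Local decay $\w{s}^{1+\mu}\|\p^{\le1}\Box\tilde V_i\|_{L^2(\cK_3)}$ must stay bounded. Since $\Box\tilde V_i$ is quadratic near the boundary, \eqref{loc:energyC} gives about $\w{s}^{14\ve_2-2}$, which is more than enough.
\item The exterior weight is only $\cW_{1+\ve_2,1}$, so the cubic bound above, with its extra $\w{s+|y|}^{-1/2}$, suffices.
\end{itemize}
This gives $\w{t}\|\p^{\le1}\p_t\tilde V_i\|_{L^2(\cK_R)}\ls\ve+\ve_1^2$. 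Passing from $\p_t\tilde V_i$ to $\p_t\p_t^iu$ again uses the smallness of the quadratic correction.

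\emph{Spatial derivatives.} To reach all $\p^a$, we run the elliptic iteration on $E^{loc}_j$ exactly as in \eqref{loc:energyB6}--\eqref{loc:energyB8}. We need $\|\Box\p_t^ku\|_{H^m(\cK_{R'})}\ls\ve_1^2\w{t}^{-1}$ without a logarithm, and this follows from the quadratic nonlinearity combined with \eqref{loc:energyC} and \eqref{BA1:impr:A}. For the $\p_t u$ statement, the spatial derivatives fall on $\p_tu$, whose $\Delta$ equals $\p_t^3u-\p_t\Box u$. So the elliptic estimate \eqref{ellip} keeps the log-free rate $\w{t}^{-1}$ inherited from the $\p_t^{j+1}u$ bounds. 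The cost of these steps is one derivative each, which explains the orders $2N-29$ and $2N-30$.

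\emph{Main obstacle.} We expect the hard step to be the exterior pointwise bound on $Z^{\le1}\Box\tilde V_i$ in the intermediate zone $2\le|y|\le 1+s/2$. There \eqref{BA2:impr:B} is unavailable, so the null forms give no $\bar\p$ gain. We would instead try to extract the needed $\w{s-|y|}^{-3/2}$ from $\w{y}\approx$ bounded by $\w{s-|y|}$ together with \eqref{BA3:impr:B}, which controls the undifferentiated factor $Z^cu$ by $\w{s+|y|}^{-1/2}$. The aim is to show that the resulting $\w{s}^{9\ve_2}$ losses are still absorbed by the weight $\cW_{3/2+\ve_2,1}$.
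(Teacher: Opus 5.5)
Your argument is sound in structure, but it takes a longer route than the paper, and the detour costs one derivative.

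\textbf{How the paper proceeds.} The paper needs the logarithm only at zeroth order, where it simply quotes \eqref{loc:energyA}. For the time derivatives it applies \eqref{locdt} directly to $\Box\p_t^iu=\p_t^iQ$ for $i\le2N-31$, with no good unknown. This works because \eqref{BA1:impr:A} and \eqref{BA2:impr:B} combine into \eqref{loc:energyD1}:
$|\bar\p Z^au|\ls(\ve+\ve_1^2)\w{x}^{-1/2}\w{t+|x|}^{9\ve_2-1}$ for $|a|\le2N-28$. For $|x|\le1+t/2$ this follows from $|\bar\p f|\ls|\p f|$ and $\w{t-|x|}\approx\w{t+|x|}$. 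Consequently the quadratic $Q_0$ term satisfies $|Z^{\le2}\Box\p_t^iu|\ls\ve_1^2\w{y}^{-1}\w{s-|y|}^{-1}\w{s+|y|}^{18\ve_2-1}$ on all of $\cK$. This is absorbed by the weak weight $\w{y}^{1/2}\cW_{1+2\ve_2,1}$ of \eqref{locdt}, and its $L^2(\cK_3)$ norm is $O(\w{s}^{18\ve_2-2})$.

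Good unknowns are needed only in two situations. One is with \eqref{loc:ibvp}, whose weight $\cW_{3/2+\eta,1}$ no quadratic term meets. The other is when the high-order pointwise bounds are still weak, as in \eqref{loc:energyB}--\eqref{loc:energyC}. Neither applies here for $i\ge1$.

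The same observation removes your ``main obstacle'': in $2\le|y|\le1+s/2$ no estimate of the type \eqref{BA2:impr:B} is needed. Also, for \eqref{loc:ibvp} your target $\w{s-|y|}^{-3/2}$ is stronger than necessary. The bound $\ve_1^3\w{y}^{-1}\w{s-|y|}^{-1}\w{s+|y|}^{C\ve_2-3/2}$, which is what \eqref{BA3:impr:B} on the undifferentiated factor gives, is already absorbed by $\w{y}^{1/2}\cW_{3/2+\ve_2,1}$.

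\textbf{The cost of your route.} $\Box\tilde V_i$ contains $\chi_{[1,2]}\,u\,\Box\p_\alpha\p_t^iu$. This involves $\p Z^bu$ and $\bar\p Z^bu$ with $|b|=i+2$, one order more than $\Box\p_t^iu$ needs. These are controlled pointwise only for $|b|\le2N-28$. Hence the $\p^{\le1}$ in \eqref{loc:ibvp} forces $i\le2N-31$ (not $2N-30$ as you wrote), and the $Z^{\le2}$ in \eqref{locdt} forces $i\le2N-32$. After the elliptic iteration you obtain \eqref{loc:energyD} with $2N-30$ and $2N-31$ in place of $2N-29$ and $2N-30$.

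To reach the stated orders, take the zeroth-order logarithmic bound from \eqref{loc:energyA} and apply \eqref{locdt} to $\p_t^iu$ itself. The elliptic step then goes through as you describe.
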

\begin{proof}
At first, \eqref{BA2:impr:B} and \eqref{BA1:impr:A} ensure that
\begin{equation}\label{loc:energyD1}
\sum_{|a|\le2N-28}|\bar\p Z^au|\ls(\ve+\ve_1^2)\w{x}^{-1/2}\w{t+|x|}^{9\ve_2-1}.
\end{equation}
Instead of \eqref{eqn:good1}, applying \eqref{locdt} to $\Box\p_t^iu$ with $i\le2N-31$, $\mu=\nu=\ve_2$ and \eqref{initial:data} derives
\begin{equation}\label{loc:energyD2}
\begin{split}
\w{t}\|\p^{\le1}\p_t^{i+1}u\|_{L^2(\cK_{R_1})}\ls\ve+\sum_{|a|\le1}\sup_{s\in[0,t]}\w{s}^{1+\ve_2}\|\p^a\Box\p_t^iu(s)\|_{L^2(\cK_3)}\\
+\sum_{|a|\le2}\sup_{(s,y)\in[0,t]\times\cK}\w{y}^{1/2}\cW_{1+2\ve_2,1}(s,y)|Z^a\Box\p_t^iu(s,y)|.
\end{split}
\end{equation}
It follows from \eqref{QWE}, \eqref{nonlinear-Chaplygin}, \eqref{null:condition}, \eqref{null:structure}, \eqref{BA1:impr:A} and \eqref{loc:energyD1} that
\begin{equation}\label{loc:energyD3}
\sum_{i\le2N-31}\sum_{|a|\le2}|Z^a\Box\p_t^iu(s,y)|\ls\ve_1^2\w{y}^{-1}\w{s-|y|}^{-1}\w{s+|y|}^{18\ve_2-1}.
\end{equation}
Substituting \eqref{loc:energyD3} into \eqref{loc:energyD2} yields
\begin{equation}\label{loc:energyD4}
\sum_{i\le2N-31}\|\p^{\le1}\p_t^{i+1}u\|_{L^2(\cK_{R_1})}\ls(\ve+\ve_1^2)\w{t}^{-1}.
\end{equation}
Then \eqref{loc:energyD} can be achieved by \eqref{loc:energyA}, \eqref{loc:energyD4} with the same method as
in the proof of \eqref{loc:energyB}.
\end{proof}

\begin{lemma}
{\bf (Precise uniform spacetime decay estimate of $\ds\sum_{|a|\le2N-35}|Z^au|$)}
Under the assumptions of Theorem \ref{thm2}, let $u$ be the solution of \eqref{QWE} with \eqref{nonlinear-Chaplygin}
and suppose that \eqref{BA1}-\eqref{BA5} hold.
Then we have
\begin{equation}\label{BA3:impr:C}
\sum_{|a|\le2N-35}|Z^au|\ls(\ve+\ve_1^2)\w{t+|x|}^{-1/2}\w{t-|x|}^{\ve_2-1/2}.
\end{equation}
\end{lemma}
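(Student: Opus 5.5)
We apply \eqref{pw:ibvp} to the good unknown equation \eqref{eqn:good2} with $|a|\le2N-35$. We choose $\mu=1/2-\ve_2$, or more precisely $\mu$ slightly less than $1/2$ with $\mu+\nu<1/2$; for instance $\mu=1/2-\ve_2$ and $\nu=\ve_2/2$. This gives
\[
\w{t+|x|}^{1/2}\w{t-|x|}^{1/2-\ve_2}|\tilde V^a|\ls\ve+\sum_{|b|\le4}\Big[\sup_y|\p^b\Box\tilde V^a(0,y)|+\sup_{s\le t}\w{s}^{1-\ve_2/2}\|\p^b\Box\tilde V^a(s)\|_{L^2(\cK_3)}\Big]+\sum_{|b|\le5}\sup\w{y}^{1/2}\cW_{3/2-\ve_2/2,1}|\p^b\Box\tilde V^a|.
\]
The initial terms are $O(\ve)$ by \eqref{initial:data}.

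\textbf{Local part.} The local $L^2(\cK_3)$ term is controlled by the sharpened local energy decay \eqref{loc:energyD}. Every term of $\Box\tilde V^a$ in \eqref{eqn:good2} supported near the boundary (the commutator $\Box(\tilde Z^a-Z^a)u$, $[\Delta,\chi_{[1/2,1]}]$, and $(1-\chi)Q_0$) is at least linear in $\p^{\le 5}u$ on $\cK_3$. The commutator term is linear and gives $(\ve+\ve_1^2)\w{s}^{-1}\ln(2+s)$. Multiplied by $\w{s}^{1-\ve_2/2}$, this is bounded since $\ln(2+s)\w{s}^{-\ve_2/2}\ls1$. This is exactly why $\mu<1/2$ (the loss $\w{t-|x|}^{\ve_2}$) is needed. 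The quadratic terms there are even better, by \eqref{loc:energyD} together with \eqref{BA3}.

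\textbf{Exterior part.} For $|y|\ge2$ we need
\[
\sum_{|b|\le5}|Z^b\Box\tilde V^a(s,y)|\ls\ve_1^3\w{y}^{-1}\w{s-|y|}^{-1}\w{s+|y|}^{-1+C\ve_2}
\]
with $C\ve_2<\ve_2/2$ plus margin. Concretely, we need the weight $\w{y}^{1/2}\w{s+|y|}^{3/2-\ve_2/2}\min\{\w{y},\w{s-|y|}\}$ times this to be bounded; since $\min\le\w{s-|y|}$ and $\w{y}^{-1/2}\le1$, this requires $\w{s+|y|}^{1/2-\ve_2/2+C\ve_2-1}$, which is fine. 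We redo the estimates \eqref{BA3:impr:B3}--\eqref{BA3:impr:B6}, now using the improved bounds \eqref{BA1:impr:A}, \eqref{loc:energyD1}, and \eqref{BA3:impr:B} in place of \eqref{BA1}--\eqref{BA3}. The cubic terms $Q^{\alpha\beta\mu\nu}_{abcd}$ are handled through the null structure \eqref{null:structure}: one factor is a good derivative $\bar\p$, which gains $\w{s+|y|}^{-1}$. The cubic terms from $Z^cu\,\Box\p_\alpha Z^bu$ and $\p_\alpha Z^bu\,\Box Z^cu$ are treated the same way, with $|Z^cu|\ls\ve_1\w{s+|y|}^{-1/2}$ from \eqref{BA3:impr:B}.

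\textbf{Main obstacle.} The main obstacle is the exterior bound just described. It requires $\w{s+|y|}$ decay close to $-3/2$ times $\w{y}^{-1/2}$, so the $\w{t}^{9\ve_2}$ losses in \eqref{BA1:impr:A} must be absorbed. We absorb them by always pairing such a factor with a $Z^cu$ factor decaying like $\w{s+|y|}^{-1/2}$, or with a $\bar\p$ factor from \eqref{loc:energyD1}. The total loss is then about $18\ve_2$ against the available margin. If the margin is too tight, we would first rerun \eqref{BA3:impr:B1} with a smaller $\mu$ to absorb it. With a bound of the form $\ve_1^3\w{y}^{-1}\w{s-|y|}^{-3/2}\w{s+|y|}^{C\ve_2-3/2}$ (as in \eqref{loc:energyA2}) the exterior part is bounded uniformly, and $\mu$ can stay at $1/2-\ve_2$.

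\textbf{Returning to $Z^au$.} Finally, by \eqref{good2}, $Z^au-\tilde V^a$ consists of $(1-\chi_{[1/2,1]})Z^au$, which is supported near the boundary and controlled by \eqref{loc:energyD} plus Sobolev embedding, giving $\w{t}^{-1}\ln(2+t)$, acceptable since $\w{t+|x|}\approx\w{t-|x|}\approx\w{t}$ there. The remaining part is the quadratic $\chi\,\p_\alpha Z^buZ^cu$. Its bound $\ve_1^2\w{x}^{-1/2}\w{t-|x|}^{-1}\w{t}^{9\ve_2}\w{t+|x|}^{-1/2}$, obtained from \eqref{BA1:impr:A} and \eqref{BA3:impr:B}, is dominated by $\w{t+|x|}^{-1/2}\w{t-|x|}^{\ve_2-1/2}$ because $\w{x}^{-1/2}\w{t}^{9\ve_2}\w{t-|x|}^{-1/2}\ls1$ except in the region $|x|\lesssim t^{18\ve_2}$ far inside the cone. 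There we use instead $\w{t-|x|}\approx\w{t}$, which gives ample decay. This yields \eqref{BA3:impr:C}.
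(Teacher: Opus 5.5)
Your proposal is correct and follows the paper's proof essentially step for step. It applies \eqref{pw:ibvp} to \eqref{eqn:good2} with $\mu=1/2-\ve_2$, $\nu=\ve_2/2$. The local term is absorbed via \eqref{loc:energyD}, with $\ln(2+s)$ eaten by $\w{s}^{-\ve_2/2}$. The exterior source is bounded by $\ve_1^3\w{y}^{-1}\w{s-|y|}^{-1}\w{s+|y|}^{18\ve_2-3/2}$ using \eqref{BA3:impr:B}, \eqref{BA1:impr:A}, \eqref{loc:energyD1} and the null structure. Finally you return to $Z^au$ through \eqref{good2}.

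One slip to fix: in your ``Concretely'' sentence the exponent should be $1/2-\ve_2/2+C\ve_2$; you subtracted an extra $1$. So a source bound with only $\w{s+|y|}^{-1+C\ve_2}$ would fail for the $C\approx18$ forced by the $\w{t}^{9\ve_2}$ losses. The $\w{s+|y|}^{18\ve_2-3/2}$ bound you reach in the next paragraph is the one actually needed. Multiplied by $\w{y}^{1/2}\cW_{3/2-\ve_2/2,1}$, it still leaves roughly $\w{s+|y|}^{-1/2}$ of margin, so no retuning of $\mu$ is required.
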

\begin{proof}
Applying \eqref{pw:ibvp} to \eqref{eqn:good2} for $|a|\le2N-35$, $\mu=1/2-\ve_2$, $\nu=\ve_2/2$ derives
\begin{equation}\label{BA3:impr:C1}
\begin{split}
&\w{t+|x|}^{1/2}\w{t-|x|}^{1/2-\ve_2}|\tilde V^a|\ls\ve+\sum_{|b|\le4}\sup_{s\in[0,t]}\w{s}^{1-\ve_2/2}\|\p^b\Box\tilde V^a(s)\|_{L^2(\cK_4)}\\
&\qquad+\sum_{|b|\le5}\sup_{(s,y)\in[0,t]\times(\overline{\R^2\setminus\cK_2})}\w{y}^{1/2}\cW_{3/2-\ve_2/2,1}(s,y)|Z^b\Box\tilde V^a(s,y)|.
\end{split}
\end{equation}
Then, by \eqref{eqn:good2} and \eqref{loc:energyD}, we arrive at
\begin{equation}\label{BA3:impr:C2}
\sum_{|a|\le2N-35}\sum_{|b|\le4}\sup_{s\in[0,t]}\w{s}^{1-\ve_2/2}\|\p^b\Box\tilde V^a(s)\|_{L^2(\cK_4)}\ls\ve+\ve_1^2.
\end{equation}
On the other hand, it can be concluded from Lemmas \ref{lem:null:structure}-\ref{lem:eqn:high}, \eqref{eqn:good2}, \eqref{BA3:impr:B}, \eqref{BA1:impr:A} and \eqref{loc:energyD1} that
\begin{equation}\label{BA3:impr:C3}
\sum_{|a|\le2N-35}\sum_{|b|\le5}|Z^b\Box\tilde V^a(s,y)|\ls\ve_1^3\w{y}^{-1}\w{s-|y|}^{-1}\w{s+|y|}^{18\ve_2-3/2},\quad |y|\ge1.
\end{equation}
Plugging \eqref{BA3:impr:C2} and \eqref{BA3:impr:C3} into \eqref{BA3:impr:C1} leads to
\begin{equation}\label{BA3:impr:C4}
\sum_{|a|\le2N-35}|\tilde V^a|\ls(\ve+\ve_1^2)\w{t+|x|}^{-1/2}\w{t-|x|}^{\ve_2-1/2}.
\end{equation}
Therefore, \eqref{BA3:impr:C} is obtained by \eqref{BA3}, \eqref{good2}, \eqref{BA1:impr:A},
\eqref{loc:energyD} and \eqref{BA3:impr:C4}.
\end{proof}

\begin{lemma}
{\bf (Precise uniform spacetime decay estimates of $\ds\sum_{|a|\le2N-39}|\p Z^au|$ and $\ds\sum_{|a|\le2N-39}|\bar\p Z^au|$)}
Under the assumptions of Theorem \ref{thm2}, let $u$ be the solution of \eqref{QWE} with \eqref{nonlinear-Chaplygin}
and suppose that \eqref{BA1}-\eqref{BA5} hold. Then 
\begin{align}
\sum_{|a|\le2N-39}|\p Z^au|\ls(\ve+\ve_1^2)\w{x}^{-1/2}\w{t-|x|}^{-1}\ln^2(2+t),\label{BA1:impr:B}\\
\sum_{|a|\le2N-39}|\bar\p Z^au|\ls(\ve+\ve_1^2)\w{x}^{-1/2}\w{t+|x|}^{\ve_2-1}.\label{BA2:impr:C}
\end{align}
\end{lemma}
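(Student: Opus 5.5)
The plan is to run the argument of \eqref{BA1:impr:A} once more, now feeding in the sharper bounds \eqref{loc:energyD}, \eqref{BA3:impr:C} and \eqref{BA2:impr:B}, so that the factor $\w{t}^{9\ve_2}$ shrinks to $\ln^2(2+t)$.

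\emph{Step 1: apply \eqref{dpw:ibvp} to $\tilde V^a$.} For $|a|\le 2N-39$ we apply \eqref{dpw:ibvp} to \eqref{eqn:good2} with $G^\alpha=0$, $G^r=\Box\tilde V^a$, a small $\eta\in(0,\ve_2)$, and use \eqref{initial:data}. This produces three contributions, which we bound as follows.
\begin{itemize}
\item The initial-data term contributes $\ve\ln(2+t)$.
\item The local term $\ln(2+t)\sum_{|b|\le8}\sup_s\w{s}\|\p^b\Box\tilde V^a(s)\|_{L^2(\cK_3)}$ is bounded by $(\ve+\ve_1^2)\ln^2(2+t)$. The reason is that, near the boundary, $\Box\tilde V^a$ is (up to cubic terms) a product of $\p^{\le1}\p Z^{\le\cdot}u$ with $\p^{\le2}Z^{\le\cdot}u$. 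The commutator $\Box(\tilde Z^a-Z^a)u$ and the cutoff terms are controlled linearly by the local energy, so the first line of \eqref{loc:energyD} gives $\w{s}^{-1}\ln(2+s)$ for them.
\item The remaining term is the far-field source weighted by $\w{y}^{1/2}\cW_{3/2+\eta,1}$, treated in Step 2.
\end{itemize}

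\emph{Step 2: the far-field source (main obstacle).} We must show
\[
\w{y}^{1/2}\cW_{3/2+\eta,1}(s,y)\,|Z^{\le 9}\Box\tilde V^a(s,y)|\ls \ve_1^3\quad\text{for } |y|\ge2,
\]
uniformly in $s$, with no power of $s$ lost. The typical term is $Z^cu\,\Box\p_\alpha Z^bu$, together with the cubic remainder.

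By Lemmas \ref{lem:null:structure}--\ref{lem:eqn:high}, $\Box\p_\alpha Z^bu$ is bounded by $|\bar\p Z^{\cdot}u||\p Z^{\cdot}u|$ plus cubic terms. We split into two regions.
\begin{itemize}
\item For $|y|\ge1+s/2$, we use \eqref{BA2:impr:B} (decay $\w{s+|y|}^{-3/2}$) for the good derivative and \eqref{BA1:impr:A} for $\p Z u$.
\item For $|y|\le1+s/2$, we have $\w{s-|y|}\approx\w{s+|y|}$, and \eqref{loc:energyD1} together with \eqref{BA1:impr:A} suffice.
\end{itemize}
In both regions the factor $Z^cu$ is estimated by \eqref{BA3:impr:C}, which supplies $\w{s+|y|}^{-1/2}\w{s-|y|}^{\ve_2-1/2}$; when $|c|$ is large and $|b|$ is small, the roles are swapped as in \eqref{BA3:impr:B3}. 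Cubic terms from the second null condition are easier.

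Multiplying by $\w{y}^{1/2}\w{s+|y|}^{3/2+\eta}\min\{\w{y},\w{s-|y|}\}$ should leave only $\w{s-|y|}^{-\kappa}$ for some $\kappa\ge0$, provided $\eta$ and the $\ve_2$-losses are small compared with the extra half power gained near the cone. Checking these exponent bookkeepings is where I expect the difficulty; if a small loss remains near the cone, I would compensate with the $\w{x}^{-3/2}$-type decay of $\bar\p$ from \eqref{BA2:impr:B}. With this bound, this term contributes $\ve_1^3\ln(2+t)$, and altogether we obtain $|\p\tilde V^a|\ls(\ve+\ve_1^2)\w{x}^{-1/2}\w{t-|x|}^{-1}\ln^2(2+t)$.

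\emph{Step 3: back to $Z^au$.} We return to $\p Z^au$ through \eqref{good2}. The correction terms are bounded by $|\p Z u|\,|\p Z u|+|\p^2Zu|\,|Zu|$, which by \eqref{BA1:impr:A} and \eqref{BA3:impr:C} are smaller than the target. In $|x|\le1$ we use \eqref{loc:energyD} and Sobolev embedding. This proves \eqref{BA1:impr:B}.

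\emph{Step 4: the good derivatives, \eqref{BA2:impr:C}.}
\begin{itemize}
\item In $|x|\le1+t/2$: $|\bar\p Z^au|\le|\p Z^au|$ and $\w{t-|x|}\approx\w{t+|x|}$, so \eqref{BA1:impr:B} gives $\w{x}^{-1/2}\w{t+|x|}^{-1}\ln^2(2+t)\ls\w{x}^{-1/2}\w{t+|x|}^{\ve_2-1}$.
\item In $|x|\ge1+t/2$: \eqref{BA2:impr:B} already gives $\w{t+|x|}^{-3/2}\ls\w{x}^{-1/2}\w{t+|x|}^{-1}$, since $\w{x}\approx\w{t+|x|}$ there.
\end{itemize}
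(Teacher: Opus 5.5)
Your proposal is correct and follows the paper's proof essentially step by step. You apply \eqref{dpw:ibvp} to \eqref{eqn:good2} with $G^\alpha=0$, take the $\ln^2(2+t)$ from the local term via the first line of \eqref{loc:energyD}, show the $\cW_{3/2+\eta,1}$-weighted far-field source is uniformly $O(\ve_1^3)$, return to $Z^au$ through \eqref{good2}, and deduce \eqref{BA2:impr:C} from \eqref{BA2:impr:B} and \eqref{BA1:impr:B}.

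The exponent bookkeeping you flag in Step 2 is not delicate. The paper uses \eqref{loc:energyD1} and \eqref{BA1:impr:A} globally, with no region split, and bounds the undifferentiated factor $Z^cu$ by \eqref{BA3:impr:B} rather than \eqref{BA3:impr:C}. You need \eqref{BA3:impr:B} anyway, because after applying $Z^{\le 9}$ the index $|c|$ reaches $2N-30>2N-35$. This gives the bound in \eqref{BA1:impr:B1}, which leaves a spare factor $\w{s+|y|}^{-1/2}$ after multiplying by the weight.
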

\begin{proof}
Similarly to \eqref{BA3:impr:C2} and \eqref{BA3:impr:C3}, instead of \eqref{BA1:impr:A2}, we can get
\begin{equation}\label{BA1:impr:B1}
\begin{split}
\sum_{|a|\le2N-39}\sum_{|b|\le8}\sup_{s\in[0,t]}\w{s}\|\p^b\Box\tilde V^a(s)\|_{L^2(\cK_4)}\ls(\ve+\ve_1^2)\ln(2+t),\\
\sum_{|a|\le2N-39}\sum_{|b|\le9}|Z^b\Box \tilde V^a(s,y)|\ls\ve_1^3\w{y}^{-1}\w{s-|y|}^{-1}\w{s+|y|}^{18\ve_2-3/2},\quad |y|\ge1.
\end{split}
\end{equation}
Substituting \eqref{BA1:impr:B1} into \eqref{BA1:impr:A1} gives
\begin{equation*}
\sum_{|a|\le2N-39}|\p\tilde V^a|\ls(\ve+\ve_1^2)\w{x}^{-1/2}\w{t-|x|}^{-1}\ln^2(2+t).
\end{equation*}
This, together with \eqref{good2}, \eqref{loc:energyD} and \eqref{BA3:impr:C}, yields \eqref{BA1:impr:B}.
At last, \eqref{BA2:impr:C} can be obtained by \eqref{BA2:impr:B} and \eqref{BA1:impr:B}.
\end{proof}

\begin{lemma}
{\bf (Precise uniform spacetime decay estimate of $\ds\p\p_tu$)}
Under the assumptions of Theorem \ref{thm2}, let $u$ be the solution of \eqref{QWE} with \eqref{nonlinear-Chaplygin}
and suppose that \eqref{BA1}-\eqref{BA5} hold.
Then one has
\begin{equation}\label{BA5:impr}
|\p\p_tu|\ls(\ve+\ve_1^2)\w{x}^{-1/2}\w{t-|x|}^{-1}.
\end{equation}
\end{lemma}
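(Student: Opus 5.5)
The plan is to exploit the special divergence-in-time structure available for $\p_tu$ and use \eqref{dtpw:ibvp}, which carries no $\ln(2+t)$ losses, in place of \eqref{dpw:ibvp}. We apply \eqref{dtpw:ibvp} to $w=\p_tu$, or rather to the time-derivative good unknown $\tilde V_1$ from \eqref{good1}. By \eqref{eqn:good1}, the right-hand side of $\Box\p_tu$ is $\p_t$ of the original nonlinearity, so
\begin{equation*}
\Box\p_tu=\p_t G^0,\qquad G^0:=2\sum_{\alpha=0}^2C^\alpha Q_0(\p_\alpha u,u)+\sum_{\alpha,\beta,\mu,\nu=0}^2Q^{\alpha\beta\mu\nu}\p^2_{\alpha\beta}u\p_\mu u\p_\nu u .
\end{equation*}
Moreover $\frac{\p}{\p\bn}\p_tu=0$ on the boundary, the data $(\p_tu,\p_t^2u)(0)$ have size $\ve$ in $H^9$ by \eqref{initial:data}, and $G^0(0)=O(\ve^2)$. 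Thus \eqref{dtpw:ibvp} with $\eta=\ve_2$ gives
\begin{equation*}
\w{x}^{1/2}\w{t-|x|}|\p\p_tu|\ls \ve+\sum_{|a|\le9}\sup_{s\le t}\w{s}^{1+\ve_2}\|\p^aG^0(s)\|_{L^2(\cK_3)}+\sum_{|a|\le10}\sup_{(s,y)}\w{y}^{1/2}\cW_{1+\ve_2,1}(s,y)|Z^aG^0(s,y)|.
\end{equation*}

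For the local term we write $\p^aG^0$ as products of two or three factors $\p^{b}\p u$ near the boundary. Each factor is bounded in $L^\infty(\cK_4)$ via Sobolev by \eqref{loc:energyD} (or by \eqref{BA1:impr:B}, which gives $\w{t}^{-1}\ln^2$ near the obstacle). This makes the quadratic part $\ls (\ve+\ve_1^2)^2\w{s}^{-2}\ln^{4}(2+s)$, so multiplying by $\w{s}^{1+\ve_2}$ stays bounded by $\ve+\ve_1^2$; here the requirement $2N-30\ge 11$, i.e. $N\ge 40$, is used.

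For the exterior weighted term we use the null structure of Lemma \ref{lem:null:structure} together with Lemma \ref{lem:eqn:high}. This gives $|Z^{\le10}Q_0(\p_\alpha u,u)|\ls \sum|\bar\p Z^b u||\p Z^c u|$, plus the analogous cubic bound. We then insert \eqref{BA2:impr:C} for the good derivatives and \eqref{BA1:impr:B} for $\p Z^cu$, obtaining
\begin{equation*}
|Z^{\le 10}G^0|\ls(\ve+\ve_1^2)^2\w{y}^{-1}\w{s+|y|}^{\ve_2-1}\w{s-|y|}^{-1}\ln^2(2+s).
\end{equation*}
Multiplying by $\w{y}^{1/2}\cW_{1+\ve_2,1}$ leaves $\w{s+|y|}^{2\ve_2}\ln^2(2+s)\,\min\{\w{y},\w{s-|y|}\}\w{s-|y|}^{-1}\w{y}^{-1/2}$, which is not uniformly bounded. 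This is the main obstacle: the weight $\w{s+|y|}^{1+\eta}$ demands slightly more than $\w{s+|y|}^{-1}$ decay.

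To fix this we use the sharper bound $\w{y}^{-3/2}$ for $\bar\p Z u$ near the cone, namely \eqref{BA2:impr:B} in $|y|\ge1+s/2$. In the interior region $|y|\le1+s/2$ we have $\w{s-|y|}\approx\w{s}$, and we use \eqref{BA3:impr:C} to write the $Q_0(\p_\alpha u,u)$ factor $|\p u|$ with full decay $\w{s}^{-1}$ coming from both factors. In the cone region the product becomes $\w{y}^{-2}\w{s-|y|}^{-1}\ln^2$, and the weight then gives $\w{s+|y|}^{\ve_2}\ln^2\,\w{y}^{-1/2}\le C$. For the cubic part, the extra factor $|\p u|\ls\ve_1\w{y}^{-1/2}\w{s-|y|}^{-1}\ln^2$ easily absorbs the loss.

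Combining the two estimates yields \eqref{BA5:impr}. If the direct use of $\p_tu$ produces derivative-loss issues in the regularity count, I would instead run the same argument with $\tilde V_1$ and recover $\p\p_tu$ from \eqref{good1} using \eqref{BA3:impr:C} and \eqref{BA1:impr:B}.
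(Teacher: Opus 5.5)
You follow the paper's proof. You apply \eqref{dtpw:ibvp} with $\eta=\ve_2$ to $w=\p_tu$ and $G^0=Q(\p u,\p^2u)$; this form comes simply from differentiating \eqref{QWE} in $t$, not from \eqref{eqn:good1}. You then control the $\cK_3$ term by decay near the obstacle and the exterior term by Lemma \ref{lem:null:structure}.

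However, the ``main obstacle'' you identify is not real, and you discard a computation that already closes. Since $\max\{\w{y},\w{s-|y|}\}\gt\w{s+|y|}$,
\begin{equation*}
\w{y}^{-1/2}\min\{\w{y},\w{s-|y|}\}\w{s-|y|}^{-1}=\frac{\w{y}^{1/2}}{\max\{\w{y},\w{s-|y|}\}}\ls\w{s+|y|}^{-1/2}.
\end{equation*}
So your first weighted bound is $\ls(\ve+\ve_1^2)^2\w{s+|y|}^{2\ve_2-1/2}\ln^2(2+s)\ls\ve+\ve_1^2$. Put differently, the factor $\w{y}^{-1/2}$ in \eqref{BA2:impr:C} already is the near-cone gain $\w{s+|y|}^{-3/2}$ that you later import from \eqref{BA2:impr:B}, because $\w{y}\approx\w{s+|y|}$ there. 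The paper argues exactly this way with \eqref{BA1:impr:A} and \eqref{loc:energyD1}. There the product is $\ls\w{y}^{-1}\w{s-|y|}^{-1}\w{s+|y|}^{18\ve_2-1}$, the weighted quantity is $\ls\w{s+|y|}^{19\ve_2-1/2}$, and no further splitting into regions is needed.

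Your region splitting is therefore unnecessary, and its interior half, as written, does not close. In $|y|\le1+s/2$ the weight is $\w{y}^{1/2}\cW_{1+\ve_2,1}\approx\w{y}^{3/2}\w{s}^{1+\ve_2}$, and \eqref{BA3:impr:C} carries no factor $\w{y}^{-1/2}$. If you bound one factor of $Q_0$ by \eqref{BA3:impr:C} (which gives $\w{s}^{\ve_2-1}$) and the other by \eqref{BA1:impr:B}, the resulting bound is $\w{y}\w{s}^{2\ve_2-1}\ln^2(2+s)$. This is unbounded near $|y|\sim s/2$, so there you must use \eqref{BA1:impr:B} on both factors.

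Finally, drop the $\tilde V_1$ fallback. The equation \eqref{eqn:good1} for $\Box\tilde V_1$ is not of the form $\p_tG^0$. Routing it through \eqref{dpw:ibvp} instead would reintroduce a $\ln(2+t)$ loss, which is exactly what the $\p_t$-divergence structure of $\Box\p_tu$ and \eqref{dtpw:ibvp} avoid.
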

\begin{proof}
Applying $\p_t$ to \eqref{QWE} yields $\Box\p_tu=\p_tQ(\p u,\p^2u)$ with $\frac{\p}{\p\bn}\p_tu|_{[0,\infty)\times\p\cK}=0$.
From \eqref{initial:data} and \eqref{dtpw:ibvp} with $\eta=\ve_2$, we have
\begin{equation*}
\begin{split}
&\w{x}^{1/2}\w{t-|x|}|\p\p_tu|\ls\ve+\sum_{|a|\le9}\sup_{s\in[0,t]}\w{s}^{1+\ve_2}\|\p^aQ(\p u,\p^2u)(s)\|_{L^2(\cK_3)}\\
&\quad+\sum_{|a|\le10}\sup_{(s,y)\in[0,t]\times\overline{\R^2\setminus\cK_2}}\w{y}^{1/2}\cW_{1+\ve_2,1}(s,y)|Z^aQ(\p u,\p^2u)(s,y)|.
\end{split}
\end{equation*}
This, together with \eqref{QWE}, \eqref{nonlinear-Chaplygin}, \eqref{null:condition}, \eqref{null:structure}, \eqref{BA1:impr:A} and \eqref{loc:energyD1}, derives \eqref{BA5:impr}.
\end{proof}

\subsection{Proof of Theorem \ref{thm2}}
\begin{proof}[Proof of Theorem \ref{thm2}]
Collecting \eqref{loc:energyD}, \eqref{BA3:impr:C}, \eqref{BA1:impr:B}, \eqref{BA2:impr:C} and \eqref{BA5:impr} yields that there is $C_4\ge1$ such that
\begin{align*}
\sum_{|a|\le2N-39}|\p Z^au|&\le C_4(\ve+\ve_1^2)\w{x}^{-1/2}\w{t-|x|}^{-1}\w{t}^{\ve_2},\\
\sum_{|a|\le2N-39}|\bar\p Z^au|&\le C_4(\ve+\ve_1^2)\w{x}^{-1/2}\w{t+|x|}^{\ve_2-1},\\
\sum_{|a|\le2N-35}|Z^au|&\le C_4(\ve+\ve_1^2)\w{t+|x|}^{-1/2}\w{t-|x|}^{\ve_2-1/2},\\
\sum_{|a|\le2N-30}\|\p^a\p_tu\|_{L^2(\cK_R)}&\le C_4(\ve+\ve_1^2)\w{t}^{-1},\\
|\p\p_tu|&\le C_4(\ve+\ve_1^2)\w{x}^{-1/2}\w{t-|x|}^{-1}.
\end{align*}
Choosing $\ve_1=4C_4\ve$ and $\ve_0=\frac{1}{16C_4^2}$. Then for $N\ge40$, \eqref{BA1}-\eqref{BA5} can be improved to
\begin{align*}
\sum_{|a|\le N+1}|\p Z^au|&\le\frac{\ve_1}{2}\w{x}^{-1/2}\w{t-|x|}^{-1}\w{t}^{\ve_2},\\
\sum_{|a|\le N+1}|\bar\p Z^au|&\le\frac{\ve_1}{2}\w{x}^{-1/2}\w{t+|x|}^{\ve_2-1},\\
\sum_{|a|\le N+1}|Z^au|&\le\frac{\ve_1}{2}\w{t+|x|}^{-1/2}\w{t-|x|}^{\ve_2-1/2},\\
\sum_{|a|\le N+2}\|\p^a\p_tu\|_{L^2(\cK_R)}&\le\frac{\ve_1}{2}\w{t}^{-1},\\
|\p\p_tu|&\le\frac{\ve_1}{2}\w{x}^{-1/2}\w{t-|x|}^{-1}.
\end{align*}
This, together with the local existence of classical solution to the initial boundary value problem
of the hyperbolic equation  under the admissible condition \eqref{YHCC-2} (see
Section 6 of \cite{Li-Yin18}), yields that problem \eqref{QWE} with \eqref{nonlinear-Chaplygin} has a global solution $u\in\bigcap\limits_{j=0}^{2N+1}C^{j}([0,\infty), H^{2N+1-j}(\cK))$.
In addition, \eqref{thm2:decay:a}, \eqref{thm2:decay:b}, \eqref{thm2:decay:c}, \eqref{thm2:decay:d}, \eqref{thm2:decay:LE} and \eqref{thm2:decay:LEdt} can be obtained by \eqref{BA1:impr:B}, \eqref{BA2:impr:C}, \eqref{BA3:impr:C}, \eqref{BA5:impr} and \eqref{loc:energyD}, respectively.
\end{proof}


\vskip 0.2 true cm

{\bf \color{blue}{Conflict of Interest Statement:}}

\vskip 0.1 true cm

{\bf The authors declare that there is no conflict of interest in relation to this article.}

\vskip 0.2 true cm
{\bf \color{blue}{Data availability statement:}}

\vskip 0.1 true cm

{\bf  Data sharing is not applicable to this article as no data sets are generated
during the current study.}

\end{document}